\documentclass[12pt]{article}




\usepackage{amsmath}
\usepackage{amsthm}

\usepackage{amssymb}
\usepackage{mathptmx}

\usepackage{caption}

\usepackage{xparse} 
\usepackage{stmaryrd} 

\usepackage{mathrsfs}

\usepackage{multirow}

\usepackage{geometry}
\usepackage{bm}

\usepackage{enumitem}

\usepackage{float} 
\usepackage{booktabs} 

\usepackage{color}
\usepackage{graphicx}
\usepackage{subfigure}
\usepackage{epstopdf}

\makeatletter
\def\widebreve{\mathpalette\wide@breve}
\def\wide@breve#1#2{\sbox\z@{$#1#2$}%
	\mathop{\vbox{\m@th\ialign{##\crcr
				\kern0.08em\brevefill#1{0.8\wd\z@}\crcr\noalign{\nointerlineskip}%
				$\hss#1#2\hss$\crcr}}}\limits}
\def\brevefill#1#2{$\m@th\sbox\tw@{$#1($}%
	\hss\resizebox{#2}{\wd\tw@}{\rotatebox[origin=c]{90}{\upshape(}}\hss$}
\makeatletter

\usepackage{hyperref} 
\usepackage{xcolor}
\hypersetup{
	colorlinks,
	linkcolor={green!50!black},
	citecolor={green!50!black}
}

\allowdisplaybreaks[1]
\raggedbottom




\def\jump#1{\llbracket #1 \rrbracket }

\newtheorem{theorem}{Theorem}[section]
\newtheorem{proposition}{Proposition}[section]
\newtheorem{lemma}{Lemma}[section]
\newtheorem{definition}{Definition}[section]
\newtheorem{remark}{Remark}[section]



\geometry{left=2.3cm,right=2.3cm,top=1.8cm,bottom=1.8cm}

\title{{\large \bf  Positivity-Preserving Well-Balanced Central Discontinuous Galerkin Schemes for the Euler Equations under Gravitational Fields}}

\author{
	Haili Jiang\thanks{School of Mathematical Sciences, Peking University, Beijing 100871, P.R.~China. ({\tt jianghaili@pku.edu.cn})}, \quad
	Huazhong Tang\thanks{Nanchang Hangkong University, Jiangxi Province, Nanchang 330000, P.R.~China; HEDPS, Center for Applied Physics and Technology, and LMAM, School of Mathematical Sciences, Peking University, Beijing 100871, P.R.~China. ({\tt hztang@math.pku.edu.cn})}, \quad
	Kailiang Wu\thanks{Corresponding author. Department of Mathematics and SUSTech International Center for Mathematics, Southern University of Science and Technology, and National Center for Applied Mathematics Shenzhen (NCAMS), Shenzhen 518055, P.R.~China.  ({\tt wukl@sustech.edu.cn}). } 
}

\date{}

\begin{document}
	
	\maketitle

	\vspace{-3mm}

\begin{abstract}
	
	This paper designs and analyzes  positivity-preserving well-balanced (WB)  central discontinuous Galerkin
	(CDG) schemes for the  Euler equations with gravity. A distinctive 
	feature of these schemes is that they not only are
	 WB for a general known  stationary hydrostatic solution, but also can preserve the positivity
	of the fluid density and pressure.
	The standard CDG method does not possess this feature, while directly applying some existing WB techniques to the CDG framework may not accommodate the positivity and keep other important properties at the same time.
	In order to
	obtain the WB and positivity-preserving properties simultaneously while also maintaining the conservativeness and stability of the schemes,
	a novel spatial discretization is devised in the CDG framework based on suitable modifications to the numerical dissipation term and the source term approximation.
	The modifications are based on a crucial projection operator for the stationary hydrostatic solution, which is proposed for the first time in this work.
	This novel projection has the same order of accuracy as the standard $L^2$-projection, can be explicitly calculated,
	and is easy to implement without solving any optimization problems.
	More importantly, it ensures that the projected stationary solution has
	 the same cell averages on both the primal and dual meshes, which is a key to achieve the desired properties of our schemes.
	Based on some convex decomposition techniques, rigorous positivity-preserving analyses for the resulting WB CDG schemes are carried out. Several one- and two-dimensional numerical examples are performed to illustrate the desired properties of these schemes, including the high-order accuracy, the WB property, the robustness for simulations involving the low pressure or density, high resolution for the discontinuous solutions and the small perturbations around the equilibrium state.
	
	\vspace*{4mm}
	\noindent {\bf Keywords:} 
Euler equations; central discontinuous Galerkin method; well-balanced schemes; positivity-preserving property; gravitational field
	
\end{abstract}

\newpage

\section{Introduction }

The Euler equations under gravitational fields are widely adopted to model  physical phenomena in the atmospheric science
and astrophysics, such as numerical weather forecasting \cite{botta2004well}, climate modeling, and supernova explosions \cite{Mishra2014well}. In the one-dimensional case, this nonlinear system can be written into the form of the hyperbolic balance laws as
\begin{equation}\label{eq:system-1d}
	{\bf U}_{t} + {\bf F}({\bf U})_{x} = {\bf S}({\bf U},\phi_{x}) \,,
\end{equation}
with
\begin{equation*}
	{\bf U} =
	\begin{pmatrix}
		\rho \\  m \\  E
	\end{pmatrix}
	 \,, \qquad
	{\bf F}({\bf U}) =
	\begin{pmatrix}
		\rho u \\  \rho u^{2}+p \\  (E+p)u
	\end{pmatrix} \,, \qquad
	{\bf S}({\bf U},\phi_{x}) =
	\begin{pmatrix}
		0 \\   -\rho \phi_{x} \\   - m \phi_{x}
	\end{pmatrix} \,.
\end{equation*}
Here $\rho$ denotes the fluid density, $u$ is the velocity, $m = \rho u$ is the momentum, $p$ represents the pressure, $E = \frac{1}{2}\rho u^2 + \rho e$ denotes the total non-gravitational energy, and $e$ is the specific internal energy.
The function $\phi(x)$ in the source terms is the static gravitational potential. In order to close the system \eqref{eq:system-1d}, an equation of state (EOS) is needed and can be written as $p = p(\rho,e)$; for {the} ideal gas it is given by
\begin{equation}\label{eq:eos}
	p = (\gamma - 1)\rho e = (\gamma - 1) \bigg( E - \frac{m^2}{2\rho} \bigg) \,,
\end{equation}
where the constant $\gamma > 1$ denotes the ratio of specific heats. This paper will mainly focus on the
ideal EOS (\ref{eq:eos}), and the proposed schemes are readily extensible to a general EOS.

The Euler system (\ref{eq:system-1d}) under {the} gravitational potential $\phi(x)$ admits non-trivial stationary hydrostatic  solutions,  where the velocity is zero and the gravity is exactly balanced by the pressure gradient:
\begin{equation}\label{eq:steady-state-1d}
	\rho = \rho(x) \,, \quad u = 0 \,, \quad p_{x} = -\rho \phi_{x} \,.
\end{equation}
Two types of equilibria appear frequently in practical applications. They are the isothermal \cite{Xing2013FDWB} and polytropic \cite{Mishra2014well}  hydrostatic equilibrium states. The temperature $T(x) = T_{0}$ is a constant under the isothermal assumption. For an isothermal ideal gas with $p = \rho RT$,
integrating (\ref{eq:steady-state-1d}) yields
\begin{equation*}
	\rho = \hat{\rho}_{0}\exp\Big( -\frac{\phi}{RT_{0}}\Big),
	\quad  u = 0, \quad
	p = \hat{p}_{0}\exp\Big( -\frac{\phi}{RT_{0}}\Big),
\end{equation*}
where $R$ is the gas constant, $\hat{p}_{0}$ is the pressure at a reference position $x_{0}$,  and $\hat{p}_{0} = \hat{\rho}_{0}RT_{0}$. A polytropic hydrostatic equilibrium, which arises from the astrophysical applications, is characterized by
$
p = K_{0} \rho ^ \nu,
$
and for this equilibrium, integrating (\ref{eq:steady-state-1d}) gives
\begin{equation*}
	\rho = \Big( \frac{\nu-1}{K_{0} \nu}(C-\phi) \Big)^\frac{1}{\nu-1},
	\quad  u = 0, \quad
	p = \frac{1}{K_{0}^{\frac{1}{\nu-1}}}\Big( \frac{\nu-1}{\nu}(C-\phi) \Big)^\frac{\nu}{\nu-1}{,}
\end{equation*}
with $K_{0}$ and $C$ being constants. A special case is $\nu=\gamma$, which corresponds to a constant entropy.

In order to correctly and accurately capture small perturbations around the equilibrium state (\ref{eq:steady-state-1d}), it is desirable to
develop well-balanced (WB) numerical methods that preserve the discrete version of {those} steady state solutions exactly up to machine accuracy.
In fact, a straightforward numerical discretization may not be WB and can
 lead to a numerical solution which is inaccurate or oscillates around the hydrostatic equilibrum after a long time simulation.
This problem may be improved if the mesh size is extremely refined, which, however, may cause the simulation time-consuming especially in {the} multidimensional cases. To reduce the computational cost, the exploration of {the} WB schemes
has attracted much attention in the past few decades.
Most of those schemes were devised for the nonlinear shallow water equations over varied bottom topology, another typical model of hyperbolic balance laws; see, e.g., \cite{li2017CDGWB, audusse2004fast, bermudez1994upwind, greenberg1996well, LeVeque1998balance, Xing2005high, Xing2014survey, Xing2010PP, Xu2002shallow}  for more details. 
In recent years, various WB schemes for {the Euler equations under gravitational fields} have  been developed within several different frameworks, including but not limited to the non-central finite volume methods \cite{botta2004well,Xu2010kinetic, Xu2010gas, Mishra2014well, kappeli2016well, 
Klingenberg2015well, Xing2016FVWB},  {the} central finite volume methods \cite{Klingenberg2016well,  Klingenberg2020well},
 {the} finite difference methods \cite{Xing2013FDWB,ghosh2016well,Xing2018FDWB}, and {the} discontinuous Galerkin (DG) methods \cite{Xing2016DGWB,Zenk2017WBNDG,Xing2018DGWB}, etc. A numerical comparison between the high-order DG method and {the} WB DG method was carried out in \cite{veiga2019capturing}.
Most of {those} works assumed that the target equilibrium is explicitly known, which is also adopted in our present work. It is worth mentioning that, recently, there exist some efforts \cite{berberich2020high, chertock2018well, franck2016finite, kappeli2016well, varma2019second} on developing {the} WB schemes for the Euler system under gravitational field, without requiring a prior knowledge of the  stationary hydrostatic solution.

In physics, the fluid density and thermal pressure are positive, implying that the conservative variables ${\bf U}$ must stay in the set of admissible states
\begin{equation*}
	{ G} := \left\{ {\bf U} = (\rho, m, E)^{\top}:~ \rho > 0, ~  p({\bf U}) = (\gamma -1)\left(
	E - \frac{m^2}{2\rho} \right) > 0 \right\} \, .
\end{equation*}
Given that the initial data in the set ${G} $, a scheme is defined to be positivity-preserving if {its} solutions are always belong to $G$. Over the past decade,  {studying the} positivity-preserving and more generally bound-preserving
high-order numerical methods has attracted much attention and achieved significant progresses for hyperbolic systems.
Most of those {high-order accurate} schemes are designed with two types of limiters: the simple scaling limiter proposed in \cite{zhang2010maximum,Zhang2010PP} and the flux-correction limiters proposed in \cite{hu2013positivity, xu2014parametrized}.
 Based on the simple scaling limiter, {the} high-order positivity-preserving DG schemes were designed for the Euler equations without source term in \cite{Zhang2010PP, zhang2012maximum} and with various source terms including the gravitational source term in \cite{zhang2011positivity}.
The bound-preserving methods were also extended
to, for example, the shallow water equations \cite{Xing2010PP},
the special relativistic Euler equations \cite{WuTang2015,qin2016bound,WuTang2017ApJS,WuMEP2021}, the compressible Navier--Stokes equations \cite{zhang2017positivity}, and the compressible magnetohydrodynamic systems   \cite{Wu2017a,WuShu2018,wu2019provably,WuTangM3AS,wu2021provably}, and the general relativistic Euler equations under strong gravitational fields \cite{Wu2017}. 
Recently, a universal framework, called geometric quasilinearization (GQL), was proposed in \cite{wu2021geometric} for studying general  bound-preserving problems involving nonlinear constraints, with applications to a wide variety of physical systems including the Euler equations.    
For more developments and applications,
{the readers are} referred to the review articles \cite{shu2016bound,XuZhang2017} and the references therein.

The present paper is concerned with {the} central DG (CDG) methods for solving the Euler equations under gravitational fields.
The CDG methods are a family of high-order numerical schemes based on the DG methods \cite{cockburn1998runge} and the central scheme framework \cite{nessyahu1990non,kurganov2000new,liu2005central}, which were originally introduced for solving the  hyperbolic conservation laws \cite{Liu2007CDG}, and have been applied to the  Hamilton-Jacobi equations \cite{li2010HJ}, the ideal magnetohydrodynamic equations \cite{li2011MHD,li2012arbitrary}, and the special relativistic {hydrodynamic \cite{zhao2017RHD} and} magnetohydrodynamic equations \cite{zhao2017RMHD}. Based on the simple scaling limiter, {the} high-order bound-preserving CDG schemes were constructed for
the scalar conservation laws and {the} Euler equations \cite{li2016maximum}, the relativistic Euler equations \cite{WuTang2017ApJS}, and the shallow water equations \cite{li2017CDGWB}. The CDG methods evolve two copies of numerical solutions defined on {two sets of meshes (e.g.~the primal
mesh and its dual mesh)}, avoiding using any exact or approximate Riemann solvers at the cell interfaces which can be extremely complicated and time-consuming in some cases. Although needing more memory space than standard DG methods, the CDG methods were proven to allow relatively larger time step-sizes \cite{li2015operator} and be more accurate in some numerical tests \cite{Liu2008CDG}.

The aim of this work is to design and rigorously analyze {the} high-order positivity-preserving WB CDG schemes for the Euler equations under gravitational fields. A second-order positivity-preserving WB finite volume scheme based on a relaxation Riemann solver was developed in \cite{thomann2019second} {for the Euler equations with gravity for arbitrary hydrostatic equilibria}. Based on the (non-central) DG framework, {the} arbitrarily high-order positivity-preserving WB methods were proposed in \cite{wu2021uniformly} for {the} Euler equations with gravitation. It is also worth mentioning that, in the context of {the} shallow water equations, several positivity-preserving WB schemes have been developed in the literature \cite{kurganov2007second, xing2011high, Xing2010PP, li2017CDGWB, zhang2021positivity}. However, within the CDG framework, the study of {the} positivity-preserving WB schemes for the Euler equations with gravitation  is still blank. 

	For the regular (non-central) DG methods \cite{wu2021uniformly}, a key to achieve the WB and positivity-preserving properties  simultaneously is based on a suitable modification of the HLLC numerical flux, which satisfies both the contact property and the positivity. 
	By contrast, the CDG methods have no numerical flux but possess an extra
	numerical dissipation term (not existing in the regular DG methods). As such, some existing WB and positivity-preserving techniques in the regular DG case \cite{wu2021uniformly} do not apply to the CDG case. 
	Therefore, the design and analysis of positivity-preserving WB schemes in 
	the CDG framework have quite different difficulties and require the development of new techniques. 
	Most notably, we need to carefully deal with the numerical dissipation term by proposing a novel critical projection operator, so as to obtain 
	the WB and positivity-preserving properties simultaneously while also maintaining the conservativeness and stability of the CDG schemes. 
The main efforts in this paper are summarized as follows.
\begin{itemize}
	\item 	In order to obtain the WB and positivity-preserving properties simultaneously while also keeping the conservativeness and stability of the schemes,  {a novel} spatial discretization is devised in the CDG framework based on suitable modifications to the numerical dissipation term and {the} source term approximation.
	\item   The modifications are based on a crucial projection operator for the stationary  hydrostatic solution, which is proposed for the first time in this work. This novel projection has the same order of accuracy as the standard $L^2$-projection, can be explicitly calculated, and is easy to implement without solving any optimization problems. More importantly, it ensures that the projected stationary solution has the same cell averages on both the primal and dual meshes, which {is key} to achieve the desired properties of our schemes.
	\item  Based on some convex decomposition techniques, {a weak positivity property of the resulting WB CDG schemes is rigorously proved}, which implies that a simple limiter \cite{Zhang2010PP,Zhang2012robust}  can ensure the positivity-preserving property without losing the high-order accuracy.  The WB modifications of the numerical dissipation term and {the} approximate source term lead to additional difficulties in our positivity-preserving analyses, which are more complicated than the analysis for the standard CDG schemes.
\end{itemize}

The rest of the paper is organized as follows. {Section \ref{section:projection}   proposes}
the novel projection of the stationary hydrostatic  solutions. Section \ref{section:ppwb1d}  constructs the
high-order positivity-preserving WB CDG method for the one-dimensional Euler equations under gravitational fields.
The proposed CDG schemes are extended to the two-dimensional case in Section \ref{section:ppwb2d}.
Section \ref{section:numerical-example} gives several numerical examples to verify the high-order accuracy, robustness, and effectiveness of our schemes. Concluding remarks are finally presented in Section \ref{section:conclusion}.

\section{Novel projection of the stationary hydrostatic  solutions} \label{section:projection}

This section presents a novel projection of the stationary hydrostatic  solutions, which will play 
a crucial role in designing our positivity-preserving WB CDG method.

\subsection{Notations}
Let us introduce some standard notations.
 {The spatial domain $\Omega$ 
is uniformly divided into
 $ \{ I_{j} := (x_{j-\frac{1}{2}}, x_{j+\frac{1}{2}}) \} $
with constant stepsize} $ \Delta x = x_{j+\frac{1}{2}} - x_{j-\frac{1}{2}} $. {If denoting} $x_{j} = \frac{1}{2}(x_{j-\frac{1}{2}}+x_{j+\frac{1}{2}})$, then $\{ I_{j+\frac{1}{2}} := (x_{j}, x_{j+1})\}$ forms a dual partition.
To approximate  the exact solution ${\bf U}(x,t)$
{in the CDG framework,
 two discrete function spaces are defined} associated with the primal mesh $\{I_{j}\}_{j}$ and the dual mesh $\{I_{j+\frac{1}{2}}\}_{j}$, respectively, as
\begin{equation*}
	\mathbb{V}_{h}^{C,k} = \left\{v \in L^2(\Omega): v|_{I_{j}} \in \mathbb{P}^{k}(I_{j}) ~\forall j \right\}, \ \
	\mathbb{V}_{h}^{D,k} = \left\{w \in L^2(\Omega): w|_{I_{j+\frac{1}{2}}} \in \mathbb{P}^{k}(I_{j+\frac{1}{2}}) ~\forall j \right\} \,,
\end{equation*}
where $\mathbb{P}^{k}(I_{j})$ and $\mathbb{P}^{k}(I_{j+\frac{1}{2}})$ denote the space of {the} polynomials with degree at most $k$ on the cells $I_{j}$ and $I_{j+\frac{1}{2}}$, respectively.

\subsection{Motivation of the novel projection}

Assume that the target equilibrium state is known and denoted by
$ \big\{ \rho^s(x), u^s(x), p^s(x) \big\} $. This yields
\begin{equation*}
	u^s(x) = 0 \,, \quad (p^s(x))_{x} = - \rho^s(x) \phi_{x} \,.
\end{equation*}
Let $ {\bf U}^s(x) = \big( \rho^s(x), 0, p^s(x)/(\gamma-1) \big)^\top $.
The standard CDG method is generally not WB for {the} stationary hydrostatic  solutions of the Euler system \eqref{eq:system-1d}, and some modifications are required.
As the WB (non-central) DG and finite volume schemes in \cite{wu2021uniformly,Xing2016DGWB,Xing2016FVWB},
our WB CDG methods proposed in the Section \ref{section:ppwb1d} are also achieved by suitable modifications based on the projection of the target
stationary hydrostatic  solution.
However, the standard $L^2$-projection is not a good choice in the present CDG framework, as it may lose
the conservative property and affect the positivity-preserving property of the CDG schemes, which will be clarified in Remarks \ref{rem:L2-00} and \ref{eq:L2-aaa}. In order to maintain the WB, positivity-preserving and conservative properties at the same time,
we need to seek a new projection, which ensures that the projected stationary solutions
${\bf U}_{h}^{s,C} \in [\mathbb{V}_{h}^{C,k}]^3 $ and 
${\bf U}_{h}^{s,D} \in [\mathbb{V}_{h}^{D,k}]^3$,
have the same cell averages, namely,
\begin{equation}\label{eq:projected-conditions}
	\int_{I_{j}} {\bf U}_{h}^{s,C} {\rm d}x = \int_{I_{j}} {\bf U}_{h}^{s,D} {\rm d}x \,,  \quad
\int_{I_{j+\frac{1}{2}}} {\bf U}_{h}^{s,D} {\rm d}x = \int_{I_{j+\frac{1}{2}}} {\bf U}_{h}^{s,C} {\rm d}x,  \quad  \forall j.
\end{equation}
The projected stationary solutions ${\bf U}_{h}^{s,C}$ and ${\bf U}_{h}^{s,D}$ will be used to modify the numerical dissipation term and {discretized} source term for {the} WB property, and the desired condition \eqref{eq:projected-conditions} will be important in guaranteeing the provably positivity-preserving and conservative properties;  see more details in Section \ref{section:dissipation} and Section \ref{section:positivity-preserving}.

\begin{remark}
Note that, for the shallow-water equations with (non-flat) bottom topography function $b(x)$,
a similar projection of $b(x)$ is also required in designing {the}
positivity-preserving WB CDG methods in \cite{li2017CDGWB},
where the projection of $b(x)$ is defined by
by solving a constrained minimization problem
\begin{equation}\label{eq:LiProj}
	\begin{aligned}
	& \min_{b_{h}^{C} \in \mathbb{V}_{h}^{C,k}, ~b_{h}^{D} \in \mathbb{V}_{h}^{D,k}} \qquad
	\lVert b_{h}^{C} - b \lVert_{L^2(\Omega)}  + \lVert b_{h}^{D} - b \lVert_{L^2(\Omega)} \,,  \\
	& {\rm subject ~ to}  ~~\int_{I_{j}} b_{h}^{C} {\rm d}x = \int_{I_{j}} b_{h}^{D} {\rm d}x \,, \quad
	\int_{I_{j+\frac{1}{2}}} b_{h}^{D} {\rm d}x = \int_{I_{j+\frac{1}{2}}} b_{h}^{C} {\rm d}x.
\end{aligned}
\end{equation}
Although there is no rigorous proof, numerical tests in \cite{li2017CDGWB} indicate that the above projected approximations $b_{h}^{C}$ and  $b_{h}^{D}$ have the same high-order accuracy as the standard $L^2$-projection, provided that $b(x)$ is a smooth function.
The notion of \eqref{eq:LiProj} can be extended to our case to construct a projection satisfies \eqref{eq:projected-conditions}. However, this projection is not easy to implement due to the involved optimization problem, and its accuracy has not yet been theoretically justified.
\end{remark}

We find a new projection, which satisfies \eqref{eq:projected-conditions} and is more efficient than \eqref{eq:LiProj}.
Our new projection can be {\em explicitly calculated} without solving
any (constrained) optimization problems,
and { thus can be easily implemented}. Moreover, we can rigorously prove that
our new projection also has the same order of accuracy as the standard $L^2$-projection.

\subsection{Definition of the novel projection}

We first define our new projection operator on the primal mesh. Let
 $ \mathcal{P}_{h}^{C} : L^2(\Omega) \longrightarrow \mathbb{V}_{h}^{C,k} $ denote an operator, which maps
 any function $f(x) \in L^2(\Omega)$ onto the piecewise polynomial space $\mathbb{V}_{h}^{C,k}$, and satisfies
\begin{align} \label{eq:projection-primal}
	\int_{I_{j}^{-}}  \mathcal{P}_{h}^{C} (f) {\rm d}x = \int_{I_{j}^{-}} f {\rm d}x \,, \quad
	\int_{I_{j}^{+}}  \mathcal{P}_{h}^{C} (f) {\rm d}x = \int_{I_{j}^{+}} f {\rm d}x, \quad  \forall j \,,
\end{align}
with $I_{j}^{-} = ( x_{j-\frac{1}{2}}, x_{j} )$, $I_{j}^{+} = ( x_{j}, x_{j+\frac{1}{2}} )$.
Note that with only the condition \eqref{eq:projection-primal}, the mapping $\mathcal{P}_{h}^{C} $ is not uniquely determined.
We uniquely define the mapping $\mathcal{P}_{h}^{C} $ by
\begin{align} \label{eq:projection-special}
	\int_{I_{j}^{-}}\mathcal{P}_{h}^{C} (f) {\rm d}x = \int_{I_{j}^{-}} f {\rm d}x \,,  \quad
	\int_{I_{j}} \mathcal{P}_{h}^{C} (f) v {\rm d}x  = \int_{I_{j}} f v {\rm d}x,  \quad
	\forall  ~ v \in  \mathbb{P}^{k}(I_{j}) \backslash  \mathrm{span}\{{\Phi}_{1}\},
\end{align}
where {$\mathbb{P}^{k}(I_{j}) \backslash\mathrm{span}\{{\Phi}_{1}\} := \mathrm{span} \{ {\Phi}_{0}\,, {\Phi}_{2}\,, {\Phi}_{3}\,, \cdots \,, {\Phi}_{k} \}$, and $\{{\Phi}_{i} \}_{i=0}^{k}$ denotes an orthogonal basis $\mathbb{P}^{k}(I_{j})$ and is taken as
the scaled Legendre polynomials
\begin{align}\label{eq:Legendre}
	{\Phi}_{0}(\xi) = 1 \,, 
\ \ {\Phi}_{1}(\xi) = \xi \,,  
\ \ {\Phi}_{2}(\xi) = \xi^2 - \frac{1}{3} \,,
\ \ {\Phi}_{3}(\xi) = \xi^3 - \frac{3}{5}\xi \,,  
\ \ \dots \,,
\end{align}
with $\xi = {2(x-x_{j})}/{\Delta x} $}. Take $v = \Phi_{0} = 1$ in (\ref{eq:projection-special}), one has the equality $\int_{I_{j}} \mathcal{P}_{h}^{C} (f) {\rm d}x  = \int_{I_{j}} f  {\rm d}x $, which implies that the  projection $\mathcal{P}_{h}^{C}$ {satisfies} the desired condition (\ref{eq:projection-primal}).
We will {show} in Lemma \ref{lemma:operator-polynomial} that
the mapping $\mathcal{P}_{h}^{C} $ defined by \eqref{eq:projection-special} is
a projection operator from $L^2(\Omega)$ onto the space $\mathbb{V}_{h}^{C,k}$.

Similarly, we can define the projection $\mathcal{P}_{h}^{D} : L^2(\Omega) \longrightarrow \mathbb{V}_{h}^{D,k} $ on the dual mesh {by
\begin{align*} 
	\int_{I_{j}^{+}}\mathcal{P}_{h}^{D} (f) {\rm d}x = \int_{I_{j}^{+}} f {\rm d}x \,,  \quad
	\int_{I_{j+\frac12}} \mathcal{P}_{h}^{D} (f) v {\rm d}x  = \int_{I_{j+\frac12}} f v {\rm d}x{,}  \quad
	\forall  ~ v \in  \mathbb{P}^{k}(I_{j+\frac12}) \backslash  \mathrm{span}\{{\Phi}_{1}\}{,}
\end{align*}
where ${\Phi}_{i}(\xi)$, $i=0,1,\cdots,k$,  defined in \eqref{eq:Legendre} are with $\xi={2(x-x_{j+\frac12})}/{\Delta x}$.}
It can be shown that $\mathcal{P}_{h}^{D}$ satisfies
\begin{align}\label{eq:projection-dual}
	\int_{I_{j}^+} \mathcal{P}_{h}^{D}(f) {\rm d}x  = \int_{I_{j}^+} f  {\rm d}x \,,  \quad
	\int_{I_{j+1}^-} \mathcal{P}_{h}^{D}(f) {\rm d}x  = \int_{I_{j+1}^-} f  {\rm d}x{,}  \quad \forall j \,.
\end{align}

Combining  {(\ref{eq:projection-primal}) with (\ref{eq:projection-dual}) gives}
\begin{align}\label{eq:ave:ID}
	\int_{I_{j}} \mathcal{P}_{h}^{C}(f) {\rm d}x =
	\int_{I_{j}} \mathcal{P}_{h}^{D}(f) {\rm d}x \,, \quad
	\int_{I_{j+\frac{1}{2}}} \mathcal{P}_{h}^{D}(f) {\rm d}x =
	\int_{I_{j+\frac{1}{2}}} \mathcal{P}_{h}^{C}(f) {\rm d}x{,} \quad \forall j \,.
\end{align}
If assuming that ${\bf U}_{h}^{s,C}$ and ${\bf U}_{h}^{s,D}$ denote the new projections of
 each component of the stationary solutions ${\bf U}^s(x)$ onto the space $\mathbb{V}_{h}^{C,k}$ and $\mathbb{V}_{h}^{D,k}$, respectively, then it follows from \eqref{eq:ave:ID} that
\begin{align}\label{eq:average-identity}
\int_{I_{j}} {\bf U}_{h}^{s,C} {\rm d}x = \int_{I_{j}} {\bf U}^{s} {\rm d}x =
\int_{I_{j}} {\bf U}_{h}^{s,D} {\rm d}x \,, \
\int_{I_{j+\frac{1}{2}}} {\bf U}_{h}^{s,D} {\rm d}x = \int_{I_{j+\frac{1}{2}}} {\bf U}^{s} {\rm d}x =
\int_{I_{j+\frac{1}{2}}} {\bf U}_{h}^{s,C} {\rm d}x{,} \ \forall j \,.
\end{align}
which yields \eqref{eq:projected-conditions}.
The identity (\ref{eq:average-identity}) plays an important role in the positivity-preserving analyses in Section \ref{section:positivity-preserving}.

\subsection{Properties of the novel projection}
For convenience, we will mainly discuss the properties of the operator $\mathcal{P}_{h}^{C}$ on the primal mesh in detail, as those of $\mathcal{P}_{h}^{D}$ on the dual mesh are very similar.

Note that, on each cell $I_{j}$, the projected solution $\mathcal{P}_{h}^{C} (f)$ is a  piecewise polynomial and can be  explicitly written as
\[
f_{h}^{C}(x) = \sum_{i=0}^{k} N_{i}(f) {\Phi}_{i}(x) \,, \quad  x \in  I_{j}{,}
\]
with the coefficients $\{ N_{i}(f) \}_{i=0}^k$ given by
\begin{align*}
	N_{i}(f) = \frac{ \int_{I_{j}} f {\Phi}_{i} {\rm d}x }{\int_{I_{j}} {\Phi}_{i}^{2} {\rm d}x} \,, \quad  i \neq 1 \,,  \quad
	N_{1}(f) =  \frac{ \int_{ I_{j}^{-} } \big( f - \sum\limits_{i\neq 1} N_{i}(f) {\Phi}_{i} \big) {\rm d}x  }{ \int_{ I_{j}^{-} } {\Phi}_{1} {\rm d}x } \,,
\end{align*}
where $\int_{ I_{j}^{-} } {\Phi}_{1} dx = -\dfrac{\Delta x}{4} \neq 0 $. It is easy to show that the operator $\mathcal{P}_{h}^{C}$ satisfies the following properties.

\begin{lemma}\label{lemma:operator-polynomial}
	For any function $ g(x) \in \mathbb{V}_{h}^{C,k}$,  one has $ \mathcal{P}_{h}^{C}(g)(x) = g(x)$, and consequently $ \mathcal{P}_{h}^{C}$
	is a projection operator.
\end{lemma}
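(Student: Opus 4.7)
The plan is to exploit the explicit formulas for the coefficients $N_i(f)$ already derived in the text, and simply verify that when the input is already a piecewise polynomial in $\mathbb{V}_{h}^{C,k}$, the formulas reproduce its Legendre coefficients exactly.

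First I would observe that the conditions \eqref{eq:projection-special} uniquely determine $\mathcal{P}_{h}^{C}(f)\big|_{I_j} \in \mathbb{P}^{k}(I_j)$: there are $k$ coefficients $N_i(f)$ with $i\in\{0,2,3,\ldots,k\}$ pinned down by orthogonality against $\{\Phi_0,\Phi_2,\ldots,\Phi_k\}$, and the remaining coefficient $N_1(f)$ is pinned down by the integral condition on $I_j^{-}$, since $\int_{I_j^{-}}\Phi_1\,dx = -\Delta x/4 \neq 0$. Thus the operator $\mathcal{P}_{h}^{C}$ is well defined, and its explicit form is exactly the one displayed just before the lemma.

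Next I would take an arbitrary $g \in \mathbb{V}_{h}^{C,k}$ and expand it locally in the Legendre basis as $g(x)\big|_{I_j} = \sum_{i=0}^{k} c_i \Phi_i(x)$. For $i \neq 1$, orthogonality of the Legendre basis gives
\begin{equation*}
    N_i(g) \;=\; \frac{\int_{I_j} g\, \Phi_i\, dx}{\int_{I_j} \Phi_i^{2}\, dx} \;=\; \frac{c_i \int_{I_j} \Phi_i^{2}\, dx}{\int_{I_j} \Phi_i^{2}\, dx} \;=\; c_i.
\end{equation*}
Substituting this back into the explicit formula for $N_1$, the sum $\sum_{i\neq 1} N_i(g)\Phi_i$ equals $g - c_1 \Phi_1$ pointwise, so
\begin{equation*}
    N_1(g) \;=\; \frac{\int_{I_j^{-}} \bigl( g - \sum_{i\neq 1} c_i \Phi_i \bigr)\, dx}{\int_{I_j^{-}} \Phi_1\, dx} \;=\; \frac{c_1 \int_{I_j^{-}} \Phi_1\, dx}{\int_{I_j^{-}} \Phi_1\, dx} \;=\; c_1,
\end{equation*}
where the denominator is nonzero as noted above. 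Hence $\mathcal{P}_{h}^{C}(g)\big|_{I_j} = \sum_{i=0}^{k} c_i \Phi_i = g\big|_{I_j}$ on every primal cell, which yields $\mathcal{P}_{h}^{C}(g) = g$. Idempotence $\mathcal{P}_{h}^{C}\circ \mathcal{P}_{h}^{C} = \mathcal{P}_{h}^{C}$ is then immediate because $\mathcal{P}_{h}^{C}(f) \in \mathbb{V}_{h}^{C,k}$ for every $f\in L^2(\Omega)$, so $\mathcal{P}_{h}^{C}$ is a bona fide projection.

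There is really no hard step here; the only thing to be careful about is the bookkeeping for the single non-orthogonal mode $\Phi_1$, for which one must check that the integral condition on $I_j^{-}$ (rather than orthogonality on $I_j$) correctly recovers the Legendre coefficient $c_1$. This is where the nonvanishing of $\int_{I_j^{-}} \Phi_1\, dx$ enters in an essential way; the analogous argument on the dual mesh for $\mathcal{P}_{h}^{D}$ is identical after replacing $I_j, I_j^{-}$ by $I_{j+\frac12}, I_{j}^{+}$.
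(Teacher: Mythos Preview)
Your proof is correct and follows essentially the same approach as the paper: expand $g$ in the Legendre basis on each cell, use orthogonality to recover the coefficients $N_i(g)=c_i$ for $i\neq 1$, and then use the half-cell integral condition (with $\int_{I_j^-}\Phi_1\,dx\neq 0$) to recover $N_1(g)=c_1$. The paper additionally records linearity of $\mathcal{P}_h^C$ explicitly before concluding idempotence, but this is immediate from the explicit coefficient formulas and your argument already contains it implicitly.
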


\begin{proof}
	For each $j$, on {the} cell $I_{j}$ the function $g(x) \in \mathbb{V}_{h}^{C,k}$ can be expressed as a linear combination of the orthogonal basis $ \{{\Phi}_{i} \}_{i=0}^{k} $, i.e.
	\[
	g(x) = \sum_{i=0}^{k} \alpha_{i} {\Phi}_{i}(x) \,, \quad  x \in I_{j} \,.
	\]
	Noting that
	\[
	N_{i}(g) =  \alpha_{i} \,,  \quad 0 \leq i \leq k \,,
	\]
	we obtain
	\begin{align*}
		\mathcal{P}_{h}^{C}(g)(x) = \sum_{i=0}^{k} N_{i}(g) {\Phi}_{i}(x) = \sum_{i=0}^{k} \alpha_{i} {\Phi}_{i}(x) = g(x) \,,
		\quad x \in I_{j} \,.
	\end{align*}
	For any functions $f_{1},f_{2} \in L^{2}(\Omega)$ and any real numbers $\alpha_{1},\alpha_{2}$, one has
	\[
	N_{i} (\alpha_{1}f_{1} + \alpha_{2}f_{2}) = \alpha_{1} N_{i} (f_{1}) + \alpha_{2} N_{i} (f_{2}){,}
	\qquad  0 \leq i \leq k \,,
	\]
	It follows that
	\begin{align*}
	\mathcal{P}_{h}^{C} (\alpha_{1}f_{1} + \alpha_{2}f_{2}) = \alpha_{1} \mathcal{P}_{h}^{C} (f_{1}) +
	\alpha_{2} \mathcal{P}_{h}^{C} (f_{2}) \,,
	\end{align*}
	which implies $\mathcal{P}_{h}^{C}$ is a linear operator.
	Furthermore, for any $f \in L^2(\Omega) $, we have $\mathcal{P}_{h}^{C}(f) \in \mathbb{V}_{h}^{C,k}$
	and thus $\mathcal{P}_{h}^{C} \big(\mathcal{P}_{h}^{C}(f)\big) = \mathcal{P}_{h}^{C}(f)$. This implies
	 $ \mathcal{P}_{h}^{C}$ is a projection operator. The proof is completed.
\end{proof}

\begin{lemma}\label{lemma:operator-bound}
	The projection operator $\mathcal{P}_{h}^{C}$ is bounded. 
\end{lemma}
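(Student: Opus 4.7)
The plan is to work cell-by-cell and exploit the explicit coefficient formulas
\[
N_i(f) = \frac{\int_{I_j} f \Phi_i\, dx}{\int_{I_j} \Phi_i^2\, dx}\ (i\neq 1),
\qquad
N_1(f) = \frac{\int_{I_j^-}\!\big(f - \sum_{i\neq 1} N_i(f)\Phi_i\big) dx}{\int_{I_j^-}\Phi_1\, dx},
\]
given just before the statement. Linearity has already been verified in Lemma~\ref{lemma:operator-polynomial}, so it remains to exhibit a constant $C$, independent of the mesh size $\Delta x$, such that $\|\mathcal{P}_h^C(f)\|_{L^2(\Omega)} \le C\,\|f\|_{L^2(\Omega)}$. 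Since the bases $\{\Phi_i\}_{i=0}^k$ are orthogonal on $I_j$, on each cell one has the Parseval-type identity $\|\mathcal{P}_h^C(f)\|_{L^2(I_j)}^2 = \sum_{i=0}^k |N_i(f)|^2 \|\Phi_i\|_{L^2(I_j)}^2$, so it is enough to control each $|N_i(f)|\,\|\Phi_i\|_{L^2(I_j)}$ by $\|f\|_{L^2(I_j)}$ and then sum over $j$.

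For the indices $i\neq 1$, applying Cauchy--Schwarz to the numerator of $N_i(f)$ gives
\[
|N_i(f)|\,\|\Phi_i\|_{L^2(I_j)} \le \frac{\|f\|_{L^2(I_j)}\,\|\Phi_i\|_{L^2(I_j)}}{\|\Phi_i\|_{L^2(I_j)}^2}\cdot \|\Phi_i\|_{L^2(I_j)} = \|f\|_{L^2(I_j)},
\]
which is already the cell-wise estimate we need, with constant $1$.

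The case $i=1$ is the delicate one, because $N_1(f)$ couples back to the other $N_i(f)$. I would bound the numerator on $I_j^-$ using Cauchy--Schwarz as
\[
\bigg|\int_{I_j^-}\!\!\Big(f - \sum_{i\neq 1} N_i(f)\Phi_i\Big) dx\bigg|
\le \sqrt{\Delta x/2}\,\Big(\|f\|_{L^2(I_j^-)} + \sum_{i\neq 1}|N_i(f)|\,\|\Phi_i\|_{L^2(I_j^-)}\Big),
\]
use the already-known bounds $|N_i(f)|\,\|\Phi_i\|_{L^2(I_j^-)} \le |N_i(f)|\,\|\Phi_i\|_{L^2(I_j)} \le \|f\|_{L^2(I_j)}$ for $i\neq 1$, and exploit the explicit denominator $\int_{I_j^-}\Phi_1\,dx = -\Delta x/4$ to conclude $|N_1(f)| \le C_k \Delta x^{-1/2}\,\|f\|_{L^2(I_j)}$ for a purely combinatorial constant $C_k$ depending only on $k$. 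Since $\|\Phi_1\|_{L^2(I_j)}^2 = \Delta x/3$, multiplying yields $|N_1(f)|\,\|\Phi_1\|_{L^2(I_j)} \le C_k'\,\|f\|_{L^2(I_j)}$ with $C_k'$ independent of $\Delta x$.

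Combining the two cases gives $\|\mathcal{P}_h^C(f)\|_{L^2(I_j)} \le \tilde C_k \|f\|_{L^2(I_j)}$ on every primal cell, and summing over $j$ yields the desired bound on $\Omega$. The argument for $\mathcal{P}_h^D$ on the dual mesh is identical in structure. I expect the main obstacle to be purely book-keeping: carefully verifying that the various powers of $\Delta x$ cancel in the coupled estimate for $N_1(f)$, so that the operator bound is genuinely uniform in $h$. Aside from that the proof is a direct application of Cauchy--Schwarz to the explicit coefficient formulas.
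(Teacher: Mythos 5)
Your proposal is correct and follows essentially the same route as the paper: Cauchy--Schwarz for the coefficients with $i\neq 1$, then bounding the coupled coefficient $N_1(f)$ through the explicit denominator $\int_{I_j^-}\Phi_1\,dx=-\Delta x/4$ and the previously obtained bounds, with the powers of $\Delta x$ cancelling exactly as you anticipate (your $\sqrt{\Delta x/2}$ is precisely the paper's $\lVert\Phi_0\rVert_{L^2(I_j^-)}$, and your constant reproduces the paper's $M_1=\tfrac{2\sqrt6}{3}(k+1)$). The only cosmetic difference is that you invoke orthogonality (a Parseval identity) where the paper simply uses the triangle inequality over the basis expansion; this does not change the argument.
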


\begin{proof}
	Let us consider an arbitrary function $f \in L^{2}(I_{j})$. Applying the triangular inequality gives
	\begin{align*}
		\lVert \mathcal{P}_{h}^{C} (f) \lVert_{L^{2}(I_{j})}  \leq  \sum_{i=0}^{k} |N_{i}(f)| \lVert {\Phi}_{i} \lVert_{L^{2}(I_{j})} \,.
	\end{align*}
	If $ i \neq 1 $, one can derive that
	\[
	| N_{i}(f) | = \frac{ | \int_{I_{j}} f {\Phi}_{i} {\rm d}x | }{ \lVert {\Phi}_{i} \lVert_{L^{2}(I_{j})}^2 }
	\leq \frac{ \lVert f \lVert_{L^{2}(I_{j})} \lVert {\Phi}_{i} \lVert_{L^{2}(I_{j})}  }{ \lVert {\Phi}_{i} \lVert_{L^{2}(I_{j})}^2 }
	 = \frac{ \lVert f \lVert_{L^{2}(I_{j})} }{ \lVert {\Phi}_{i} \lVert_{L^{2}(I_{j})} } \,,
	\]
	which leads to
	\begin{align} \label{eq:operator-bound-first}
		|N_{i}(f)| \lVert {\Phi}_{i} \lVert_{L^{2}(I_{j})} \leq  M_{i} \lVert f \lVert_{L^{2}(I_{j})} \,, \quad  \mbox{with~~~~} M_{i} = 1 \,.
	\end{align}
	If $ i = 1 $, we have
	\begin{align} \notag
		|N_{1}(f)| &\leq \frac{ |\int_{ I_{j}^{-} } f {\rm d}x|  + \sum\limits_{i \neq 1} |N_{i}(f)| |\int_{ I_{j}^{-} } {\Phi}_{i} {\rm d}x| }
		{ |\int_{ I_{j}^{-} } {\Phi}_{1} {\rm d}x | }
		= \frac{ |\int_{ I_{j}^{-} } f {\rm d}x| }{ |\int_{ I_{j}^{-} } {\Phi}_{1} {\rm d}x | } + \sum_{i \neq 1}
		\frac{|\int_{ I_{j}^{-} } {\Phi}_{i} {\rm d}x| }{ |\int_{ I_{j}^{-} } {\Phi}_{1} {\rm d}x | } |N_{i}(f)| \,.
	\end{align}
	Note that $I_{j}^{-} \subseteq I_{j}$ and ${\Phi}_{0} = 1$, one has
	\[
	\left|\int_{ I_{j}^{-} } f {\rm d}x \right| = \left|\int_{ I_{j}^{-} } f {\Phi}_{0} {\rm d}x \right| \leq
	\lVert f \lVert_{L^{2}(I_{j})} \lVert {\Phi}_{0} \lVert_{L^{2}(I_{j}^-)} \,,  \quad
	\left|\int_{ I_{j}^{-} } {\Phi}_{i} {\rm d}x \right|  \leq
	\lVert {\Phi}_{i} \lVert_{L^{2}(I_{j})} \lVert {\Phi}_{0} \lVert_{L^{2}(I_{j}^-)} \,.
	\]
	It follows that
	\begin{align*}
		|N_{1}(f)| &\le \frac{  \lVert f \lVert_{L^{2}(I_{j})} \lVert {\Phi}_{0} \lVert_{L^{2}(I_{j}^-)}  }{ |\int_{ I_{j}^{-} } {\Phi}_{1} {\rm d}x | } + \sum_{i \neq 1}
		\frac{  \lVert {\Phi}_{0} \lVert_{L^{2}(I_{j}^-)} }{ |\int_{ I_{j}^{-} } {\Phi}_{1} {\rm d}x | }  \lVert {\Phi}_{i} \lVert_{L^{2}(I_{j})} |N_{i}(f)|
		\\
		&\leq  \frac{  \lVert f \lVert_{L^{2}(I_{j})} \lVert {\Phi}_{0} \lVert_{L^{2}(I_{j}^-)}  }{ |\int_{ I_{j}^{-} } {\Phi}_{1} {\rm d}x | } + \sum_{i \neq 1}
		\frac{  \lVert {\Phi}_{0} \lVert_{L^{2}(I_{j}^-)} }{ |\int_{ I_{j}^{-} } {\Phi}_{1} {\rm d}x | }  \lVert f \lVert_{L^{2}(I_{j})}
		\\
		&
		=(k+1) \frac{   \lVert {\Phi}_{0} \lVert_{L^{2}(I_{j}^-)} }
		{ |\int_{ I_{j}^{-} } {\Phi}_{1} {\rm d}x |}  \lVert f \lVert_{L^{2}(I_{j})} \,, \quad
	\end{align*}
where (\ref{eq:operator-bound-first}) has been used in the second inequality.
Therefore, we obtain
\begin{equation}\label{eq:operator-bound-second}
	|N_{1}(f)| \lVert {\Phi}_{1} \lVert_{L^{2}(I_{j})}   
\le  M_1  \lVert f \lVert_{L^{2}(I_{j})}
\end{equation}
	with
	$$
	M_{1} := (k+1) \times \frac{ \lVert {\Phi}_{1} \lVert_{L^{2}(I_{j})} \lVert {\Phi}_{0} \lVert_{L^{2}(I_{j}^-)} }
	{ |\int_{ I_{j}^{-} } {\Phi}_{1} {\rm d}x |} = \frac{2\sqrt{6}}{3} (k+1).
	$$
	{Combining  (\ref{eq:operator-bound-first}) with (\ref{eq:operator-bound-second}) yields}
	\begin{align*}
		\lVert \mathcal{P}_{h}^{C} (f) \lVert_{L^{2}(I_{j})}  \leq  M \lVert f \lVert_{L^{2}(I_{j})} \,,  \quad
		M = \sum_{i=0}^{k} M_{i} = k + \frac{2\sqrt{6}}{3} (k+1) \,.
	\end{align*}
	This finishes the proof.
\end{proof}

\begin{theorem} \label{theorem:operator-accuracy}
	For any function $f \in W_{2}^{k+1}(I_{j})$,  one has
	\begin{align} \notag
		\lVert f - \mathcal{P}_{h}^{C} (f) \lVert_{L^{2}(I_{j})} & \leq  C_{k+1} (\Delta x)^{k+1} |f|_{W_{2}^{k+1}(I_{j})} \,.
	\end{align}
	where $W_{2}^{k+1}(I_{j})$ is the standard Sobolev space, $|\cdot|_{W_{2}^{k+1}(I_{j})}$ is the Sobolev seminorm of order $k+1$, and $C_{k+1}$ is a constant only  depending on $k$.
\end{theorem}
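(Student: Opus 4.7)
The plan is to combine the two properties of $\mathcal{P}_h^C$ already established, namely the polynomial-reproducing property (Lemma \ref{lemma:operator-polynomial}) and the local $L^2$-boundedness (Lemma \ref{lemma:operator-bound}), with a standard Bramble--Hilbert / Deny--Lions style polynomial-approximation estimate. This is exactly the argument used to obtain optimal error bounds for the usual $L^2$-projection, so the novelty of $\mathcal{P}_h^C$ is absorbed into the two lemmas and the remaining steps are classical.

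First, I would introduce an arbitrary polynomial $q \in \mathbb{P}^k(I_j)$ and use Lemma \ref{lemma:operator-polynomial}, together with the linearity of $\mathcal{P}_h^C$ verified in the proof of that lemma, to write
\begin{equation*}
f - \mathcal{P}_h^C(f) = (f - q) - \mathcal{P}_h^C(f - q).
\end{equation*}
Taking the $L^2(I_j)$-norm, applying the triangle inequality, and invoking Lemma \ref{lemma:operator-bound} then gives
\begin{equation*}
\lVert f - \mathcal{P}_h^C(f) \rVert_{L^2(I_j)} \le (1+M)\,\lVert f - q\rVert_{L^2(I_j)},
\end{equation*}
where $M$ is the cell-wise constant from Lemma \ref{lemma:operator-bound}, which depends only on $k$.

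Next, since $q\in\mathbb{P}^k(I_j)$ is arbitrary, I would take the infimum over all such $q$ and appeal to the standard polynomial-approximation result in Sobolev spaces: on the reference interval there exists a polynomial of degree $\le k$ whose $L^2$-error is controlled by the $W_2^{k+1}$-seminorm, and by the usual affine scaling argument to the cell $I_j$ of length $\Delta x$ one has
\begin{equation*}
\inf_{q\in\mathbb{P}^k(I_j)} \lVert f - q\rVert_{L^2(I_j)} \le \tilde C_{k+1}\,(\Delta x)^{k+1}\,|f|_{W_2^{k+1}(I_j)},
\end{equation*}
with $\tilde C_{k+1}$ depending only on $k$. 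Combining the two inequalities yields the theorem with $C_{k+1} := (1+M)\tilde C_{k+1}$.

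I do not expect a real obstacle here; the only thing worth double-checking is that the linearity and the polynomial-reproducing properties were indeed proved in full generality in Lemma \ref{lemma:operator-polynomial} (they were), so that the decomposition $f - \mathcal{P}_h^C(f) = (I - \mathcal{P}_h^C)(f-q)$ is legitimate for every $q\in\mathbb{P}^k(I_j)$. The analogous bound for $\mathcal{P}_h^D$ follows by an identical argument on the dual cells $I_{j+\frac12}$, and no new constants arise since $|I_{j+\frac12}|=\Delta x$ as well.
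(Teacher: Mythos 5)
Your proposal is correct and follows essentially the same route as the paper's proof: both use the polynomial-reproduction and boundedness lemmas to get $\lVert f - \mathcal{P}_h^C(f)\rVert_{L^2(I_j)} \le (1+M)\lVert f-q\rVert_{L^2(I_j)}$ for arbitrary $q\in\mathbb{P}^k(I_j)$, then take the infimum and invoke the Bramble--Hilbert lemma with affine scaling. No gaps; the constant $C_{k+1}=(1+M)\tilde C_{k+1}$ matches the paper's.
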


\begin{proof}
	For any $g \in  \mathbb{P}^{k}(I_{j})$,
	applying Lemmas \ref{lemma:operator-polynomial} and \ref{lemma:operator-bound} and then using the triangular inequality give
	\begin{align} \notag
		\lVert f - \mathcal{P}_{h}^{C} (f) \lVert_{L^{2}(I_{j})}
		&\leq  \lVert f - g \lVert_{L^{2}(I_{j})}  +  \lVert g - \mathcal{P}_{h}^{C} (f) \lVert_{L^{2}(I_{j})}   \\  \notag
		& =  \lVert f - g \lVert_{L^{2}(I_{j})}  +  \lVert \mathcal{P}_{h}^{C} (f - g ) \lVert_{L^{2}(I_{j})}
		\quad ( \mathrm{Lemma ~ \ref{lemma:operator-polynomial}} )\\ \notag
		& \leq  \lVert f - g \lVert_{L^{2}(I_{j})}  +  M \lVert f - g  \lVert_{L^{2}(I_{j})}
		\quad ( \mathrm{Lemma ~ \ref{lemma:operator-bound}} )  \\
		& = ( 1 + M  )  \lVert f - g  \lVert_{L^{2}(I_{j})} \,.
	\end{align}
	Consequently,
	\begin{align} \notag
		\lVert f - \mathcal{P}_{h}^{C} (f) \lVert_{L^{2}(I_{j})} & \leq  ( 1 + M )
		\inf_{ g \in  \mathbb{P}^{k}(I_{j}) } \lVert f - g  \lVert_{L^{2}(I_{j})}   \\  \notag
		& \leq ( 1 + M ) \tilde{C}_{k+1} (\Delta x)^{k+1} |f|_{W_{2}^{k+1}(I_{j})}  \quad (\mathrm{Bramble-Hilbert}) \\ \notag
		& = C_{k+1} (\Delta x)^{k+1} |f|_{W_{2}^{k+1}(I_{j})} \,.
	\end{align}
	where have used the Bramble-Hilbert Lemma \cite{dekel2004bramble}, and $\tilde{C}_{k+1}$ is a constant only  depending on $k$.
\end{proof}

\begin{remark}
	Similarly, the operator $\mathcal{P}_{h}^{D}$ on the dual mesh is also linear, bounded, and a projection. Furthermore, for any function $f \in W_{2}^{k+1}(I_{j-\frac{1}{2}})$
	\begin{align} \notag
		\lVert f - \mathcal{P}_{h}^{D} (f) \lVert_{L^{2}(I_{j-\frac{1}{2}})}
		& \leq  C_{k+1} (\Delta x)^{k+1} |f|_{W_{2}^{k+1}(I_{j-\frac{1}{2}})} \,,
	\end{align}
	where $C_{k+1}$ is a constant only  depending on $k$.
\end{remark}

\begin{remark}
By similar arguments, one can prove that the errors $\lVert f - \mathcal{P}_{h}^{C} (f) \lVert_{L^{q}(\Omega)}$ and $\lVert f - \mathcal{P}_{h}^{D} (f) \lVert_{L^{q}(\Omega)}$ are  of order  ${\mathcal O} ( (\Delta x)^{k+1} )$ for a general $q$ ($1\le q \le +\infty$) and $f \in W^{k+1}_q(\Omega)$. The details are omitted here.
\end{remark}
\section{Positivity-preserving WB CDG schemes in one dimension} \label{section:ppwb1d}

To solve the {system} (\ref{eq:system-1d}),
{the} CDG schemes evolve two copies of numerical solutions, denoted by
${\bf U}_{h}^{C}(x,t)$ and ${\bf U}_{h}^{D}(x,t)$, on the primal and dual meshes, respectively.

\subsection{Review of the standard CDG method}

The semi-discrete formulations of the standard CDG method 
 are given as follows: for any test function $v \in \mathbb{V}_{h}^{C,k}$
and $w \in \mathbb{V}_{h}^{D,k}$, look for {the} numerical solutions 
 ${\bf U}_{h}^{C} \in [\mathbb{V}_{h}^{C,k}]^3$ and ${\bf U}_{h}^{D} \in 
[\mathbb{V}_{h}^{D,k}]^3$  satisfying
\begin{align}\notag
	\int_{I_{j}} & \frac{\partial {\bf U}_{h}^{C}}{\partial t} v {\rm d}x
	= \frac{1}{\tau_{max}}\int_{I_{j}} ( {\bf U}_{h}^{D} - {\bf U}_{h}^{C}) v {\rm d}x
	+ \int_{I_{j}} {\bf F}({\bf U}_{h}^{D}) v_{x} {\rm d}x    
\\ \label{eq:CDG-primal}
	& - \Big( {\bf F}({\bf U}_{h}^{D}(x_{j+\frac{1}{2}})) v(x_{j+\frac{1}{2}}^{-})
	-       {\bf F}({\bf U}_{h}^{D}(x_{j-\frac{1}{2}})) v(x_{j-\frac{1}{2}}^{+}) \Big)
	+  \int_{I_{j}} {\bf S}( {\bf U}_{h}^{D}, (\phi_{h}^{D})_{x} ) v {\rm d}x \,,
\end{align}
\begin{align}\notag
	\int_{I_{j+\frac{1}{2}}}& \frac{\partial {\bf U}_{h}^{D}}{\partial t} w {\rm d}x
	= \frac{1}{\tau_{max}}\int_{I_{j+\frac{1}{2}}} ({\bf U}_{h}^{C} - {\bf U}_{h}^{D}) w {\rm d}x
	+ \int_{I_{j+\frac{1}{2}}} {\bf F}({\bf U}_{h}^{C}) w_{x} {\rm d}x    \\\label{eq:CDG-dual}
	& - \Big( {\bf F}({\bf U}_{h}^{C}(x_{j+1}))w(x_{j+1}^{-})
	- {\bf F}({\bf U}_{h}^{C}(x_{j}))w(x_{j}^{+}) \Big)
	+ \int_{I_{j+\frac{1}{2}}} {\bf S}({\bf U}_{h}^{C}, (\phi_{h}^{C})_{x} ) w {\rm d}x  \,.
\end{align}
Here $\tau_{max} = \tau_{max}(t)$ is the maximal time step allowed by the CFL condition at time $t$, and $f(x^{\pm}) = \lim_{\epsilon \to 0+}f(x\pm\epsilon)$ denotes the limits at point $x$ taken from the left and right sides, respectively.
In general, the standard CDG method is neither positivity-preserving nor WB and does not maintain the stationary hydrostatic  solution (\ref{eq:steady-state-1d}).

\subsection{Outline of the positivity-preserving WB CDG schemes}

To define the positivity-preserving CDG schemes, we introduce the following two sets
\[
\overline{\mathbb{G}}_{h}^{C,k} := \Big \{ {\bf v} \in [\mathbb{V}_{h}^{C,k}]^3 : ~
\frac{1}{\Delta x} \int_{I_{j}} {\bf v} (x) {\rm d}x \in G \,, ~ \forall j  \Big\}  \,,
\]
\[
\mathbb{G}_{h}^{C,k} := \Big \{ {\bf v} \in \overline{\mathbb{G}}_{h}^{C,k} :
{\bf v}\big|_{I_{j}} (x) \in G, ~ \forall x \in \mathbb{S}_{j}   \,, ~ \forall j  \Big\}  \,,
\]
where $\mathbb{S}_{j}$ denotes the set of some critical points in $I_{j}$ which will be specified in \eqref{eq:sj-primal}.
Similarly, we can define the sets $\overline{\mathbb{G}}_{h}^{D,k}$ and $\mathbb{G}_{h}^{D,k}$ on the
dual mesh as
\[
\overline{\mathbb{G}}_{h}^{D,k} := \Big \{ {\bf w} \in [\mathbb{V}_{h}^{D,k}]^3 : ~
\frac{1}{\Delta x} \int_{I_{j+\frac{1}{2}}} {\bf w} (x) {\rm d}x \in G \,, ~ \forall j  \Big\}  \,,
\]
\[
\mathbb{G}_{h}^{D,k} := \Big \{ {\bf w} \in \overline{\mathbb{G}}_{h}^{D,k} :
{\bf w}|_{I_{j+\frac{1}{2}}} (x) \in G, ~ \forall x \in \mathbb{S}_{j+\frac{1}{2}}   \,, ~ \forall j  \Big\}  \,,
\]
where $\mathbb{S}_{j+\frac12}$ denotes the set of some critical points in $I_{j+\frac12}$ which will be specified in \eqref{eq:sj-dual}.

We will describe in Section \ref{section:spatial-discretization}
a suitable CDG spatial discretization of the Euler system \eqref{eq:system-1d}, and the resulting
semi-discrete CDG schemes can be written in the ODE form as follows
\begin{equation}\label{eq:ODE-system}
	\frac{\mathrm{d} {\bf U}_{h}^C }{\mathrm{d} t} = {\bf L}_{h}^C ({\bf U}_{h}^C, {\bf U}_{h}^D) \,, \quad
	\frac{\mathrm{d} {\bf U}_{h}^D }{\mathrm{d} t} = {\bf L}_{h}^D ({\bf U}_{h}^D, {\bf U}_{h}^C) \,,
\end{equation}
where ${\bf L}_{h}^C$ and ${\bf L}_{h}^D$ are respectively spatial discretization operators on the primal and dual meshes obtained from suitable modifications to the standard CDG discretization.

\begin{definition}
	Suppose the initial data satisfy ${\bf U}_{h}^{C}(x,0) = {\bf U}_{h}^{s,C}$, ${\bf U}_{h}^{D}(x,0) = {\bf U}_{h}^{s,D}$,
	a CDG scheme is defined to be WB if the flux and source term approximations balance each other,
	namely $ {\bf L}_{h}^C ( {\bf U}_{h}^{s,C}, {\bf U}_{h}^{s,D} )  = {\bf 0} $,  ${\bf L}_{h}^D ( {\bf U}_{h}^{s,D}, {\bf U}_{h}^{s,C} )  = {\bf 0} $.
\end{definition}

\begin{definition}
	A CDG scheme is defined to be positivity-preserving if its numerical solutions ${\bf U}_{h}^{C}$,
	${\bf U}_{h}^{D}$ stay in sets $\mathbb{G}_{h}^{C,k}$ and $\mathbb{G}_{h}^{D,k}$, respectively. For clarity,
	if a CDG scheme preserves the numerical solutions in set $\overline{\mathbb{G}}_{h}^{C,k}$ and $\overline{\mathbb{G}}_{h}^{D,k}$, then we say {that} it satisfies a weak positivity-preserving property.
\end{definition}

We aim at designing {the} high-order accurate CDG schemes that satisfy the  WB and
positivity-preserving properties simultaneously. This goal will be achieved by following three steps:
\begin{itemize}
	\item  First,  we seek spatial discretization operators  ${\bf L}_{h}^C({\bf U}_{h}^C, {\bf U}_{h}^D)$ and
	${\bf L}_{h}^D({\bf U}_{h}^D, {\bf U}_{h}^C)$ satisfying both the WB property:
	\begin{equation}\label{eq:wb-property}
		{\bf L}_{h}^C ( {\bf U}_{h}^{s,C}, {\bf U}_{h}^{s,D} )  = 0 \,, \quad
		{\bf L}_{h}^D ( {\bf U}_{h}^{s,D}, {\bf U}_{h}^{s,C} )  = 0 \,,
	\end{equation}
	and the weak positivity-preserving property: if ${\bf U}_{h}^C \in \mathbb{G}_{h}^{C,k}$,
	${\bf U}_{h}^D \in \mathbb{G}_{h}^{D,k}$, then
	\begin{align}\label{eq:pp-property}
		{\bf U}_{h}^C  + \Delta t {\bf L}_{h}^C ({\bf U}_{h}^C, {\bf U}_{h}^D) \in  \overline{\mathbb{G}}_{h}^{C,k} \,,\quad
		{\bf U}_{h}^D  + \Delta t {\bf L}_{h}^D ({\bf U}_{h}^D, {\bf U}_{h}^C) \in  \overline{\mathbb{G}}_{h}^{D,k} \,,
	\end{align}
	under some CFL-type condition on time {stepsize} $\Delta t$.
	\item Second, we further discretize the ODE system (\ref{eq:ODE-system}) in time using a
	strong-stability-preserving (SSP) explicit Runge-Kutta method \cite{GottliebShuTadmor2001}.
	\item Finally, a local scaling positivity-preserving limiting procedure, which will be introduced in Section \ref{section:limiter},
	is applied to the intermediate solutions of the Runge-Kutta discretization. This procedure
	corresponds to two operators ${\Pi}_{h}^C : \overline{\mathbb{G}}_{h}^{C,k} \longrightarrow \mathbb{G}_{h}^{C,k}$
	and ${\Pi}_{h}^D : \overline{\mathbb{G}}_{h}^{D,k} \longrightarrow \mathbb{G}_{h}^{D,k}$, which satisfy
	the  conservative property
	\[
	\frac{1}{\Delta x} \int_{I_{j}} {\Pi}_{h}^C ({\bf v}) dx
	= \frac{1}{\Delta x} \int_{I_{j}} {\bf v} {\rm d}x \quad
	\forall j\,, ~ \forall {\bf v} \in \overline{\mathbb{G}}_{h}^{C,k}\,,
	\]
	\[
	\frac{1}{\Delta x} \int_{I_{j+\frac{1}{2}}} {\Pi}_{h}^D ({\bf w}) dx
	= \frac{1}{\Delta x} \int_{I_{j+\frac{1}{2}}} {\bf w} {\rm d}x \quad
	\forall j\,, ~ \forall {\bf w} \in \overline{\mathbb{G}}_{h}^{D,k}\,.
	\]
	{For the first order CDG scheme $(k = 0)$, both ${\Pi}_{h}^C$ and ${\Pi}_{h}^D$ become the identity operators, so that
	the positivity-preserving  limiting procedure is only operated}  for {the} high-order CDG schemes with $k \geq 1${.}

\end{itemize}

Let ${\bf U}_{h}^{C,n}$ and ${\bf U}_{h}^{D,n}$ denote the numerical solution at time $t = t_n$. The resulting fully
discrete positivity-preserving  WB CDG method, with the third-order accurate SSP Runge-Kutta time discretization as example, is then given as follows.
\begin{itemize}
	\item Set ${\bf U}_{h}^{C,0} = {\Pi}_{h}^C \big[ {\bf U}_{h}^{C}(x,0) \big]$ and ${\bf U}_{h}^{D,0} = {\Pi}_{h}^D \big[ {\bf U}_{h}^{D}(x,0) \big]$, where ${\bf U}_{h}^{C}(x,0)$ and ${\bf U}_{h}^{D}(x,0)$ denote the
	novel projections of {the} initial data ${\bf U}(x,0)$ onto the space
	$[\mathbb{V}_{h}^{C,k}]^3$ and $[\mathbb{V}_{h}^{D,k}]^3$, respectively.
	\item For $n = 0,\cdots, N_t - 1$, compute ${\bf U}_{h}^{C,n+1}$ and ${\bf U}_{h}^{D,n+1}$ as follows:
	\begin{description}
		\item[(i)] Compute the intermediate solutions ${\bf U}_{h}^{C,(1)}$ and ${\bf U}_{h}^{D,(1)}$ via
		\[
		{\bf U}_{h}^{C,(1)} = {\Pi}_{h}^C \big[ {\bf U}_{h}^{C,n}
		+ \Delta t_n {\bf L}_{h}^C ({\bf U}_{h}^{C,n}, {\bf U}_{h}^{D,n}) \big] \,,
		\]
		\[
		{\bf U}_{h}^{D,(1)} = {\Pi}_{h}^D \big[ {\bf U}_{h}^{D,n}
		+ \Delta t_n {\bf L}_{h}^D ({\bf U}_{h}^{D,n}, {\bf U}_{h}^{C,n}) \big] \,.
		\]
		\item[(ii)] Compute the intermediate solutions ${\bf U}_{h}^{C,(2)}$ and ${\bf U}_{h}^{D,(2)}$ via
		\[
		{\bf U}_{h}^{C,(2)} = {\Pi}_{h}^C \bigg[ \frac{3}{4} {\bf U}_{h}^{C,n} + \frac{1}{4} \bigg( {\bf U}_{h}^{C,(1)}
		+ \Delta t_n {\bf L}_{h}^C ({\bf U}_{h}^{C,(1)}, {\bf U}_{h}^{D,(1)}) \bigg) \bigg] \,,
		\]
		\[
		{\bf U}_{h}^{D,(2)} = {\Pi}_{h}^D \bigg[ \frac{3}{4} {\bf U}_{h}^{D,n} + \frac{1}{4} \bigg( {\bf U}_{h}^{D,(1)}
		+ \Delta t_n {\bf L}_{h}^D ({\bf U}_{h}^{D,(1)}, {\bf U}_{h}^{C,(1)}) \bigg) \bigg] \,.
		\]
		\item[(iii)] Compute ${\bf U}_{h}^{C,n+1}$ and ${\bf U}_{h}^{D,n+1}$ via
		\[
		{\bf U}_{h}^{C,n+1} = {\Pi}_{h}^C \bigg[ \frac{1}{3} {\bf U}_{h}^{C,n} + \frac{2}{3} \bigg( {\bf U}_{h}^{C,(2)}
		+ \Delta t_n {\bf L}_{h}^C ({\bf U}_{h}^{C,(2)}, {\bf U}_{h}^{D,(2)}) \bigg) \bigg] \,,
		\]
		\[
		{\bf U}_{h}^{D,n+1} = {\Pi}_{h}^D \bigg[ \frac{1}{3} {\bf U}_{h}^{D,n} + \frac{2}{3} \bigg( {\bf U}_{h}^{D,(2)}
		+ \Delta t_n {\bf L}_{h}^D ({\bf U}_{h}^{D,(2)}, {\bf U}_{h}^{C,(2)}) \bigg) \bigg] \,,
		\]
	\end{description}
	where the SSP Runge-Kutta method has been written into a convex combination of the forward Euler method.
	In the following subsections, we will describe in detail the operators ${\bf L}_{h}^C$, ${\bf L}_{h}^D$ and
	${\Pi}_{h}^C$, ${\Pi}_{h}^D${, and  we shall state} the WB property (\ref{eq:wb-property})
	of our modified CDG discretization in Theorem \ref{theorem:wb-property} and the weak positivity-preserving
	property (\ref{eq:pp-property}) in Theorem \ref{theorem:pp-property}.
\end{itemize}

\subsection{Spatial discretization operators ${\bf L}_{h}^C$ and ${\bf L}_{h}^D$ } \label{section:spatial-discretization}

For convenience, we will mainly present the CDG spatial discretization on the primal mesh in detail, as that on the dual mesh is very similar.

\subsubsection{WB dissipation term} \label{section:dissipation}

As well-known,
the numerical dissipation term
\[
{\rm d}_{j}^{C} ( {\bf U}_{h}^{C}, {\bf U}_{h}^{D},v) = \frac{1}{\tau_{max}}\int_{I_{j}}
({\bf U}_{h}^{D} - {\bf U}_{h}^{C}) v {\rm d}x \,,  \quad v \in \mathbb{V}_{h}^{C,k} \,,
\]
in the standard CDG method (\ref{eq:CDG-primal}) is essential for {the} numerical stability \cite{Liu2008CDG}.
However, it would destroy the WB property at the steady state. To address this issue, we propose to modify it into
\begin{equation}\label{eq:wb-dissipation}
	\tilde{{\rm d}}_{j}^{C} ({\bf U}_{h}^{C}, {\bf U}_{h}^{D},v) =
	\frac{1}{\tau_{max}} \int_{I_{j}} ({\bf U}_{h}^{D}   - {\bf U}_{h}^{C}) v {\rm d}x +
	\frac{1}{\tau_{max}} \int_{I_{j}} ({\bf U}_{h}^{s,C} - {\bf U}_{h}^{s,D}) v {\rm d}x  \,,
\end{equation}
so that
\begin{equation*}
	\tilde{{\rm d}}_{j}^{C} ({\bf U}_{h}^{s,C}, {\bf U}_{h}^{s,D},v) =
	\frac{1}{\tau_{max}} \int_{I_{j}} ({\bf U}_{h}^{s,D} - {\bf U}_{h}^{s,C}) v {\rm d}x +
	\frac{1}{\tau_{max}} \int_{I_{j}} ({\bf U}_{h}^{s,C} - {\bf U}_{h}^{s,D} ) v {\rm d}x = 0 \,.
\end{equation*}

\begin{remark}\label{rem:L2-00}
Thanks to the key property \eqref{eq:average-identity} of our novel projection, it holds
$$
\tilde{{\rm d}}_{j}^{C} ({\bf U}_{h}^{C}, {\bf U}_{h}^{D}, 1) = {\rm d}_{j}^{C} ( {\bf U}_{h}^{C}, {\bf U}_{h}^{D},1),
$$
which implies that the modified numerical dissipation term \eqref{eq:wb-dissipation} would not destroy
the conservative property of our schemes. It is worth noting that using the standard $L^2$-projection for ${\bf U}_{h}^{s,C} $ and $ {\bf U}_{h}^{s,D}$ does not lead to this desirable feature.
\end{remark}

In addition to the above-mentioned advantages of the modified numerical dissipation term \eqref{eq:wb-dissipation}, we can also prove that our modification of dissipation term will not affect the high-order accuracy of the CDG schemes as indicated by the following proposition.

\begin{proposition}
	Assume ${\bf U}^{s} \in W_{2}^{k+1}(\Omega)$, $\tau_{max} = {\mathcal O}(\Delta x)$, then on each cell $I_{j}$, it holds that
	\begin{align*}
	    \left|\tilde{{\rm d}}_{j}^{C} ({\bf U}_{h}^{C}, {\bf U}_{h}^{D}, v) - {\rm d}_{j}^{C} ({\bf U}_{h}^{C},{\bf U}_{h}^{D},v)\right| =
		\frac{1}{\tau_{max}} \left| \int_{I_{j}} ({\bf U}_{h}^{s,C} - {\bf U}_{h}^{s,D} ) v {\rm d}x \right| \leq  {\mathcal O}((\Delta x) ^{k+1}) \,.
	\end{align*}
\end{proposition}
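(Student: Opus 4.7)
The asserted equality is immediate: subtracting the definition \eqref{eq:wb-dissipation} of $\tilde{{\rm d}}_{j}^{C}$ from the standard dissipation term ${\rm d}_{j}^{C}$ cancels the $({\bf U}_{h}^{D}-{\bf U}_{h}^{C})$ contribution and leaves precisely $\tfrac{1}{\tau_{max}}\int_{I_{j}}({\bf U}_{h}^{s,C}-{\bf U}_{h}^{s,D})\,v\,{\rm d}x$. So the substance is the estimate on the right-hand side, and the plan is to control this integral by the approximation accuracy of the novel projection established in Theorem \ref{theorem:operator-accuracy} (and the remarks following it).

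My approach is to insert and subtract the exact stationary state and to write
\[
{\bf U}_{h}^{s,C}-{\bf U}_{h}^{s,D}
=\bigl(\mathcal{P}_{h}^{C}({\bf U}^{s})-{\bf U}^{s}\bigr)-\bigl(\mathcal{P}_{h}^{D}({\bf U}^{s})-{\bf U}^{s}\bigr).
\]
On the primal cell $I_{j}$, the first piece is directly controlled by Theorem \ref{theorem:operator-accuracy}. The second piece needs a little care because, on $I_{j}$, the dual projection $\mathcal{P}_{h}^{D}({\bf U}^{s})$ consists of two different polynomial pieces, one on $I_{j}^{-}\subset I_{j-1/2}$ and one on $I_{j}^{+}\subset I_{j+1/2}$. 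I would therefore split the integral as $\int_{I_j}=\int_{I_j^-}+\int_{I_j^+}$ and apply the dual-mesh analogue of Theorem \ref{theorem:operator-accuracy} (stated in the first remark after it) on $I_{j-1/2}$ and $I_{j+1/2}$ respectively.

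Combining these estimates via the triangle inequality yields
\[
\|{\bf U}_{h}^{s,C}-{\bf U}_{h}^{s,D}\|_{L^{\infty}(I_{j})}\le C(\Delta x)^{k+1},
\]
where I use the $L^{\infty}$ form of the projection error (the general-$q$ version stated in the second remark following Theorem \ref{theorem:operator-accuracy}), with the hidden constant depending on ${\bf U}^{s}$ through a Sobolev semi-norm on $I_{j-1/2}\cup I_{j}\cup I_{j+1/2}$. H\"older's inequality then gives
\[
\left|\int_{I_{j}}({\bf U}_{h}^{s,C}-{\bf U}_{h}^{s,D})\,v\,{\rm d}x\right|
\le \|{\bf U}_{h}^{s,C}-{\bf U}_{h}^{s,D}\|_{L^{\infty}(I_{j})}\,\|v\|_{L^{1}(I_{j})}
\le C(\Delta x)^{k+2}\|v\|_{L^{\infty}(I_{j})},
\]
since $\|v\|_{L^{1}(I_{j})}\le \Delta x\,\|v\|_{L^{\infty}(I_{j})}$. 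Finally, invoking the hypothesis $\tau_{max}=\mathcal{O}(\Delta x)$ converts this $\mathcal{O}((\Delta x)^{k+2})$ bound into the claimed $\mathcal{O}((\Delta x)^{k+1})$.

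The only real subtlety I anticipate is the bookkeeping in step two, namely making sure the $\mathcal{P}_{h}^{D}$-error on $I_{j}$ is handled by splitting into the two halves lying in different dual cells so that the dual-mesh accuracy estimate applies cell-by-cell; everything else is the triangle inequality, H\"older, and the projection accuracy already proved.
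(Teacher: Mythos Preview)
Your proposal is correct and follows essentially the same route as the paper: insert ${\bf U}^{s}$, bound each projection error via Theorem \ref{theorem:operator-accuracy} (splitting $I_j=I_j^-\cup I_j^+$ for the dual-mesh piece, exactly as you describe), and absorb the $1/\tau_{max}$ using $\tau_{max}=\mathcal{O}(\Delta x)$. The only minor difference is that the paper uses the $L^2$--$L^2$ Cauchy--Schwarz pairing rather than your $L^\infty$--$L^1$ H\"older pairing; this matches the stated hypothesis ${\bf U}^{s}\in W_2^{k+1}(\Omega)$ directly, whereas the $L^\infty$ projection estimate from the remark formally requires $W_\infty^{k+1}$ regularity, so you may prefer to swap to the $L^2$ version for a clean match with the assumption.
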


\begin{proof}
	Using the triangular inequality gives
	\begin{align} \notag
		\frac{1}{\tau_{max}} \left| \int_{I_{j}} ({\bf U}_{h}^{s,C} - {\bf U}_{h}^{s,D} ) v {\rm d}x \right|
		\leq \frac{1}{\tau_{max}}  \int_{I_{j}} |({\bf U}_{h}^{s,C} - {\bf U}^{s} ) v | {\rm d}x
		+ \frac{1}{\tau_{max}} \int_{I_{j}} |({\bf U}^{s} - {\bf U}_{h}^{s,D} ) v | {\rm d}x  \,. \notag
	\end{align}
	Under the condition $|{\bf U}^s|_{W_{2}^{k+1}(I_{j})} \lVert v \lVert_{L^{2}(I_{j})}
	= {\mathcal O}(\Delta x)$, Theorem \ref{theorem:operator-accuracy} implies
	\begin{align} \notag
		\frac{1}{\tau_{max}}  \int_{I_{j}} |({\bf U}_{h}^{s,C} - {\bf U}^{s}) v | {\rm d}x  &\leq
		\frac{1}{\tau_{max}} \lVert {\bf U}_{h}^{s,C} - {\bf U}^{s}  \lVert_{L^{2}(I_{j})} \lVert  v  \lVert_{L^{2}(I_{j})} \\ \notag
		&\leq C_{k+1} (\Delta x)^{k+1}
		\frac{|{\bf U}^{s}|_{W_{2}^{k+1}(I_{j})} \lVert v \lVert_{L^{2}(I_{j})}}{\tau_{max}}  = {\mathcal O}((\Delta x) ^{k+1}) \,.
	\end{align}
	Similarly, one has
	\begin{align} \notag
		& \frac{1}{\tau_{max}}  \int_{I_{j}} |({\bf U}^{s} - {\bf U}_{h}^{s,D} ) v | {\rm d}x  =
		\frac{1}{\tau_{max}}  \Big( \int_{I_{j}^-} |({\bf U}^{s} - {\bf U}_{h}^{s,D} ) v | {\rm d}x
		+ \int_{I_{j}^+} |({\bf U}^{s} - {\bf U}_{h}^{s,D} ) v | {\rm d}x \Big) \\ \notag
		\leq &  \frac{1}{\tau_{max}} \Big(
		\lVert {\bf U}^{s} - {\bf U}_{h}^{s,D} \lVert_{L^{2}(I_{j-\frac{1}{2}})} \lVert  v  \lVert_{L^{2}(I_{j}^{-})}
	  + \lVert {\bf U}^{s} - {\bf U}_{h}^{s,D} \lVert_{L^{2}(I_{j+\frac{1}{2}})} \lVert  v  \lVert_{L^{2}(I_{j}^{+})}\Big)
	  = {\mathcal O}((\Delta x) ^{k+1}) \,.
	\end{align}
	Combining these results, we conclude that
	$\frac{1}{\tau_{max}} \left| \int_{I_{j}} ({\bf U}_{h}^{s,C} - {\bf U}_{h}^{s,D} ) v {\rm d}x \right| \leq  {\mathcal O}((\Delta x) ^{k+1})$.
\end{proof}

\subsubsection{Numerical flux and source term} \label{section:source-approximation}

Let $ \mathbb{Q}_{j}^{1,x} = \big\{ x_{j}^{1,\alpha} \big\}_{\alpha = 1}^{N} $
and $ \mathbb{Q}_{j}^{2,x} = \big\{ x_{j}^{2,\alpha} \big\}_{\alpha = 1}^{N} $
denote the $N$-point Gauss quadrature nodes transformed into the interval $\big[x_{j-\frac{1}{2}}, x_{j}\big]$
and  $\big[x_{j}, x_{j+\frac{1}{2}}\big]$, respectively, and $\big\{\omega_{\alpha}\big\}_{\alpha = 1}^{N}$
are the associated weights satisfying $\sum_{\alpha=1}^{N} \omega_{\alpha} = 1$, with $N \geq k+1$ for the
CDG accuracy requirement.

Because ${\bf U}_{h}^{D}(x,t)$ can be discontinuous at $x = x_{j}$, the element integral
$\int_{I_{j}} {\bf F}({\bf U}_{h}^{D}) v_{x} {\rm d}x$ in (\ref{eq:CDG-primal}) is usually divided into two parts
\begin{equation*}
	\int_{I_{j}}     {\bf F}({\bf U}_{h}^{D}) v_{x} {\rm d}x =
	\int_{I_{j}^{-}} {\bf F}({\bf U}_{h}^{D}) v_{x} {\rm d}x
	+ \int_{I_{j}^{+}} {\bf F}({\bf U}_{h}^{D}) v_{x} {\rm d}x \,,
\end{equation*}
which is then approximately by the numerical quadrature rule
\begin{equation*}
	\int_{I_{j}} {\bf F}({\bf U}_{h}^{D}) v_{x} {\rm d}x  \approx  \frac{\Delta x}{2}
	\sum_{\kappa = 1}^{2}\sum_{\alpha = 1}^{N} \omega_{\alpha}
	{\bf F}\big({\bf U}_{h}^{D}(x_{j}^{\kappa,\alpha}) \big) v_{x}(x_{j}^{\kappa,\alpha})  \,.
\end{equation*}

Next, we introduce a non-standard approximation to the source term integral in (\ref{eq:CDG-primal}) to achieve the WB property. 
This idea is similar to \cite{Xing2016DGWB} but has some key differences owing to carefully accommodate the positivity-preserving property; see Remark \ref{Wu:Remark}. 
 {Reformulate} and decompose the integral of the source term in the momentum equation as 
\begin{align*}
	\int_{I_{j}} S_{2}({\bf U}, \phi_{x}) v {\rm d}x  = - \int_{I_{j}} \rho \phi_{x} v {\rm d}x
	= \int_{I_{j}}  \frac{\rho}{\rho^s} p^s_{x} v {\rm d}x
	= \int_{I_{j}}  \Bigg( \frac{\rho}{\rho^s} -  \frac{ \overline{\rho}_{j} }{ \overline{\rho}^s_{j} }
	+ \frac{ \overline{\rho}_{j} }{ \overline{\rho}^s_{j} }  \Bigg)  p^s_{x} v {\rm d}x \,,
\end{align*}
where $\overline{\rho}_{j} = \frac{1}{\Delta x}\int_{I_{j}} \rho {\rm d}x$ denotes the cell average. Our numerical approximation to the source term takes the form of
\begin{align*}
	\int_{I_{j}} S_{2}( {\bf U}_{h}^{D}, (\phi_{h}^{D})_{x} ) v {\rm d}x &\approx  \int_{I_{j}}
	\Bigg( \frac{ \rho_{h}^{D} }{ \rho_{h}^{s,D} }
	- \frac{\overline{ (\rho_{h}^{D}) }_{j} }{ \overline{ (\rho_{h}^{s,D}) }_{j}}  \Bigg)
	(p_{h}^{s,D})_{x} v {\rm d}x  + \frac{\overline{ (\rho_{h}^{D}) }_{j} }{ \overline{ (\rho_{h}^{s,D}) }_{j} }
	\int_{I_{j}} (p_{h}^{s,D})_{x} v {\rm d}x   \,.
\end{align*}
Applying integration by parts {gives}
\begin{align*}
	\int_{I_{j}} (p_{h}^{s,D})_{x} v {\rm d}x
	=& - \int_{I_{j}} p_{h}^{s,D} v_{x} {\rm d}x - \Big( p_{h}^{s,D}(x_{j}^{+}) - p_{h}^{s,D}(x_{j}^{-})\Big) v(x_{j}) \\ \notag
	&+ \Big(p_{h}^{s,D}(x_{j+\frac{1}{2}}) v(x_{j+\frac{1}{2}}^{-})
	- p_{h}^{s,D}(x_{j-\frac{1}{2}}) v(x_{j-\frac{1}{2}}^{+}) \Big)  \\ \notag
	\approx & - \int_{I_{j}} p_{h}^{s,D} v_{x} {\rm d}x + \Big(p_{h}^{s,D}(x_{j+\frac{1}{2}}) v(x_{j+\frac{1}{2}}^{-})
	- p_{h}^{s,D}(x_{j-\frac{1}{2}}) v(x_{j-\frac{1}{2}}^{+}) \Big)  \,,
\end{align*}
where the following term has been omitted
\begin{align*}
	p_{h}^{s,D}(x_{j}^{+}) - p_{h}^{s,D}(x_{j}^{-})
	= \big(p_{h}^{s,D}(x_{j}^{+}) - p^s(x_{j}) \big) + \big( p^s(x_{j}) - p_{h}^{s,D}(x_{j}^{-}) \big)
	= {\mathcal O}(\Delta x^{k+1})  \,. 
\end{align*}
This leads to
\begin{align*}
	\int_{I_{j}} S_{2}( {\bf U}_{h}^{D}, (\phi_{h}^{D})_{x} ) v {\rm d}x &\approx  \int_{I_{j}}
	\Bigg( \frac{ \rho_{h}^{D} }{ \rho_{h}^{s,D} } - \frac{\overline{ (\rho_{h}^{D}) }_{j} }{ \overline{ (\rho_{h}^{s,D}) }_{j}}  \Bigg)
	(p_{h}^{s,D})_{x} v {\rm d}x  - \frac{\overline{ (\rho_{h}^{D}) }_{j} }{ \overline{ (\rho_{h}^{s,D}) }_{j} }
	\int_{I_{j}} p_{h}^{s,D} v_{x} {\rm d}x  \\
	&+ \frac{\overline{ (\rho_{h}^{D}) }_{j} }{ \overline{ (\rho_{h}^{s,D}) }_{j} }
	\Big(p_{h}^{s,D}(x_{j+\frac{1}{2}}) v(x_{j+\frac{1}{2}}^{-})
	-   p_{h}^{s,D}(x_{j-\frac{1}{2}}) v(x_{j-\frac{1}{2}}^{+}) \Big)  \,.
\end{align*}
Therefore, the source term $\int_{I_{j}} S_{2}({\bf U}_{h}^{D}, (\phi_{h}^{D})_{x}) v {\rm d}x$ in the momentum equation can further be approximated by
\begin{align}\nonumber
	& \big\langle S_{h,2}^{D}, v \big\rangle_{j}  =
	\frac{\overline{ (\rho_{h}^{D}) }_{j} }{ \overline{ (\rho_{h}^{s,D}) }_{j} }
	\Big(p_{h}^{s,D}(x_{j+\frac{1}{2}}) v(x_{j+\frac{1}{2}}^{-})
	-  p_{h}^{s,D}(x_{j-\frac{1}{2}}) v(x_{j-\frac{1}{2}}^{+}) \Big)  
\\ 
	+ &\frac{\Delta x}{2} \sum_{\kappa = 1}^{2} \sum_{\alpha = 1}^{N} \omega_{\alpha}
	\Bigg[\Bigg( \frac{ \rho_{h}^{D}(x_{j}^{\kappa,\alpha}) }{ \rho_{h}^{s,D} (x_{j}^{\kappa,\alpha}) }
	- \frac{\overline{ (\rho_{h}^{D}) }_{j} }{ \overline{ (\rho_{h}^{s,D}) }_{j} } \Bigg)
	(p_{h}^{s,D})_{x}(x_{j}^{\kappa,\alpha}) v(x_{j}^{\kappa,\alpha})
	- \frac{\overline{ (\rho_{h}^{D}) }_{j} }{ \overline{ (\rho_{h}^{s,D}) }_{j} }
	p_{h}^{s,D}(x_{j}^{\kappa,\alpha}) v_{x}(x_{j}^{\kappa,\alpha}) \Bigg]    \,.
\label{eq:source-momentum}
\end{align}
Similarly, we approximate the source term $\int_{I_{j}} S_{3}({\bf U}_{h}^{D}, (\phi_{h}^{D})_{x}) v {\rm d}x$ in the energy equation by
\begin{align*}
	& \big\langle S_{h,3}^{D}, v \big\rangle_{j}  =
	\frac{\overline{ (m_{h}^{D}) }_{j} }{ \overline{ (\rho_{h}^{s,D}) }_{j} }
	\Big(p_{h}^{s,D}(x_{j+\frac{1}{2}}) v(x_{j+\frac{1}{2}}^{-})
	-  p_{h}^{s,D}(x_{j-\frac{1}{2}}) v(x_{j-\frac{1}{2}}^{+}) \Big)  
\\ 
	+ &\frac{\Delta x}{2} \sum_{\kappa = 1}^{2} \sum_{\alpha = 1}^{N} \omega_{\alpha}
	\Bigg[\Bigg( \frac{ m_{h}^{D}(x_{j}^{\kappa,\alpha}) }{ \rho_{h}^{s,D} (x_{j}^{\kappa,\alpha}) }
	- \frac{\overline{ (m_{h}^{D}) }_{j} }{ \overline{ (\rho_{h}^{s,D}) }_{j} } \Bigg)
	(p_{h}^{s,D})_{x}(x_{j}^{\kappa,\alpha}) v(x_{j}^{\kappa,\alpha})
	- \frac{\overline{ (m_{h}^{D}) }_{j} }{ \overline{ (\rho_{h}^{s,D}) }_{j} }
	p_{h}^{s,D}(x_{j}^{\kappa,\alpha}) v_{x}(x_{j}^{\kappa,\alpha}) \Bigg]  \,.
\end{align*}

\subsubsection{Semi-discrete WB CDG schemes}

Combining {the} modified dissipation term in {Section \ref{section:dissipation}} with the discrete source term in Section \ref{section:source-approximation}, we obtain the final semi-discrete WB CDG method on the primal mesh
\begin{align}\nonumber
	\int_{I_{j}} \frac{\partial {\bf U}_{h}^{C}}{\partial t} v {\rm d}x
	& = \frac{1}{\tau_{max}} \int_{I_{j}} ({\bf U}_{h}^{D} - {\bf U}_{h}^{C}) v {\rm d}x +
	\frac{1}{\tau_{max}} \int_{I_{j}} ({\bf U}_{h}^{s,C} - {\bf U}_{h}^{s,D} ) v {\rm d}x   \\ \notag
	&  - \Big( {\bf F}({\bf U}_{h}^{D}(x_{j+\frac{1}{2}}))v(x_{j+\frac{1}{2}}^{-})
	- {\bf F}({\bf U}_{h}^{D}(x_{j-\frac{1}{2}}))v(x_{j-\frac{1}{2}}^{+}) \Big)  \\   
	&  + \frac{\Delta x}{2} \sum_{\kappa = 1}^{2} \sum_{\alpha = 1}^{N} \omega_{\alpha}
	{\bf F} \big({\bf U}_{h}^{D}(x_{j}^{\kappa,\alpha}) \big) v_{x}(x_{j}^{\kappa,\alpha})
	+ \big\langle {\bf S}_{h}^{D}, v \big\rangle_{j} \quad \forall v \in \mathbb{V}_{h}^{C,k}  \,,
\label{eq:CDG-modify-primal}
\end{align}
where $ \big\langle {\bf S}_{h}^{D}, v \big\rangle_{j} = \big(0, \big\langle S_{h,2}^{D}, v \big\rangle_{j}, \big\langle S_{h,3}^{D}, v \big\rangle_{j} \big) ^ \top$.

The WB CDG  spatial discretization on the dual mesh is very similar. Denote ${\bf Q} = (0, \rho, m)^\top$, one has ${\bf S}({\bf U}, \phi_{x} ) = -\phi_{x} {\bf Q} $,
and the modified source term approximation $\big\langle {\bf S}_{h}^{C}, w \big\rangle_{j+\frac{1}{2}}$ is given by
\begin{align*}
	& \big\langle {\bf S}_{h}^{C}, w \big\rangle_{j+\frac{1}{2}}  =
	\frac{\overline{ ({\bf Q}_{h}^{C}) }_{j+\frac{1}{2}} }{ \overline{ (\rho_{h}^{s,C}) }_{j+\frac{1}{2}} }
	\Big(p_{h}^{s,C}(x_{j+1}) w(x_{j+1}^{-})
	-  p_{h}^{s,C}(x_{j}) w(x_{j}^{+}) \Big)  \\ \notag
	+ &\frac{\Delta x}{2} \sum_{\kappa = 1}^{2} \sum_{\alpha = 1}^{N} \omega_{\alpha}
	\Bigg[\Bigg( \frac{ {\bf Q}_{h}^{C}(x_{j+\frac{1}{2}}^{\kappa,\alpha}) }{ \rho_{h}^{s,C} (x_{j+\frac{1}{2}}^{\kappa,\alpha}) }
	- \frac{\overline{ ({\bf Q}_{h}^{C}) }_{j+\frac{1}{2}} }{ \overline{ (\rho_{h}^{s,C}) }_{j+\frac{1}{2}} } \Bigg)
	((p_{h}^{s,C})_{x} w)(x_{j+\frac{1}{2}}^{\kappa,\alpha})
	- \frac{\overline{ ({\bf Q}_{h}^{C}) }_{j+\frac{1}{2}} }{ \overline{ (\rho_{h}^{s,C}) }_{j+\frac{1}{2}} }
	(p_{h}^{s,C}  w_{x}  )(x_{j+\frac{1}{2}}^{\kappa,\alpha}) \Bigg]  \,,
\end{align*}
where $w \in \mathbb{V}_{h}^{D,k} $,
$\big\{ x_{j+\frac{1}{2}}^{1,\alpha} \big\}_{\alpha = 1}^{N}  = \mathbb{Q}_{j}^{2,x}$ and
$\big\{ x_{j+\frac{1}{2}}^{2,\alpha} \big\}_{\alpha = 1}^{N}  = \mathbb{Q}_{j+1}^{1,x}$
denote the Gauss quadrature nodes transformed into the interval $\big[x_{j}, x_{j+\frac{1}{2}}\big]$
and  $\big[ x_{j+\frac{1}{2}}, x_{j+1} \big]$, respectively. Then the WB CDG method on the dual mesh reads
\begin{align}\nonumber
	\int_{ I_{j+\frac{1}{2}} } \frac{\partial {\bf U}_{h}^{D}}{\partial t} w {\rm d}x
	& = \frac{1}{\tau_{max}} \int_{I_{j+\frac{1}{2}}} ({\bf U}_{h}^{C} - {\bf U}_{h}^{D}) w {\rm d}x +
	\frac{1}{\tau_{max}} \int_{I_{j+\frac{1}{2}}} ({\bf U}_{h}^{s,D} - {\bf U}_{h}^{s,C}) w {\rm d}x   \\ \notag
	&  - \Big( {\bf F}({\bf U}_{h}^{C}(x_{j+1}))w(x_{j+1}^{-})
	- {\bf F}({\bf U}_{h}^{C}(x_{j}))w(x_{j}^{+}) \Big)  \\   
	&  + \frac{\Delta x}{2} \sum_{\kappa = 1}^{2} \sum_{\alpha = 1}^{N} \omega_{\alpha}
	{\bf F} \big({\bf U}_{h}^{C}(x_{j+\frac{1}{2}}^{\kappa,\alpha}) \big) w_{x}(x_{j+\frac{1}{2}}^{\kappa,\alpha})
	+ \big\langle {\bf S}_{h}^{C}, w \big\rangle_{j+\frac{1}{2}}  \quad \forall w \in \mathbb{V}_{h}^{D,k} \,.
\label{eq:CDG-modify-dual}
\end{align}
As the standard CDG schemes (\ref{eq:CDG-primal}) and (\ref{eq:CDG-dual}), the semi-discrete WB CDG schemes $(\ref{eq:CDG-modify-primal})$ and $(\ref{eq:CDG-modify-dual})$  can be rewritten in the ODE form as
\begin{equation*}
	\frac{\mathrm{d} {\bf U}_{h}^C }{\mathrm{d} t} = {\bf L}_{h}^C ({\bf U}_{h}^C, {\bf U}_{h}^D) \,, \quad
	\frac{\mathrm{d} {\bf U}_{h}^D }{\mathrm{d} t} = {\bf L}_{h}^D ({\bf U}_{h}^D, {\bf U}_{h}^C) \,,
\end{equation*}
after choosing suitable {bases} of $\mathbb{V}_{h}^{C,k}, \mathbb{V}_{h}^{D,k}$ and representing
${\bf U}_{h}^C, {\bf U}_{h}^D$ as linear combinations of the basis functions.

\begin{remark}\label{Wu:Remark}
	It is worth noting that the above WB discretization has carefully accommodated the positivity-preserving property. For example,
	if we are only concerned with the WB property (see, for example, \cite{Xing2016DGWB}), the choice of $\frac{\overline{ (\rho_{h}^{D}) }_{j} }{\overline{ (\rho_{h}^{s,D}) }_{j}}$ in (\ref{eq:source-momentum}) is not unique and can be replaced with any other suitable term that can reduce to one at the steady state (\ref{eq:steady-state-1d}), e.g.~$\frac{\rho_{h}^{D}(x_{\beta})}{\rho_{h}^{s,D}(x_{\beta})}$ with an arbitrary $x_{\beta} \in I_{j}$. 
However, our analyses show that choosing $ \frac{\overline{ (\rho_{h}^{D}) }_{j} }{ \overline{ (\rho_{h}^{s,D}) }_{j} } $  in the source term approximation of the momentum equation {is} advantageous for achieving the positivity-preserving
	property under a milder and more concise CFL condition. Similar consideration is also applied to the integral of the source term in the energy equation. For example, one can simply approximate
	\[
	- \int_{I_{j}} (\rho u)_{h}^{D} (\phi_{h}^{D})_{x} v {\rm d}x
	\]
	by using any standard quadrature rule and does not affect the WB property. However, our analyses indicate that it is crucial to employ a ``unified'' discretization for {the} source terms in the momentum and energy {equations to simultaneously} accommodate the positivity-preserving property.
\end{remark}


\subsection{Proofs of WB and positivity-preserving properties}

\subsubsection{WB property}

\begin{theorem}\label{theorem:wb-property}
	For the one-dimensional Euler equations $(\ref{eq:system-1d})$ under the gravitational field, the modified semi-discrete CDG schemes, given by $(\ref{eq:CDG-modify-primal})$ and  $(\ref{eq:CDG-modify-dual})$,  are WB for a general stationary hydrostatic  solution $(\ref{eq:steady-state-1d})$.
\end{theorem}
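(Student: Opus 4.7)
The plan is to verify ${\bf L}_{h}^{C}({\bf U}_{h}^{s,C},{\bf U}_{h}^{s,D}) = {\bf 0}$ component-wise by substituting the projected steady state into the modified semi-discrete formulation $(\ref{eq:CDG-modify-primal})$; the identity ${\bf L}_{h}^{D}({\bf U}_{h}^{s,D},{\bf U}_{h}^{s,C}) = {\bf 0}$ then follows from the same argument on the dual mesh. The first preliminary observation I will use is that, since $u^{s}\equiv 0$ implies $m^{s}\equiv 0$, the linearity of the projection operators (Lemma \ref{lemma:operator-polynomial}) gives $m_{h}^{s,C}\equiv 0$ and $m_{h}^{s,D}\equiv 0$. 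The second preliminary observation is that plugging the steady state directly into the modified dissipation term $(\ref{eq:wb-dissipation})$ makes the two integrals cancel exactly, so $\tilde{{\rm d}}_{j}^{C}({\bf U}_{h}^{s,C},{\bf U}_{h}^{s,D},v) = {\bf 0}$ for every test function $v$; this part requires no further work.

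Next I would dispose of the first and third components. For the continuity equation, the flux is $\rho u$, which vanishes identically at steady state since $m_{h}^{s,D}\equiv 0$; both the surface flux differences and the interior quadrature terms therefore vanish, the source vector has zero first component, and the dissipation is already zero. For the energy equation, the flux is $(E+p)u = (E+p)m/\rho$, which again vanishes because $m_{h}^{s,D}\equiv 0$; the discretized source $\langle S_{h,3}^{D},v\rangle_{j}$ has the common factor $\overline{(m_{h}^{D})}_{j}/\overline{(\rho_{h}^{s,D})}_{j}$ and contains $m_{h}^{D}(x_{j}^{\kappa,\alpha})$ in the pointwise terms, all of which vanish at steady state.

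The core of the proof is the momentum component, and this is where the source term approximation $(\ref{eq:source-momentum})$ is tailor-made to cancel the flux contributions. At steady state, $\rho_{h}^{D} = \rho_{h}^{s,D}$ pointwise, so the ratio $\rho_{h}^{D}(x)/\rho_{h}^{s,D}(x)\equiv 1$ and $\overline{(\rho_{h}^{D})}_{j}/\overline{(\rho_{h}^{s,D})}_{j} = 1$; consequently the first quadrature sum in $\langle S_{h,2}^{D},v\rangle_{j}$ drops out, leaving the boundary contribution $p_{h}^{s,D}(x_{j+\frac{1}{2}})v(x_{j+\frac{1}{2}}^{-}) - p_{h}^{s,D}(x_{j-\frac{1}{2}})v(x_{j-\frac{1}{2}}^{+})$ together with the single volume sum $-\frac{\Delta x}{2}\sum_{\kappa,\alpha}\omega_{\alpha}p_{h}^{s,D}(x_{j}^{\kappa,\alpha})v_{x}(x_{j}^{\kappa,\alpha})$. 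Meanwhile the flux in the momentum equation at steady state reduces to $F_{2}({\bf U}_{h}^{s,D}) = p_{h}^{s,D}$ (the $\rho u^{2}$ contribution vanishes), so the surface flux terms in $(\ref{eq:CDG-modify-primal})$ produce exactly the opposite boundary contribution and the interior quadrature produces exactly the opposite volume contribution. These two matched pairs cancel identically for every test function $v\in\mathbb{V}_{h}^{C,k}$, and combined with the already-zero dissipation this yields ${\bf L}_{h,2}^{C} = 0$.

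I do not anticipate any real obstacle: the whole proof is a matter of organized substitution, with the balance on the momentum equation hinging only on the algebraic structure chosen in $(\ref{eq:source-momentum})$ (the choice of ratios that collapse to $1$ at equilibrium) and the WB dissipation construction $(\ref{eq:wb-dissipation})$. The only point that deserves care is to be explicit that the pressure-gradient integration-by-parts step used to derive $(\ref{eq:source-momentum})$ dropped the jump $p_{h}^{s,D}(x_{j}^{+})-p_{h}^{s,D}(x_{j}^{-})$; because we are working with the projected steady state and not the exact one, this jump is nonzero in general but it does not appear in $(\ref{eq:CDG-modify-primal})$, so there is nothing to reconcile — the scheme as written is exactly WB by construction, not merely up to the order of the projection error.
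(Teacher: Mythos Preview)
Your proposal is correct and follows essentially the same approach as the paper's own proof: both show the modified dissipation term vanishes by construction, dispose of the density and energy equations by observing that $m_{h}^{s,D}\equiv 0$ kills both the fluxes and the energy source, and then verify the momentum balance by using $\rho_{h}^{D}/\rho_{h}^{s,D}\equiv 1$ to collapse $(\ref{eq:source-momentum})$ into exactly the negative of the flux contribution. Your closing remark about the dropped jump term is a helpful clarification that the paper leaves implicit.
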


\begin{proof}
	Suppose that the initial solution is ${\bf U}^{s}$. By the construction of ${\bf U}_{h}^{s,C}, {\bf U}_{h}^{s,D}$, one has
	\[
	{\bf U}_{h}^{C}(x,0) =  {\bf U}_{h}^{s,C} \,, \quad {\bf U}_{h}^{D}(x,0) = {\bf U}_{h}^{s,D} \,,
	\]
	and
	\[
	\rho_{h}^{D}(x,0) = \rho_{h}^{s,D} \,, \quad u_{h}^{D}(x,0) = 0 \,, \quad p_{h}^{D}(x,0) = p_{h}^{s,D} \,.
	\]
	The modified dissipation term becomes
	\begin{align*}
		\tilde{\rm d}_{j}^{C} ({\bf U}_{h}^{C}, {\bf U}_{h}^{D},v) =
		\frac{1}{\tau_{max}} \int_{I_{j}} ({\bf U}_{h}^{D}   - {\bf U}_{h}^{C})    v {\rm d}x +
		\frac{1}{\tau_{max}} \int_{I_{j}} ({\bf U}_{h}^{s,C} - {\bf U}_{h}^{s,D} ) v {\rm d}x  = 0 \,.
	\end{align*}
	It is observed that the WB property holds for the density and energy equations, as both
	the flux and source term approximations in {those} equations become zero. For the momentum equation, because
	$\frac{\overline{ (\rho_{h}^{D}) }_{j} }{\overline{ (\rho_{h}^{s,D}) }_{j}} = 1 $, the modified source term becomes
	\begin{align}\notag
		\big\langle S_{h,2}^{D}, v \big\rangle_{j} & =
		\Big(p_{h}^{s,D}(x_{j+\frac{1}{2}}) v(x_{j+\frac{1}{2}}^{-})
		-  p_{h}^{s,D}(x_{j-\frac{1}{2}}) v(x_{j-\frac{1}{2}}^{+}) \Big)
		- \frac{\Delta x}{2} \sum_{\kappa = 1}^{2} \sum_{\alpha = 1}^{N} \omega_{\alpha}
		p_{h}^{s,D}(x_{j}^{\kappa,\alpha}) v_{x}(x_{j}^{\kappa,\alpha})  \,.
	\end{align}
	Since $u = 0$, the flux term $F_{2} = \rho u^{2} + p$ reduces to $p$, and its numerical approximation is given by
	\begin{align}\notag
		& \frac{\Delta x}{2} \sum_{\kappa = 1}^{2} \sum_{\alpha = 1}^{N} \omega_{\alpha}
		F_{2} \big( {\bf U}_{h}^{D}(x_{j}^{\kappa,\alpha}) \big) v_{x}(x_{j}^{\kappa,\alpha})
		- \Big( F_{2} ({\bf U}_{h}^{D}(x_{j+\frac{1}{2}}))v(x_{j+\frac{1}{2}}^{-})
		-       F_{2} ({\bf U}_{h}^{D}(x_{j-\frac{1}{2}}))v(x_{j-\frac{1}{2}}^{+}) \Big)     \\  \notag
		= &  \frac{\Delta x}{2} \sum_{\kappa = 1}^{2} \sum_{\alpha = 1}^{N} \omega_{\alpha}
		p_{h}^{s,D}(x_{j}^{\kappa,\alpha}) v_{x}(x_{j}^{\kappa,\alpha})
		- \Big( p_{h}^{s,D}(x_{j+\frac{1}{2}}) v(x_{j+\frac{1}{2}}^{-})
		-      p_{h}^{s,D}(x_{j-\frac{1}{2}}) v(x_{j-\frac{1}{2}}^{+}) \Big) = -\big\langle S_{h,2}^{D}, v \big\rangle_{j} \,.
	\end{align}
	Therefore, the flux and source term approximations balance each other, implying
	\[
	\int_{I_{j}} \frac{\partial {\bf U}_{h}^{C}}{\partial t} v {\rm d}x = 0 \,,  \quad  \forall v \in \mathbb{V}_{h}^{C,k} \,.
	\]
	Similarly, on the dual mesh, one can establish
	\[
	\int_{I_{j+\frac{1}{2}}} \frac{\partial {\bf U}_{h}^{D}}{\partial t} w {\rm d}x = 0 \,,  \quad  \forall w \in \mathbb{V}_{h}^{D,k} \,.
	\]
	Hence our CDG schemes, given by $(\ref{eq:CDG-modify-primal})$ and  $(\ref{eq:CDG-modify-dual})$, are WB for a general stationary hydrostatic  solution $(\ref{eq:steady-state-1d})$.
\end{proof}

\subsubsection{Positivity-preserving property} \label{section:positivity-preserving}

{This subsection will} discuss the positivity-preserving property of the WB CDG schemes
(\ref{eq:CDG-modify-primal}) and (\ref{eq:CDG-modify-dual}). The WB modifications of the numerical dissipation
and source terms lead to additional difficulties in the positivity-preserving analyses, which are more complicated than that for the standard CDG method. We introduce several basic properties of the admissible state set $G$, which will be useful in our positivity-preserving analyses.

\begin{lemma}[Convexity]
	The set $G$ is a convex set.
\end{lemma}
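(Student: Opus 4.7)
The plan is to express $G$ as the intersection of two sets and show each is convex. Writing $G = G_1 \cap G_2$ with $G_1 = \{{\bf U} : \rho > 0\}$ and $G_2 = \{{\bf U} : \rho > 0, \, q({\bf U}) > 0\}$ where $q({\bf U}) := E - \frac{m^2}{2\rho}$, the set $G_1$ is an open half-space, hence trivially convex. So the work reduces to showing $G_2$ is convex.

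For $G_2$, I would show that $q$ is a concave function on the open half-space $\{\rho > 0\}$; once this is established, $G_2$ is the strict superlevel set $\{q > 0\}$ of a concave function intersected with the convex domain $\{\rho > 0\}$, and is therefore convex. To establish concavity, the cleanest route is to compute the Hessian of $q$: the linear term $E$ contributes nothing, so only the $(\rho, m)$-block matters, and a direct calculation shows that this block equals $-\tfrac{1}{\rho} v v^\top$ with $v = (m/\rho, -1)^\top$, which is negative semidefinite since $\rho > 0$. Alternatively one can invoke the fact that $(\rho, m) \mapsto m^2/\rho$ is the perspective of the convex function $m \mapsto m^2$ and is hence convex on $\{\rho > 0\}$, so $-m^2/(2\rho) + E$ is concave.

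From concavity, for any ${\bf U}_1, {\bf U}_2 \in G$ and $\theta \in [0,1]$, the convex combination ${\bf U}_\theta = \theta {\bf U}_1 + (1-\theta){\bf U}_2$ satisfies $\rho_\theta = \theta \rho_1 + (1-\theta)\rho_2 > 0$ and $q({\bf U}_\theta) \geq \theta q({\bf U}_1) + (1-\theta) q({\bf U}_2) > 0$, so $p({\bf U}_\theta) = (\gamma-1) q({\bf U}_\theta) > 0$ and ${\bf U}_\theta \in G$. There is no real obstacle here, the result is entirely classical; the only bookkeeping item is to make clear that concavity of $q$ must be verified only on the domain $\{\rho > 0\}$ where it is well-defined, rather than on all of $\mathbb{R}^3$.
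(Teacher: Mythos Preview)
Your proof is correct and follows essentially the same approach as the paper, which simply states that the convexity ``can be verified by definition and Jensen's inequality'' with a citation to \cite{Zhang2010PP}. Your concavity argument for $q({\bf U}) = E - m^2/(2\rho)$ on $\{\rho>0\}$ is exactly the content behind the paper's appeal to Jensen's inequality, so you have supplied the details the paper omits; note only that your decomposition $G = G_1 \cap G_2$ is slightly redundant since $G_2 \subset G_1$ by your own definition.
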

\begin{proof}
	This property can be verified by definition and Jensen's inequality; see \cite[{Page} 8919]{Zhang2010PP}.
\end{proof}

\begin{lemma}\label{lemmma:pp-source}
	For any ${\bf U} \in G$ {and  $b \in \mathbb{R}$, the state} $ {\bf U} + \lambda {\bf S}({\bf U},b)\in G$ under the condition
	\[
	|\lambda| < \frac{1}{|b|} \sqrt{\frac{2p}{(\gamma-1)\rho}}  \, .
	\]
\end{lemma}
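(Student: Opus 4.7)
The plan is to prove this by direct computation, since the source term has a particularly clean structure that causes a key cancellation. Let me denote $\mathbf{V} = \mathbf{U} + \lambda \mathbf{S}(\mathbf{U}, b)$ and write its three components explicitly: since $\mathbf{S}(\mathbf{U},b) = (0, -\rho b, -mb)^\top$, one gets $\tilde{\rho} = \rho$, $\tilde{m} = m - \lambda \rho b$, and $\tilde{E} = E - \lambda m b$. The positivity of $\tilde{\rho}$ is immediate from $\mathbf{U} \in G$, so the entire argument reduces to verifying $\tilde{p} := (\gamma-1)\bigl(\tilde{E} - \tfrac{\tilde{m}^2}{2\tilde{\rho}}\bigr) > 0$ under the stated bound on $|\lambda|$.

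The main step is to expand $\tilde{E} - \tfrac{\tilde{m}^2}{2\tilde{\rho}}$ and observe a cancellation. Expanding $(m - \lambda \rho b)^2 = m^2 - 2\lambda m \rho b + \lambda^2 \rho^2 b^2$ and dividing by $2\rho$ yields
\begin{equation*}
\tilde{E} - \frac{\tilde{m}^2}{2\tilde{\rho}} = \Bigl(E - \lambda m b\Bigr) - \frac{m^2}{2\rho} + \lambda m b - \frac{\lambda^2 \rho b^2}{2} = \Bigl(E - \frac{m^2}{2\rho}\Bigr) - \frac{\lambda^2 \rho b^2}{2}.
\end{equation*}
The cross terms $\pm \lambda m b$ cancel exactly, which is the crucial algebraic fact. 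Using $E - \tfrac{m^2}{2\rho} = \tfrac{p}{\gamma-1}$ from the ideal EOS, this simplifies to $\tfrac{p}{\gamma-1} - \tfrac{\lambda^2 \rho b^2}{2}$.

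Consequently $\tilde{p} = p - \tfrac{(\gamma-1)\lambda^2 \rho b^2}{2}$, and requiring $\tilde{p} > 0$ is equivalent to $\lambda^2 b^2 < \tfrac{2p}{(\gamma-1)\rho}$, i.e. $|\lambda| < \tfrac{1}{|b|}\sqrt{\tfrac{2p}{(\gamma-1)\rho}}$, which is exactly the assumed bound (the case $b = 0$ being trivial, since then $\mathbf{S}(\mathbf{U},b) = 0$ and any $\lambda$ works). Combined with $\tilde{\rho} = \rho > 0$, this places $\mathbf{V}$ in $G$, completing the proof. There is no real obstacle here beyond spotting the cancellation; the statement is essentially saying that the pressure dissipation induced by a forward Euler step on the gravity source is quadratic in $\lambda b$, so the usual CFL-like bound on the kinetic-energy injection governs positivity.
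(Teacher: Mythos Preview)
Your proof is correct and is exactly the standard direct computation; the paper itself does not spell out the argument but simply cites \cite{wu2021uniformly}, where the same calculation appears. The key cancellation of the $\pm\lambda m b$ terms that you identify is indeed the whole content of the lemma.
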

\begin{proof}
	The proof can be found in \cite[{Page} A476]{wu2021uniformly} for the details.
\end{proof}

\begin{lemma}\label{lemmma:pp-flux}
	For any $ {\bf U} \in G $ {and $\lambda \in \mathbb{R}$,  the state} ${\bf U} - \lambda {\bf F}({\bf U}) \in G$
	under the condition
	$$|\lambda| a_{x} ({\bf U}) \le 1, \quad \mbox{with~~~}  a_{x} ({\bf U}) := |u| + \sqrt{ \frac{\gamma p}{\rho}  }.$$
\end{lemma}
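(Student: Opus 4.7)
The approach I propose is to verify the two defining inequalities of $G$ separately for $\widetilde{\mathbf{U}} := \mathbf{U} - \lambda \mathbf{F}(\mathbf{U})$, using the geometric quasilinearization idea from \cite{wu2021geometric} to replace the nonlinear pressure constraint by a family of linear ones. The case $\lambda=0$ is trivial, so I may assume $\lambda\neq 0$.

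First I would treat the density. A direct computation gives $\widetilde\rho = \rho(1-\lambda u)$. Since $\mathbf{U}\in G$ implies $p>0$, we have $|\lambda|\sqrt{\gamma p/\rho}>0$, and the hypothesis yields $|\lambda u|\le |\lambda|\,a_x(\mathbf{U}) - |\lambda|\sqrt{\gamma p/\rho} < 1$, so $\widetilde\rho>0$.

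Next, for pressure positivity I would use the equivalent characterization that a state $\mathbf{V}=(\rho,m,E)^\top$ with $\rho>0$ lies in $G$ if and only if the linear functional $L_v(\mathbf{V}) := E - v m + \tfrac{1}{2}\rho v^2$ is strictly positive for every $v\in\mathbb{R}$ (its minimum over $v$, attained at $v=m/\rho$, equals $p(\mathbf{V})/(\gamma-1)$). A short calculation with $m=\rho u$ gives the key identity
\begin{equation*}
L_v(\mathbf{F}(\mathbf{U})) = u\,L_v(\mathbf{U}) + (u-v)\,p,
\end{equation*}
so that by linearity $L_v(\widetilde{\mathbf{U}}) = (1-\lambda u)L_v(\mathbf{U}) + \lambda(v-u)\,p$. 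Substituting $L_v(\mathbf{U}) = \tfrac{p}{\gamma-1} + \tfrac{\rho}{2}(u-v)^2$ and $w := v-u$ rewrites this as the quadratic
\begin{equation*}
L_v(\widetilde{\mathbf{U}}) = \frac{\rho(1-\lambda u)}{2}\,w^2 + \lambda p\,w + \frac{(1-\lambda u)\,p}{\gamma-1}
\end{equation*}
in the free parameter $w\in\mathbb{R}$.

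Finally I would show this quadratic is strictly positive for all $w$. Its leading coefficient $\rho(1-\lambda u)/2$ is positive by the first step, and its discriminant $\lambda^2 p^2 - \tfrac{2\rho p(1-\lambda u)^2}{\gamma-1}$ is strictly negative: rearranged as $\lambda^2(\gamma-1)p < 2\rho(1-\lambda u)^2$, this follows by squaring the hypothesis $|\lambda|\sqrt{\gamma p/\rho}\le 1 - |\lambda||u|$ to get $\lambda^2\gamma p \le \rho(1-|\lambda||u|)^2 \le \rho(1-\lambda u)^2$, combined with $\gamma-1<\gamma$ and $p>0$. The main obstacle is purely bookkeeping: one must pass carefully between $|\lambda||u|$ and $\lambda u$ while preserving strict versus non-strict inequalities, and confirm that the strictness required for $\widetilde{\mathbf{U}}\in G$ can be traced back to $\rho,p>0$.
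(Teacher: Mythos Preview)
Your proof is correct and follows precisely the GQL approach from \cite{wu2021geometric} that the paper cites as one of the two valid references for this lemma (the other being the original argument in \cite{Zhang2010PP}). Since the paper itself does not spell out a proof but only points to these two sources, your argument is essentially an explicit rendering of one of the intended proofs; the identity $L_v(\mathbf{F}(\mathbf{U})) = u\,L_v(\mathbf{U}) + (u-v)p$ and the discriminant analysis are exactly the GQL machinery, and your handling of the strict inequalities via $\gamma-1<\gamma$ and $\lambda\neq 0$ is clean.
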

\begin{proof}
	A proof can be found in \cite[Page 8921]{Zhang2010PP}. 
    	See also 
	\cite{wu2021geometric} for another simple proof based on the GQL approach.
\end{proof}

Next, we consider the semi-discrete scheme satisfied by the cell averages of the WB CDG solution.
Denote  
\[
\overline{{\bf U}}_{j}^{C}(t) = \frac{1}{\Delta x}  \int_{I_{j}} {\bf U}_{h}^{C}(x,t) \mathrm{d}x \,,  \quad
\overline{{\bf U}}_{j+\frac{1}{2}}^{D}(t) = \frac{1}{\Delta x}  \int_{I_{j+\frac{1}{2}}} {\bf U}_{h}^{D}(x,t) \mathrm{d}x \,.
\]
Taking the test function $v = 1$ in (\ref{eq:CDG-modify-primal}) and $w = 1$ in (\ref{eq:CDG-modify-dual})  
and using the identities in (\ref{eq:average-identity})
gives
\begin{align}\label{eq:CDG-modify-average-primal}
\begin{aligned}
	\frac{d \overline{ {\bf U} }_{j}^{C} }{d t} =  {\bf L}_{j}^{C} \big( {\bf U}_{h}^{C}, {\bf U}_{h}^{D} \big)
	: = \frac{\Big( \overline{ {\bf U} }_{j}^{D} - \overline{ {\bf U} }_{j}^{C} \Big)}{\tau_{max}}
	- \frac{\Big( {\bf F}({\bf U}_{j+\frac{1}{2}}^{D}) - {\bf F}({\bf U}_{j-\frac{1}{2}}^{D}) \Big)}{\Delta x}
	+ \frac{\big\langle {\bf S}_{h}^{D}, 1 \big\rangle_{j}}{\Delta x}  \,,
\\ 
	\frac{d \overline{ {\bf U} }_{j+\frac{1}{2}}^{D} }{d t} =  {\bf L}_{j+\frac{1}{2}}^{D} \big( {\bf U}_{h}^{D}, {\bf U}_{h}^{C} \big)
	: = \frac{\Big( \overline{ {\bf U} }_{j+\frac{1}{2}}^{C} - \overline{ {\bf U} }_{j+\frac{1}{2}}^{D} \Big)}{\tau_{max}}
	- \frac{\Big( {\bf F}({\bf U}_{j+1}^{C}) - {\bf F}({\bf U}_{j}^{C}) \Big)}{\Delta x}
	+ \frac{\big\langle {\bf S}_{h}^{C}, 1 \big\rangle_{j+\frac{1}{2}}}{\Delta x}  \,,
\end{aligned}\end{align}
where ${\bf U}_{j+\frac{1}{2}}^{D} = {\bf U}_{h}^{D}(x_{j+\frac{1}{2}})$, ${\bf U}_{j+1}^{C} =
{\bf U}_{h}^{C}(x_{j+1})$.

\begin{remark}\label{eq:L2-aaa}
Recall that the two key identities in (\ref{eq:average-identity}) are derived from
the novel projection operators $\mathcal{P}_{h}^{C}$ and $\mathcal{P}_{h}^{D}$ and do not hold for the standard $L^2$-projection.
Benefited from this remarkable feature, our modification of the numerical dissipation term
does not destroy the positivity-preserving property. 	
\end{remark}

Let $\mathbb{L}_{j}^{1,x} = \big\{ \hat{x}_{j}^{1,\beta} \big\}_{\beta = 1}^{L} $
and $\mathbb{L}_{j}^{2,x} = \big\{ \hat{x}_{j}^{2,\beta} \big\}_{\beta = 1}^{L} $
denote the Gauss-Lobatto quadrature nodes transformed into the interval $\big[x_{j-\frac{1}{2}}, x_{j}\big]$ and  $\big[x_{j}, x_{j+\frac{1}{2}}\big]$, respectively, and $\big\{\hat{\omega}_{\beta}\big\}_{\beta = 1}^{L}$
are the associated weights satisfying $\sum_{\beta=1}^{L} \hat{\omega}_{\beta} = 1$.
We take $L=\lceil \frac{k+3}2 \rceil$, which gives $2L-3\geq k$, so that
the $L$-point Gauss-Lobatto quadrature rule is exact for polynomials of degree up to $k$. For each primal cell $I_{j}$, we define the point set
\begin{equation}\label{eq:sj-primal}
	\mathbb{S}_{j} = \mathbb{Q}_{j}^{1,x} \cup \mathbb{Q}_{j}^{2,x} \cup \mathbb{L}_{j}^{1,x} \cup \mathbb{L}_{j}^{2,x} \,, \quad
\mathbb{Q}_{j} = \mathbb{Q}_{j}^{1,x} \cup \mathbb{Q}_{j}^{2,x} \,,
\end{equation}
{and the} parameter $\tilde{\alpha}_{j}^D$ as
\[
\tilde{\alpha}_{j}^D = \tilde{\alpha}_{1,j}^D + \tilde{\alpha}_{2,j}^D \,, \quad
\tilde{\alpha}_{1,j}^D = \max_{ x\in \{ x_{j-\frac{1}{2}}, x_{j+\frac{1}{2}} \} }  a_{x}({\bf U}_{h}^{D})\,,
\]
\[
\tilde{\alpha}_{2,j}^D = \frac{\hat{\omega}_{1} \Delta x}{2} \max_{x \in \mathbb{Q}_{j}}
\Bigg\{|(\hat{\phi}_{h}^{D})_{x}| \sqrt{\frac{(\gamma-1)\rho_{h}^{D}}{ 2 p_{h}^{D} } } \Bigg\} \,, \quad
(\hat{\phi}_{h}^{D})_{x}  =
\frac{ p_{h}^{s,D}(x_{j}^{-}) - p_{h}^{s,D}(x_{j}^{+}) }{ \overline{ (\rho_{h}^{s,D}) }_{j} \Delta x}
- \frac{(p_{h}^{s,D})_{x}}{ \rho_{h}^{s,D} } \,.
\]
Similarly, in each dual cell $I_{j+\frac{1}{2}}$ , we define the point set
\begin{equation}\label{eq:sj-dual}
\mathbb{S}_{j+\frac{1}{2}} = \mathbb{Q}_{j}^{2,x} \cup \mathbb{Q}_{j+1}^{1,x} \cup \mathbb{L}_{j}^{2,x} \cup \mathbb{L}_{j+1}^{1,x} \,,
\quad \mathbb{Q}_{j+\frac{1}{2}} = \mathbb{Q}_{j}^{2,x} \cup \mathbb{Q}_{j+1}^{1,x}
\end{equation}
{and the} parameter $\tilde{\alpha}_{j+\frac{1}{2}}^C$ as
\[
\tilde{\alpha}_{j+\frac{1}{2}}^C = \tilde{\alpha}_{1,j+\frac{1}{2}}^C + \tilde{\alpha}_{2,j+\frac{1}{2}}^C \,, \quad
\tilde{\alpha}_{1,j+\frac{1}{2}}^C = \max_{ x\in \{ x_{j}, x_{j+1} \} }  a_{x}({\bf U}_{h}^{C})\,,
\]
\[
\tilde{\alpha}_{2,j+\frac{1}{2}}^C = \frac{\hat{\omega}_{1} \Delta x}{2} \max_{x \in \mathbb{Q}_{j+\frac{1}{2}}}
\Bigg\{|(\hat{\phi}_{h}^{C})_{x}| \sqrt{\frac{(\gamma-1)\rho_{h}^{C}}{ 2 p_{h}^{C} } } \Bigg\} \,, \quad
(\hat{\phi}_{h}^{C})_{x}  =
\frac{ p_{h}^{s,C}(x_{j+\frac{1}{2}}^{-}) - p_{h}^{s,C}(x_{j+\frac{1}{2}}^{+}) }{ \overline{ (\rho_{h}^{s,C}) }_{j+\frac{1}{2}} \Delta x}
- \frac{(p_{h}^{s,C})_{x}}{ \rho_{h}^{s,C} } \,.
\]
Then we have the following CFL-type condition for the high-order CDG schemes (\ref{eq:CDG-modify-primal}) and
(\ref{eq:CDG-modify-dual}) to be positivity-preserving.

\begin{theorem}\label{theorem:pp-property}
	Assume that the numerical solutions ${\bf U}_{h}^{C}(x,t), {\bf U}_{h}^{D}(x,t)$ and the projected stationary hydrostatic solutions
	${\bf U}_{h}^{s,C}(x), {\bf U}_{h}^{s,D}(x)$ satisfy
	\begin{equation}\label{pp-condition}
		\begin{cases}
			{\bf U}_{h}^{C}(x,t)   \in G,~~~ {\bf U}_{h}^{s,C}(x) \in G & \forall x \in \mathbb{S}_{j},~ \forall j,
			\\
			{\bf U}_{h}^{D}(x,t)   \in G,~~~ {\bf U}_{h}^{s,D}(x) \in G
			& \forall x \in \mathbb{S}_{j+\frac{1}{2}},~ \forall j.
		\end{cases}
	\end{equation}
	If $\overline{ {\bf U} }_{j}^{C}, \overline{ {\bf U} }_{j+\frac{1}{2}}^{D} \in G$, then
	the weak positivity-preserving property
	\begin{align*}
		\overline{ {\bf U} }_{j}^{C} + \Delta t  {\bf L}_{j}^{C} \big( {\bf U}_{h}^{C}, {\bf U}_{h}^{D} \big) \in G, \quad
		\overline{ {\bf U} }_{j+\frac{1}{2}}^{D} + \Delta t  {\bf L}_{j+\frac{1}{2}}^{D} \big( {\bf U}_{h}^{D}, {\bf U}_{h}^{C} \big) \in G,
		\quad  \forall j,
	\end{align*}
	holds under the CFL-type condition
	\begin{align*}
		\frac{\Delta t}{\Delta x}  \tilde{\alpha}_{x}  < \frac{\theta \hat{\omega}_{1} }{2} \,, \quad
		\tilde{\alpha}_{x} = \max_{j} \max \{\tilde{\alpha}_{j}^D, \tilde{\alpha}_{j+\frac{1}{2}}^C\}  \,, \quad
		\theta = \frac{\Delta t}{\tau_{max}} \in (0,1] \,.
	\end{align*}
\end{theorem}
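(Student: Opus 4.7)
The plan is to establish the weak positivity-preserving property $\overline{\mathbf{U}}_j^C + \Delta t\,\mathbf{L}_j^C(\mathbf{U}_h^C, \mathbf{U}_h^D) \in G$; the analogous statement on the dual mesh will follow by a symmetric argument. The strategy is the standard convex-decomposition technique for bound-preserving analysis: rewrite the updated cell average as a convex combination of intermediate states, verify that each lies in $G$ via Lemmas \ref{lemmma:pp-flux} and \ref{lemmma:pp-source}, and conclude by convexity of $G$. The role of the identity \eqref{eq:average-identity} is crucial here, since it is precisely what makes the WB dissipation modification disappear from the cell-average equation \eqref{eq:CDG-modify-average-primal}; without it the modification would produce indefinite-sign contributions that cannot be absorbed into a convex combination.

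Starting from \eqref{eq:CDG-modify-average-primal} with $\theta = \Delta t/\tau_{max} \in (0,1]$, the forward-Euler update reads
\begin{equation*}
    \overline{\mathbf{U}}_j^{C} + \Delta t\,\mathbf{L}_j^C = (1-\theta)\overline{\mathbf{U}}_j^C + \theta\,\overline{\mathbf{U}}_j^D - \frac{\Delta t}{\Delta x}\bigl[\mathbf{F}(\mathbf{U}_{j+\frac12}^D) - \mathbf{F}(\mathbf{U}_{j-\frac12}^D)\bigr] + \frac{\Delta t}{\Delta x}\langle\mathbf{S}_h^D, 1\rangle_j.
\end{equation*}
The first slot carries weight $(1-\theta)$ at the state $\overline{\mathbf{U}}_j^C\in G$. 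For the remainder I expand $\overline{\mathbf{U}}_j^D$ by the $L$-point Gauss--Lobatto rule (exact for polynomials of degree $\le 2L-3\ge k$) separately on the half-cells $I_j^-$ and $I_j^+$, on each of which $\mathbf{U}_h^D$ is a single polynomial. This produces the convex decomposition
\begin{equation*}
    \overline{\mathbf{U}}_j^D = \frac{\hat{\omega}_1}{2}\bigl[\mathbf{U}_h^D(x_{j-\frac12}) + \mathbf{U}_h^D(x_{j+\frac12})\bigr] + \sum_{\kappa=1}^{2}\sum_{\beta=2}^{L-1}\frac{\hat{\omega}_\beta}{2}\,\mathbf{U}_h^D(\hat{x}_j^{\kappa,\beta}) + \frac{\hat{\omega}_L}{2}\bigl[\mathbf{U}_h^D(x_j^-)+\mathbf{U}_h^D(x_j^+)\bigr],
\end{equation*}
in which every node belongs to $\mathbb{S}_{j-\frac12}\cup\mathbb{S}_{j+\frac12}$, so every value $\mathbf{U}_h^D(\cdot)$ lies in $G$ by \eqref{pp-condition}.

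Next, I split the WB source functional $\langle\mathbf{S}_h^D,1\rangle_j$ into an ``interface'' piece (the boundary terms at $x_{j\pm\frac12}$ carrying the factors $\overline{(\rho_h^{D})}_j/\overline{(\rho_h^{s,D})}_j$ and $\overline{(m_h^{D})}_j/\overline{(\rho_h^{s,D})}_j$) and a ``bulk'' piece (the Gauss-quadrature sum over $\mathbb{Q}_j$). The interface piece is paired with the endpoint contributions $\tfrac{\theta\hat{\omega}_1}{2}\mathbf{U}_h^D(x_{j\pm\frac12})$ and the flux differences $\mp\tfrac{\Delta t}{\Delta x}\mathbf{F}(\mathbf{U}_{j\pm\frac12}^D)$ to form two Lax--Friedrichs-like expressions of the form $\mathbf{U}_h^D(x_{j\pm\frac12}) \mp \lambda\,\mathbf{F}(\mathbf{U}_{j\pm\frac12}^D)$, up to harmless pressure-constant shifts that do not alter admissibility; Lemma \ref{lemmma:pp-flux} places each in $G$ as long as $|\lambda|\,a_x(\mathbf{U}_{j\pm\frac12}^D)\le 1$, which given $\lambda = 2\Delta t/(\theta\hat{\omega}_1\Delta x)$ is precisely $\tfrac{\Delta t}{\Delta x}\tilde{\alpha}_{1,j}^D \le \tfrac{\theta\hat{\omega}_1}{2}$. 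The bulk piece is absorbed by borrowing a small fraction of the interior Gauss--Lobatto weights: at each $x\in\mathbb{Q}_j$ one forms a state of the shape $\mathbf{U}_h^D(x) + \tilde\lambda\,\mathbf{S}(\mathbf{U}_h^D(x), (\hat{\phi}_h^D)_x|_x)$, to which Lemma \ref{lemmma:pp-source} applies and delivers a state in $G$ whenever $\tfrac{\Delta t}{\Delta x}\tilde{\alpha}_{2,j}^D \le \tfrac{\theta\hat{\omega}_1}{2}$. Summing these two complementary sub-CFL conditions gives the unified bound stated in the theorem.

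The main obstacle will be the bookkeeping of the decomposition so that (i) all weights are nonnegative and sum to one, (ii) the ``interface'' and ``bulk'' portions of the WB source term split cleanly into pieces fitting the two lemmas, and (iii) the effective gravitational gradient seen at each Gauss node is exactly $(\hat{\phi}_h^D)_x$ entering $\tilde{\alpha}_{2,j}^D$. A coupled analysis of the momentum and energy components is required, because the common denominator $\overline{(\rho_h^{s,D})}_j$ introduced in \eqref{eq:source-momentum} and its energy counterpart are what allow a single source-type update with a common $\tilde\lambda$, rather than two incompatible updates; this observation, together with \eqref{eq:average-identity}, is the essential reason the modified CDG discretization is simultaneously WB, conservative, and positivity-preserving.
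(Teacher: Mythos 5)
Your overall skeleton — pass to the cell-average equation \eqref{eq:CDG-modify-average-primal} (where \eqref{eq:average-identity} kills the WB dissipation modification), build a convex decomposition, and invoke Lemmas \ref{lemmma:pp-flux} and \ref{lemmma:pp-source} — is the paper's strategy, but two of your concrete steps would fail as described. First, you route the interface piece of $\langle {\bf S}_h^D,1\rangle_j$ (the terms $\frac{\overline{(\rho_h^{D})}_j}{\overline{(\rho_h^{s,D})}_j}p_h^{s,D}(x_{j\pm\frac12})$ and $\frac{\overline{(m_h^{D})}_j}{\overline{(\rho_h^{s,D})}_j}p_h^{s,D}(x_{j\pm\frac12})$) into the Lax--Friedrichs endpoint terms, calling the result a ``harmless pressure-constant shift.'' It is not harmless: adding a constant to the momentum component changes $m^2/(2\rho)$ and hence the pressure positivity, and these additions carry cell-average ratios unrelated to the pointwise endpoint state, so Lemma \ref{lemmma:pp-flux} no longer certifies those states. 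Worse, once the interface piece is removed, your ``bulk'' Gauss-node states have momentum and energy source entries $\big(\tfrac{\rho_h^D}{\rho_h^{s,D}}-\tfrac{\overline{(\rho_h^{D})}_j}{\overline{(\rho_h^{s,D})}_j}\big)(p_h^{s,D})_x$ and $\big(\tfrac{m_h^D}{\rho_h^{s,D}}-\tfrac{\overline{(m_h^{D})}_j}{\overline{(\rho_h^{s,D})}_j}\big)(p_h^{s,D})_x$, which are \emph{not} of the unified form $(0,-\rho b,-mb)^\top$ with a common $b$; Lemma \ref{lemmma:pp-source} does not apply, and the effective gradient cannot be $(\hat{\phi}_h^D)_x$, contrary to your own item (iii). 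The correct move (the paper's) is the opposite routing: with $v=1$ the Gauss quadrature of $(p_h^{s,D})_x$ is exact on each half-cell, so the interface terms cancel against it and only the center jump $p_h^{s,D}(x_j^+)-p_h^{s,D}(x_j^-)$ survives; because $\frac12\sum_{\kappa,\alpha}\omega_\alpha\rho_h^D(x_j^{\kappa,\alpha})=\overline{(\rho_h^{D})}_j$ (and likewise for $m$), the \emph{entire} source functional collapses to $-\frac{\Delta x}{2}\sum_{\kappa,\alpha}\omega_\alpha\,(0,\rho_h^D,m_h^D)^\top(\hat{\phi}_h^D)_x$ at the Gauss nodes, which is exactly where the specific cell-average ratio in \eqref{eq:source-momentum} and the ``unified'' momentum/energy discretization pay off, and Lemma \ref{lemmma:pp-source} then applies node by node.

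Second, the weight bookkeeping: you cannot ``borrow a small fraction of the interior Gauss--Lobatto weights'' to pay for states of the form ${\bf U}_h^D(x)+\tilde\lambda\,{\bf S}(\cdot)$ at $x\in\mathbb{Q}_j$, because the Gauss nodes are not Gauss--Lobatto nodes — the values available in that decomposition are ${\bf U}_h^D(\hat{x}_j^{\kappa,\beta})$, not ${\bf U}_h^D(x_j^{\kappa,\alpha})$. The clean device is to split $\theta\,\overline{{\bf U}}_j^D=(1-\eta)\theta\,\overline{{\bf U}}_j^D+\eta\theta\,\overline{{\bf U}}_j^D$, expand the first copy by the Gauss--Lobatto rule to absorb the flux difference (giving $\lambda_x\tilde{\alpha}_{1,j}^D<\frac{(1-\eta)\theta\hat{\omega}_1}{2}$ via Lemma \ref{lemmma:pp-flux}) and the second copy by the Gauss rule to absorb the source (giving $\lambda_x\tilde{\alpha}_{2,j}^D<\frac{\eta\theta\hat{\omega}_1}{2}$ via Lemma \ref{lemmma:pp-source}), and then choose $\eta=\tilde{\alpha}_{2,j}^D/(\tilde{\alpha}_{1,j}^D+\tilde{\alpha}_{2,j}^D)$ to merge the two constraints into the stated condition $\lambda_x\tilde{\alpha}_j^D<\frac{\theta\hat{\omega}_1}{2}$. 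Your version, in which each sub-condition is allotted the full budget $\frac{\theta\hat{\omega}_1}{2}$ and the two are ``summed,'' does not produce a convex combination whose weights attached to $\overline{{\bf U}}_j^D$ total $\theta$, so the final CFL condition of the theorem is not actually derived.
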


\begin{proof}
	Using (\ref{eq:CDG-modify-average-primal}) gives
	\begin{align*}
		\overline{ {\bf U} }_{j}^{C} + \Delta t  {\bf L}_{j}^{C} \big( {\bf U}_{h}^{C}, {\bf U}_{h}^{D} \big)
		&= (1-\theta)\overline{ {\bf U} }_{j}^{C} + \Big[ \eta\theta \overline{ {\bf U} }_{j}^{D}
		+ \lambda_{x} \big\langle {\bf S}_{h}^{D}, 1 \big\rangle_{j} \Big]\\ \notag
		&+ \Big[ (1-\eta)\theta \overline{ {\bf U} }_{j}^{D}
		- \lambda_{x}\Big( {\bf F}( {\bf U}_{j+\frac{1}{2}}^{D}) - {\bf F}({\bf U}_{j-\frac{1}{2}}^{D}) \Big) \Big] \\  \notag
		&= (1-\theta) \overline{ {\bf U} }_{j}^{C} + \eta\theta {\bf L}_{h,{\bf S}}
		+ (1-\eta) \theta {\bf L}_{h,{\bf F}} \,,
	\end{align*}
	where $\lambda_{x} = \frac{\Delta t}{\Delta x}, \theta = \frac{\Delta t}{\tau_{max}}, \eta \in (0,1)$ is a constant,
	and ${\bf L}_{h,{\bf F}}, {\bf L}_{h,{\bf S}}$ are given by
	\begin{align*}
		{\bf L}_{h,{\bf F}} = \overline{ {\bf U} }_{j}^{D} - \frac{\lambda_{x}}{(1-\eta)\theta }
		\Big( {\bf F}( {\bf U}_{j+\frac{1}{2}}^{D}) - {\bf F}( {\bf U}_{j-\frac{1}{2}}^{D}) \Big) \,, \quad
		{\bf L}_{h,{\bf S}} = \overline{ {\bf U} }_{j}^{D} + \frac{\lambda_{x}}{\eta\theta }
		\big\langle {\bf S}_{h}^{D}, 1 \big\rangle_{j} \,.
	\end{align*}
	Due to 
	the exactness of the Gauss-Lobatto quadrature rule, one has
	\begin{align*}
		\overline{ {\bf U} }_{j}^{D}  = \sum_{\kappa = 1 }^{2}\sum_{\beta=1}^{L}
		\frac{\hat{\omega}_{\beta}}{2} {\bf U}_{h}^{D}(\hat{x}_j^{\kappa,\beta}).
	\end{align*}
	Let us first consider ${\bf L}_{h,{\bf F}}$ and reformulate it as follows
	\begin{align*}
		{\bf L}_{h,{\bf F}} & = \sum_{\kappa = 1 }^{2}\sum_{\beta=1}^{L} \frac{\hat{\omega}_{\beta}}{2}
		{\bf U}_{h}^{D}(\hat{x}_j^{\kappa,\beta})  - \frac{\lambda_{x}}{(1-\eta)\theta }
		\Big( {\bf F}({\bf U}_{j+\frac{1}{2}}^{D}) - {\bf F}( {\bf U}_{j-\frac{1}{2}}^{D} ) \Big)     \\ \notag
		& = \sum_{\beta=2}^{L}   \frac{\hat{\omega}_{\beta}}{2} {\bf U}_{h}^{D}(\hat{x}_j^{1,\beta})
		+ \dfrac{ \hat{\omega}_{1}}{2} \Big( {\bf U}_{j-\frac{1}{2}}^{D}
		+ \frac{2\lambda_{x}}{(1-\eta)\theta \hat{\omega}_{1}} {\bf F}( {\bf U}_{j-\frac{1}{2}}^{D}) \Big)  \\ \notag
		& + \sum_{\beta=1}^{L-1} \frac{\hat{\omega}_{\beta}}{2} {\bf U}_{h}^{D}(\hat{x}_j^{2,\beta})
		+ \dfrac{ \hat{\omega}_{L}}{2} \Big( {\bf U}_{j+\frac{1}{2}}^{D}
		- \frac{2\lambda_{x}}{(1-\eta)\theta \hat{\omega}_{L}} {\bf F}( {\bf U}_{j+\frac{1}{2}}^{D}) \Big) \\ \notag
		& = \sum_{\beta=2}^{L} \frac{\hat{\omega}_{\beta}}{2} {\bf U}_{h}^{D}(\hat{x}_j^{1,\beta})
		+ \dfrac{ \hat{\omega}_{1}}{2} \mathbb{E}_{j-\frac{1}{2}}^{+}
		+ \sum_{\beta=1}^{L-1} \frac{\hat{\omega}_{\beta}}{2} {\bf U}_{h}^{D}(\hat{x}_j^{2,\beta})
		+ \dfrac{ \hat{\omega}_{L}}{2} \mathbb{E}_{j+\frac{1}{2}}^{-}  \,,
	\end{align*}
	where
	\begin{align*}
		\mathbb{E}_{j-\frac{1}{2}}^{+}  =  {\bf U}_{j-\frac{1}{2}}^{D}
		+ \frac{2\lambda_{x}}{(1-\eta)\theta \hat{\omega}_{1}} {\bf F}({\bf U}_{j-\frac{1}{2}}^{D}) \,, \quad
		\mathbb{E}_{j+\frac{1}{2}}^{-}  =  {\bf U}_{j+\frac{1}{2}}^{D}
		- \frac{2\lambda_{x}}{(1-\eta)\theta \hat{\omega}_{L}} {\bf F}({\bf U}_{j+\frac{1}{2}}^{D}) \,.
	\end{align*}
	Thanks to the Lax-Friedrichs splitting property,  we have $\mathbb{E}_{j-\frac{1}{2}}^{+} \in G$ and
	$\mathbb{E}_{j+\frac{1}{2}}^{-} \in G$, as long as
	\begin{align*}
		\lambda_{x} \max_{ x\in \{ x_{j-\frac{1}{2}}, x_{j+\frac{1}{2}} \} }  a_{x}({\bf U}_{h}^{D}) = \lambda_{x} \tilde{\alpha}_{1,j}^D
		< \frac{(1-\eta)\theta \hat{\omega}_{1}}{2}  \,.
	\end{align*}
	Using the convexity of set $G$, we obtain ${\bf L}_{h,{\bf F}} \in G$. Next, we discuss the term ${\bf L}_{h,{\bf S}}$, and  reformulate the source term $\big\langle {\bf S}_{h}^{D}, 1 \big\rangle_{j}$ as follows
	\begin{align*}
		\big\langle S_{h,2}^{D}, 1 \big\rangle_{j}
		& = \frac{\overline{ (\rho_{h}^{D}) }_{j} }{ \overline{ (\rho_{h}^{s,D}) }_{j} }
		\Bigg( \Big(p_{h}^{s,D}(x_{j+\frac{1}{2}})  -  p_{h}^{s,D}(x_{j-\frac{1}{2}})  \Big)
		- \frac{\Delta x}{2} \sum_{\kappa = 1 }^{2} \sum_{\alpha = 1}^{N} \omega_{\alpha}
		(p_{h}^{s,D})_{x}(x_{j}^{\kappa,\alpha})  \Bigg)  \\ \notag
		& + \frac{\Delta x}{2} \sum_{\kappa = 1 }^{2} \sum_{\alpha = 1}^{N} \omega_{\alpha}
		\frac{\rho_{h}^{D}(x_{j}^{\kappa,\alpha}) }{ \rho_{h}^{s,D} (x_{j}^{\kappa,\alpha}) }
		(p_{h}^{s,D})_{x}(x_{j}^{\kappa,\alpha}) \,.
	\end{align*}
	Notice that
	\begin{align*}
		\frac{\Delta x}{2} \sum_{\kappa = 1 }^{2} \sum_{\alpha = 1}^{N} \omega_{\alpha}
		(p_{h}^{s,D})_{x}(x_{j}^{\kappa,\alpha})  = \int_{I_{j}} (p_{h}^{s,D})_{x} {\rm d}x  \,,
	\end{align*}
	which leads to
	\begin{align*} \notag
		\big\langle S_{h,2}^{D}, 1 \big\rangle_{j}
		& = \frac{\overline{ (\rho_{h}^{D}) }_{j} }{ \overline{ (\rho_{h}^{s,D}) }_{j} }
		\Big( p_{h}^{s,D}(x_{j}^{+})  -  p_{h}^{s,D}(x_{j}^{-}) \Big)
		+ \frac{\Delta x}{2} \sum_{\kappa = 1 }^{2} \sum_{\alpha = 1}^{N} \omega_{\alpha}
		\frac{\rho_{h}^{D}(x_{j}^{\kappa,\alpha}) }{ \rho_{h}^{s,D} (x_{j}^{\kappa,\alpha}) }
		(p_{h}^{s,D})_{x}(x_{j}^{\kappa,\alpha})  \\ \notag
		& = - \frac{\Delta x}{2} \sum_{\kappa = 1 }^{2} \sum_{\alpha = 1}^{N} \omega_{\alpha} \rho_{h}^{D}(x_{j}^{\kappa,\alpha})
		(\hat{\phi}_{h}^{D})_{x} (x_{j}^{\kappa,\alpha}){,}
	\end{align*}
	with
	\begin{equation*}
		- (\hat{\phi}_{h}^{D})_{x} (x_{j}^{\kappa,\alpha}) :=
		\frac{ p_{h}^{s,D}(x_{j}^{+}) - p_{h}^{s,D}(x_{j}^{-}) }{ \overline{ (\rho_{h}^{s,D}) }_{j} \Delta x}
		+ \frac{(p_{h}^{s,D})_{x}(x_{j}^{\kappa,\alpha})}{ \rho_{h}^{s,D} (x_{j}^{\kappa,\alpha}) } \,.
	\end{equation*}
	Similarly, one can derive
	\begin{align*} \notag
		\big\langle S_{h,3}^{D}, 1 \big\rangle_{j}
		& = \frac{\overline{ (m_{h}^{D}) }_{j} }{ \overline{ (\rho_{h}^{s,D}) }_{j} }
		\Big( p_{h}^{s,D}(x_{j}^{+})  -  p_{h}^{s,D}(x_{j}^{-}) \Big)
		+ \frac{\Delta x}{2} \sum_{\kappa = 1 }^{2} \sum_{\alpha = 1}^{N} \omega_{\alpha}
		\frac{m_{h}^{D}(x_{j}^{\kappa,\alpha}) }{ \rho_{h}^{s,D} (x_{j}^{\kappa,\alpha}) }
		(p_{h}^{s,D})_{x}(x_{j}^{\kappa,\alpha})  \\ \notag
		& = - \frac{\Delta x}{2} \sum_{\kappa = 1 }^{2} \sum_{\alpha = 1}^{N} \omega_{\alpha} m_{h}^{D}(x_{j}^{\kappa,\alpha})
		(\hat{\phi}_{h}^{D})_{x} (x_{j}^{\kappa,\alpha})   \,.
	\end{align*}
	Thus ${\bf L}_{h,{\bf S}}$ is reformulated as
	\begin{align*}
		{\bf L}_{h,{\bf S}} &= \overline{ {\bf U} }_{j}^{D} + \frac{\lambda_{x}}{\eta\theta } \big\langle {\bf S}_{h}^{D}, 1 \big\rangle_{j}
		= \overline{ {\bf U} }_{j}^{D} + \frac{\Delta t}{\eta\theta } \sum_{\kappa = 1 }^{2} \sum_{\alpha = 1}^{N} \frac{\omega_{\alpha}}{2}
		\hat{ {\bf S} }_{h}^{D}(x_{j}^{\kappa,\alpha})  \\ \notag
		& = \sum_{\kappa = 1 }^{2} \sum_{\alpha = 1}^{N} \frac{\omega_{\alpha}}{2}
		\Big( {\bf U}_{h}^{D}(x_{j}^{\kappa,\alpha})
		+ \frac{\Delta t}{\eta\theta }  \hat{ {\bf S} }_{h}^{D}(x_{j}^{\kappa,\alpha}) \Big) \,.
	\end{align*}
	where
	\begin{align*}
		\hat{ {\bf S} }_{h}^{D} := \left( 0, -\rho_{h}^{D}(\hat{\phi}_{h}^{D})_{x}, -m_{h}^{D}(\hat{\phi}_{h}^{D})_{x} \right)^\top \,.
	\end{align*}
	Thanks to Lemma \ref{lemmma:pp-source} , we have $ {\bf L}_{h,{\bf S}} \in G$  under the condition
	\begin{equation*}
		\Delta t < \eta\theta \min_{x \in \mathbb{Q}_{j} }
		\Bigg\{ \frac{1}{|(\hat{\phi}_{h}^{D})_{x}|} \sqrt{\frac{2 p_{h}^{D}}{(\gamma-1)\rho_{h}^{D}}} \Bigg\} \,.
	\end{equation*}
	or equivalently
	\begin{align*}
		\lambda_{x} \tilde{\alpha}_{2,j}^D < \frac{\eta \theta \hat{\omega}_{1}}{2} \,.
	\end{align*}
	Combining {those} results, we conclude that if
	\begin{align}\label{eq:1722}
		\lambda_{x} \in  \Big\{ \lambda \in \mathbb{R}^{+}:~
		\lambda \tilde{\alpha}_{1,j}^D <  \frac{(1-\eta)\theta \hat{\omega}_{1}}{2},~
		\lambda \tilde{\alpha}_{2,j}^D <  \frac{\eta \theta \hat{\omega}_{1}}{2} \Big\} \,,
	\end{align}
	then $\overline{ {\bf U} }_{j}^{C} + \Delta t  {\bf L}_{j}^{C} \big( {\bf U}_{h}^{C}, {\bf U}_{h}^{D} \big) \in G$. Since the parameters $\eta$ can be chosen arbitrarily in this proof, we specify $\eta = \tilde{\alpha}_{2,j}^D/\tilde{\alpha}_{j}^D = \tilde{\alpha}_{2,j}^D/ ( \tilde{\alpha}_{1,j}^D + \tilde{\alpha}_{2,j}^D ) $
	such that the condition \eqref{eq:1722} becomes
	\[
	\lambda_{x} (\tilde{\alpha}_{1,j}^D + \tilde{\alpha}_{2,j}^D ) = \lambda_{x} \tilde{\alpha}_{j}^D  < \frac{\theta \hat{\omega}_{1}}{2}  \,.
	\]
	Similar arguments show that $\overline{ {\bf U} }_{j+\frac{1}{2}}^{D} + \Delta t  {\bf L}_{j+\frac{1}{2}}^{D} \big( {\bf U}_{h}^{D}, {\bf U}_{h}^{C} \big) \in G$. The proof is completed.
\end{proof}

\subsection{Positivity-preserving limiting operators ${\Pi}_{h}^C$ and ${\Pi}_{h}^D$} \label{section:limiter}

A simple positivity-preserving limiter can be applied to enforce the condition (\ref{pp-condition}). Because the limiting procedures for ${\bf U}_{h}^C(x)$ and ${\bf U}_{h}^D(x)$ are similar and implemented separately, we only present that for ${\bf U}_{h}^C(x)$. For any ${\bf U}_{h}^C \in \overline{\mathbb{G}}_{h}^{C,k}$ with ${\bf U}_{h}^C \big|_{I_{j}} := {\bf U}_{j}^C(x)$, we follow \cite{Zhang2012robust,Zhang2010PP} and define the positivity-preserving limiting operator ${\Pi}_{h}^C : \overline{\mathbb{G}}_{h}^{C,k} \longrightarrow \mathbb{G}_{h}^{C,k}$ as follows
\[
({\Pi}_{h}^C {\bf U}_{h}^C) \big|_{I_{j}} =
\theta_{j}^{(2)} ( \hat{{\bf U}}_{j}^C(x) - \overline{{\bf U}}_{j}^C ) + \overline{{\bf U}}_{j}^C \,, \quad
\theta_{j}^{(2)} = \min \Bigg\{1, ~ \frac{ p(\overline{{\bf U}}_{j}^C) - \epsilon_{2} }{ p(\overline{{\bf U}}_{j}^C)
	- \min\limits_{x\in \mathbb{S}_j} p(\hat{{\bf U}}_{j}^C(x)) } \Bigg\} \,,
\]
where $\hat{{\bf U}}_{j}^C(x) = (\hat{\rho}_{j}^C(x), m_{j}^C(x), E_{j}^C(x) )^{\top} $, and  $\hat{\rho}_{j}^C(x)$
is a modification of the density $\rho_{j}^C(x)$ given by
\[
\hat{\rho}_{j}^C(x)  = \theta_{j}^{(1)} ( \rho_{j}^C(x) - \overline{\rho}_{j}^C ) + \overline{\rho}_{j}^C \,,\quad
\theta_{j}^{(1)} = \min \Bigg\{1, ~ \frac{\overline{\rho}_{j}^C - \epsilon_{1} }{ \overline{\rho}_{j}^C -
	\min\limits_{x\in \mathbb{S}_j} \rho_{j}^C(x) } \Bigg\} \,.
\]
Here $\epsilon_{1}$ and $\epsilon_{2}$ are two small positive numbers for avoiding the effect of round-off error, and in the computation, one can take
$\epsilon_{1} = \min\{ 10^{-13}, \overline{\rho}_{j}^C \} $, $\epsilon_{2} = \min\{ 10^{-13}, p(\overline{{\bf U}}_{j}^C) \} $. Note that such a local scaling limiter keeps the local conservation and does not destroy the high-order accuracy; 
see \cite{zhang2010maximum,zhang2017positivity} for more details. The positivity-preserving limiting operator ${\Pi}_{h}^D : \overline{\mathbb{G}}_{h}^{D,k} \longrightarrow \mathbb{G}_{h}^{D,k}$ defined on the dual mesh is similar.

Suppose the initial numerical solutions are defined as  ${\bf U}_{h}^{C,0} = {\Pi}_{h}^C \big[ {\bf U}_{h}^{C}(x,0) \big]$,  ${\bf U}_{h}^{D,0} = {\Pi}_{h}^D \big[ {\bf U}_{h}^{D}(x,0) \big]$. For the WB CDG schemes (\ref{eq:CDG-modify-primal}) and (\ref{eq:CDG-modify-dual}) coupled with an third order SSP Runge-Kutta method,
if the positivity-preserving limiter is used at each Runge-Kutta stage, then our fully discrete CDG schemes are positivity-preserving, {namely, ${\bf U}_{h}^{C,n} \in \mathbb{G}_{h}^{C,k}$ and ${\bf U}_{h}^{D,n} \in \mathbb{G}_{h}^{D,k}$}.

\begin{remark}[WB Implementation of Non-oscillatory Limiters]\label{rem:WB-WENO}
When the exact solution contains strong discontinuities, 
the above positivity-preserving limiter may not control the nonphysical numerical  	
oscillations in the CDG solutions, and 
a standard non-oscillatory limiter, such as the TVD/TVB or WENO limiter, is still needed in the ``troubled'' cells. 
We will adopt the WENO limiter \cite{qiu2005runge} in the numerical examples involving discontinuities (Examples 4, 9, and 11 in Section \ref{section:numerical-example}).  
 However, the traditional use of non-oscillatory limiters may 
destroy the WB property of our schemes. 
This issue can be easily addressed by slightly modifying the procedure of identifying the ``troubled'' cells, based on 
 the perturbations of the solutions and cell averages 
\[
\tilde{\bf U}_{h}^C(x) = {\bf U}_{h}^C(x) - {\bf U}_{h}^{s,C}(x) \,,  \qquad
\overline{\tilde{\bf U}}_{j}^C = \overline{\bf U}_{j}^C - \overline{\bf U}_{j}^{s,C} \,.
\]
More specifically, for each $j$ we first use the TVB corrected minmod function (see, e.g., \cite{qiu2005runge})
\begin{equation}\label{minmod-corrected}
	\tilde{\tt m}(a_{1}, a_{2},\cdots,  a_{n})=
	\begin{cases}
		a_{1},    &  {\rm if} ~ |a_{1}| \leq M (\Delta x)^2  \,,  \\
		{\tt m}(a_{1}, a_{2},\cdots,  a_{n}),    &  {\rm otherwise}  \,,
	\end{cases} 
\end{equation}
to check if the cell $I_{j}$ is ``troubled'' based on the cell-averaged values $\overline{\tilde{\bf U}}_{j}^C$, 
$\overline{\tilde{\bf U}}_{j\pm 1}^C$, and the endpoint values $\tilde{\bf U}_{h}^C(x_{j-\frac{1}{2}}^{+})$, $\tilde{\bf U}_{h}^C(x_{j+\frac{1}{2}}^{-})$ on the cell $I_{j}$. Only if cell $I_{j}$ is identified as ``troubled'' cell, we then apply the WENO limiter on ${\bf U}_{h}^C(x)$ as usual before the positivity-preserving limiter. 
%
The same implementation is also used separately on the dual mesh. 
Note that if the steady state is reached, then $\tilde{\bf U}_{h}^C(x)$ becomes zero so that no cell will be flagged as ``troubled'', and thus the WB property is preserved. 
Numerical results in Example 11 will further confirm that our implementation of the WENO limiter does not affect the WB property. 
\end{remark}

\section{Extension to the two-dimensional case} \label{section:ppwb2d}

This section will extend the positivity-preserving WB CDG schemes to the two-dimensional Euler equations under the gravitational field $\phi(x,y)$
\begin{equation}\label{eq:euler-source-2d}
	{\bf U}_{t} + \nabla\cdot {\bf F}({\bf U}) = {\bf S}({\bf U}, \nabla\phi) \,,
\end{equation}
where ${\bf U} = (\rho, \rho u_{1}, \rho u_{2}, E)^\top$ denotes the conservative variables,
${\bf F}({\bf U}) = \big( {\bf F}_{1} ({\bf U}), {\bf F}_{2} ({\bf U}) \big)$ with
\[
{\bf F}_{1} ({\bf U}) = (\rho u_{1}, \rho u_{1}^2 + p, \rho u_{1} u_{2}, (E+p)u_{1} )^\top \,,
\]
\[
{\bf F}_{2} ({\bf U}) = (\rho u_{2}, \rho u_{1} u_{2}, \rho u_{2}^2 + p, (E+p)u_{2} )^\top \,,
\]
denote the fluxes, and ${\bf S}({\bf U},\nabla\phi) = (0, -\rho\phi_{x}, -\rho\phi_{y}, 
-{\bf m}\cdot\nabla\phi)^\top$ is the source term with $ {\bf m} = (\rho u_{1}, \rho u_{2})$ 
being the momentum vector.

Let $\mathcal{T}_{h}^{C} = \big\{ I_{i,j}\,, \forall i,j \big\}$ and $\mathcal{T}_{h}^{D} = \big\{ I_{i+\frac{1}{2}, j+\frac{1}{2}}\,, \forall i,j \big\}$
respectively denote two overlapping uniform  meshes for the rectangular computational domain
$\Omega = [x_{\min},x_{\max}] \times [y_{\min},y_{\max}]$ with $ I_{i,j} = (x_{i-\frac{1}{2}}, x_{i+\frac{1}{2}}) \times (y_{j-\frac{1}{2}}, y_{j+\frac{1}{2}}) $
and $ I_{i+\frac{1}{2}, j+\frac{1}{2}} = ( x_{i}, x_{i+1} ) \times ( y_{j}, y_{j+1} ) $.
The spatial {stepsizes} are $\Delta x = x_{i+\frac{1}{2}} - x_{i-\frac{1}{2}} $ in the $x$-direction and $\Delta y = y_{j+\frac{1}{2}} - y_{j-\frac{1}{2}} $ in the $y$-direction. We define two discrete function spaces associated with the overlapping meshes
{$\{I_{i,j}\}$ and $\{ I_{i+\frac{1}{2}, j+\frac{1}{2}}\}$}
\[
\mathbb{V}_{h}^{C,k} = \Big\{v:v \big|_{I_{i,j}} \in \mathbb{P}^{k}(I_{i,j}), \forall i,j \Big\} \,, \quad
\mathbb{V}_{h}^{D,k} = \Big\{v:v \big|_{I_{i+\frac{1}{2}, j+\frac{1}{2}}} \in \mathbb{P}^{k}(I_{i+\frac{1}{2}, j+\frac{1}{2}}),
\forall i,j \Big\} \,,
\]
where $\mathbb{P}^{k}(I_{i,j})$ and $\mathbb{P}^{k}(I_{i+\frac{1}{2}, j+\frac{1}{2}})$ denote the space of two-dimensional polynomials in
the cells 
$I_{i,j}$ and $I_{i+\frac{1}{2}, j+\frac{1}{2}}$ with degree of at most $k$, respectively. To solve the system (\ref{eq:euler-source-2d}), the standard CDG method in the semi-discrete form looks for two numerical solutions ${\bf U}_{h}^{C} \in [\mathbb{V}_{h}^{C,k}]^4$ and  ${\bf U}_{h}^{D} \in [\mathbb{V}_{h}^{D,k}]^4$ such that
\begin{align}\label{eq:CDG-primal-2d} \notag
	\int_{ I_{i,j} } \frac{\partial {\bf U}_{h}^{C}}{\partial t} v {\rm d}x {\rm d}y
	& = \frac{1}{\tau_{max}}\int_{ I_{i,j} } ( {\bf U}_{h}^{D} - {\bf U}_{h}^{C}) v {\rm d}x {\rm d}y
	+ \int_{I_{i,j}}  {\bf F}({\bf U}_{h}^{D}) \cdot \nabla v  {\rm d}x {\rm d}y  \\ \notag
	& - \int_{y_{j-\frac{1}{2}}}^{y_{j+\frac{1}{2}}}
	\Big({\bf F}_{1}({\bf U}_{h}^{D}(x_{i+\frac{1}{2}},y))v(x_{i+\frac{1}{2}}^{-},y)
	-      {\bf F}_{1}({\bf U}_{h}^{D}(x_{i-\frac{1}{2}},y))v(x_{i-\frac{1}{2}}^{+},y) \Big)  {\rm d}y \\ \notag
	& - \int_{x_{i-\frac{1}{2}}}^{x_{i+\frac{1}{2}}}
	\Big({\bf F}_{2}({\bf U}_{h}^{D}(x, y_{j+\frac{1}{2}})) v(x, y_{j+\frac{1}{2}}^{-})
	- {\bf F}_{2}({\bf U}_{h}^{D}(x, y_{j-\frac{1}{2}})) v(x, y_{j-\frac{1}{2}}^{+}) \Big) {\rm d}x \\
	& +  \int_{ I_{i,j} } {\bf S}({\bf U}_{h}^{D}, \nabla\phi_{h}^{D}) v {\rm d}x{\rm d}y
	\qquad \forall v \in \mathbb{V}_{h}^{C,k}  \,,
\end{align}
\begin{align}\label{eq:CDG-dual-2d} \notag
	\int_{ I_{i+\frac{1}{2},j+\frac{1}{2}} } \frac{\partial {\bf U}_{h}^{D}}{\partial t} w {\rm d}x {\rm d}y
	& = \frac{1}{\tau_{max}}\int_{ I_{i+\frac{1}{2},j+\frac{1}{2}} } ( {\bf U}_{h}^{C} - {\bf U}_{h}^{D}) w {\rm d}x {\rm d}y
	+ \int_{I_{i+\frac{1}{2},j+\frac{1}{2}}}  {\bf F}({\bf U}_{h}^{C}) \cdot \nabla w  {\rm d}x {\rm d}y  \\ \notag
	& - \int_{y_{j}}^{y_{j+1}}
	\Big({\bf F}_{1}({\bf U}_{h}^{C}(x_{i+1},y))w(x_{i+1}^{-},y)
	-    {\bf F}_{1}({\bf U}_{h}^{C}(x_{i},y))w(x_{i}^{+},y) \Big)  {\rm d}y \\ \notag
	& - \int_{x_{i}}^{x_{i+1}}
	\Big({\bf F}_{2}({\bf U}_{h}^{C}(x, y_{j+1})) w(x, y_{j+1}^{-})
	- {\bf F}_{2}({\bf U}_{h}^{C}(x, y_{j} )) w(x, y_{j}^{+}) \Big) {\rm d}x \\
	& +  \int_{ I_{i+\frac{1}{2},j+\frac{1}{2}} } {\bf S}({\bf U}_{h}^{C}, \nabla\phi_{h}^{C}) w {\rm d}x{\rm d}y
	\qquad \forall w \in \mathbb{V}_{h}^{D,k} \,,
\end{align}
where $\tau_{max} = \tau_{max}(t)$ is the maximal time step allowed by the CFL restriction at time $t$. As the one-dimensional case,
the two-dimensional standard CDG method \eqref{eq:CDG-primal-2d}--\eqref{eq:CDG-dual-2d} is generally not  WB for the stationary hydrostatic  solutions.

\subsection{Novel projection of the stationary hydrostatic solutions}
Assume that the target equilibrium state of the system \eqref{eq:euler-source-2d} is known and denoted by
$$ \big\{ \rho^s(x,y),  u_1^s(x,y),  u_2^s(x,y), p^s(x,y) \big\}, $$
which satisfies
\begin{equation}\label{eq:steady-state-2d}
	 u_1^s(x,y) = 0 \,, \quad  u_2^s(x,y) = 0 \,,  \quad \nabla p^s = -\rho^s \nabla \phi \,.
\end{equation}

Let $ {\bf U}^s(x,y) = \big( \rho^s(x,y), 0,  0, p^s(x,y)/(\gamma-1) \big)^\top $,
{and define}
\begin{align*}
	& I_{i,j}^0 = (x_{i-\frac{1}{2}}, x_{i})\times(y_{j-\frac{1}{2}}, y_{j}),  \quad
	I_{i,j}^1 = (x_{i}, x_{i+\frac{1}{2}})\times(y_{j-\frac{1}{2}}, y_{j}),\\
	&I_{i,j}^2 = (x_{i-\frac{1}{2}}, x_{i})\times(y_{j}, y_{j+\frac{1}{2}}), \quad
	I_{i,j}^3 = (x_{i}, x_{i+\frac{1}{2}})\times(y_{j}, y_{j+\frac{1}{2}}).
\end{align*}
Following the ideas in the one-dimensional case, we first introduce the novel projection of the stationary solution $ {\bf U}^s(x,y)$ on the primal mesh. Define {the} operator  $\mathcal{P}_{h}^{C} : L^2(\Omega) \longrightarrow \mathbb{V}_{h}^{C,k}$, such that for any function $f \in L^2(\Omega)$,
\begin{equation} \label{eq:projection-special-2d}
	\begin{aligned}
	\int_{I_{i,j}^m}\mathcal{P}_{h}^{C}(f) {\rm d}x {\rm d}y  &= \int_{I_{i,j}^m} f {\rm d}x {\rm d}y \,, \quad m \in \{ 0,1,2 \} \,, \\
	\int_{I_{i,j}} \mathcal{P}_{h}^{C} (f) v {\rm d}x {\rm d}y  &= \int_{I_{i,j}} f v {\rm d}x {\rm d}y \,,  \quad
	\forall v \in \mathrm{span} \{ \Phi_{0}\,, \Phi_{4}\,, \cdots \,, \Phi_{K} \},
\end{aligned}
\end{equation}
where
$K = k(k+3)/2$, and $\{\Phi_{l} \}_{l=0}^{K}$ is an orthogonal basis of $\mathbb{P}^{k}(I_{i,j})$ and taken as the scaled Legendre polynomials
\begin{align*}
	&\Phi_{0}{(\xi,\eta)} = 1 \,, 
\  \Phi_{1}{(\xi,\eta)} = \xi \,,  \ \Phi_{2}{(\xi,\eta)} =  \eta \,,   \ \Phi_{3}{(\xi,\eta)} =  \xi\eta \,,
\\  &
	\ \Phi_{4}{(\xi,\eta)} = \xi^2 - \frac{1}{3} \,,   \
 \Phi_{5}{(\xi,\eta)} = \eta^2 - \frac{1}{3} \,,  \  \cdots{,}
\end{align*}
with $\xi = 2(x-x_{i})/\Delta x$ and $\eta = 2(y-y_{j})/\Delta y$. 
It follows from \eqref{eq:projection-special-2d} that the operator $\mathcal{P}_{h}^{C} $ satisfies
\begin{align} \label{eq:projection-primal-2d}
	\int_{I_{i,j}^m}\mathcal{P}_{h}^{C}(f) {\rm d}x {\rm d}y = \int_{I_{i,j}^m} f {\rm d}x {\rm d}y{,}
	\quad \forall m \in \{ 0,1,2,3 \} \,,  \quad \forall  i,j \,,
\end{align}
for any function $f \in L^2(\Omega)$.
As the one-dimensional case, the operator $\mathcal{P}_{h}^{C} $ defined by \eqref{eq:projection-special-2d} can be explicitly expressed.
In fact, the piecewise polynomial $\mathcal{P}_{h}^{C} (f)$ on each cell $I_{i,j}$ takes the form of
\begin{align*}
	\mathcal{P}_{h}^{C} (f) = \sum_{l=0}^{K} N_{l}(f) \Phi_{l}{,}
\end{align*}
with the polynomial coefficients $\{ N_{l}(f) \}_{l=0}^K$ given by
\begin{align*}
	&N_{l}(f) = \frac{ \int_{I_{i,j}} f \Phi_{l} {\rm d}x {\rm d}y }{\int_{I_{i,j}} \Phi_{l}^{2} {\rm d}x {\rm d}y} \,,
	\quad  l \neq 1,2,3 \,, \\
	&N_{1}(f) = \frac{-4(b_{0} + b_{2})}{\Delta x \Delta y} \,, \quad
	N_{2}(f) = \frac{-4(b_{0} + b_{1})}{\Delta x \Delta y} \,, \quad
	N_{3}(f) = \frac{-8(b_{1} + b_{2})}{\Delta x \Delta y} \,,
\end{align*}
where {the} parameter $b_{m} = \int_{ I_{i,j}^m } \big( f - \sum\limits_{l\notin \{1,2,3\} } N_{l}(f) \Phi_{l} \big) {\rm d}x {\rm d}y $,
$m \in \{ 0,1,2 \} $.

Similarly, we can define the projection $\mathcal{P}_{h}^{D} : L^2(\Omega) \longrightarrow \mathbb{V}_{h}^{D,k}$
on the dual mesh  such that
\begin{align} \label{eq:projection-dual-2d}
	\int_{I_{i+\frac{1}{2},j+\frac{1}{2}}^m}\mathcal{P}_{h}^{D}(f) {\rm d}x {\rm d}y =
	\int_{I_{i+\frac{1}{2},j+\frac{1}{2}}^m} f {\rm d}x {\rm d}y{,}
	\quad \forall m \in \{ 0,1,2,3 \} \,,  \quad \forall i,j \,,
\end{align}
where $I_{i+\frac{1}{2},j+\frac{1}{2}}^m$ is a shift of $I_{i,j}^m$ with $\frac{\Delta x}{2}$ in the  $x$-direction and
$\frac{\Delta y}{2}$ in the $y$-direction. Combining \eqref{eq:projection-primal-2d} {with} \eqref{eq:projection-dual-2d} leads to the following crucial {identities}
\begin{align}\label{eq:ID001}
\int_{I_{i,j}}  \mathcal{P}_{h}^{C} (f) {\rm d}x{\rm d}y =  \int_{I_{i,j}}  \mathcal{P}_{h}^{D} (f) {\rm d}x{\rm d}y  \,, \quad
\int_{I_{i+\frac{1}{2},j+\frac{1}{2}}}  \mathcal{P}_{h}^{D} (f) {\rm d}x{\rm d}y =
\int_{I_{i+\frac{1}{2},j+\frac{1}{2}}}  \mathcal{P}_{h}^{C} (f) {\rm d}x{\rm d}y \,.
\end{align}
{If let} ${\bf U}_{h}^{s,C}$ and ${\bf U}_{h}^{s,D}$ denote the above novel {projections  of} the steady state solutions ${\bf U}^s(x,y)$ onto the space  $[\mathbb{V}_{h}^{C,k}]^4$ and $[\mathbb{V}_{h}^{D,k}]^4$, respectively{, then} the {identities in} \eqref{eq:ID001} {imply}
\begin{align}\label{eq:average-identity-2d}
\int_{I_{i,j}}  {\bf U}_{h}^{s,C} {\rm d}x{\rm d}y =  \int_{I_{i,j}}  {\bf U}_{h}^{s,D} {\rm d}x{\rm d}y  \,, \quad
\int_{I_{i+\frac{1}{2},j+\frac{1}{2}}}  {\bf U}_{h}^{s,D}  {\rm d}x{\rm d}y =
\int_{I_{i+\frac{1}{2},j+\frac{1}{2}}}  {\bf U}_{h}^{s,C}  {\rm d}x{\rm d}y \,,  \quad \forall i,j \,.
\end{align}

\subsection{WB CDG schemes}

The design of our two-dimensional WB CDG method on the rectangular mesh is similar to the procedure described in the one-dimensional case.

The numerical dissipation term in the semi-discrete CDG method (\ref{eq:CDG-primal-2d})  is modified as
\begin{equation*}
	\tilde{\rm d}_{ij}^{C} ({\bf U}_{h}^{C}, {\bf U}_{h}^{D}, v) =
	\frac{1}{\tau_{max}} \int_{I_{i,j}} ({\bf U}_{h}^{D}   - {\bf U}_{h}^{C}   ) v {\rm d}x {\rm d}y +
	\frac{1}{\tau_{max}} \int_{I_{i,j}} ({\bf U}_{h}^{s,C} - {\bf U}_{h}^{s,D} ) v {\rm d}x {\rm d}y \,,
\end{equation*}
where $\tilde{\rm d}_{ij}^{C}$ satisfies the WB property $\tilde{\rm d}_{ij}^{C} ({\bf U}_{h}^{s,C}, {\bf U}_{h}^{s,D}, v) = {\bf 0}$. Such modification does not affect the spatial accuracy.

In order to discretize the flux and source term integrals, we need to introduce the two-dimensional numerical quadrature.
The notations for the quadrature points in the $x$-direction is the same as the one-dimensional case. For the $y$-direction,
let $ \mathbb{Q}_{j}^{1,y} = \big\{ y_{j}^{1,\mu} \big\}_{\mu = 1}^{N} $
and $ \mathbb{Q}_{j}^{2,y} = \big\{ y_{j}^{2,\mu} \big\}_{\mu = 1}^{N} $
denote the $N$-point Gauss quadrature nodes transformed into the interval $\big[y_{j-\frac{1}{2}}, y_{j}\big]$
and  $\big[y_{j}, y_{j+\frac{1}{2}}\big]$, respectively, and $\big\{\omega_{\mu}\big\}_{\mu = 1}^{N}$
are the associated weights satisfying $\sum_{\mu = 1}^{N} \omega_{\mu} = 1$, with $N \geq k+1$ for the
CDG accuracy requirement.
Then the flux integrals can be approximated by the numerical quadrature
\[
\int_{I_{i,j}}  {\bf F}({\bf U}_{h}^{D}) \nabla v {\rm d}x{\rm d}y \approx
\frac{\Delta x \Delta y}{4} \sum_{\kappa,\alpha}  \sum_{\sigma, \mu} \omega_{\alpha} \omega_{\mu}
{\bf F}({\bf U}_{h}^{D} (x_{i}^{\kappa,\alpha}, y_{j}^{\sigma, \mu})) \nabla v(x_{i}^{\kappa,\alpha}, y_{j}^{\sigma, \mu}) \,,
\]
\[
\int_{\partial I_{i,j}}  \big( {\bf F}({\bf U}_{h}^{D}) \cdot {\bf n} \big)v {\rm d}s \approx
\Delta y  \delta^x_{ij} { \big( {\bf F}_{1}({\bf U}_{h}^{D}) v \big)} +
\Delta x \delta^y_{ij} { \big( {\bf F}_{2}({\bf U}_{h}^{D}) v \big)}   \,,
\]
where $\sigma,\kappa \in \{1,2\}$ and $\mu,\alpha \in \{1,\cdots,N\}$, $ {\bf n} $ is the outward unit normal vector of the cell $I_{i,j}$, and the operators
$\delta^x_{ij}, \delta^y_{ij}$ are defined by
\begin{align}\notag
  \delta^x_{ij} { ( f)}
	&:= \sum_{\kappa,\alpha} \frac{\omega_{\alpha}}{2}
	\Big(  f (x_{i+\frac{1}{2}}, y_{j}^{\kappa,\alpha})
	- f (x_{i-\frac{1}{2}}, y_{j}^{\kappa,\alpha})  \Big)   \,,  \\ \notag
	\delta^y_{ij} { (f)}
	&:= \sum_{\kappa,\alpha} \frac{\omega_{\alpha}}{2}
	\Big(  f (x_{i}^{\kappa,\alpha}, y_{j+\frac{1}{2}})
	- f (x_{i}^{\kappa,\alpha}, y_{j-\frac{1}{2}})  \Big)  \,.
\end{align}
The last step for designing our WB CDG spatial discretization is to suitably discretize the source term integral.  The source term integrals in the momentum equations are reformulated into
\begin{align*}
	\int_{I_{i,j}} (S_{2},S_{3})^\top v {\rm d}x{\rm d}y
	& = - \int_{I_{i,j}} \rho \nabla \phi v {\rm d}x{\rm d}y
	= \int_{I_{i,j}} \frac{\rho}{\rho^s} \nabla p^s v {\rm d}x{\rm d}y  \\  \notag
	& =   \int_{I_{i,j}} \Bigg( \frac{\rho}{\rho^s} -  \frac{ \overline{\rho}_{ij} }{ \overline{\rho}^s_{ij} }
	+   \frac{ \overline{\rho}_{ij} }{ \overline{\rho}^s_{ij} }  \Bigg) \nabla p^s v {\rm d}x{\rm d}y    \,,
\end{align*}
where $\overline{\rho}_{ij} = \frac{1}{\Delta x \Delta y}\int_{I_{ij}} \rho {\rm d}x{\rm d}y $ is the cell average. Following the one-dimensional design, we observe that
\begin{align*} \notag
	\int_{I_{i,j}} (S_{2},S_{3})^\top v {\rm d}x{\rm d}y   &\approx  \int_{ I_{i,j} }
	\Bigg( \frac{ \rho_{h}^{D} }{ \rho_{h}^{s,D} }
	- \frac{\overline{ (\rho_{h}^{D}) }_{ij} }{ \overline{ (\rho_{h}^{s,D}) }_{ij}}  \Bigg)
	\nabla p_{h}^{s,D} v {\rm d}x{\rm d}y
	+ \frac{\overline{ (\rho_{h}^{D}) }_{ij} }{ \overline{ (\rho_{h}^{s,D}) }_{ij} }
	\int_{I_{i,j}} \nabla p_{h}^{s,D} v {\rm d}x{\rm d}y    \\  \notag
	& \approx  \int_{ I_{i,j} }
	\Bigg( \frac{ \rho_{h}^{D} }{ \rho_{h}^{s,D} }
	- \frac{\overline{ (\rho_{h}^{D}) }_{ij} }{ \overline{ (\rho_{h}^{s,D}) }_{ij}}  \Bigg)
	\nabla p_{h}^{s,D} v {\rm d}x{\rm d}y    -
	\frac{\overline{ (\rho_{h}^{D}) }_{ij} }{ \overline{ (\rho_{h}^{s,D}) }_{ij} }
	\int_{ I_{i,j} } p_{h}^{s,D} \nabla v {\rm d}x{\rm d}y   \\
	&+\frac{\overline{(\rho_{h}^{D})}_{ij} }{ \overline{(\rho_{h}^{s,D})}_{ij}}
	\int_{\partial I_{i,j}} p_{h}^{s,D}v {\bf n} {\rm d}s \,.
\end{align*}
Therefore, the source term integrals $\int_{I_{i,j}} (S_{2},S_{3})^\top v {\rm d}x{\rm d}y$  can be approximated by
\begin{align}\notag
	& \big\langle \big(S_{2,h}^{D},S_{3,h}^{D}\big)^\top, v \big\rangle_{ij}
	:= \frac{\Delta x \Delta y}{4}
	\sum_{\kappa,\alpha}  \sum_{\sigma, \mu}
	\omega_{\alpha} \omega_{\mu}
	\Bigg( \frac{ \rho_{h}^{D}(x_{i}^{\kappa,\alpha}, y_{j}^{\sigma, \mu}) }
	{ \rho_{h}^{s,D}(x_{i}^{\kappa,\alpha}, y_{j}^{\sigma, \mu}) }
	-  \frac{\overline{(\rho_{h}^{D})}_{ij} }{ \overline{(\rho_{h}^{s,D})}_{ij}}  \Bigg)
	(\nabla p_{h}^{s,D} v)(x_{i}^{\kappa,\alpha}, y_{j}^{\sigma, \mu})  \\ \notag
	& - \frac{\overline{(\rho_{h}^{D})}_{ij} }{ \overline{(\rho_{h}^{s,D})}_{ij}}
	\frac{\Delta x \Delta y}{4}  \sum_{\kappa,\alpha}  \sum_{\sigma, \mu} \omega_{\alpha} \omega_{\mu}
	(p_{h}^{s,D} \nabla v)(x_{i}^{\kappa,\alpha}, y_{j}^{\sigma, \mu})
	+ \frac{\overline{(\rho_{h}^{D})}_{ij} }{ \overline{(\rho_{h}^{s,D})}_{ij}}
	\Big(\Delta y \delta^{x}_{ij} (p_{h}^{s,D}v), \Delta x \delta^{y}_{ij} (p_{h}^{s,D}v)  \Big)^\top \,.
\end{align}
Similarly, the source term integral $\int_{I_{i,j}} S_4 v {\rm d}x{\rm d}y$ in the energy equation can be approximated by
\begin{align}\notag
	& \big\langle S_{4,h}^{D}, v \big\rangle_{ij}
	:= \frac{\Delta x \Delta y}{4} \sum_{\kappa,\alpha}  \sum_{\sigma, \mu} \omega_{\alpha} \omega_{\mu}
	\Bigg( \frac{ {\bf m}_{h}^{D}(x_{i}^{\kappa,\alpha}, y_{j}^{\sigma, \mu}) }
	{ \rho_{h}^{s,D}(x_{i}^{\kappa,\alpha}, y_{j}^{\sigma, \mu}) }
	-  \frac{\overline{( {\bf m}_{h}^{D})}_{ij} }{ \overline{(\rho_{h}^{s,D})}_{ij}}  \Bigg)
	(\nabla p_{h}^{s,D} v)(x_{i}^{\kappa,\alpha}, y_{j}^{\sigma, \mu})  \\ \notag
	& - \frac{\overline{( {\bf m}_{h}^{D})}_{ij} }{ \overline{(\rho_{h}^{s,D})}_{ij}}
	\frac{\Delta x \Delta y}{4}  \sum_{\kappa,\alpha}  \sum_{\sigma, \mu} \omega_{\alpha} \omega_{\mu}
	(p_{h}^{s,D} \nabla v)(x_{i}^{\kappa,\alpha}, y_{j}^{\sigma, \mu})
	+ \frac{\overline{( {\bf m}_{h}^{D})}_{ij} }{ \overline{(\rho_{h}^{s,D})}_{ij}}
	\Big(\Delta y \delta^{x}_{ij} (p_{h}^{s,D}v),  \Delta x \delta^{y}_{ij} (p_{h}^{s,D}v) \Big)^\top \,.
\end{align}
Combining those leads to the following WB CDG discretization for the two-dimensional Euler equations with gravity
on the primal mesh
\begin{align}\nonumber
	&\int_{ I_{i,j} } \frac{\partial {\bf U}_{h}^{C}}{\partial t} v {\rm d}x {\rm d}y
	= \frac{1}{\tau_{max}} \int_{ I_{i,j} } ({\bf U}_{h}^{D}   - {\bf U}_{h}^{C}   )  v {\rm d}x {\rm d}y
	+ \frac{1}{\tau_{max}} \int_{ I_{i,j} } ({\bf U}_{h}^{s,C} - {\bf U}_{h}^{s,D} )  v {\rm d}x {\rm d}y
	+ \big\langle {\bf S}_{h}^{D}, v \big\rangle_{ij} \\  
	& + \frac{\Delta x \Delta y}{4} \sum_{\kappa,\alpha}  \sum_{\sigma, \mu} \omega_{\alpha} \omega_{\mu}
	{\bf F}({\bf U}_{h}^{D} (x_{i}^{\kappa,\alpha}, y_{j}^{\sigma, \mu}))
	\nabla v(x_{i}^{\kappa,\alpha}, y_{j}^{\sigma, \mu})
	 - \Delta y \delta^{x}_{ij} {\big( {\bf F}_{1}({\bf U}_{h}^{D}) v \big)}
	- \Delta x \delta^{y}_{ij} \big( {\bf F}_{2}({\bf U}_{h}^{D})v \big)  \,,\label{eq:CDG-modify-primal-2d}
\end{align}
where $\big\langle {\bf S}_{h}^{D}, v \big\rangle_{ij} = \big (0, \big\langle S_{h,2}^{D}, v \big\rangle_{ij},
\big\langle S_{h,3}^{D}, v \big\rangle_{ij}, \big\langle S_{h,4}^{D}, v \big\rangle_{ij} \big)^\top $.
The WB CDG  spatial discretization on the dual mesh is very similar. Denote ${\bf Q} = ({\bf Q}_{1}, {\bf Q}_{2})$, with
${\bf Q}_{1} = (0, \rho, 0, \rho u_{1} )^\top$, ${\bf Q}_{2} = (0, 0, \rho, \rho u_{2})^\top$, one has
${\bf S}({\bf U}) = - {\bf Q} \cdot \nabla \phi$,  and the source term integrals on the dual mesh  are approximated by
\begin{align}\notag
	\big\langle {\bf S}_{h}^{C}, w \big\rangle_{i+\frac{1}{2}, j+\frac{1}{2}}
	& = \frac{\Delta x \Delta y}{4} \sum_{\kappa,\alpha}  \sum_{\sigma, \mu} \omega_{\alpha} \omega_{\mu}
	\Bigg( \frac{ {\bf Q}_{h}^{C}(x_{i+\frac{1}{2}}^{\kappa,\alpha}, y_{j+\frac{1}{2}}^{\sigma, \mu}) }
	{ \rho_{h}^{s,C}(x_{i+\frac{1}{2}}^{\kappa,\alpha}, y_{j+\frac{1}{2}}^{\sigma, \mu}) }
	-  \frac{\overline{( {\bf Q}_{h}^{C})}_{i+\frac{1}{2},j+\frac{1}{2}} }{ \overline{(\rho_{h}^{s,C})}_{i+\frac{1}{2},j+\frac{1}{2}}}  \Bigg)
	(\nabla p_{h}^{s,C} w)(x_{i+\frac{1}{2}}^{\kappa,\alpha}, y_{j+\frac{1}{2}}^{\sigma, \mu})  \\ \notag
	& - \frac{\overline{( {\bf Q}_{h}^{C})}_{i+\frac{1}{2},j+\frac{1}{2}} }{ \overline{(\rho_{h}^{s,C})}_{i+\frac{1}{2},j+\frac{1}{2}}}
	\frac{\Delta x \Delta y}{4}  \sum_{\kappa,\alpha}  \sum_{\sigma, \mu} \omega_{\alpha} \omega_{\mu}
	(p_{h}^{s,C} \nabla w)(x_{i+\frac{1}{2}}^{\kappa,\alpha}, y_{j+\frac{1}{2}}^{\sigma, \mu})   \\ \notag
	&+ \frac{\overline{( {\bf Q}_{h}^{C})}_{i+\frac{1}{2},j+\frac{1}{2}} }{ \overline{(\rho_{h}^{s,C})}_{i+\frac{1}{2},j+\frac{1}{2}}}
	\Big(\Delta y \delta^{x}_{i+\frac{1}{2},j+\frac{1}{2}} (p_{h}^{s,C} w),  \Delta x \delta^{y}_{i+\frac{1}{2},j+\frac{1}{2}} (p_{h}^{s,C} w)  \Big)^\top \,,
\end{align}
where  $\big\{ y_{j+\frac{1}{2}}^{1,\mu} \big\}_{\mu = 1}^{N} = \mathbb{Q}_{j}^{2,y}$,  
      $\big\{ y_{j+\frac{1}{2}}^{2,\mu} \big\}_{\mu = 1}^{N} = \mathbb{Q}_{j+1}^{1,y}$,
and  operators $\delta^x_{i+\frac{1}{2},j+\frac{1}{2}}, \delta^y_{i+\frac{1}{2},j+\frac{1}{2}}$ 
are defined {by}
\begin{align}\notag
	\delta^x_{i+\frac{1}{2},j+\frac{1}{2}} { ( f)}
	&:= \sum_{\kappa,\alpha} \frac{\omega_{\alpha}}{2}
	\Big(  f (x_{i+1}, y_{j+\frac{1}{2}}^{\kappa,\alpha})
	- f (x_{i}, y_{j+\frac{1}{2}}^{\kappa,\alpha})  \Big)   \,,  \\ \notag
	\delta^y_{i+\frac{1}{2},j+\frac{1}{2}} { (f)}
	&:= \sum_{\kappa,\alpha} \frac{\omega_{\alpha}}{2}
	\Big(  f (x_{i+\frac{1}{2}}^{\kappa,\alpha}, y_{j+1})
	- f (x_{i+\frac{1}{2}}^{\kappa,\alpha}, y_{j})  \Big)  \,.
\end{align}
Then the WB CDG discretization on the dual mesh is given by
\begin{align}\notag
	\int_{ I_{i+\frac{1}{2},j+\frac{1}{2}} } &\frac{\partial {\bf U}_{h}^{D}}{\partial t} w {\rm d}x {\rm d}y
	= \frac{1}{\tau_{max}} \int_{ I_{i+\frac{1}{2},j+\frac{1}{2}} } ({\bf U}_{h}^{C}   - {\bf U}_{h}^{D})   w {\rm d}x {\rm d}y
	+ \frac{1}{\tau_{max}} \int_{ I_{i+\frac{1}{2},j+\frac{1}{2}} } ({\bf U}_{h}^{s,D} - {\bf U}_{h}^{s,C}) w {\rm d}x {\rm d}y  
\\ \notag
	& + \frac{\Delta x \Delta y}{4} \sum_{\kappa,\alpha}  \sum_{\sigma, \mu} \omega_{\alpha} \omega_{\mu}
	{\bf F}({\bf U}_{h}^{C} (x_{i+\frac{1}{2}}^{\kappa,\alpha}, y_{j+\frac{1}{2}}^{\sigma, \mu}))
	\nabla w(x_{i+\frac{1}{2}}^{\kappa,\alpha}, y_{j+\frac{1}{2}}^{\sigma, \mu}) 
\\  
	& - \Delta y \delta^{x}_{i+\frac{1}{2},j+\frac{1}{2}} \big( {\bf F}_{1}({\bf U}_{h}^{C}) w \big)
	- \Delta x \delta^{y}_{i+\frac{1}{2},j+\frac{1}{2}} \big( {\bf F}_{2}({\bf U}_{h}^{C}) w \big)
	+ \big\langle {\bf S}_{h}^{C}, w \big\rangle_{i+\frac{1}{2},j+\frac{1}{2}} \,.\label{eq:CDG-modify-dual-2d}
\end{align}

\begin{theorem}
	For the two-dimensional Euler equations (\ref{eq:euler-source-2d}) with gravity, our semi-discrete CDG  schemes
	(\ref{eq:CDG-modify-primal-2d})--(\ref{eq:CDG-modify-dual-2d}) are WB for the stationary  hydrostatic solution
	$(\ref{eq:steady-state-2d})$.
\end{theorem}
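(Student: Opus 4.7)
The plan is to mimic, in two dimensions, the equation-by-equation verification used in Theorem~\ref{theorem:wb-property}. Assume the initial data coincides with the projected stationary state, so that ${\bf U}_{h}^{C}(x,y,0) = {\bf U}_{h}^{s,C}$ and ${\bf U}_{h}^{D}(x,y,0) = {\bf U}_{h}^{s,D}$. With these inputs, the modified numerical dissipation terms $\tilde{\rm d}_{ij}^{C}$ and its dual-mesh counterpart vanish identically by construction. It remains to show that, on every primal cell $I_{i,j}$ and every dual cell $I_{i+\frac12,j+\frac12}$, the flux approximation and the modified source approximation balance component-wise, for every test function $v\in \mathbb{V}_{h}^{C,k}$ and $w\in \mathbb{V}_{h}^{D,k}$.

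The crucial preparatory step is to invoke the cell-average identity \eqref{eq:average-identity-2d} afforded by the novel projection. It immediately yields $\overline{(\rho_{h}^{D})}_{ij} = \overline{(\rho_{h}^{s,D})}_{ij}$, so the ratio $\overline{(\rho_{h}^{D})}_{ij}/\overline{(\rho_{h}^{s,D})}_{ij}$ appearing in $\langle {\bf S}_{h}^{D}, v\rangle_{ij}$ equals exactly $1$. Moreover, since the stationary momentum is zero, the projection ${\bf m}_{h}^{s,D}$ is identically zero, and hence ${\bf m}_{h}^{D} = 0$ pointwise and $\overline{({\bf m}_{h}^{D})}_{ij} = 0$. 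These two facts are the linchpin of the argument: no other feature of the discretization is used in an essential way.

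With these observations in hand, the density, $x$-momentum, $y$-momentum, and energy equations can be checked in turn. The density equation is trivial because ${\bf F}_{1,1} = \rho u_1 = 0$, ${\bf F}_{2,1} = \rho u_2 = 0$, and $S_1 \equiv 0$. The energy equation is equally immediate: every ${\bf F}_{i,4}=(E+p)u_i$ vanishes pointwise, and every term in $\langle S_{4,h}^{D},v\rangle_{ij}$ carries either ${\bf m}_{h}^{D}$ (which is zero) or $\overline{({\bf m}_{h}^{D})}_{ij}$ (also zero). The momentum equations are the substantive case. Since $\rho_{h}^{D}=\rho_{h}^{s,D}$ pointwise and the averaged ratio equals $1$, the first double sum in $\langle (S_{2,h}^{D},S_{3,h}^{D})^{\!\top},v\rangle_{ij}$ vanishes node-by-node, and the source collapses to
\[
-\tfrac{\Delta x\Delta y}{4}\sum_{\kappa,\alpha}\sum_{\sigma,\mu}\omega_{\alpha}\omega_{\mu}(p_{h}^{s,D}\nabla v)(x_i^{\kappa,\alpha},y_j^{\sigma,\mu}) +\bigl(\Delta y\,\delta_{ij}^{x}(p_{h}^{s,D}v),\,\Delta x\,\delta_{ij}^{y}(p_{h}^{s,D}v)\bigr)^{\!\top}.
\]
On the other hand, because $u_1^s=u_2^s=0$, the momentum fluxes reduce to pressure only (${\bf F}_{1,2}={\bf F}_{2,3}=p$, off-diagonals vanishing), so the CDG flux discretization contributes exactly the opposite expression. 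The two cancel precisely, and the analogous verification on the dual mesh is structurally identical.

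The main obstacle is the momentum balance, and specifically the exact cancellation between the pressure-flux quadrature and the modified source quadrature. It is not automatic: it relies on having chosen the source decomposition in the particular ``conservative'' form with the weight $\overline{(\rho_{h}^{D})}_{ij}/\overline{(\rho_{h}^{s,D})}_{ij}$, and it relies on that weight collapsing to $1$ at the steady state, which in turn requires the cell-average identity \eqref{eq:average-identity-2d}. If one instead used a standard $L^{2}$-projection for ${\bf U}_{h}^{s,C}$ and ${\bf U}_{h}^{s,D}$, the weight would generally differ from $1$ and the cancellation would fail; this is precisely where the novel projection earns its keep.
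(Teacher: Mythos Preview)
Your overall strategy is correct and follows the paper's approach (the paper omits the proof, saying it is ``similar to that of Theorem~\ref{theorem:wb-property}''): check that the dissipation vanishes, the density and energy equations are trivial, and the momentum source exactly cancels the pressure flux. Your quadrature cancellation in the momentum equations is accurate.

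However, there is a conceptual confusion in your justification. You invoke the cell-average identity \eqref{eq:average-identity-2d} to obtain $\overline{(\rho_{h}^{D})}_{ij}=\overline{(\rho_{h}^{s,D})}_{ij}$, and in your final paragraph you assert that the novel projection is essential for the WB property. Neither claim is right. Identity~\eqref{eq:average-identity-2d} relates $\int_{I_{i,j}}{\bf U}_{h}^{s,C}$ to $\int_{I_{i,j}}{\bf U}_{h}^{s,D}$; it says nothing about ${\bf U}_{h}^{D}$. The equality $\overline{(\rho_{h}^{D})}_{ij}=\overline{(\rho_{h}^{s,D})}_{ij}$ holds for the much simpler reason that at the steady state $\rho_{h}^{D}=\rho_{h}^{s,D}$ \emph{pointwise}, since by assumption ${\bf U}_{h}^{D}={\bf U}_{h}^{s,D}$. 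This is exactly how the $1$D proof of Theorem~\ref{theorem:wb-property} proceeds, and it would work equally well had ${\bf U}_{h}^{s,C},{\bf U}_{h}^{s,D}$ been defined via the standard $L^{2}$-projection. The novel projection ``earns its keep'' not in the WB proof but in the \emph{conservativeness} and \emph{positivity-preserving} analyses (see Remarks~\ref{rem:L2-00} and~\ref{eq:L2-aaa}): there the identity~\eqref{eq:average-identity-2d} ensures that the modified dissipation contributes nothing to the evolution of cell averages. So your proof goes through, but the paragraph calling \eqref{eq:average-identity-2d} the ``linchpin'' and the final paragraph attributing WB to the novel projection should be removed or corrected.
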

\begin{proof}
	The proof is similar to that of Theorem \ref{theorem:wb-property} and thus is omitted here.
\end{proof}

\begin{remark}[WB Implementation of Boundary Conditions]
A suitable implementation of boundary conditions is also essential for preserving the WB property. 
	Take the solid wall boundary as example, which may appear in the bottom of atmosphere as
	in weather modeling (see, e.g., the rising thermal bubble problem in Section \ref{sec:RTB}). 
	Suppose the spatial domain $\Omega = [x_{\min},x_{\max}]\times [y_{\min}, y_{\max}]$ is  divided into $N_x \times N_y$ uniform cells, with  
	\[
	x_{\min} =  x_{\frac{1}{2}} < x_{\frac{3}{2}} < \cdots < x_{N_x+\frac{1}{2}} = x_{\max} \,, \quad
	y_{\min} =  y_{\frac{1}{2}} < y_{\frac{3}{2}} < \cdots < y_{N_y+\frac{1}{2}} = y_{\max} \,. 
	\]
	We implement the reflective boundary conditions on the solid wall as follows:
	\begin{itemize}
		\item For the right boundary, we set 
		\[
		\rho^{C}_{N_x+1,j}(x,y) = \rho^{C}_{N_x,j}(2x_{\max} - x, y)\,, \quad  
		\rho^{D}_{N_x+\frac{1}{2},j+\frac{1}{2}}(x,y) = 
		\rho^{D}_{N_x+\frac{1}{2},j+\frac{1}{2}}(2x_{\max} - x, y) \,,
		\]
		\[
		m^{C}_{1,N_x+1,j}(x,y) = - m^{C}_{1,N_x,j}(2x_{\max} - x, y)  \,, \quad  
		m^{D}_{1,N_x+\frac{1}{2},j+\frac{1}{2}}(x,y) = - m^{D}_{1,N_x+\frac{1}{2},j+\frac{1}{2}}(2x_{\max} - x, y) \,.
		\]
		The boundary conditions for $m_{2,h}^{C}(x,y), E_{h}^{C}(x,y)$ and 
		$m_{2,h}^{D}(x,y), E_{h}^{D}(x,y)$ 
		are same as the density.  The left boundary condition is similar to the right. 
		\item Analogously, for the top boundary, we set  
		\[
		\rho^{C}_{i,N_y+1}(x,y) = \rho^{C}_{i,N_y}(x,2y_{\max} - y)\,, \quad  
		\rho^{D}_{i+\frac{1}{2},N_y+\frac{1}{2}}(x,y) = 
		\rho^{D}_{i+\frac{1}{2},N_y+\frac{1}{2}}(x,2y_{\max} - y) \,,
		\]
		\[
		m^{C}_{2,i,N_y+1}(x,y) = - m^{C}_{2,i,N_y}(x,2y_{\max} - y)\,, \quad  
		m^{D}_{2,i+\frac{1}{2},N_y+\frac{1}{2}}(x,y) = -
		m^{D}_{2,i+\frac{1}{2},N_y+\frac{1}{2}}(x,2y_{\max} - y) \,.
		\]
		The boundary conditions for $m_{1,h}^{C}(x,y), E_{h}^{C}(x,y)$ and 
		$m_{1,h}^{D}(x,y), E_{h}^{D}(x,y)$ are same as the density.  The bottom boundary condition is similar to the top. 
	\end{itemize}
		It is worth noting that, in order to preserve the WB property, we should also apply the same reflective boundary conditions to 
the projected hydrostatic  solutions 
$\rho_{h}^{s,C}, p_{h}^{s,C}$ and $\rho_{h}^{s,D}, p_{h}^{s,D}$ for consistency. The implementations for other boundary conditions are similar and omitted here. 
\end{remark}

\subsection{Positivity-preserving WB CDG schemes}


\subsubsection{Properties of admissible states}

The set of admissible states of the two-dimensional Euler equations \eqref{eq:euler-source-2d} is defined by
\begin{equation*}
	G = \left\{ {\bf U} = (\rho, {\bf m}, E)^{\top}:~ \rho > 0, ~ p({\bf U}) = (\gamma -1) \left(
	E - \frac{|{\bf m}|^2}{2\rho} \right) > 0 \right\}  \,,
\end{equation*}
which is a convex set \cite{Zhang2010PP}.

\begin{lemma}\label{lemmma:pp-source-2d}
	For any $ {\bf U} \in G $  and ${\bf b} \in \mathbb{R}^2$, one has $ {\bf U} + \lambda {\bf S}({\bf U}, {\bf b}) \in G$ under the condition
	\[
	|\lambda| < \frac{1}{|{\bf b}|} \sqrt{\frac{2p}{(\gamma-1)\rho}}  \, .
	\]
\end{lemma}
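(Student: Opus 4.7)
The plan is to verify the two conditions in the definition of $G$ directly for the state $\tilde{\bf U} := {\bf U} + \lambda {\bf S}({\bf U},{\bf b})$. First I would unpack the source term: writing ${\bf b} = (b_1,b_2)$, the state $\tilde{\bf U}$ has density $\tilde\rho = \rho$, momentum $\tilde{\bf m} = {\bf m} - \lambda\rho\,{\bf b}$, and total energy $\tilde E = E - \lambda\,{\bf m}\cdot{\bf b}$. Positivity of $\tilde\rho$ is immediate since the density component of ${\bf S}$ vanishes, so the entire content of the lemma is the pressure constraint $p(\tilde{\bf U})>0$.

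Next I would compute $p(\tilde{\bf U})/(\gamma-1) = \tilde E - |\tilde{\bf m}|^2/(2\tilde\rho)$ explicitly. Expanding $|{\bf m}-\lambda\rho{\bf b}|^2 = |{\bf m}|^2 - 2\lambda\rho\,{\bf m}\cdot{\bf b} + \lambda^2\rho^2|{\bf b}|^2$ and dividing by $2\rho$, the linear-in-$\lambda$ term $\lambda\,{\bf m}\cdot{\bf b}$ cancels exactly against the corresponding term coming from $\tilde E$. This cancellation is the crux of the argument and is the natural two-dimensional analogue of the one-dimensional identity used in Lemma \ref{lemmma:pp-source}; it is not really an obstacle but the one step that deserves attention, since it is what makes the bound on $|\lambda|$ come out clean.

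After the cancellation I would be left with
\begin{equation*}
\frac{p(\tilde{\bf U})}{\gamma-1} \;=\; \frac{p}{\gamma-1} \;-\; \frac{\lambda^2 \rho\,|{\bf b}|^2}{2},
\end{equation*}
so that $p(\tilde{\bf U})>0$ is equivalent to $\lambda^2 < \dfrac{2p}{(\gamma-1)\rho\,|{\bf b}|^2}$, which is exactly the stated condition $|\lambda| < \frac{1}{|{\bf b}|}\sqrt{\frac{2p}{(\gamma-1)\rho}}$. The degenerate case ${\bf b}={\bf 0}$ is trivial since then $\tilde{\bf U}={\bf U}\in G$ for every $\lambda$, and the bound is vacuously satisfied by convention. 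This completes the proposal; since the computation is algebraic and mirrors the one-dimensional proof in \cite{wu2021uniformly}, no genuine obstacle is anticipated beyond bookkeeping the inner product ${\bf m}\cdot{\bf b}$ in place of the scalar product $mb$.
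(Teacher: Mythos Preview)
Your proposal is correct. The paper does not give its own proof of this lemma but simply cites \cite{wu2021uniformly}; your direct computation---showing that the linear-in-$\lambda$ terms cancel and leaving $p(\tilde{\bf U})/(\gamma-1) = p/(\gamma-1) - \lambda^2\rho|{\bf b}|^2/2$---is exactly the standard argument that appears there and is the natural two-dimensional analogue of the one-dimensional case.
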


\begin{proof}
	The proof can be found in, for example, \cite[Page A476]{wu2021uniformly}.
\end{proof}

\begin{lemma}\label{lemmma:pp-flux-2d}
	For any $ {\bf U} \in G $  and $\lambda \in \mathbb{R}$,  the states ${\bf U} - \lambda {\bf F}_{1}({\bf U}) \in G$
	and ${\bf U} - \lambda {\bf F}_{2} ({\bf U}) \in G$ under the conditions $|\lambda| a_{x} ({\bf U}) \le 1$
	and $|\lambda| a_{y} ({\bf U}) \le 1$, respectively. Here $a_{x} ({\bf U}) := |u_1| + \sqrt{\gamma p/\rho}$
	and $a_{y} ({\bf U}) := |u_2| + \sqrt{\gamma p/\rho}$.
\end{lemma}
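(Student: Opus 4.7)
The plan is to reduce the two-dimensional statement to a one-dimensional positivity check via a Lax--Friedrichs convex splitting and then verify that two extreme states lie in $G$. By the symmetry between ${\bf F}_1$ and ${\bf F}_2$ (swap the roles of $u_1$ and $u_2$), it suffices to prove the statement for ${\bf F}_1$; the bound involving $a_y$ then follows verbatim. Throughout I will abbreviate $a\equiv a_x({\bf U})$.

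First I would establish the convex decomposition
\[
{\bf U} - \lambda {\bf F}_1({\bf U})
= \frac{1+\lambda a}{2}\Big( {\bf U} - \tfrac{1}{a}{\bf F}_1({\bf U}) \Big)
+ \frac{1-\lambda a}{2}\Big( {\bf U} + \tfrac{1}{a}{\bf F}_1({\bf U}) \Big),
\]
which is a genuine convex combination whenever $|\lambda| a \le 1$. Since $G$ is convex, the membership ${\bf U} - \lambda {\bf F}_1({\bf U}) \in G$ follows once the two endpoints ${\bf U} \pm \tfrac{1}{a}{\bf F}_1({\bf U})$ are shown to lie in $G$. Denoting $\tilde {\bf U} = {\bf U} \mp \tfrac{1}{a}{\bf F}_1({\bf U})$ and $\alpha := 1 \mp u_1/a$, a short component-wise expansion yields $\tilde\rho = \rho\alpha$, $\tilde m_1 = \rho u_1\alpha \mp p/a$, $\tilde m_2 = \rho u_2\alpha$, and $\tilde E = E\alpha \mp p u_1/a$. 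The identity $|u_1|/a = |u_1|/(|u_1|+\sqrt{\gamma p/\rho}) < 1$ gives $\alpha \ge \sqrt{\gamma p/\rho}/a > 0$, so $\tilde\rho > 0$.

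Next I would verify positivity of the pressure. Substituting $E = \tfrac12\rho(u_1^2+u_2^2) + p/(\gamma-1)$ into $\tilde E - |\tilde {\bf m}|^2/(2\tilde\rho)$, the kinetic energy contributions and the $p u_1/a$ cross terms cancel cleanly, leaving
\[
\tilde p = \frac{p}{\alpha}\Big( \alpha^2 - \frac{(\gamma-1)\,p}{2\rho\, a^2} \Big).
\]
Combining $\alpha^2 \ge \gamma p/(\rho a^2)$ with $\gamma > 1$ makes the bracketed factor strictly positive, so $\tilde p > 0$ and hence $\tilde {\bf U} \in G$, completing the argument.

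The only mildly delicate point is the algebraic simplification producing the clean closed form for $\tilde p$; while elementary, it hinges on the exact cancellation between the kinetic energy terms of $\tilde E$ and $|\tilde {\bf m}|^2/(2\tilde\rho)$, mirroring the one-dimensional analysis in \cite{Zhang2010PP}. A more conceptual alternative is the geometric quasilinearization framework of \cite{wu2021geometric}: since $p({\bf U}) > 0$ is equivalent to the \emph{linear} inequality $E - {\bf m}\cdot{\bf v} + \tfrac12\rho|{\bf v}|^2 > 0$ for every ${\bf v} \in \mathbb{R}^2$, the pressure constraint on $\tilde {\bf U}$ becomes linear in ${\bf U}$, reducing the whole argument to a quadratic-in-${\bf v}$ check that avoids any case-by-case computation with ${\bf F}_1({\bf U})$.
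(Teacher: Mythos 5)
Your proposal is correct: the convex splitting, the component-wise formulas for ${\bf U}\mp\frac1a{\bf F}_1({\bf U})$, and the resulting pressure identity $\tilde p=\frac{p}{\alpha}\bigl(\alpha^2-\frac{(\gamma-1)p}{2\rho a^2}\bigr)$ with $\alpha\ge\sqrt{\gamma p/\rho}/a$ all check out, and the boundary case $|\lambda|a=1$ is covered since the extreme states themselves lie in $G$. This is essentially the same Lax--Friedrichs-splitting argument that the paper relies on -- the paper itself gives no details and simply defers to the one-dimensional Lemma and the references \cite{Zhang2010PP,wu2021geometric}, whose standard proof your write-up reproduces (with the GQL remark matching the second cited route).
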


\begin{proof}
	The proof is similar to that of Lemma \ref{lemmma:pp-flux} and can be found in, for example, \cite{Zhang2010PP,wu2021geometric}. 
\end{proof}

\subsubsection{Positivity-preserving analysis}
Let us derive the semi-discrete scheme satisfied by the cell averages of the WB CDG method  (\ref{eq:CDG-modify-primal-2d})--(\ref{eq:CDG-modify-dual-2d}).
Denote 
\[
\overline{{\bf U}}_{ij}^{C}(t) = \frac{1}{\Delta x \Delta y}  \int_{I_{i,j}} {\bf U}_{h}^{C}(x,y,t) \mathrm{d}x\mathrm{d}y \,, \quad
\overline{{\bf U}}_{i+\frac{1}{2},j+\frac{1}{2}}^{D}(t) = \frac{1}{\Delta x\Delta y}  \int_{I_{i+\frac{1}{2},j+\frac{1}{2}}}
{\bf U}_{h}^{D}(x,y,t) \mathrm{d}x \mathrm{d}y \,.
\]
Taking the test function $v = 1$ in (\ref{eq:CDG-modify-primal-2d}) and $w = 1$ in (\ref{eq:CDG-modify-dual-2d})
{and using} the crucial identities in (\ref{eq:average-identity-2d})
 gives
\begin{align}\label{eq:CDG-modify-average-primal-2d}
	\frac{{{\rm d}} \overline{ {\bf U} }_{ij}^{C} }{{{\rm d}} t}
	= \frac{\Big( \overline{ {\bf U} }_{ij}^{D} - \overline{ {\bf U} }_{ij}^{C} \Big)}{\tau_{max}}
	- \frac{ \delta^{x}_{ij} \big( {\bf F}_{1}({\bf U}_{h}^{D}) \big) }{\Delta x}
	- \frac{ \delta^{y}_{ij} \big( {\bf F}_{2}({\bf U}_{h}^{D}) \big) }{\Delta y}
	+ \frac{\big\langle {\bf S}_{h}^{D}, 1 \big\rangle_{ij}}{\Delta x \Delta y}  \,,
\end{align}
\begin{align}\label{eq:CDG-modify-average-dual-2d}\notag
	\frac{{{\rm d}} \overline{ {\bf U} }_{i+\frac{1}{2},j+\frac{1}{2}}^{D} }{{{\rm d}} t}
	&= \frac{\Big( \overline{ {\bf U} }_{i+\frac{1}{2},j+\frac{1}{2}}^{C}
		- \overline{ {\bf U} }_{i+\frac{1}{2},j+\frac{1}{2}}^{D} \Big)}{\tau_{max}}
	- \frac{ \delta^{x}_{i+\frac{1}{2},j+\frac{1}{2}} \big( {\bf F}_{1}({\bf U}_{h}^{C}) \big) }{\Delta x}  \\
	&- \frac{ \delta^{y}_{i+\frac{1}{2},j+\frac{1}{2}} \big( {\bf F}_{2}({\bf U}_{h}^{C}) \big) }{\Delta y}
	+ \frac{\big\langle {\bf S}_{h}^{C}, 1 \big\rangle_{i+\frac{1}{2},j+\frac{1}{2}}}{\Delta x \Delta y}  \,{.}
\end{align}

Denote the right-hand {sides of}  (\ref{eq:CDG-modify-average-primal-2d}) and (\ref{eq:CDG-modify-average-dual-2d}) by
${\bf L}_{ij}^{C} \big( {\bf U}_{h}^{C}, {\bf U}_{h}^{D} \big)$ and ${\bf L}_{i+\frac{1}{2},j+\frac{1}{2}}^{D} \big( {\bf U}_{h}^{D}, {\bf U}_{h}^{C} \big)$, respectively. In other words, we have
\begin{align}\label{eq:CDG-modify-average-2d}
	\frac{ {{\rm d}} \overline{ {\bf U} }_{ij}^{C} }{{{\rm d}} t} =  {\bf L}_{ij}^{C} \big( {\bf U}_{h}^{C}, {\bf U}_{h}^{D} \big) \,, \quad
	\frac{ {{\rm d}} \overline{ {\bf U} }_{i+\frac{1}{2},j+\frac{1}{2}}^{D} }{ {{\rm d}} t} = {\bf L}_{i+\frac{1}{2},j+\frac{1}{2}}^{D} \big( {\bf U}_{h}^{D}, {\bf U}_{h}^{C} \big) \,.
\end{align}

On each primal cell $I_{i,j}$, we define the point set
\[
\mathbb{S}_{ij} =  \mathbb{S}_{ij}^{1,1} \cup \mathbb{S}_{ij}^{1,2} \cup \mathbb{S}_{ij}^{2,1}  \cup \mathbb{S}_{ij}^{2,2} \,,\quad
\mathbb{Q}_{ij} =  \mathbb{Q}_{ij}^{1,1} \cup \mathbb{Q}_{ij}^{1,2} \cup \mathbb{Q}_{ij}^{2,1}
\cup \mathbb{Q}_{ij}^{2,2} \,,
\]
\[
\mathbb{S}_{ij}^{\kappa, \tau} = (\mathbb{Q}_{i}^{\kappa,x}\otimes\mathbb{L}_{j}^{\tau,y}) \cup
(\mathbb{L}_{i}^{\kappa,x}\otimes\mathbb{Q}_{j}^{\tau,y}) \cup (\mathbb{Q}_{i}^{\kappa,x}\otimes\mathbb{Q}_{j}^{\tau,y}) \,, \quad
\mathbb{Q}_{ij}^{\kappa, \tau} =  \mathbb{Q}_{i}^{\kappa,x}\otimes\mathbb{Q}_{j}^{\tau,y}  \,,  \quad
\kappa, \tau \in \{1,2\} \,{,}
\]
{and} the parameters $\tilde{\alpha}_{x,ij}^D$ and $\tilde{\alpha}_{y,ij}^D$  by
\[
\tilde{\alpha}_{x,ij}^D = \tilde{\alpha}_{x,1}^D + \tilde{\alpha}_{x,2}^D           \,, \quad
\tilde{\alpha}_{x,1}^D  = \max_{(x,y) \in \mathbb{S}_{ij} }  a_{x}(U_{h}^{D})       \,, \quad
\tilde{\alpha}_{x,2}^D  = \frac{\hat{\omega}_{1} \Delta x}{4} \tilde{\alpha}_{s}^D  \,,
\]
\[
\tilde{\alpha}_{y,ij}^D = \tilde{\alpha}_{y,1}^D + \tilde{\alpha}_{y,2}^D           \,, \quad
\tilde{\alpha}_{y,1}^D = \max_{(x,y) \in \mathbb{S}_{ij} }  a_{y}(U_{h}^{D})        \,, \quad
\tilde{\alpha}_{y,2}^D = \frac{\hat{\omega}_{1} \Delta y}{4} \tilde{\alpha}_{s}^D   \,{,}
\]
where
\[
\tilde{\alpha}_{s}^D = \max_{(x,y) \in \mathbb{Q}_{ij}}
\Bigg\{|\nabla\hat{\phi}_{h}^{D}| \sqrt{\frac{(\gamma-1)\rho_{h}^{D}}{ 2 p_{h}^{D} } } \Bigg\} \,, \quad
\nabla \hat{\phi}_{h}^{D} = -\frac{\nabla p_{h}^{s,D}}{\rho_{h}^{s,D}} -
\frac{ \Big(
	\jump{ p_{h}^{s,D} }_{ij}^x , \jump{ p_{h}^{s,D} }_{ij}^y \Big)^\top }
{ \overline{(\rho_{h}^{s,D})}_{ij} \Delta x \Delta y} \,,
\]
\[
\jump{ p_{h}^{s,D} }_{ij}^x := \int_{y_{j-\frac{1}{2}}}^{y_{j+\frac{1}{2}}}
\Big( p_{h}^{s,D}(x_{i}^{+}, y) - p_{h}^{s,D}(x_{i}^{-}, y) \Big) {\rm d}y, \
\jump{ p_{h}^{s,D} }_{ij}^y := \int_{x_{i-\frac{1}{2}}}^{x_{i+\frac{1}{2}}}
\Big( p_{h}^{s,D}(x, y_{j}^{+}) - p_{h}^{s,D}(x, y_{j}^{-}) \Big) {\rm d}x  \,.
\]
Similarly, on each dual cell $I_{i+\frac{1}{2},j+\frac{1}{2}}$, we define the point set $\mathbb{S}_{i+\frac{1}{2},j+\frac{1}{2}}$
and $\mathbb{Q}_{i+\frac{1}{2},j+\frac{1}{2}}$, which are respectively the shifts of the point sets  $\mathbb{S}_{i,j}$ and $\mathbb{Q}_{i,j}$ with $\frac{\Delta x}{2}$ in the $x$-direction and $\frac{\Delta y}{2}$ in the $y$-direction{, and} the parameters $\tilde{\alpha}_{x,i+\frac{1}{2},j+\frac{1}{2}}^C$ and $\tilde{\alpha}_{y,i+\frac{1}{2},j+\frac{1}{2}}^C$ by
\[
\tilde{\alpha}_{x,i+\frac{1}{2},j+\frac{1}{2}}^C = \tilde{\alpha}_{x,1}^C + \tilde{\alpha}_{x,2}^C           \,, \quad
\tilde{\alpha}_{x,1}^C  = \max_{(x,y) \in \mathbb{S}_{i+\frac{1}{2},j+\frac{1}{2}} }  a_{x}({\bf U}_{h}^{C})       \,, \quad
\tilde{\alpha}_{x,2}^C  = \frac{\hat{\omega}_{1} \Delta x}{4} \tilde{\alpha}_{s}^C  \,,
\]
\[
\tilde{\alpha}_{y,i+\frac{1}{2},j+\frac{1}{2}}^C = \tilde{\alpha}_{y,1}^C + \tilde{\alpha}_{y,2}^C           \,, \quad
\tilde{\alpha}_{y,1}^C = \max_{(x,y) \in \mathbb{S}_{i+\frac{1}{2},j+\frac{1}{2}} }  a_{y}({\bf U}_{h}^{C})        \,, \quad
\tilde{\alpha}_{y,2}^C = \frac{\hat{\omega}_{1} \Delta y}{4} \tilde{\alpha}_{s}^C   \,{,}
\]
where
\[
\tilde{\alpha}_{s}^C = \max_{(x,y) \in \mathbb{Q}_{i+\frac{1}{2},j+\frac{1}{2}}}
\Bigg\{|\nabla\hat{\phi}_{h}^{C}| \sqrt{\frac{(\gamma-1)\rho_{h}^{C}}{ 2 p_{h}^{C} } } \Bigg\} \,, \
\nabla \hat{\phi}_{h}^{C} = -\frac{\nabla p_{h}^{s,C}}{\rho_{h}^{s,C}} -
\frac{ \Big( \jump{ p_{h}^{s,C} }_{i+\frac12,j+\frac12}^x,
\jump{ p_{h}^{s,C} }_{i+\frac12,j+\frac12}^y \Big)^\top }
{ \overline{(\rho_{h}^{s,C})}_{i+\frac{1}{2},j+\frac{1}{2}} \Delta x \Delta y}{,} 
\]
{with}
\[
\jump{ p_{h}^{s,C} }_{i+\frac12,j+\frac12}^x := \int_{y_{j}}^{y_{j+1}}
\Big( p_{h}^{s,C}(x_{i+\frac{1}{2}}^{+}, y) - p_{h}^{s,C}(x_{i+\frac{1}{2}}^{-}, y) \Big) {\rm d} y  \,,
\]
\[
\jump{ p_{h}^{s,C} }_{i+\frac12,j+\frac12}^y := \int_{x_{i}}^{x_{i+1}}
\Big( p_{h}^{s,C}(x, y_{j+\frac{1}{2}}^{+}) - p_{h}^{s,C}(x, y_{j+\frac{1}{2}}^{-}) \Big) {\rm d} x  \,.
\]

\begin{theorem}\label{theorem:PP-WB-CDG-2d}
	Assume that the numerical solutions ${\bf U}_{h}^{C}(x,y), {\bf U}_{h}^{D}(x,y)$  satisfy
	\begin{align}\label{eq:pp-condition-2d}
		{\bf U}_{h}^{C}(x,y)   \in G ~~ \forall (x,y) \in \mathbb{S}_{ij},  \quad {\bf U}_{h}^{D}(x,y)   \in G ~~ \forall  (x,y) \in \mathbb{S}_{i+\frac{1}{2},j+\frac{1}{2}} \,,  \quad \forall i,j \,,  \end{align}
{and the projected stationary hydrostatic  solutions ${\bf U}_{h}^{s,C}(x,y), {\bf U}_{h}^{s,D}(x,y)$ satisfy}
\begin{align*}
		{\bf U}_{h}^{s,C}(x,y) \in G ~~ \forall (x,y) \in \mathbb{S}_{ij}, \quad {\bf U}_{h}^{s,D}(x,y) \in G ~~ \forall  (x,y) \in  \mathbb{S}_{i+\frac{1}{2},j+\frac{1}{2}} \,,  \quad \forall i,j.
	\end{align*}
If $\overline{ {\bf U} }_{ij}^{C}, \overline{ {\bf U} }_{i+\frac{1}{2},j+\frac{1}{2}}^{D} \in G$,
	then the positivity-preserving property
	\begin{align*}
		\overline{ {\bf U} }_{ij}^{C} + \Delta t {\bf L}_{ij}^{C} \big( {\bf U}_{h}^{C},{\bf U}_{h}^{D} \big) \in G \,,  \quad
		\overline{ {\bf U} }_{i+\frac{1}{2},j+\frac{1}{2}}^{D} + \Delta t {\bf L}_{i+\frac{1}{2},j+\frac{1}{2}}^{D}
		\big( {\bf U}_{h}^{D},{\bf U}_{h}^{C} \big) \in G \,,  \quad \forall  i,j \,,
	\end{align*}
	holds under the CFL-type condition
	\begin{align*}
		\frac{\Delta t}{\Delta x} \tilde{\alpha}_{x} + \frac{\Delta t}{\Delta y} \tilde{\alpha}_{y}
		<  \frac{\theta \hat{\omega}_{1}}{2}\,, \quad \theta = \frac{\Delta t}{\tau_{max}} \in (0,1] \,,
	\end{align*}
	with
	\begin{align*}
		\tilde{\alpha}_{x} =  \max\limits_{i,j} \max \{\tilde{\alpha}_{x,ij}^D, \tilde{\alpha}_{x,i+\frac{1}{2},j+\frac{1}{2}}^C \} \,, \quad
		\tilde{\alpha}_{y} =  \max\limits_{i,j} \max \{\tilde{\alpha}_{y,ij}^D, \tilde{\alpha}_{y,i+\frac{1}{2},j+\frac{1}{2}}^C \} \,.
	\end{align*}
\end{theorem}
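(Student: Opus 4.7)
The plan is to mimic the structure of the proof of Theorem \ref{theorem:pp-property} but in two dimensions, writing the cell-average update $\overline{{\bf U}}_{ij}^{C}+\Delta t\,{\bf L}_{ij}^{C}$ as a convex combination of states that each lie in $G$ by Lemmas \ref{lemmma:pp-flux-2d} and \ref{lemmma:pp-source-2d}, and then using the convexity of $G$. The key identities in \eqref{eq:average-identity-2d}, which were guaranteed by the novel projection, allow the modified dissipation term to vanish upon taking the cell average, so the update reduces to the standard dissipation-plus-flux-plus-source form. Introducing splitting parameters $\eta_x,\eta_y,\eta_s>0$ with $\eta_x+\eta_y+\eta_s=1$, I would write
\begin{align*}
\overline{{\bf U}}_{ij}^{C}+\Delta t\,{\bf L}_{ij}^{C}
&= (1-\theta)\overline{{\bf U}}_{ij}^{C}
+ \eta_x\theta\,{\bf L}_{h,{\bf F}_1} + \eta_y\theta\,{\bf L}_{h,{\bf F}_2} + \eta_s\theta\,{\bf L}_{h,{\bf S}},
\end{align*}
with each ${\bf L}_{h,\cdot}$ being $\overline{{\bf U}}_{ij}^{D}$ plus the corresponding flux/source contribution rescaled by the appropriate factor of $\eta\theta$.

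Next I would expand $\overline{{\bf U}}_{ij}^{D}$ via a tensor-product Gauss--Lobatto rule on the four sub-rectangles $I_{i,j}^m$, which is exact for polynomials of degree up to $k$ since we chose $L=\lceil (k+3)/2\rceil$. This yields a positive-weighted sum of point values of ${\bf U}_{h}^{D}$ on the set $\mathbb{S}_{ij}$. Splitting out the boundary nodes in the $x$-direction and pairing them with the $x$-flux differences produces Lax--Friedrichs-type combinations of the form ${\bf U}_{h}^{D}(x_{i\pm\frac12},y_{j}^{\kappa,\alpha})\mp\lambda\,{\bf F}_1$, which lie in $G$ by Lemma \ref{lemmma:pp-flux-2d} provided
\[
\frac{\Delta t}{\Delta x}\,\tilde\alpha_{x,1}^D<\frac{\eta_x\theta\,\hat\omega_1}{2}.
\]
The analogous treatment in $y$ yields the twin condition with $\tilde\alpha_{y,1}^D$ and $\eta_y$.

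For the source piece ${\bf L}_{h,{\bf S}}$, the most delicate step is to rewrite $\langle{\bf S}_h^D,1\rangle_{ij}$ in a form where the integrated $\nabla p_{h}^{s,D}$ and the boundary jumps $\jump{p_h^{s,D}}_{ij}^{x},\jump{p_h^{s,D}}_{ij}^{y}$ combine into a single effective gradient $\nabla\hat\phi_h^{D}$ multiplied against $(\rho_h^{D},{\bf m}_h^{D})$ at the interior Gauss nodes $\mathbb{Q}_{ij}$; this is the 2D analogue of the manipulation leading to $(\hat\phi_h^{D})_x$ in the 1D proof, and relies crucially on the fact that $\overline{(\rho_h^D)}_{ij}/\overline{(\rho_h^{s,D})}_{ij}$ was used as the coefficient in the source approximation (cf.\ Remark \ref{Wu:Remark}). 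Once this rewriting is done, ${\bf L}_{h,{\bf S}}$ becomes a positive convex combination of states of the form ${\bf U}_h^D+\mu\,{\bf S}({\bf U}_h^D,\nabla\hat\phi_h^D)$, each in $G$ by Lemma \ref{lemmma:pp-source-2d} under
\[
\frac{\Delta t}{\Delta x}\,\tilde\alpha_{x,2}^D+\frac{\Delta t}{\Delta y}\,\tilde\alpha_{y,2}^D<\frac{\eta_s\theta\,\hat\omega_1}{2},
\]
where the split between the $x$- and $y$-parts of $\tilde\alpha_s^D$ is free and will be optimized. Summing the three CFL conditions and then optimizing over $\eta_x,\eta_y,\eta_s$ (choosing each $\eta$ proportional to the corresponding $\tilde\alpha$) telescopes them into the single sharp condition $\frac{\Delta t}{\Delta x}\tilde\alpha_x+\frac{\Delta t}{\Delta y}\tilde\alpha_y<\frac{\theta\hat\omega_1}{2}$.

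The repetition of the argument on the dual mesh is verbatim after replacing $C\leftrightarrow D$ and the indices $(i,j)\leftrightarrow(i+\tfrac12,j+\tfrac12)$, using the second identity in \eqref{eq:average-identity-2d}. The main obstacle I expect is the source-term rewriting: ensuring that, after decomposition, the residual pressure-gradient terms line up exactly with the boundary jump terms so as to express the whole source contribution as a clean $\rho\nabla\hat\phi$ structure at the tensor-product Gauss nodes. If that algebraic identity fails, the bound on the CFL number will contain additional uncontrolled interface jumps; verifying it carefully (and in particular checking that the choice of $\overline{(\rho_h^D)}_{ij}/\overline{(\rho_h^{s,D})}_{ij}$ in both momentum and energy source discretizations is what makes the identity close) is the heart of the proof, as stressed in Remark \ref{Wu:Remark}. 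All other steps are routine 2D extensions of the 1D argument.
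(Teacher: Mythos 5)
Your proposal is correct and follows essentially the same route as the paper's proof: expand the cell average of ${\bf U}_h^D$ by the mixed Gauss (in one direction) $\otimes$ Gauss--Lobatto (in the other) quadrature so that the edge values at the Gauss nodes pair with $\delta^x_{ij}$, $\delta^y_{ij}$ into Lax--Friedrichs states handled by Lemma \ref{lemmma:pp-flux-2d}, rewrite $\big\langle {\bf S}_h^D,1\big\rangle_{ij}$ via the boundary-jump identity into the effective gradient $\nabla\hat\phi_h^D$ handled by Lemma \ref{lemmma:pp-source-2d}, and then optimize the splitting parameters to obtain the combined CFL condition. Your three-way split $(\eta_x,\eta_y,\eta_s)$ versus the paper's two-way split with an internal convex split of the flux part (weights proportional to $a_x\lambda_x$, $a_y\lambda_y$) is only a bookkeeping difference yielding the identical final condition; just note that the expansion must indeed be the mixed rule rather than a full tensor Gauss--Lobatto rule, since $\mathbb{S}_{ij}$ contains only the mixed-product points--which is what your pairing at $(x_{i\pm\frac12},y_j^{\kappa,\alpha})$ implicitly uses.
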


\begin{proof}
	Using (\ref{eq:CDG-modify-average-2d}) gives
	\begin{align*}
		\overline{ {\bf U} }_{ij}^{C} + \Delta t {\bf L}_{ij}^{C} \big( {\bf U}_{h}^{C},{\bf U}_{h}^{D} \big)
		& = (1-\theta)\overline{{\bf U}}_{ij}^{C} + \theta \overline{{\bf U}}_{ij}^{D}
		+ \frac{\Delta t }{\Delta x \Delta y} \big\langle {\bf S}_{h}^{D}, 1 \big\rangle_{ij}         \\ \notag
		& - \lambda_{x} \delta^{x}_{ij} \big( {\bf F}_{1}({\bf U}_{h}^{D}) \big)
		- \lambda_{y} \delta^{y}_{ij}  \big( {\bf F}_{2}({\bf U}_{h}^{D}) \big)     \\ \notag
		& = (1-\theta)\overline{ {\bf U} }_{ij}^{C} + \Big( \eta\theta \overline{ {\bf U} }_{ij}^{D}
		+ \frac{\Delta t }{\Delta x \Delta y} \big\langle {\bf S}_{h}^{D}, 1 \big\rangle_{ij} \Big)   \\ \notag
		& + \Big( (1-\eta)\theta \overline{ {\bf U} }_{ij}^{D}
		- \lambda_{x} \delta^{x}_{ij} \big( {\bf F}_{1}({\bf U}_{h}^{D}) \big)
		- \lambda_{y} \delta^{y}_{ij} \big( {\bf F}_{2}({\bf U}_{h}^{D}) \big) \Big)  \\ \notag
		& = (1-\theta)\overline{ {\bf U} }_{ij}^{C} + \eta\theta {\bf L}_{h,{\bf S}}
		+ (1-\eta)\theta {\bf L}_{h,{\bf F}} \,,
	\end{align*}
	where $\lambda_{x} = \frac{\Delta t}{\Delta x}, \lambda_{y} = \frac{\Delta t}{\Delta y}, \theta = \frac{\Delta t}{\tau_{max}}, \eta \in (0,1)$ is a constant, and ${\bf L}_{h,{\bf F}}$ and $ {\bf L}_{h,{\bf S}} $ are defined by
	\begin{align*}
		{\bf L}_{h,{\bf F}} =& \overline{  {\bf U} }_{ij}^{D} - \frac{\lambda_{x}}{(1-\eta)\theta } \delta^{x}_{ij}
		{ \big( {\bf F}_{1}({\bf U}_{h}^{D})  \big) }
		- \frac{\lambda_{y}}{(1-\eta)\theta } \delta^{y}_{ij}
		\big( {\bf F}_{2} ({\bf U}_{h}^{D}) \big)  \,,
\\
		{\bf L}_{h,{\bf S}} =& \overline{ {\bf U} }_{ij}^{D} + \frac{\Delta t}{\eta\theta\Delta x\Delta y }
		\big\langle {\bf S}_{h}^{D}, 1 \big\rangle_{ij} \,.
	\end{align*}
	Using the convexity of set $G$ and the exactness of the quadrature rule can derive that
	\begin{align*}
		& \overline{{\bf U}}_{ij}^{D}  = \frac{1}{\Delta x \Delta y}  \int_{I_{ij}}  {\bf U}_{h}^{D}(x,y) 
		{{\rm d}}x {{\rm d}}y
		= \frac{1}{\Delta x} \int_{x_{i-\frac{1}{2}}}^{x_{i+\frac{1}{2}}}
		\bigg( \frac{1}{2} \sum_{\kappa = 1 }^{2} \sum_{\alpha = 1}^{N} \omega_{\alpha}
		{\bf U}_{h}^{D}(x,y_{j}^{\kappa,\alpha}) \bigg) {{\rm d}}x  \\ \notag
		= & \frac{1}{2} \sum_{\kappa = 1 }^{2} \sum_{\alpha = 1}^{N} \omega_{\alpha}
		\bigg( \frac{1}{\Delta x} \int_{x_{i-\frac{1}{2}}}^{x_{i+\frac{1}{2}}}
		{\bf U}_{h}^{D}(x,y_{j}^{\kappa,\alpha})  {{\rm d}}x \bigg)
		= \frac{1}{2} \sum_{\kappa = 1 }^{2} \sum_{\alpha = 1}^{N} \omega_{\alpha}
		\bigg( \frac{1}{2} \sum_{\tau = 1 }^{2} \sum_{\beta = 1}^{L} \hat{\omega}_{\beta}
		{\bf U}_{h}^{D}(\hat{x}_{i}^{\tau,\beta},y_{j}^{\kappa,\alpha}) \bigg)  \\ \notag
		= & \frac{1}{2} \sum_{\kappa = 1 }^{2} \sum_{\alpha = 1}^{N} \omega_{\alpha}
		\bigg( \frac{\hat{\omega}_{1}}{2} {\bf U}_{h}^{D}(x_{i-\frac{1}{2}},y_{j}^{\kappa,\alpha}) +
		\frac{\hat{\omega}_{1}}{2} {\bf U}_{h}^{D}(x_{i+\frac{1}{2}},y_{j}^{\kappa,\alpha}) +
		(1-\hat{\omega}_{1} )\mathbb{E}^{\alpha}_{1,ij} \bigg) \,,
	\end{align*}
	where $\hat{\omega}_{1} = \hat{\omega}_{L}$ has been used, and
	\begin{align*}
		\mathbb{E}^{\alpha}_{1,ij} = \frac{1}{(1-\hat{\omega}_{1} )}
		\bigg( \frac{1}{2}  \sum_{\beta = 2}^{L} \hat{\omega}_{\beta}
		{\bf U}_{h}^{D}(\hat{x}_{i}^{1,\beta},y_{j}^{\kappa,\alpha})
		+ \frac{1}{2}  \sum_{\beta = 1}^{L-1} \hat{\omega}_{\beta}
		{\bf U}_{h}^{D}(\hat{x}_{i}^{2,\beta},y_{j}^{\kappa,\alpha}) \bigg) \in G \,.
	\end{align*}
	Similarly, one has
	\begin{align*}
		\overline{{\bf U}}_{ij}^{D}  = \frac{1}{2} \sum_{\kappa = 1 }^{2} \sum_{\alpha = 1}^{N} \omega_{\alpha}
		\bigg( \frac{\hat{\omega}_{1}}{2} {\bf U}_{h}^{D}(x_{i}^{\kappa,\alpha},y_{j-\frac{1}{2}}) +
		\frac{\hat{\omega}_{1}}{2} {\bf U}_{h}^{D}(x_{i}^{\kappa,\alpha},y_{j+\frac{1}{2}}) +
		(1-\hat{\omega}_{1} )\mathbb{E}^{\alpha}_{2,ij} \bigg) \,,
	\end{align*}
	with
	\begin{align*}
		\mathbb{E}^{\alpha}_{2,ij} = \frac{1}{(1-\hat{\omega}_{1} )}
		\bigg( \frac{1}{2}  \sum_{\beta = 2}^{L} \hat{\omega}_{\beta}
		{\bf U}_{h}^{D}( x_{i}^{\kappa,\alpha}, \hat{y}_{j}^{1,\beta} )
		+ \frac{1}{2}  \sum_{\beta = 1}^{L-1} \hat{\omega}_{\beta}
		{\bf U}_{h}^{D}( x_{i}^{\kappa,\alpha}, \hat{y}_{j}^{2,\beta} ) \bigg) \in G \,.
	\end{align*}
	Note that ${\bf L}_{h,{\bf F}}$ can be reformulated as
	\begin{align*}  \notag
		{\bf L}_{h,{\bf F}} = \frac{a_{x} \lambda_{x}}{\lambda}
		\bigg( \overline{{\bf U}}_{ij}^{D} - \frac{\lambda}{(1-\eta)\theta a_{x} }
		\delta^{x}_{ij} \big( {\bf F}_{1}({\bf U}_{h}^{D}) \big) \bigg)
		+ \frac{a_{y} \lambda_{y}}{\lambda}
		\bigg( \overline{{\bf U}}_{ij}^{D}-  \frac{\lambda}{(1-\eta)\theta a_{y} }
		\delta^{y}_{ij} \big( {\bf F}_{2} ({\bf U}_{h}^{D}) \big) \bigg) \,,
	\end{align*}
	where $\lambda = a_{x}\lambda_{x}  +  a_{y}\lambda_{y}$, $a_{x} = \tilde{\alpha}_{x,1}^D$,
	$a_{y} = \tilde{\alpha}_{y,1}^D$, and
	\begin{align*}  \notag
		\overline{{\bf U}}_{ij}^{D} - \frac{\lambda}{(1-\eta)\theta a_{x} }
		\delta^{x}_{ij} \big( {\bf F}_{1}({\bf U}_{h}^{D}) \big)
		&= \frac{1}{2} \sum_{\kappa = 1 }^{2} \sum_{\alpha = 1}^{N} \omega_{\alpha}
		\bigg( \frac{\hat{\omega}_{1}}{2} \mathbb{E}^{+}_{1,i-\frac{1}{2}, j} +
		\frac{\hat{\omega}_{1}}{2} \mathbb{E}^{-}_{1,i+\frac{1}{2},j} +
		(1-\hat{\omega}_{1} )\mathbb{E}^{\alpha}_{1,ij} \bigg) \,,  \\ \notag
		\overline{{\bf U}}_{ij}^{D}-  \frac{\lambda}{(1-\eta)\theta a_{y} }
		\delta^{y}_{ij}  \big( {\bf F}_{2} ({\bf U}_{h}^{D}) \big)
		&= \frac{1}{2} \sum_{\kappa = 1 }^{2} \sum_{\alpha = 1}^{N} \omega_{\alpha}
		\bigg( \frac{\hat{\omega}_{1}}{2} \mathbb{E}^{+}_{2,i,j-\frac{1}{2}} +
		\frac{\hat{\omega}_{1}}{2} \mathbb{E}^{-}_{2,i,j+\frac{1}{2}} +
		(1-\hat{\omega}_{1} )\mathbb{E}^{\alpha}_{2,ij} \bigg) \,,
	\end{align*}
	with
	\begin{align*}  \notag
		\mathbb{E}^{\pm}_{1,i\mp\frac{1}{2}, j} &= {\bf U}_{h}^{D}(x_{i\mp\frac{1}{2}},y_{j}^{\kappa,\alpha}) \pm
		\frac{2 \lambda}{\hat{\omega}_{1} (1-\eta) \theta a_{x} }
		{\bf F}_{1}({\bf U}_{h}^{D}(x_{i\mp\frac{1}{2}},y_{j}^{\kappa,\alpha}))   \,, \\ \notag
		\mathbb{E}^{\pm}_{2,i,j\mp\frac{1}{2}}  &= {\bf U}_{h}^{D}(x_{i}^{\kappa,\alpha},y_{j\mp\frac{1}{2}}) \pm
		\frac{2 \lambda}{\hat{\omega}_{1} (1-\eta) \theta a_{y} }
		{\bf F}_{2} ({\bf U}_{h}^{D}(x_{i}^{\kappa,\alpha},y_{j\mp\frac{1}{2}}))    \,.
	\end{align*}
	Thanks to Lemma \ref{lemmma:pp-flux-2d},  we have $\mathbb{E}^{\pm}_{1,i\mp\frac{1}{2}, j} \in G$ and $\mathbb{E}^{\pm}_{2,i,j\mp\frac{1}{2}} \in G$, as long as
	\begin{align}\label{9883}
		\lambda_{x} \tilde{\alpha}_{x,1}^D  + \lambda_{y} \tilde{\alpha}_{y,1}^D < \frac{(1-\eta)\theta \hat{\omega}_{1}}{2}  \,.
	\end{align}
	Using the convexity of $G$, we further obtain ${\bf L}_{h,{\bf F}} \in G$ under \eqref{9883}. Next, we discuss the term ${\bf L}_{h,{\bf S}}$.
	The source term approximations $\big\langle \big(S_{2,h}^{D},S_{3,h}^{D}\big)^\top, 1 \big\rangle_{ij}$ in the momentum equations are
	\begin{align*} \notag
		\big\langle \big(S_{2,h}^{D},S_{3,h}^{D}\big)^\top, 1 \big\rangle_{ij}
		&= \frac{\Delta x \Delta y}{4}  \sum_{\kappa,\alpha}  \sum_{\tau,\beta}  \omega_{\alpha} \omega_{\beta}
		\left( \frac{ \rho_{h}^{D}(x_{i}^{\kappa,\alpha}, y_{j}^{\tau,\beta}) }
		{ \rho_{h}^{s,D}(x_{i}^{\kappa,\alpha}, y_{j}^{\tau,\beta}) }
		-  \frac{\overline{(\rho_{h}^{D})}_{ij} }{ \overline{(\rho_{h}^{s,D})}_{ij}}  \right)
		(\nabla p_{h}^{s,D} )(x_{i}^{\kappa,\alpha}, y_{j}^{\tau,\beta})  \\
		& + \frac{\overline{(\rho_{h}^{D})}_{ij} }{ \overline{(\rho_{h}^{s,D})}_{ij}}
		\Big(\Delta y \delta^{x}_{ij} (p_{h}^{s,D}),  \Delta x \delta^{y}_{ij} (p_{h}^{s,D})  \Big)^\top \,.
	\end{align*}
	Based on
	\begin{align} \notag
		\int_{I_{ij}} \nabla p_{h}^{s,D} {\rm d}x{\rm d}y   &=  \frac{\Delta x \Delta y}{4}
		\sum_{\kappa,\alpha}  \sum_{\tau,\beta}  \omega_{\alpha} \omega_{\beta}
		\nabla p_{h}^{s,D}(x_{i}^{\kappa,\alpha}, y_{j}^{\tau,\beta}) \,,
		\\ \notag
		\int_{\partial I_{ij}} p_{h}^{s,D} {\bf n}{\rm d}s &=  \Big(\Delta y \delta^{x}_{ij} (p_{h}^{s,D}),
		\Delta x \delta^{y}_{ij} (p_{h}^{s,D}) \Big)^\top  \,,
	\end{align}
	and the identity
	\begin{align*}
		\int_{\partial I_{ij}} p_{h}^{s,D} {\bf n}{\rm d}s - \int_{I_{ij}} \nabla p_{h}^{s,D} {\rm d}x{\rm d}y
		= \Big( \jump{ p_{h}^{s,D} }_{ij}^x , \jump{ p_{h}^{s,D} }_{ij}^y \Big)^\top \,,
	\end{align*}
	we reformulate $\big\langle \big(S_{2,h}^{D},S_{3,h}^{D}\big)^\top, 1 \big\rangle_{ij}$ as
	\begin{align*} \notag
		&\big\langle \big(S_{2,h}^{D},S_{3,h}^{D}\big)^\top, 1 \big\rangle_{ij}
		= \frac{\Delta x \Delta y}{4}  \sum_{\kappa,\alpha}  \sum_{\tau,\beta}  \omega_{\alpha} \omega_{\beta}
		\frac{ \rho_{h}^{D}(x_{i}^{\kappa,\alpha}, y_{j}^{\tau,\beta}) }
		{ \rho_{h}^{s,D}(x_{i}^{\kappa,\alpha}, y_{j}^{\tau,\beta}) }
		(\nabla p_{h}^{s,D} )(x_{i}^{\kappa,\alpha}, y_{j}^{\tau,\beta})  \\
		& + \frac{\overline{(\rho_{h}^{D})}_{ij} }{ \overline{(\rho_{h}^{s,D})}_{ij}}
		\Big( \jump{ p_{h}^{s,D} }_{ij}^x , \jump{ p_{h}^{s,D} }_{ij}^y \Big)^\top
		= -\frac{\Delta x \Delta y}{4}  \sum_{\kappa,\alpha}  \sum_{\tau,\beta}  \omega_{\alpha} \omega_{\beta}
		\rho_{h}^{D}(x_{i}^{\kappa,\alpha}, y_{j}^{\tau,\beta})
		\nabla \hat{\phi}_{h}^{D}(x_{i}^{\kappa,\alpha}, y_{j}^{\tau,\beta}) \,,
	\end{align*}
	where
	\[
	-\nabla \hat{\phi}_{h}^{D}(x_{i}^{\kappa,\alpha}, y_{j}^{\tau,\beta})
	= \frac{ \nabla p_{h}^{s,D}(x_{i}^{\kappa,\alpha}, y_{j}^{\tau,\beta})}
	{ \rho_{h}^{s,D}(x_{i}^{\kappa,\alpha}, y_{j}^{\tau,\beta}) } +
	\frac{ \Big( \jump{ p_{h}^{s,D} }_{ij}^x , \jump{ p_{h}^{s,D} }_{ij}^y \Big)^\top }
	{ \overline{(\rho_{h}^{s,D})}_{ij} \Delta x \Delta y} \,.
	\]
	Similarly, we can rewrite $\big\langle S_{4,h}^{D}, 1 \big\rangle_{ij}$ in the energy equation as
	\begin{align*}
		\big\langle S_{4,h}^{D}, 1 \big\rangle_{ij}
		= -\frac{\Delta x \Delta y}{4}  \sum_{\kappa,\alpha}  \sum_{\tau,\beta}  \omega_{\alpha} \omega_{\beta}
		{\bf m}_{h}^{D}(x_{i}^{\kappa,\alpha}, y_{j}^{\tau,\beta}) \cdot
		\nabla \hat{\phi}_{h}^{D}(x_{i}^{\kappa,\alpha}, y_{j}^{\tau,\beta})  \,.
	\end{align*}
	Thus, ${\bf L}_{h,{\bf S}} = \overline{ {\bf U} }_{ij}^{D} + \frac{\Delta t}{\eta\theta\Delta x\Delta y } \big\langle {\bf S}_{h}^{D}, 1 \big\rangle_{ij}$ can be reformulated as
	\begin{align} \notag
		{\bf L}_{h,{\bf S}} = \sum_{\kappa,\alpha}  \sum_{\tau,\beta}  \frac{\omega_{\alpha} \omega_{\beta}}{4}
		\Big( {\bf U}_{h}^{D}(x_{i}^{\kappa,\alpha}, y_{j}^{\tau,\beta}) +
		\frac{\Delta t}{\eta\theta} \hat{ {\bf S}}_{h}^{D}(x_{i}^{\kappa,\alpha}, y_{j}^{\tau,\beta}) \Big){,}
	\end{align}
	with $\hat{{\bf S}}_{h}^{D}(x_{i}^{\kappa,\alpha}, y_{j}^{\tau,\beta}) :=
	\Big(0, - (\rho_{h}^{D}\nabla \hat{\phi}_{h}^{D}) (x_{i}^{\kappa,\alpha}, y_{j}^{\tau,\beta}),
	- ({\bf m}_{h}^{D}\cdot\nabla \hat{\phi}_{h}^{D}) (x_{i}^{\kappa,\alpha}, y_{j}^{\tau,\beta}) \Big)^\top$.
	Thanks to  Lemma \ref{lemmma:pp-source-2d}, we have ${\bf L}_{h,{\bf S}} \in G$  under the condition
	\begin{equation*}
		\Delta t < \eta\theta \min_{(x,y) \in \mathbb{Q}_{ij}}
		\Bigg\{ \frac{1}{|\nabla\hat{\phi}_{h}^{D}|} \sqrt{\frac{2 p_{h}^{D}}{(\gamma-1)\rho_{h}^{D}}} \Bigg\} \,,
	\end{equation*}
	or equivalently
	\begin{align*}
		\lambda_{x} \tilde{\alpha}_{x,2}^D + \lambda_{y} \tilde{\alpha}_{y,2}^D < \frac{\eta \theta \hat{\omega}_{1}}{2} \,.
	\end{align*}
	Combining {those} results, we conclude that if 
	\begin{align}\label{condition8393}
		(\lambda_{x}, \lambda_{y}) \in  \Big\{ (\lambda_{x} ,\lambda_{y})\in (\mathbb{R}^{+})^2:
		\lambda_{x} \tilde{\alpha}_{x,1}^D + \lambda_{y} \tilde{\alpha}_{y,1}^D < \frac{(1-\eta)\theta \hat{\omega}_{1}}{2}, ~
		\lambda_{x} \tilde{\alpha}_{x,2}^D + \lambda_{y} \tilde{\alpha}_{y,2}^D < \frac{\eta \theta \hat{\omega}_{1}}{2}  \Big\} \,,
	\end{align}
	then $ \overline{ {\bf U} }_{ij}^{C} + \Delta t {\bf L}_{ij}^{C}
	\big( {\bf U}_{h}^{C},{\bf U}_{h}^{D} \big) \big) \in G$. Since the
	parameters $\eta$ can be chosen arbitrarily in this proof, we specify
	$$
	\eta = \frac{ \lambda_{x} \tilde{\alpha}_{x,2}^D + \lambda_{y} \tilde{\alpha}_{y,2}^D } {  \lambda_{x} \tilde{\alpha}_{x,ij}^D + \lambda_{y} \tilde{\alpha}_{y,ij}^D } = \frac{ \lambda_{x} \tilde{\alpha}_{x,2}^D + \lambda_{y} \tilde{\alpha}_{y,2}^D } { \lambda_{x} (\tilde{\alpha}_{x,1}^D + \tilde{\alpha}_{x,2}^D) +
		\lambda_{y} (\tilde{\alpha}_{y,1}^D + \tilde{\alpha}_{y,2}^D) },
	$$
	so that the condition \eqref{condition8393} becomes
	\[
	\lambda_{x} (\tilde{\alpha}_{x,1}^D + \tilde{\alpha}_{x,2}^D) +
	\lambda_{y} (\tilde{\alpha}_{y,1}^D + \tilde{\alpha}_{y,2}^D) =
	\lambda_{x} \tilde{\alpha}_{x,ij}^D + \lambda_{y} \tilde{\alpha}_{y,ij}^D
	<  \frac{\theta \hat{\omega}_{1}}{2} \,.
	\]
	Similar arguments can show that $\overline{ {\bf U} }_{i+\frac{1}{2},j+\frac{1}{2}}^{D} + \Delta t  {\bf L}_{i+\frac{1}{2},j+\frac{1}{2}}^{D} \big( {\bf U}_{h}^{D}, {\bf U}_{h}^{C} \big) \in G$. The proof is completed.
\end{proof}

Theorem \ref{theorem:PP-WB-CDG-2d} provides a sufficient condition for the proposed high-order WB
CDG schemes (\ref{eq:CDG-modify-primal-2d}) and (\ref{eq:CDG-modify-dual-2d}) to be positivity-preserving,
when {the} SSP time discretization is used. The condition (\ref{eq:pp-condition-2d}) can again be enforced
by a simple positivity-preserving limiter similar to the one-dimensional case; see Section \ref{sec:2Dlimiter}. 
With the positivity-preserving limiter applied at each stage of
the SSP Runge-Kutta method, the resulting fully discrete CDG schemes are positivity-preserving.

\subsection{Positivity-preserving limiting operators}\label{sec:2Dlimiter}
Introduce the following two sets
\[
\overline{\mathbb{G}}_{h}^{C,k} := \Big \{ {\bf v} \in [\mathbb{V}_{h}^{C,k}]^4 : ~
\frac{1}{\Delta x \Delta y} \int_{I_{ij}} {\bf v} (x,y) {\rm d}x {\rm d}y \in G \,, ~ \forall ~ i,j  \Big\}  \,,
\]
\[
\mathbb{G}_{h}^{C,k} := \Big \{ {\bf v} \in \overline{\mathbb{G}}_{h}^{C,k} :
{\bf v}\big|_{I_{ij}} (x,y) \in G \,, ~ \forall ~ (x,y) \in \mathbb{S}_{ij}   \,, ~ \forall ~ i,j  \Big\}  \,.
\]
For any ${\bf U}_{h}^C \in \overline{\mathbb{G}}_{h}^{C,k}$ with ${\bf U}_{h}^C \big|_{I_{ij}} := {\bf U}_{ij}^C(x,y)$,
 {following \cite{Zhang2012robust,Zhang2010PP} we} define the positivity-preserving limiting operator ${\Pi}_{h}^C : \overline{\mathbb{G}}_{h}^{C,k} \longrightarrow \mathbb{G}_{h}^{C,k}$ as follows
\[
{\Pi}_{h}^C {\bf U}_{h}^C \big|_{I_{ij}} =
\theta_{ij}^{(2)} ( \hat{{\bf U}}_{ij}^C(x,y) - \overline{{\bf U}}_{ij}^C ) + \overline{{\bf U}}_{ij}^C \,, \quad
\theta_{ij}^{(2)} = \min \Bigg\{1, ~ \frac{ p(\overline{{\bf U}}_{ij}^C) - \epsilon_{2} }{ p(\overline{{\bf U}}_{ij}^C)
	- \min\limits_{(x,y) \in \mathbb{S}_{ij}} p(\hat{{\bf U}}_{ij}^C(x,y)) } \Bigg\} \,,
\]
where $\hat{{\bf U}}_{ij}^C(x,y) = (\hat{\rho}_{ij}^C(x,y), {\bf m}_{ij}^C(x,y), E_{ij}^C(x,y) )^{\top} $, and
$\hat{\rho}_{ij}^C(x,y)$ is a modification of the density $\rho_{ij}^C(x,y)$ given by
\[
\hat{\rho}_{ij}^C(x,y)  = \theta_{ij}^{(1)} ( \rho_{ij}^C(x,y) - \overline{\rho}_{ij}^C ) + \overline{\rho}_{ij}^C \,, \quad
\theta_{ij}^{(1)} = \min \Bigg\{1, ~ \frac{\overline{\rho}_{ij}^C - \epsilon_{1} }{ \overline{\rho}_{ij}^C
	- \min\limits_{(x,y) \in \mathbb{S}_{ij}} \rho_{ij}^C(x,y) } \Bigg\} \,,
\]
We take
$\epsilon_{1} = \min\{ 10^{-13}, \overline{\rho}_{ij}^C \}$, $\epsilon_{2} = \min\{ 10^{-13}, p(\overline{{\bf U}}_{ij}^C) \}$.
The sets $\overline{\mathbb{G}}_{h}^{D,k}, \mathbb{G}_{h}^{D,k}$ and {the} positivity-preserving limiting operator
${\Pi}_{h}^D : \overline{\mathbb{G}}_{h}^{D,k} \longrightarrow \mathbb{G}_{h}^{D,k}$ defined on the dual mesh
$I_{i+\frac{1}{2},j+\frac{1}{2}}$ are very similar, and the details are omitted here.

\section{Numerical examples} \label{section:numerical-example}

{This section presents} several one- and two-dimensional tests to demonstrate the WB and positivity-preserving
properties of the proposed CDG methods on uniform Cartesian meshes.
The explicit third-order SSP Runge-Kutta method is employed for {the} time discretization.
For comparison, we will also show the numerical
results of the standard non-WB CDG schemes with the straightforward source term approximation
and the original numerical dissipation term.
Unless explained specifically, we use the ideal EOS (\ref{eq:eos}) with $\gamma  = 1.4$,
the CFL numbers for the third-order and fourth-order
 CDG methods are taken as $0.25$ and $0.15$, respectively, and the parameter $\theta =\frac{\Delta t}{\tau_{max}} = 1$. In all the numerical examples, the schemes are implemented by using C/C++ language with double precision.

\subsection{One-dimensional tests}

\subsubsection{Example 1: Accuracy test} \label{example:accuracy-test-1d}

We start with a one-dimensional example \cite{Xing2013FDWB} to demonstrate the accuracy of the proposed WB CDG schemes
for the Euler equations under a linear gravitational field $\phi_x = g = 1$. The  time-dependent exact solution of this example
is given by
\begin{align} \notag
	\rho(x,t) & = 1 + 0.2\sin(\pi(x-u_{0}t)) \,, \quad  u(x,t)  = u_{0} \,, \\ \notag
	p(x,t) & = p_{0} + u_{0} t -x + 0.2\cos(\pi(x-u_{0}t))/\pi \,,
\end{align}
where the constants $u_{0} = 1.0$, $p_{0} = 4.5$. The computational domain $\Omega =  [0,2]$
is divided into $N$ uniform cells and  the boundary condition is set as the exact solution  on $\partial \Omega$.
To match the temporal and spatial accuracy, we use $\Delta t = 0.15 (\Delta x)^\frac{4}{3} /\tilde{\alpha}_{x}$ for the fourth-order
WB CDG scheme. The $L^{1}$ errors and corresponding convergence rates at $t = 0.1$ are shown in Tables  \ref{tab:accuracy-test-1d-order3} and \ref{tab:accuracy-test-1d-order4}. We clearly observe that the expected third-order and fourth-order convergence rates are achieved by the WB CDG schemes. This indicates that our novel projection, modification of the dissipation term and WB source term approximation do not destroy the accuracy of our proposed WB CDG schemes.

\begin{table}[htbp]
	\centering
	\caption{Example 1: $L^{1}$ errors at $t = 0.1$ and corresponding convergence rates
		     for the third-order WB CDG scheme at different grid resolutions.}
	\begin{tabular}{c|c|c|c|c|c|c}
		\hline   N  &\multicolumn{2}{|c|}{$\rho$}  &\multicolumn{2}{|c|}{$\rho u$ }  &\multicolumn{2}{|c}{$E$ }  \\
		\hline   Mesh       &$L^{1}$ error      & Order & $L^{1}$ error     & Order  &$L^{1}$ error      & Order \\
		\hline  $8$ &1.99e-04	&-	&2.01e-04	&-	&1.94e-04	&-	             \\
		\hline  $16$ &2.54e-05	&2.97	&2.54e-05	&2.98	&2.39e-05	&3.02	 \\
		\hline  $32$ &3.16e-06	&3.01	&3.17e-06	&3.00	&2.98e-06	&3.00	 \\
		\hline  $64$ &3.96e-07	&3.00	&3.96e-07	&3.00	&3.72e-07	&3.00	 \\
		\hline  $128$ &4.94e-08	&3.00	&4.95e-08	&3.00	&4.65e-08	&3.00    \\
		\hline
	\end{tabular}
	\label{tab:accuracy-test-1d-order3}
\end{table}

\begin{table}[htbp]
	\centering
	\caption{Same as Table \ref{tab:accuracy-test-1d-order3}, except for
the fourth-order WB CDG scheme.}
	\begin{tabular}{c|c|c|c|c|c|c}
		\hline   N  &\multicolumn{2}{|c|}{$\rho$}  &\multicolumn{2}{|c|}{$\rho u$ }  &\multicolumn{2}{|c}{$E$ }  \\
		\hline  Mesh        &$L^{1}$ error      & Order & $L^{1}$ error     & Order  &$L^{1}$ error      & Order  \\ 		
		\hline  $8$   &3.12e-06	&-	    &2.93e-06	&-	    &2.70e-06	&-	     \\
		\hline  $16$  &1.69e-07	&4.21	&1.67e-07	&4.13	&1.61e-07	&4.07	 \\
		\hline  $32$  &9.62e-09	&4.13	&9.68e-09	&4.11	&9.74e-09	&4.05	 \\
		\hline  $64$  &6.67e-10	&3.85	&6.75e-10	&3.84	&7.65e-10	&3.67	 \\
		\hline  $128$ &4.05e-11	&4.04	&4.12e-11	&4.03	&4.79e-11	&4.00    \\
		\hline
	\end{tabular}
	\label{tab:accuracy-test-1d-order4}
\end{table}

\subsubsection{Example 2: Isothermal equilibrium test} \label{example:isothermal-1d}

We consider the isothermal equilibrium  problem \cite{Zenk2017WBNDG} under a linear gravitational field $\phi_x = g  = 1$.
The initial data is taken as an isothermal steady state solution
\begin{equation}\label{eq:example-isothermal-1d}
	\rho(x) = \exp(-x) \,, \quad u(x) = 0 \,, \quad p(x) = \exp(-x) \,.
\end{equation}
The computational domain is {taken} as $\Omega = [0,1]$, {and} the adiabatic index is $\gamma = 5/3$.  We first use this example to verify the WB property of the proposed CDG method. The numerical solution is computed until $t = 2$ by using our third-order WB CDG scheme with respectively $50$ and $100$ uniform cells. {Table \ref{tab:isothermal-1d} lists the $L^{1}$ errors between} the numerical solution and the projected stationary hydrostatic solution (\ref{eq:example-isothermal-1d}). It is clearly observed that {all} the numerical errors are  at the level of rounding error, demonstrating {that} the proposed CDG method satisfies the WB property.

\begin{table}[htbp]
	\centering
	\caption{$L^{1}$ errors for the isothermal equilibrium test in Example 2.}
	\begin{tabular}{c|c|c|c}
		\hline
		Mesh	       & errors in  $\rho$   & errors in $\rho u$ & errors in $E$ \\
		\hline
		50         & 7.71e-15 & 1.97e-15 & 4.00e-15   \\
		\hline
		100        & 1.63e-14 & 4.50e-15 & 7.27e-15  \\
		\hline
	\end{tabular}  \label{tab:isothermal-1d}
\end{table}

Next, in order to check the effectiveness of our WB CDG method in capturing a small perturbation near the isothermal equilibrium solution (\ref{eq:example-isothermal-1d}), we modify the initial pressure state 
{as}
\[
p(x,0) = \exp(-x) + \eta \exp(-100(x-0.5)^2) \, ,
\]
where $\eta$ is a non-zero perturbation parameter.
We simulate two cases: $\eta = 10^{-2}$ and $\eta = 10^{-3}$.
Outflow boundary conditions are used at $x = 0$ and $x = 1$. The pressure perturbations at $t=0.25$ computed by our third-order WB CDG scheme on the mesh of $50$ uniform cells, compared with a reference solution with $1000$ cells, are displayed in Figure \ref{fig:isothermal-1d-pressure}. For comparison, we also present the results simulated by the third-order non-WB CDG scheme in the same figure.
The initial pressure perturbations are also plotted in {the} dashed curves.
As we can see {that} the results of the WB CDG scheme agree well with the reference solutions for both cases, while the results obtained by the non-WB CDG scheme fail to capture the small perturbations. This demonstrates that our WB method is more accurate and advantageous in resolving small perturbations near {the} equilibrium states.

\begin{figure}[htbp]
	\centering
	\subfigure[{$\eta = 10^{-2}$}]{
		\includegraphics[width=0.475\textwidth,height=0.375\textwidth]
		{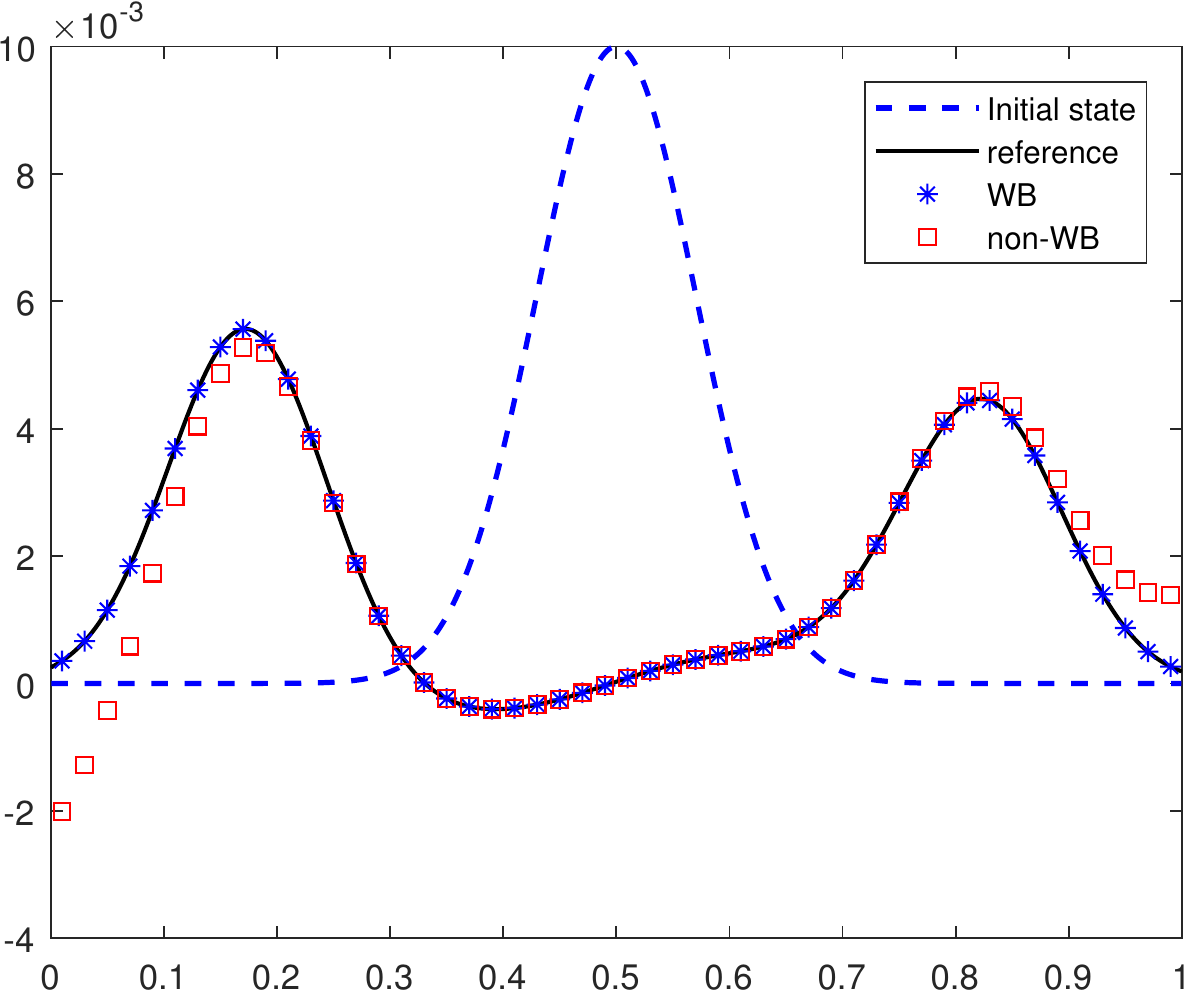}
	}
	\hfill
	\subfigure[{$\eta = 10^{-3}$}]{
		\includegraphics[width=0.475\textwidth,height=0.375\textwidth]
		{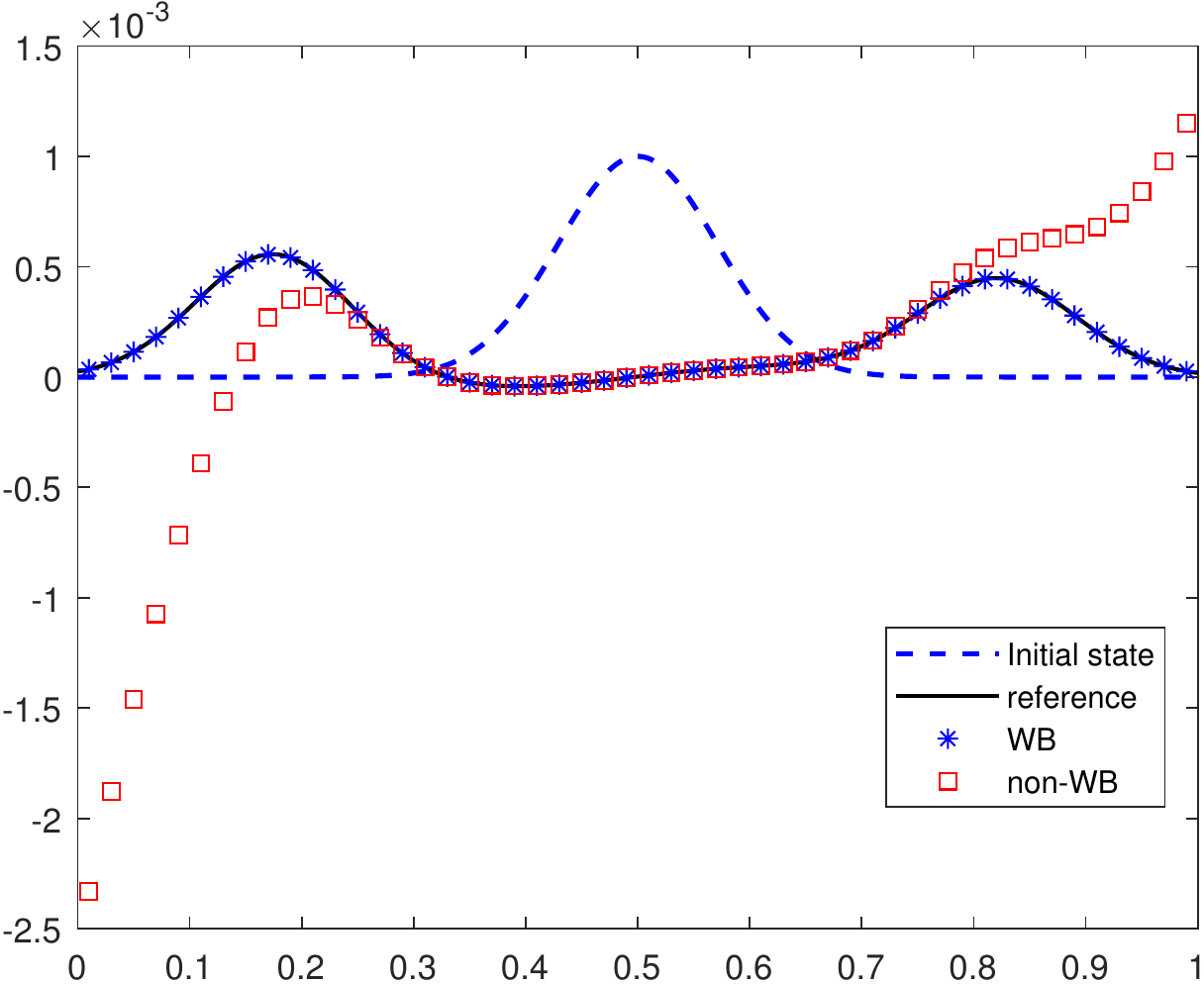}
	}
	\caption{Example $2$: {Pressure perturbations at $t=0.25$ obtained by the} WB scheme and non-WB scheme with $50$  uniform cells. The reference solutions are obtained by the WB scheme using $1000$  cells.}
	\label{fig:isothermal-1d-pressure}
\end{figure}

\subsubsection{Example 3: Rarefaction test with {low density and pressure} }
The purpose of this example is to investigate the positivity-preserving property of our WB CDG method. We consider an extreme rarefaction problem \cite{wu2021uniformly} under a quadratic gravitational potential function $\phi(x) = x^2/2$, and the initial data {are} given by
\begin{equation*}
	\rho(x,0) = 7 , \quad p(x,0) = 0.2 , \quad
	u(x,0) =
	\left\{
	\begin{aligned}
		-1  \,, \quad  x < 0  \,,
		\\  1  \,, \quad  x > 0  \,.
	\end{aligned}
	\right.
\end{equation*}
The computational domain is set as $\Omega = [-1, 1]$ with outflow boundary conditions  at $x = -1$ and $x = 1$.
This test involves extremely {low density and pressure}, so that the positivity-preserving limiting operators are used
in our simulation. 
{Figure \ref{fig:Rarefaction} shows the} numerical solutions at $t = 0.6$ computed by our third-order positivity-preserving  WB CDG scheme with
$400$ uniform cells, along with a reference solution with $1000$ cells.
As we can see {that} the low density and pressure wave structures are well captured by the proposed method. Our numerical scheme exhibits good robustness, and no negative density or pressure is encountered during the entire simulation.

\begin{figure}[htbp]
	\centering
	\subfigure[{Density}]{
		\includegraphics[width=0.30\textwidth,height=0.25\textwidth]
		{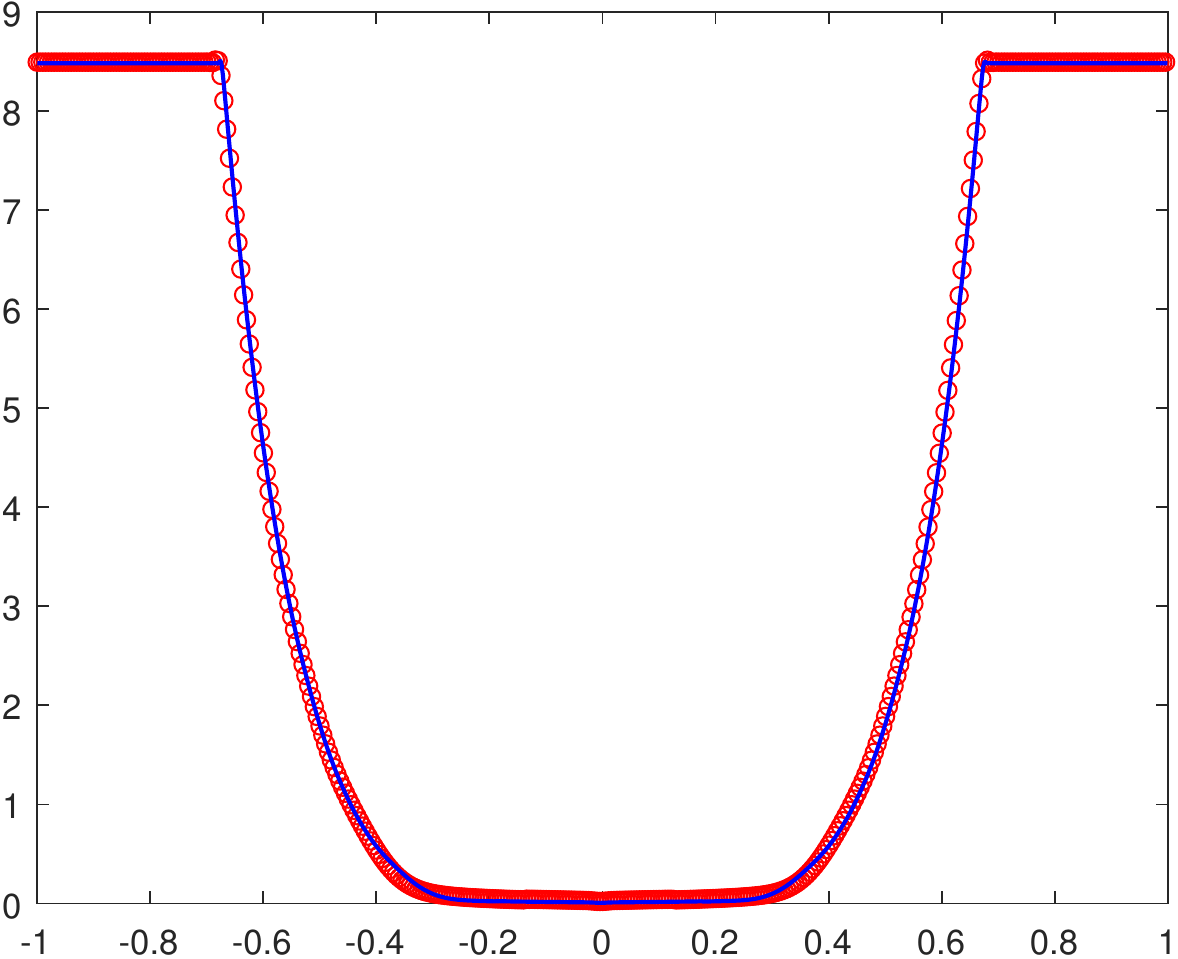}
	}
	\hfill
	\subfigure[{Momentum}]{
		\includegraphics[width=0.30\textwidth,height=0.25\textwidth]
		{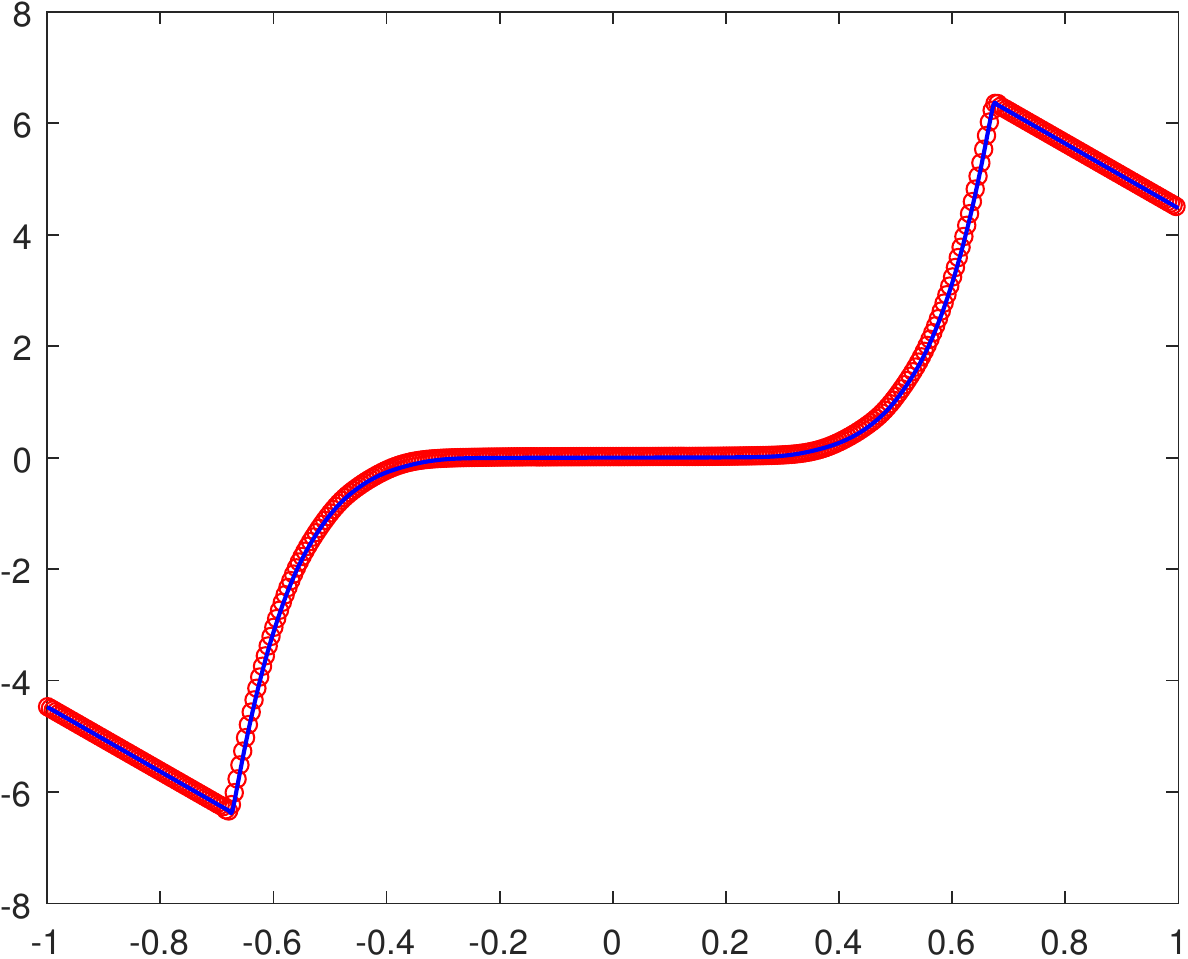}
	}
	\hfill
	\subfigure[{Energy}]{
		\includegraphics[width=0.30\textwidth,height=0.25\textwidth]
		{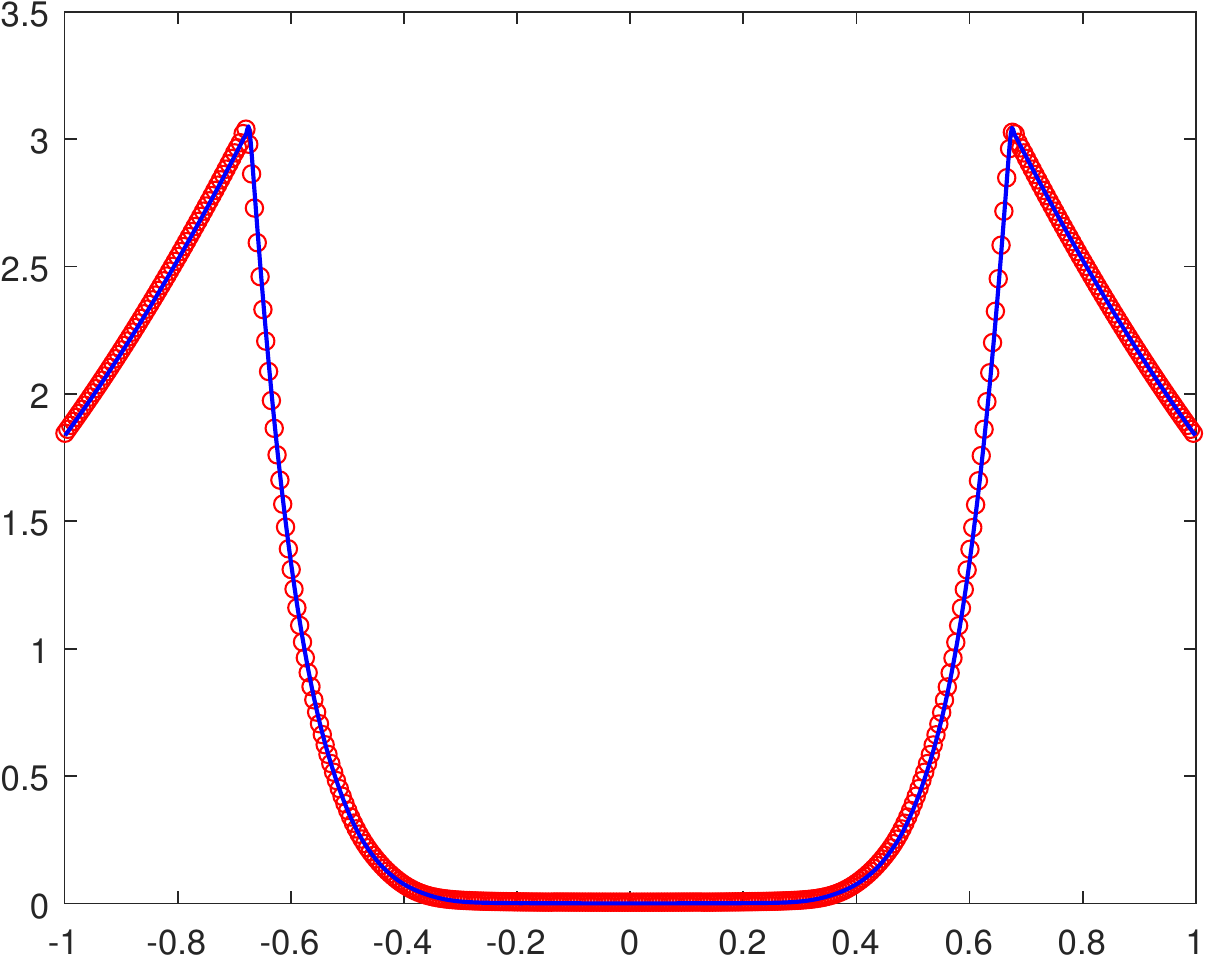}
	}
	\caption{Example $3$: {Numerical} results for the rarefaction test at $t = 0.6$ obtained by the  third-order positivity-preserving WB CDG scheme with $400$ cells (circles) and $1000$ cells (solid lines).}
	\label{fig:Rarefaction}
\end{figure}

\subsubsection{Example 4: Leblanc shock tube problem under gravitational field}

{This example considers} a Leblanc shock tube problem \cite{wu2021uniformly} under a linear gravitational field
$\phi(x) = gx$ with $g = 1$. The initial condition is given by
\begin{equation*}
	{(\rho, u, p)(x,0)=
	\begin{cases}
		  (2, ~ 0, ~ 10^9),    &  x < 0\,,
		\\  (10^{-3}, ~ 0, ~1), &  x > 0  \,.
	\end{cases}}%
\end{equation*}
The computational domain is taken as $\Omega = [-10,10]$ with outflow boundary condition at $x = -10$ and
$x = 10$. 
As the initial data {contain a strong discontinuity in the density and the pressure}, the WB implementation (see Remark \ref{rem:WB-WENO}) of the WENO limiter \cite{qiu2005runge} is applied 
to the local characteristic fields in some troubled cells, before the positivity-preserving limiting procedure in our simulation. The troubled cells are  adaptively identified 
with parameters $(M_{1}, M_{2}, M_{3}) = (10^{10}, 2\times10^6, 5\times10^{10})$, where $M_i$ denotes the parameter $M$ in \eqref{minmod-corrected} for the $i$-th component of ${\bf U}$. 
{Figure \ref{fig:Leblanc-shock} displays the density, the velocity, and the pressure} at $t=0.0001$ computed by our third-order positivity-preserving WB CDG scheme on a mesh with $800$ cells, compared with a reference solution with a refined mesh of $1600$ cells.  It is seen that our positivity-preserving scheme is highly robust, and the strong discontinuities are captured with high resolution.

\begin{figure}[htbp]
	\centering
	\subfigure[$\log \rho$]{
		\includegraphics[width=0.30\textwidth,height=0.25\textwidth]
		{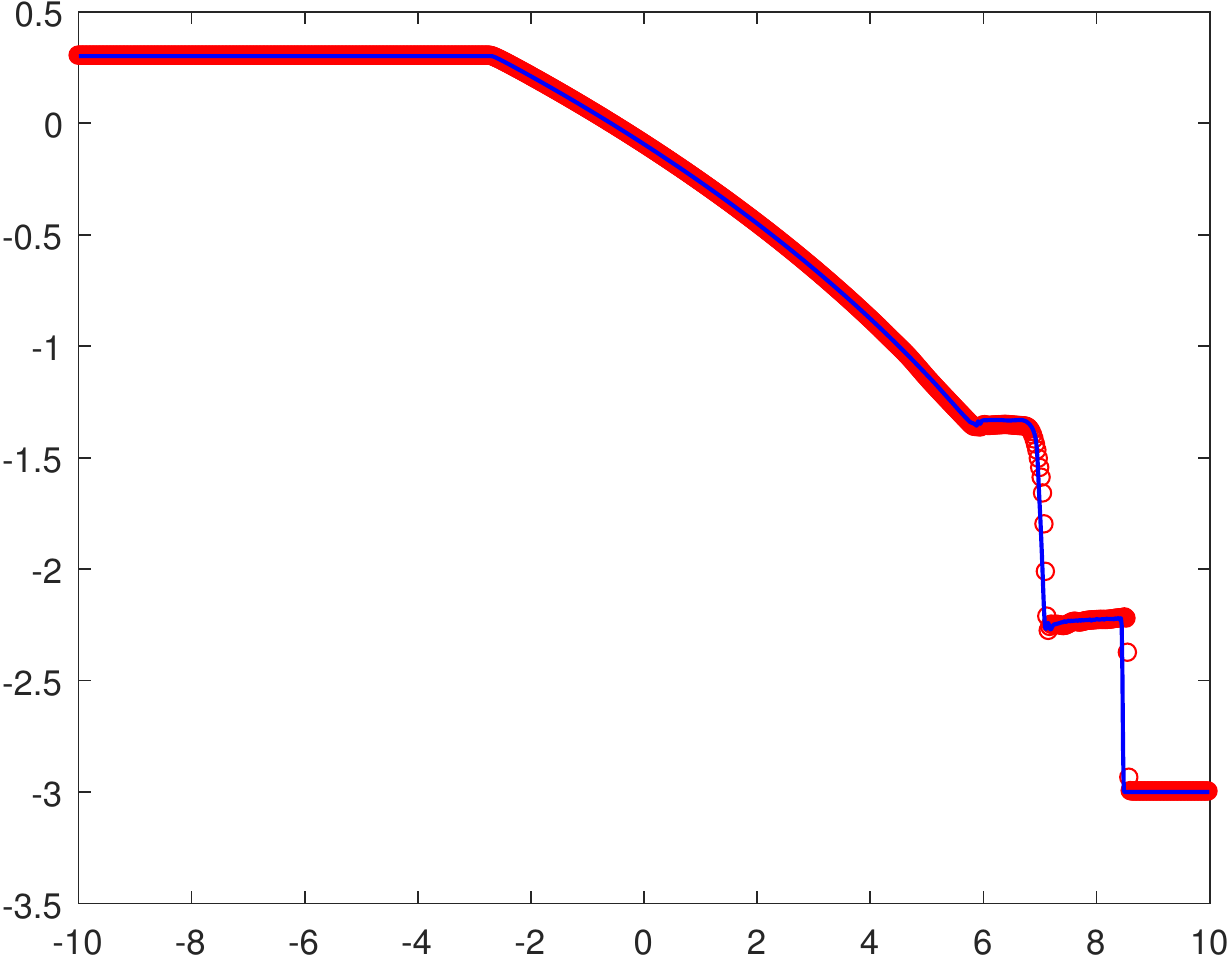}
	}
	\hfill
	\subfigure[$u$]{
		\includegraphics[width=0.30\textwidth,height=0.26\textwidth]
		{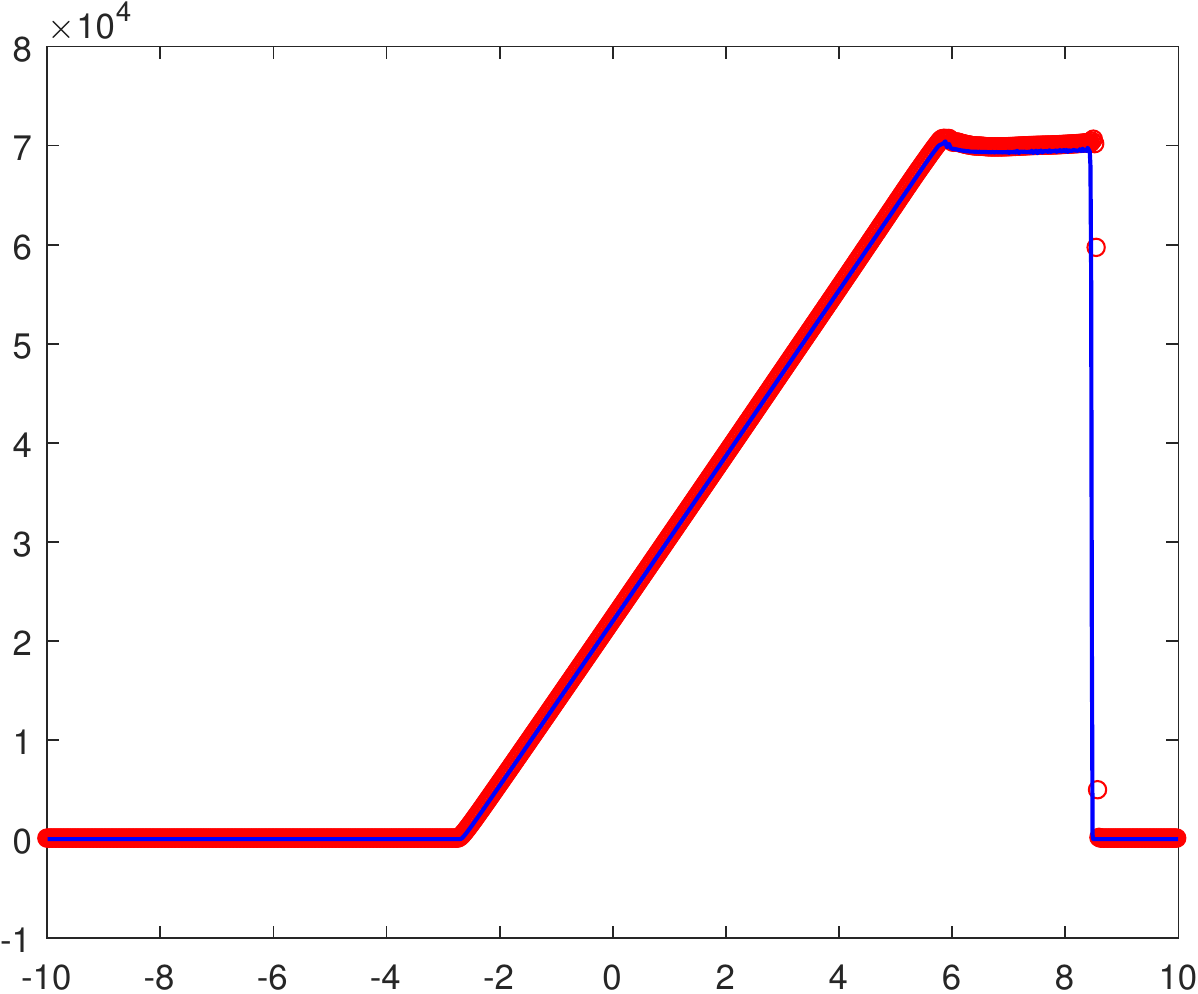}
	}
	\hfill
	\subfigure[$\log p$]{
		\includegraphics[width=0.30\textwidth,height=0.25\textwidth]
		{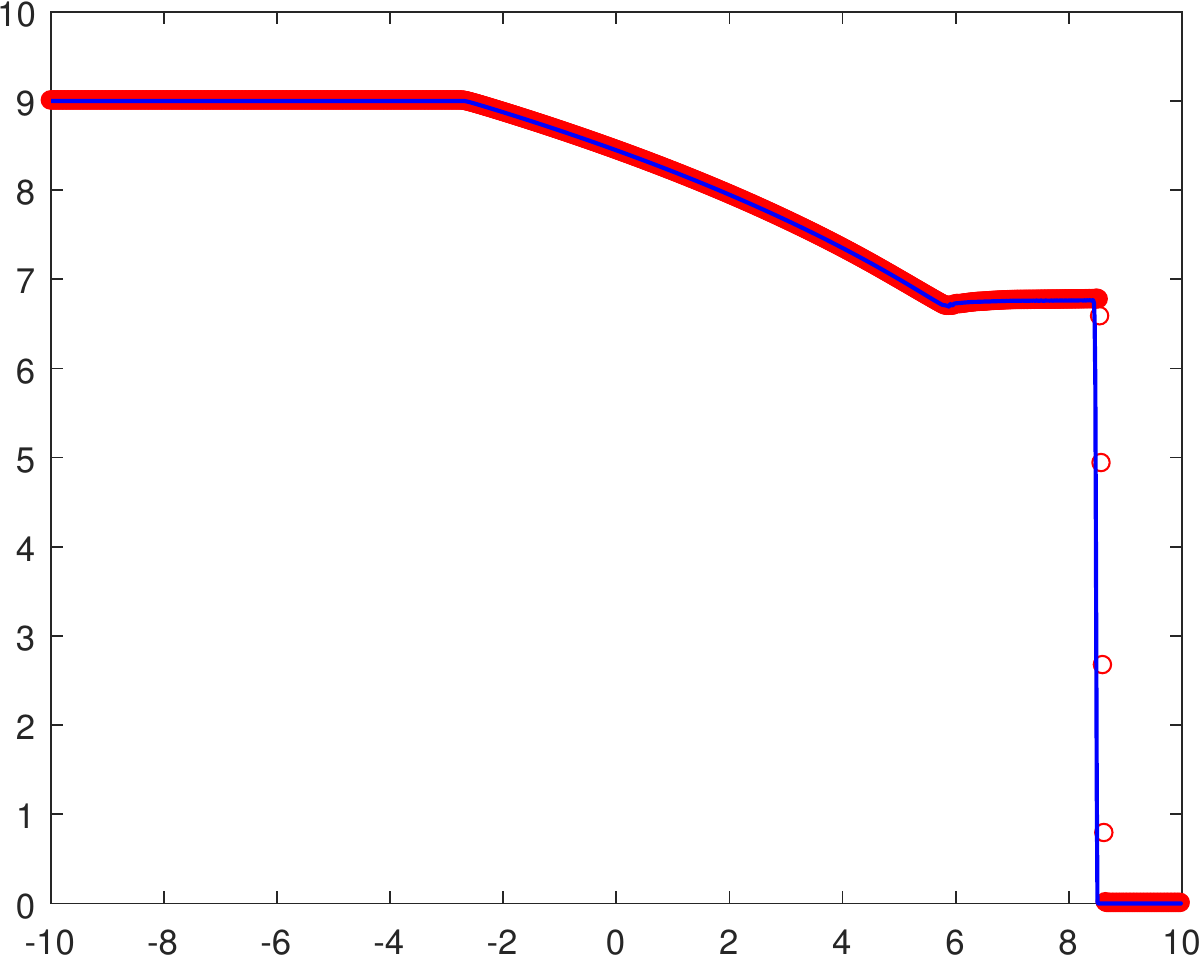}
	}
	\caption{ Example $4$: {Numerical} results for the Leblanc shock tube problem at $t = 0.0001$ obtained by the third-order positivity-preserving WB CDG scheme with $800$ cells (circles) and $1600$ cells (solid lines). }
	\label{fig:Leblanc-shock}
\end{figure}

\subsection{Two-dimensional tests} 

\subsubsection{Example 5: Accuracy test} \label{example:accuracy-test-2d}

{This test checks} the convergence rates of the WB CDG schemes for the two-dimensional Euler equations
under a linear gravitational field $\phi_x = \phi_y = 1$. A time-dependent exact solution \cite{Xing2013FDWB}
takes the form of
\begin{align} \notag
	&\rho(x,y,t)  = 1 + 0.2\sin(\pi(x + y - (u_{0} + v_{0})t )),  \\ \notag
	&u_{1}(x,y,t)  = u_{0}, \quad u_{2}(x,y,t)  = v_{0},                 \\ \notag
	&p(x,y,t)  = p_{0} + (u_{0} + v_{0}) t -x -y + 0.2 \cos(\pi(x+y - (u_{0} + v_{0})t ))/\pi,
\end{align}
where the parameters $u_{0} = v_{0} = 1$ and $p_{0} = 4.5$. The computational domain
$\Omega = [0, 2]\times[0, 2]$ is divided into $N \times N$ uniform cells, and the boundary condition
is set as the exact solution on $\partial \Omega$. The $L^{1}$ errors and {corresponding convergence rates} at $t = 0.1$ are
displayed in Tables \ref{tab:accuracy-test-2d-order3}  and \ref{tab:accuracy-test-2d-order4} . It is seen that
{the theoretical} convergence rates are achieved by our WB CDG schemes, as expected. Our novel projection, modification of the dissipation term, and  WB source term approximation do not affect the accuracy of the CDG method.

\begin{table}[htbp]
	\centering
	\caption
	{Example 5: $L^{1}$ errors at $t = 0.1$ and corresponding convergence rates
	  for the third-order WB CDG scheme at different grid resolutions. }
	\begin{tabular}{c|c|c|c|c|c|c|c|c}
		\hline  $N \times N$    &\multicolumn{2}{|c|}{$\rho$}         &\multicolumn{2}{|c|}{$\rho u_{1}$ }
		&\multicolumn{2}{|c|}{$\rho u_{2}$ }  &\multicolumn{2}{|c}{$E$ }  \\
		\hline   Mesh       &$L^{1}$ error & Order & $L^{1}$ error & Order  &$L^{1}$ error & Order &$L^{1}$ error & Order \\
		\hline  $8\times8$      &7.18e-04	&  -	&7.09e-04	&  -	&7.09e-04	&  -	&8.99e-04	& -	    \\
		\hline  $16\times16$	&8.53e-05	&3.07	&8.48e-05	&3.06	&8.48e-05	&3.06	&1.08e-04	&3.06	\\
		\hline  $32\times32$	&1.05e-05	&3.02	&1.05e-05	&3.01	&1.05e-05	&3.01	&1.34e-05	&3.01	\\
		\hline  $64\times64$	&1.31e-06	&3.00	&1.30e-06	&3.01	&1.30e-06	&3.01	&1.67e-06	&3.00	\\
		\hline  $128\times128$	&1.63e-07	&3.01	&1.63e-07	&3.00	&1.63e-07	&3.00	&2.09e-07	&3.00	\\		
		\hline
	\end{tabular}
	\label{tab:accuracy-test-2d-order3}
\end{table}

\begin{table}[htbp]
	\centering
	\caption
	{Same as Table \ref{tab:accuracy-test-2d-order3}, 
except for the fourth-order WB CDG scheme. }
	\begin{tabular}{c|c|c|c|c|c|c|c|c}
		\hline $N \times N$   &\multicolumn{2}{|c|}{$\rho$}         &\multicolumn{2}{|c|}{$\rho u_{1}$ }
		&\multicolumn{2}{|c|}{$\rho u_{2}$ }  &\multicolumn{2}{|c}{$E$ }  \\
		\hline     Mesh     &$L^{1}$ error & Order & $L^{1}$ error & Order  &$L^{1}$ error & Order &$L^{1}$ error & Order \\
		\hline  $8\times8$	    &9.65e-05	& -	    &9.35e-05	& -	    &9.35e-05	& -	    &1.16e-04	& -	   \\
		\hline  $16\times16$	&5.51e-06	&4.13	&5.43e-06	&4.11	&5.43e-06	&4.11	&6.87e-06	&4.08  \\
		\hline  $32\times32$	&3.33e-07	&4.05	&3.30e-07	&4.04	&3.30e-07	&4.04	&4.22e-07	&4.02  \\	
		\hline  $64\times64$	&2.06e-08	&4.01	&2.05e-08	&4.01	&2.05e-08	&4.01	&2.62e-08	&4.01  \\	
		\hline  $128\times128$	&1.29e-09	&4.00	&1.29e-09	&3.99	&1.29e-09	&3.99	&1.65e-09	&3.99  \\		
		\hline
	\end{tabular}
	\label{tab:accuracy-test-2d-order4}
\end{table}

\subsubsection{Example 6: Isothermal equilibrium solution}\label{example:isothermal-2d}

{This example considers} a two-dimensional isothermal equilibrium problem \cite{Xing2013FDWB} under the linear
gravitational field $\phi_{x} = \phi_{y} = g $. The initial condition 
{is specified as}
\begin{align} \label{eq:example-isothermal-2d}
\begin{aligned}
	&\rho(x,y) = \rho_{0} \exp\Big(-\frac{\rho_{0}g}{p_{0}}(x+y)\Big) \,,  \\  
	&u_{1}(x,y) = u_{2}(x,y) = 0 \,,  \\  
	&p(x,y) = p_{0} \exp\Big(-\frac{\rho_{0}g}{p_{0}}(x+y)\Big) \,,
\end{aligned}\end{align}
where the parameters $\rho_{0} = 1.21$, $p_{0} = 1$ and $g = 1$. The computational domain is a
unit square $\Omega = [0,1]\times[0,1]$. 
We first use this test to demonstrate the WB property of the proposed CDG method. The numerical results at $t = 1$ are obtained by using our third-order WB CDG scheme on two different uniform meshes. {Table \ref{tab:isothermal-2d} lists the $L^1$ errors   between  the numerical solution and
the  isothermal equilibrium solution \eqref{eq:example-isothermal-2d}}. It is clearly observed that {all} the numerical errors are at the level of machine precision, demonstrating
{that} the proposed CDG method satisfies the WB property in two-dimensional case.
\begin{table}[htbp]
	\centering
	\caption{ $L^{1}$ errors for the isothermal equilibrium solution in Example 6. }
	\begin{tabular}{c|c|c|c|c}
		\hline
		Mesh      & errors in $\rho$   & errors in $\rho u_{1}$  & errors in $\rho u_{2}$ & errors in $E$ \\
		\hline
		50$\times$50        & 2.26e-15 & 8.43e-16 & 8.44e-16  & 4.07e-15  \\
		\hline
		80$\times$80        & 3.96e-15 & 1.40e-15 & 1.38e-15  & 6.49e-15 \\
		\hline
	\end{tabular}  \label{tab:isothermal-2d}
\end{table}

Next, we investigate the effectiveness of our WB CDG method in capturing the propagation of {a} small wave perturbation
around the isothermal equilibrium solution. Initially, a small Gaussian perturbation is imposed in the pressure
state as follows
\[
p(x,y) = p_{0} \exp\Big(-\frac{\rho_{0}g}{p_{0}}(x+y)\Big) +
\eta \exp \Big(-100\frac{\rho_{0}g}{p_{0}} \big( (x-0.3)^2+(y-0.3)^2 \big)\Big),
\]
where parameter $\eta = 10^{-3}$, and the density and {the} velocities are given by the equilibrium
state (\ref{eq:example-isothermal-2d}). We evolve the numerical solution until $t = 0.15$ on the mesh of
$50\times50$ cells, with {the} transmissive boundary conditions {specified on} $\partial\Omega$.  The contour plots of the density  and  pressure perturbations obtained {by} the third-order WB and non-WB CDG schemes are {shown} in Figure \ref{fig:isothermal-2d}. One can see that the non-WB CDG scheme cannot capture {those} small perturbations well on the relatively coarse mesh, while the WB method resolves them  accurately.

\begin{figure}[htbp]
	\centering
	\subfigure[{Pressure perturbation of non-WB method}]{
		\includegraphics[width=0.47\textwidth]
		{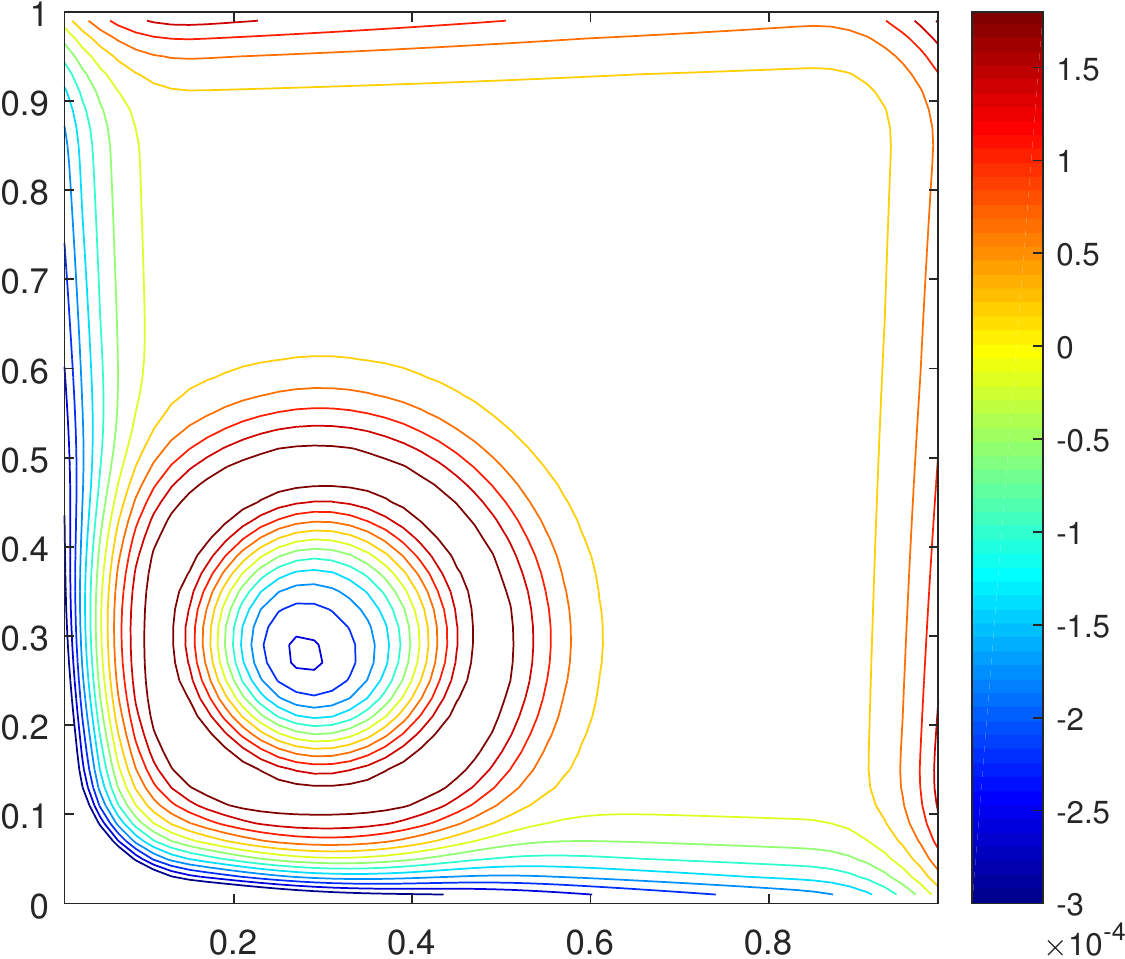}
	}
	\hfill
	\subfigure[{Density perturbation of non-WB method}]{
		\includegraphics[width=0.47\textwidth]
		{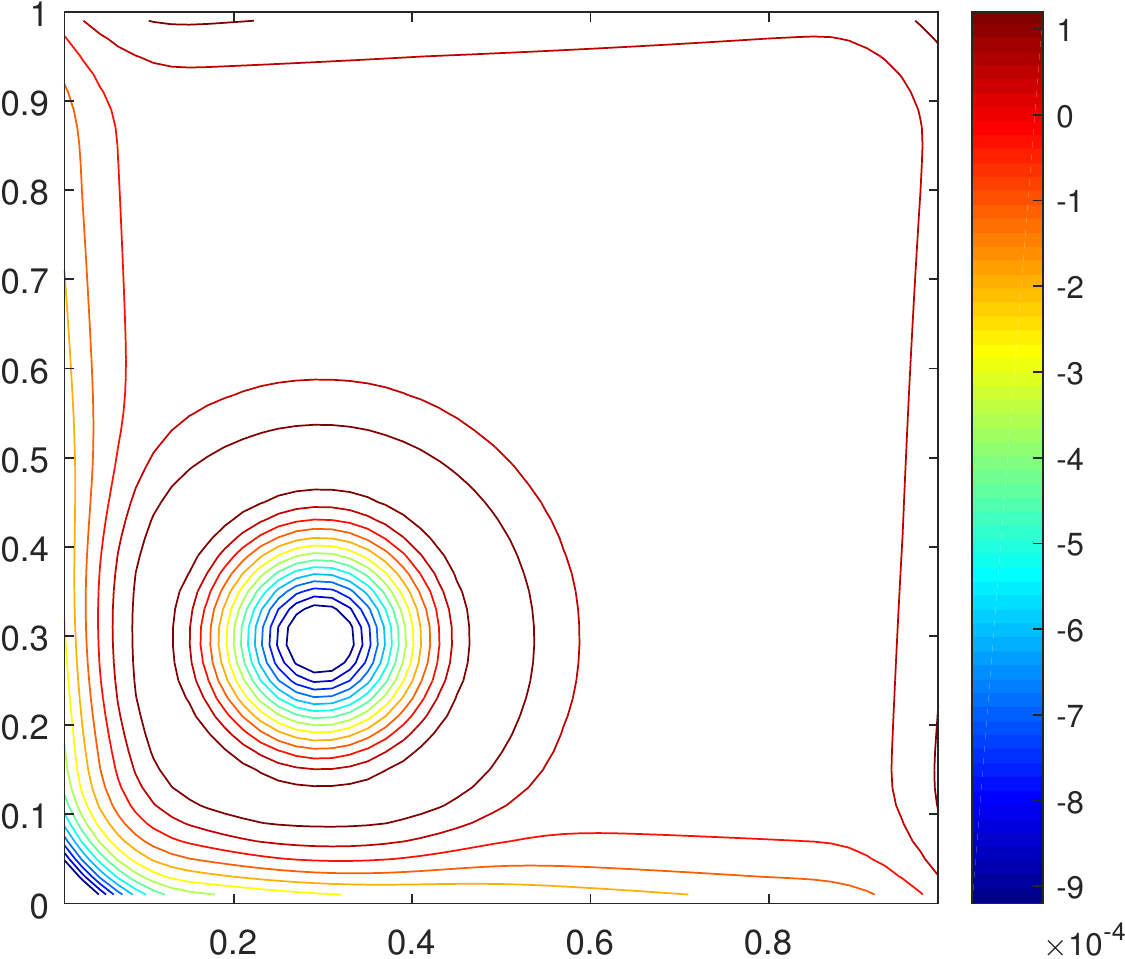}
	}
	\hfill
	\subfigure[{Pressure perturbation of WB method}]{
		\includegraphics[width=0.47\textwidth]
		{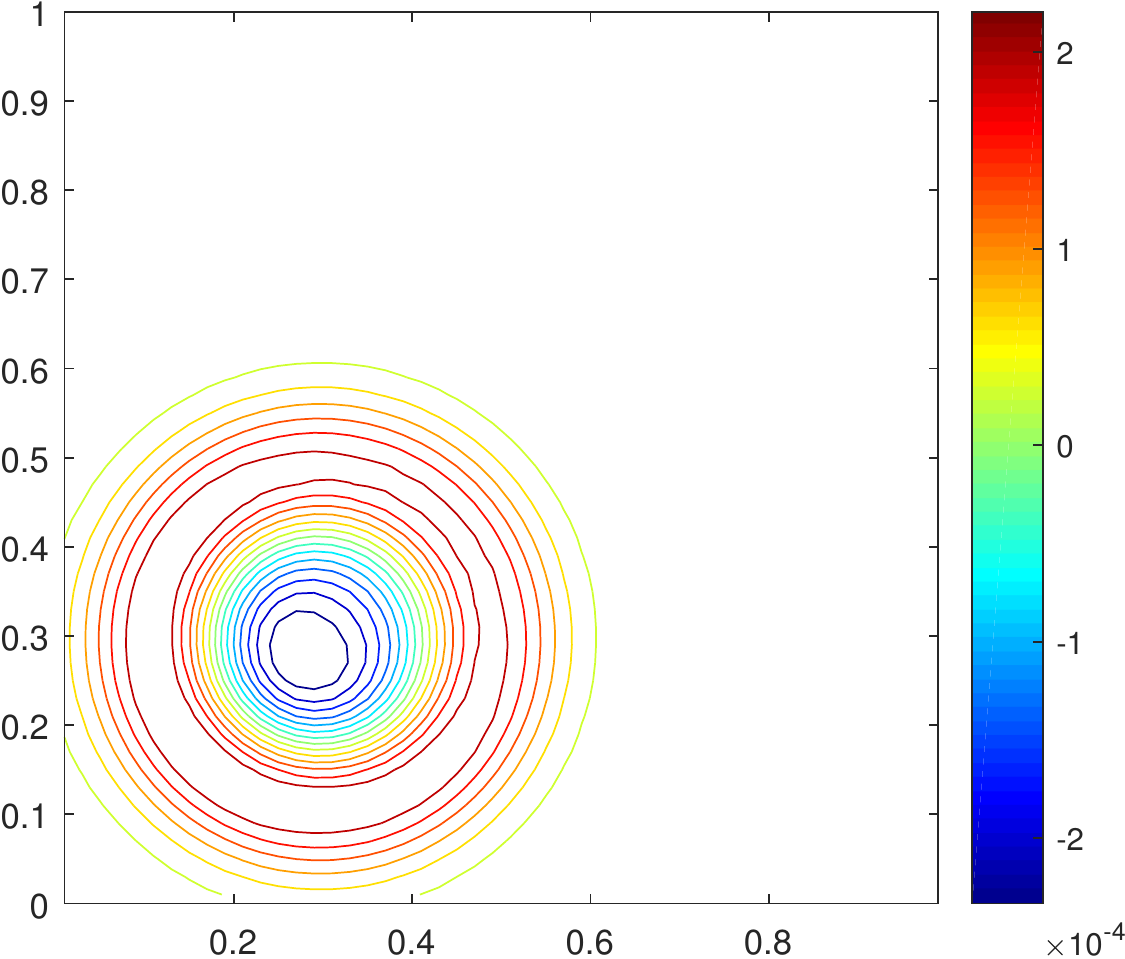}
	}
	\hfill
	\subfigure[{Density perturbation of WB method}]{
		\includegraphics[width=0.47\textwidth]
		{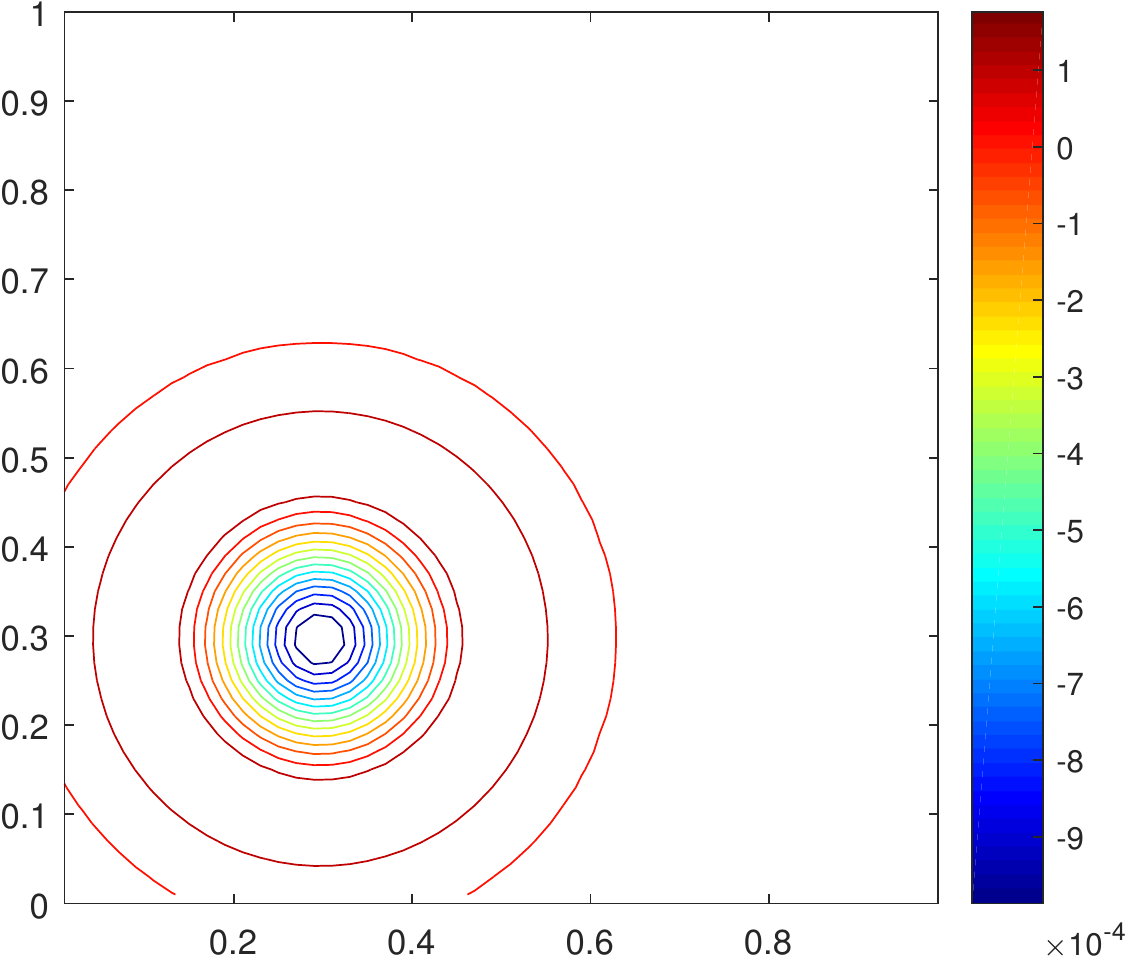}
	}
	\caption{Example 6: {Contour} plots of the pressure and density perturbations for the two-dimensional isothermal equilibrium problem at $t = 0.15$ obtained by the WB and non-WB CDG schemes with $50\times50$ uniform cells. }
	\label{fig:isothermal-2d}
\end{figure}

\subsubsection{ Example 7: {Polytropic} equilibrium solution }\label{example:polytropic-2d}

{This example considers} a two-dimensional polytropic problem arising from astrophysics \cite{Mishra2014well}.
This model can be constructed from the hydrostatic equilibrium in spherical symmetry case
\[
\frac{dp}{dr} = -\rho \frac{d\phi}{dr} \,,
\]
where $r := \sqrt{x^2 + y^2}$ denotes the radial variable, and the adiabatic index is $\gamma = 2$. One equilibrium solution of
this model is given by
\begin{equation}\label{eq:polytropic-radial}
	\rho (r) = \rho_{c}\frac{\sin(\alpha r)}{\alpha r} \,, \quad u_{1}(r) = 0\,, \quad u_{2}(r) = 0\,, \quad p(r) = K\rho(r)^2\,,
\end{equation}
coupled with a gravitational potential function
\begin{equation}\label{eq:polytropic-radial-potential}
	\phi(r) = -2K\rho_{c} \frac{\sin(\alpha r)}{\alpha r} \,,
\end{equation}
where $\alpha = \sqrt{\frac{2\pi g}{K}}$ and  $K = g = \rho_{c} = 1$. The computational domain is taken as $\Omega = [-0.5, 0.5]\times[-0.5, 0.5]$.

We first use this example to verify the WB property of the proposed CDG schemes. The initial
data {are} set as the equilibrium solution (\ref{eq:polytropic-radial}), and we perform the numerical simulations
up to $t = 14.8$ on two different uniform meshes. 
{Table \ref{tab:polytropic-2d} lists the $L^1$ errors between the numerical
solution and the projected  equilibrium solution \eqref{eq:polytropic-radial}}.
It is observed that all the numerical errors are at the level of rounding error, which confirms that the proposed CDG method is WB.
\begin{table}[h]
	\centering
	\caption{ $L^{1}$ errors for the polytropic equilibrium solution in Example 7. }
	\begin{tabular}{c|c|c|c|c}
		\hline
		Mesh      & errors in $\rho$   & errors in $\rho u_{1}$  & errors in $\rho u_{2}$ & errors in $E$ \\
		\hline
		50$\times$50        & 1.31e-13 & 1.48e-14 & 1.55e-14  & 3.68e-14  \\
		\hline
		80$\times$80        & 2.25e-13 & 2.02e-14 & 2.03e-14  & 6.39e-14  \\
		\hline
	\end{tabular}  \label{tab:polytropic-2d}
\end{table}

In order to investigate the capability of our WB CDG method in capturing small perturbations near the polytropic equilibrium solution, we impose a small Gaussian hump perturbation in pressure as follows
\[
p(r) = K\rho(r)^2 + \eta \exp(-100r^2) \,,
\]
where the parameter $\eta = 10^{-3}$. We perform the numerical simulation up to $t = 0.2$ on the mesh of $50\times50$ cells with outflow boundary conditions specified on $\partial\Omega$. The contour plots of the pressure perturbation and the velocity magnitude $\sqrt{u^2+v^2}$ are displayed in Figure \ref{fig:polytropic-2d}. {The results show} that the
non-WB scheme are not capable of capturing {the} small perturbations on the relatively coarse mesh, while our WB scheme can resolve them accurately. In addition, the WB scheme is able to preserve the axial symmetry, but the non-WB scheme cannot well maintain the symmetry.
\begin{figure}[htbp]
	\centering
	\subfigure[{Velocity magnitude of non-WB scheme}]{
		\includegraphics[width=0.47\textwidth]
		{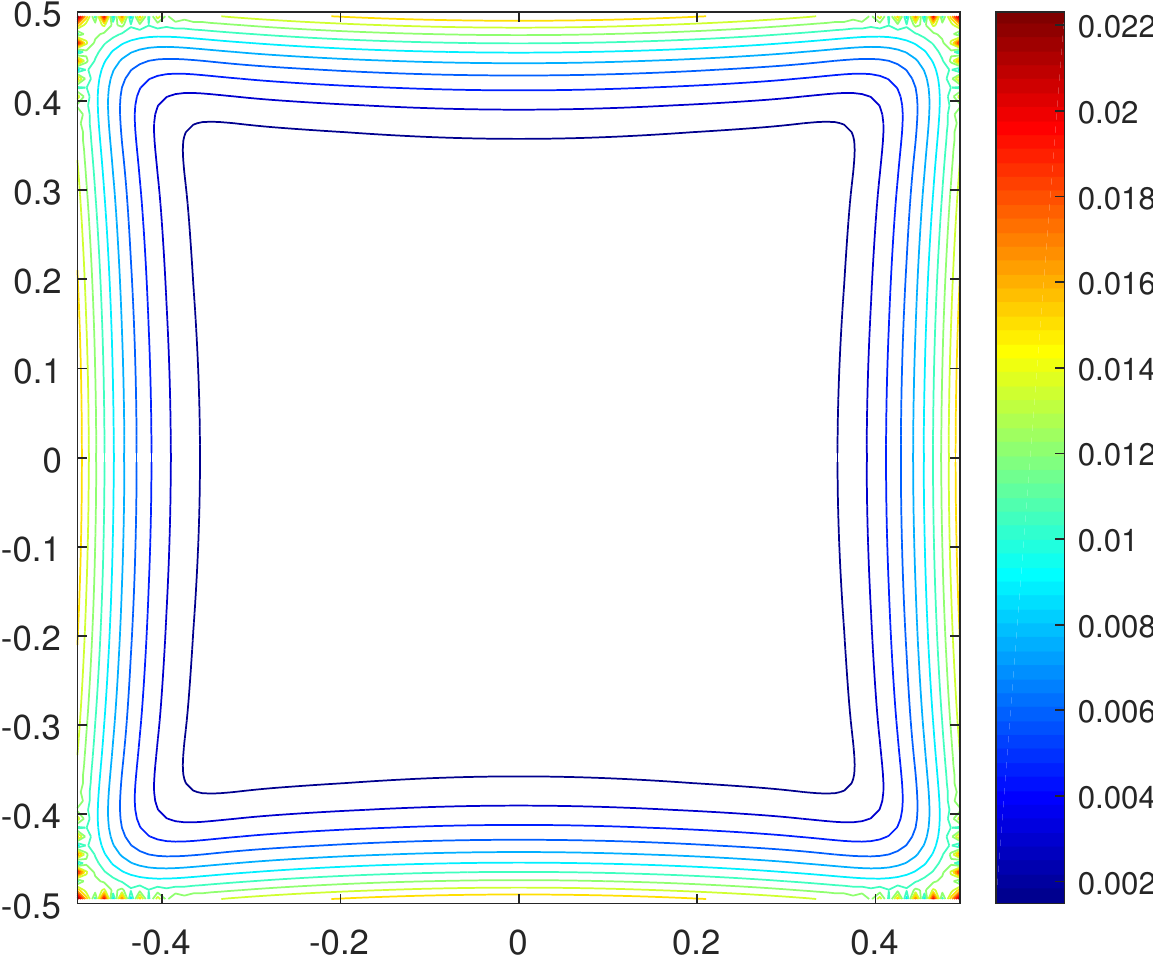}
	}
	\hfill
	\subfigure[{Pressure perturbation of non-WB scheme}]{
		\includegraphics[width=0.47\textwidth]
		{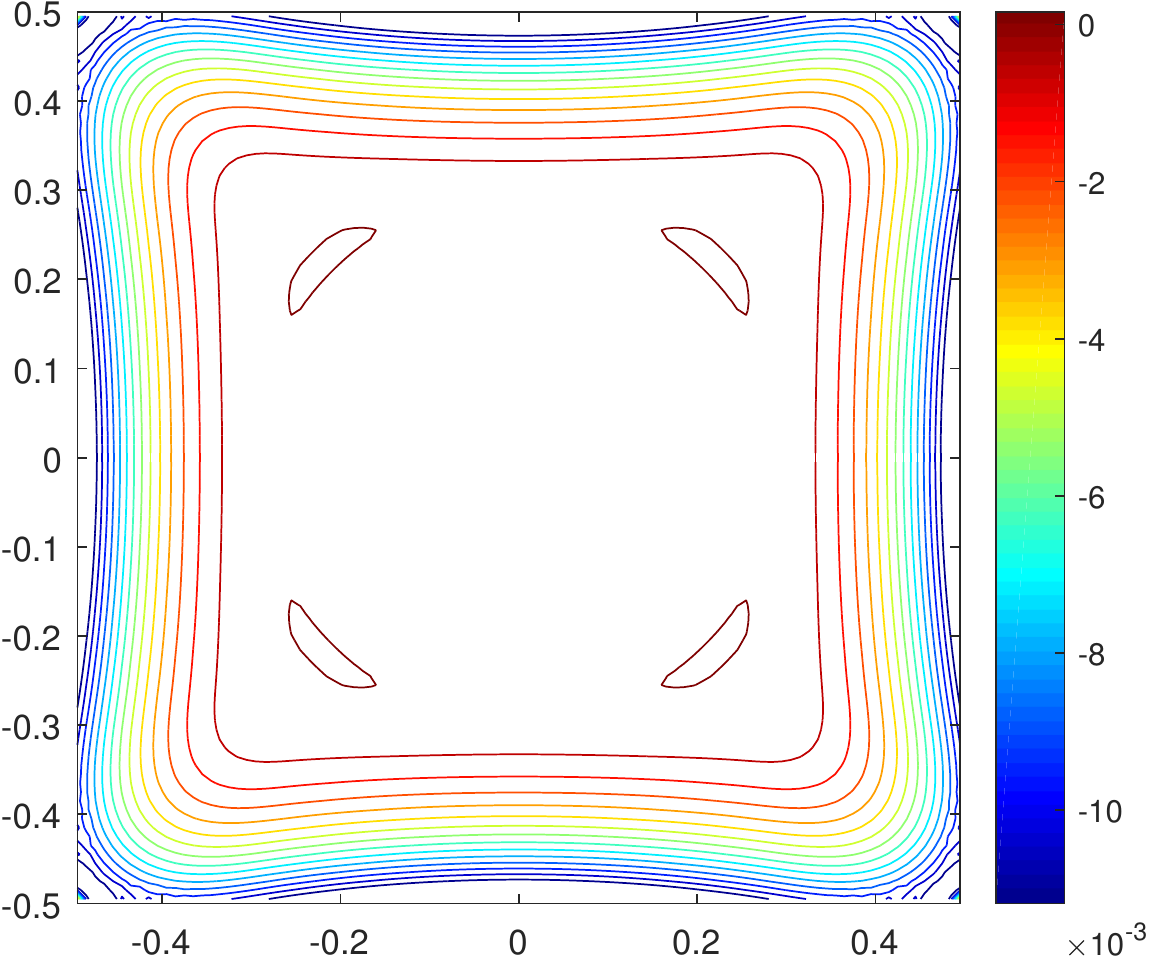}
	}
	\hfill
	\subfigure[{Velocity magnitude of WB scheme}]{
		\includegraphics[width=0.47\textwidth]
		{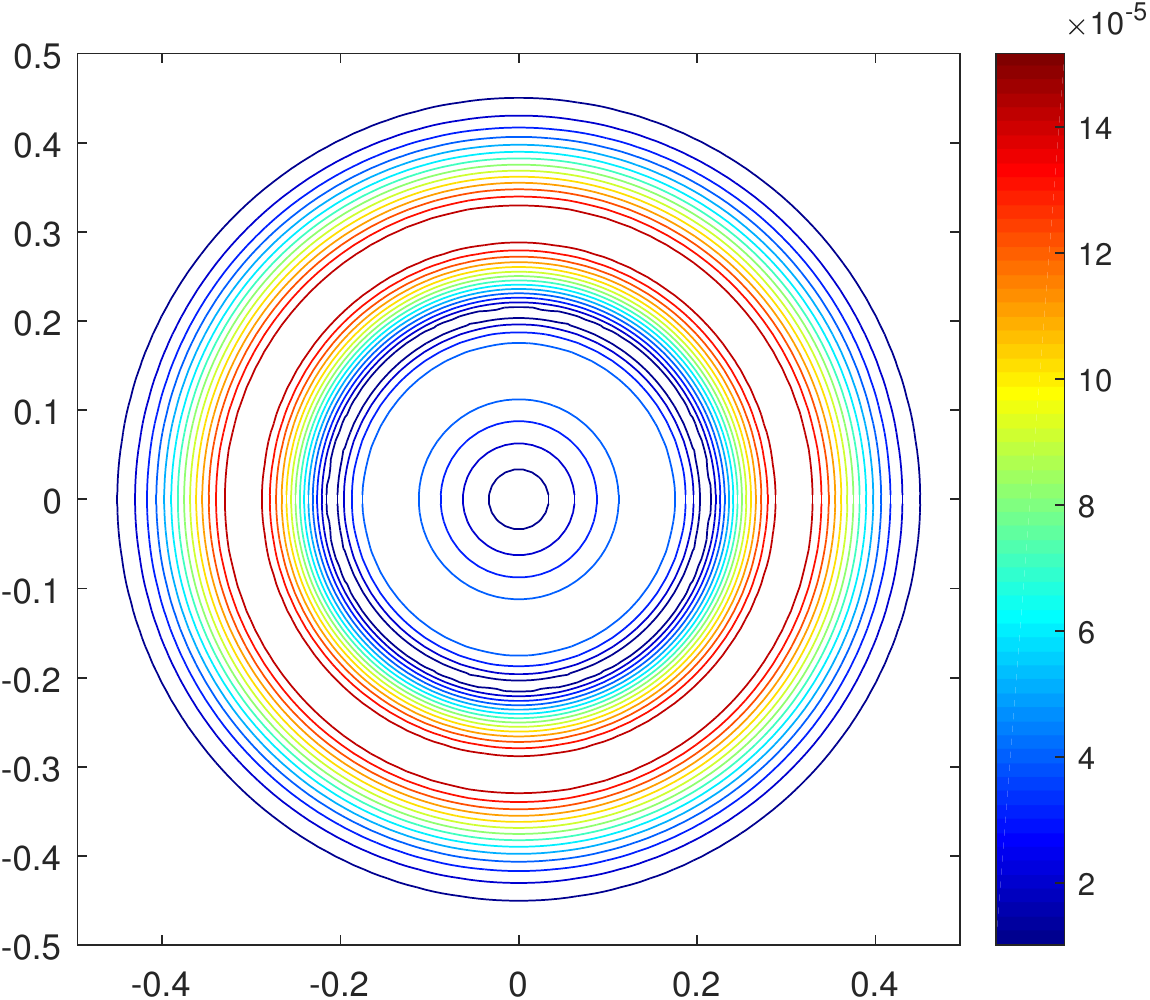}
	}
	\hfill
	\subfigure[{Pressure perturbation of WB scheme}]{
		\includegraphics[width=0.47\textwidth]
		{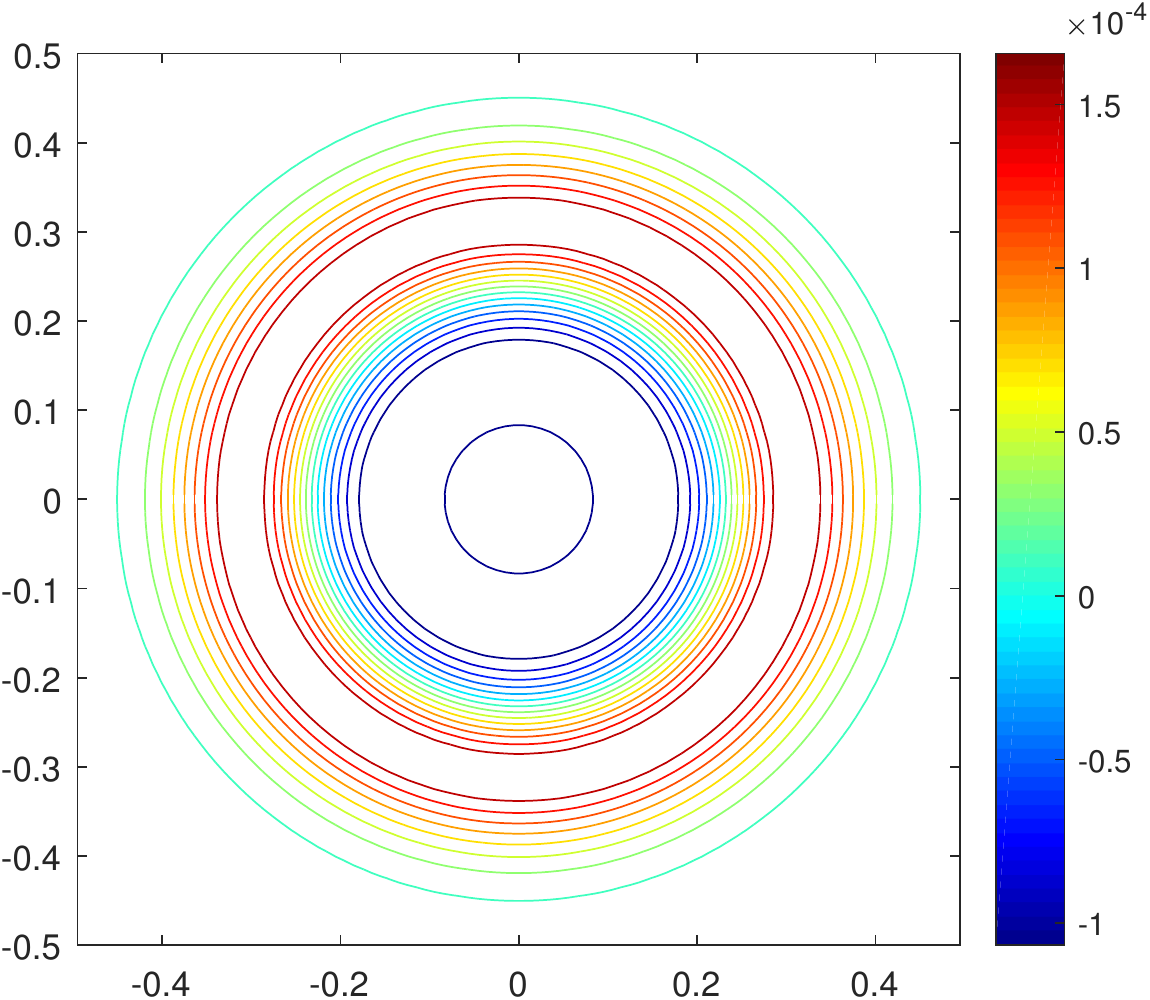}
	}
	\caption{Example 7: {Contour} plots of the velocity magnitude $\sqrt{u_{1}^2 + u_{2}^2}$ and pressure perturbation for the two-dimensional polytropic equilibrium problem at $t = 0.2$ obtained by the third-order WB and non-WB CDG schemes with $100\times100$ uniform cells. $10$ uniformly spaced contour lines are displayed. }
	\label{fig:polytropic-2d}
\end{figure}

\subsubsection{Example 8: Rarefaction test with low density and pressure}
This example is used to demonstrate the positivity-preserving property of the proposed CDG schemes.
The initial condition is given by
\begin{align*}
	& \rho(x,y,0)  = \exp(-\phi(x,y)/0.4) \,, \quad p(x,y,0) = 0.4 \exp(-\phi(x,y)/0.4) \,,   \\ \notag
	& u_{1}(x,y,0) =  \left\{
	\begin{aligned}
		~  -2  \,, \quad  x < 0.5  \,,  \\
		~ ~ 2  \,, \quad  x > 0.5  \,,
	\end{aligned} \right. \quad  u_{2}(x,y,0) = 0 \,,
\end{align*}
with a quadratic gravitational potential $\phi(x,y) = \frac{1}{2} \big[(x-0.5)^2 + (y-0.5)^2 \big]$.
The computational domain $\Omega = [0,1]\times[0,1]$ is divided into $100\times100$ uniform cells with outflow
boundary conditions {on} $\partial\Omega$. 
Figure \ref{fig:Rarefaction-2d} displays the numerical solutions 
obtained by our third-order positivity-preserving WB CDG method.  We observe that the density and the pressure get close
to zero but remain positive throughout the simulation. It is noticed that the CDG code would blow-up, if the positivity-preserving limiter is not employed.

\begin{figure}[htbp]
	\centering
	\subfigure[{$\rho$}]{
		\includegraphics[width=0.3\textwidth]
		{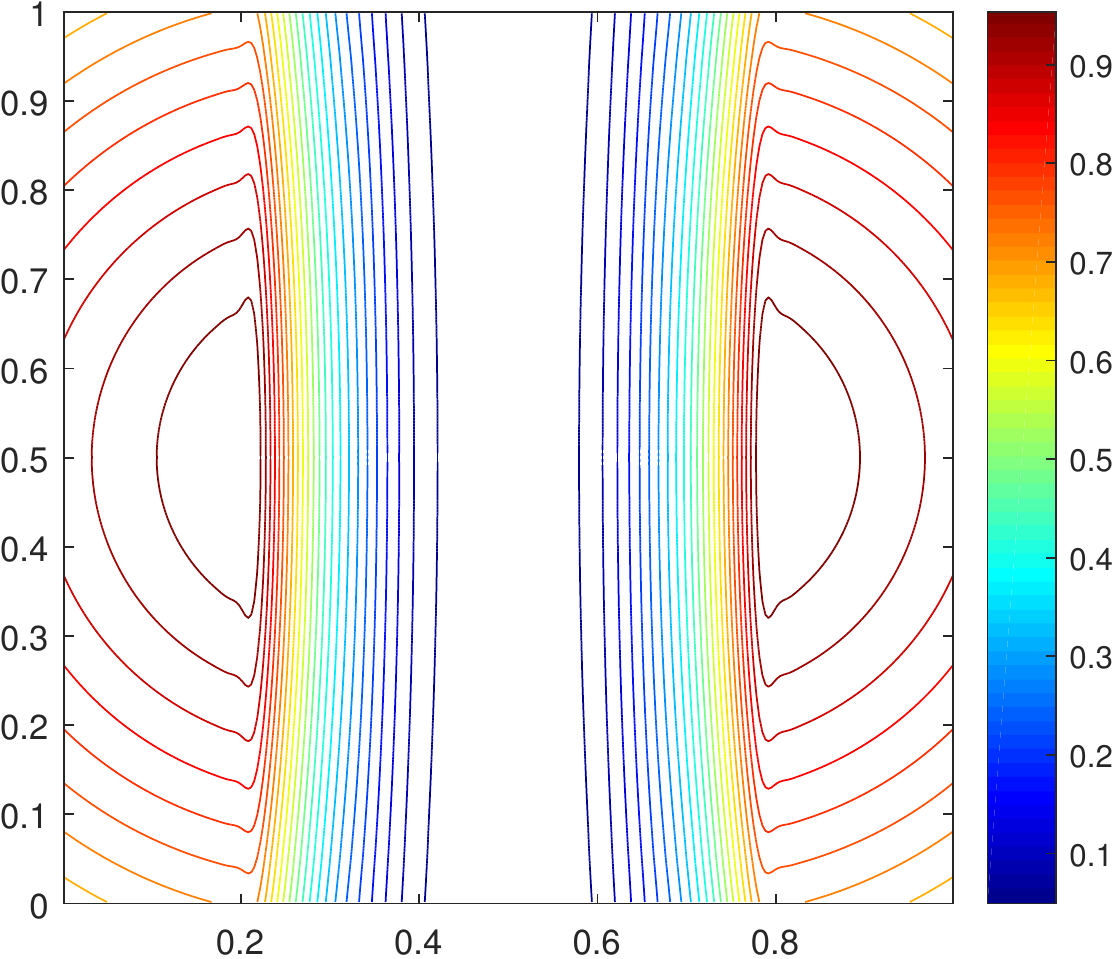}
	}
	\hfill
	\subfigure[{$\rho u_{1}$}]{
		\includegraphics[width=0.3\textwidth]
		{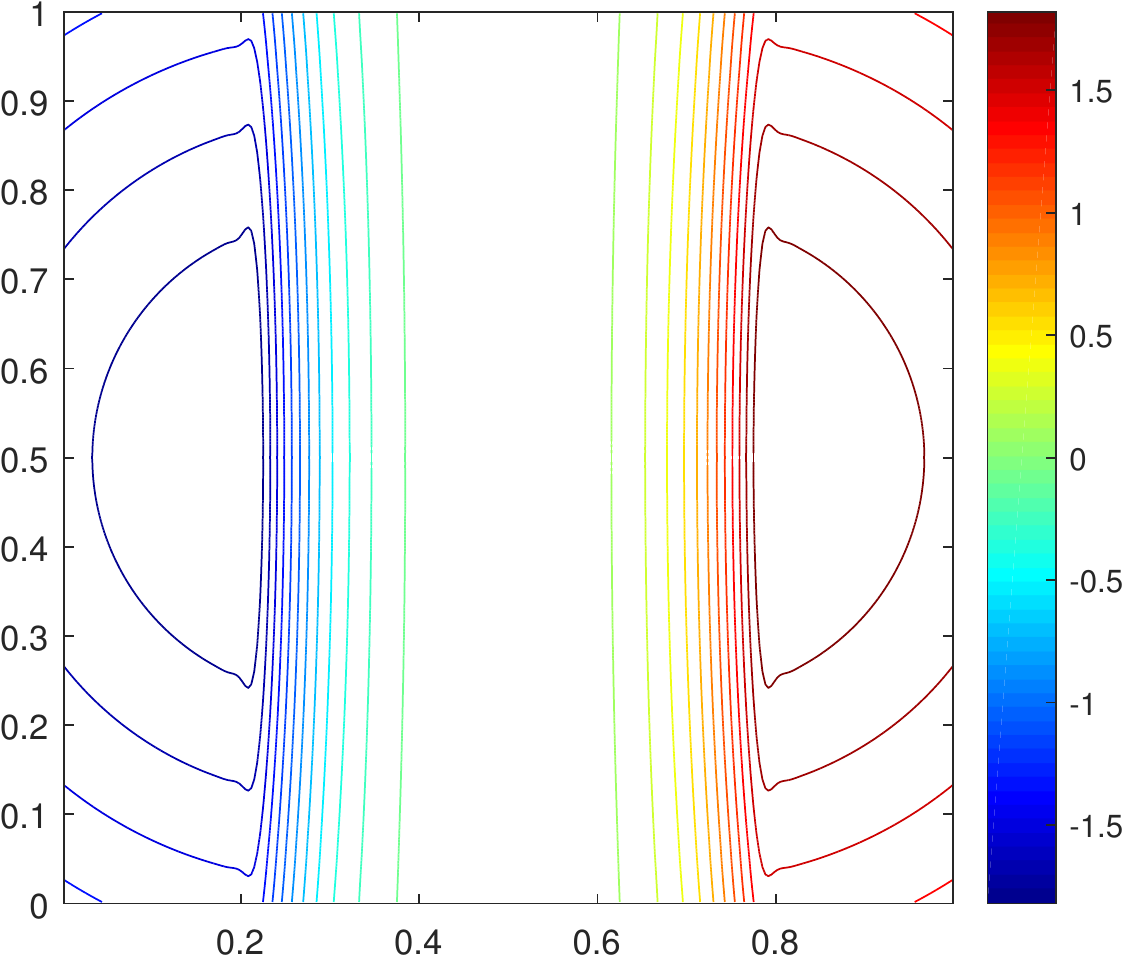}
	}
	\hfill
	\subfigure[{$p$}]{
		\includegraphics[width=0.3\textwidth]
		{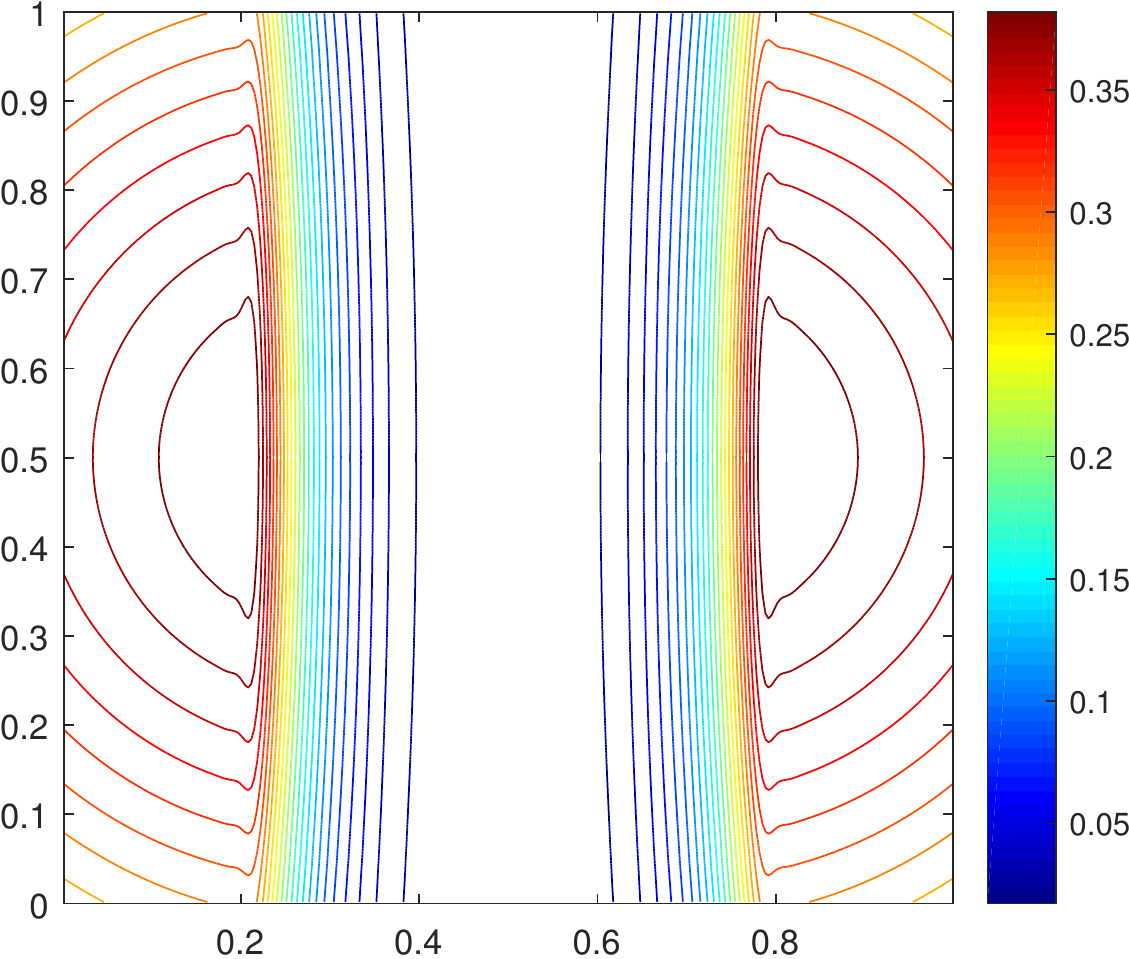}
	}
	\caption{ Example 8: {Contour} plots for the two-dimensional rarefaction test at $t = 0.1$ obtained by the positivity-preserving WB CDG scheme with $100\times100$ uniform cells. }
	\label{fig:Rarefaction-2d}
\end{figure}

\subsubsection{Example 9: Blast problem}

In order to demonstrate the positivity-preserving property and the capability of the proposed WB CDG
method in resolving strong discontinuities, we consider a two-dimensional blast problem \cite{wu2021uniformly}
under the gravitational field (\ref{eq:polytropic-radial-potential}). The initial condition is obtained by adding
a huge jump to the pressure of the polytropic equilibrium  solution (\ref{eq:polytropic-radial}). {Specially,  the initial pressure is given by}
\begin{equation*}
	p(r) = K\rho(r)^2 +
	\left\{
	\begin{aligned}
		100 \,, \quad  r < 0.1  \,,
		\\  0 \,, \quad  r \geq 0.1 \,.
	\end{aligned}
	\right.
\end{equation*}
We set the parameters $K = g = 1 $ and $\rho_{c} = 0.01$, so that {the} low pressure and {the} low density appear in the solution and make this test challenging. The computational domain is set as $\Omega = [-0.5, 0.5]\times[-0.5, 0.5]$, and the adiabatic index is $\gamma = 2$.

In this test, {both} the adaptive WENO limiter \cite{qiu2005runge} (see Remark \ref{rem:WB-WENO} for its WB implementation with the TVB parameter $M = 200$) and {the} positivity-preserving limiter are implemented. 
{Figure \ref{fig:polytropic-pp} displays the} contour plots of {the} density and {the} pressure at $t = 0.005$ computed
by the third-order positivity-preserving WB CDG method with $200 \times 200$ cells. {Figure \ref{fig:polytropic-pp} also gives} the plots along the line $y = 0$, from which we can clearly observe a strong shock at $|x| \approx 0.4$. It is seen that the discontinuities are captured with high resolution, and the proposed CDG method preserves the positivity of {the} density and {the} pressure as well as the axisymmetric structure of the solution.

\begin{figure}[htbp]
	\centering
	\subfigure{
		\includegraphics[width=0.47\textwidth]
		{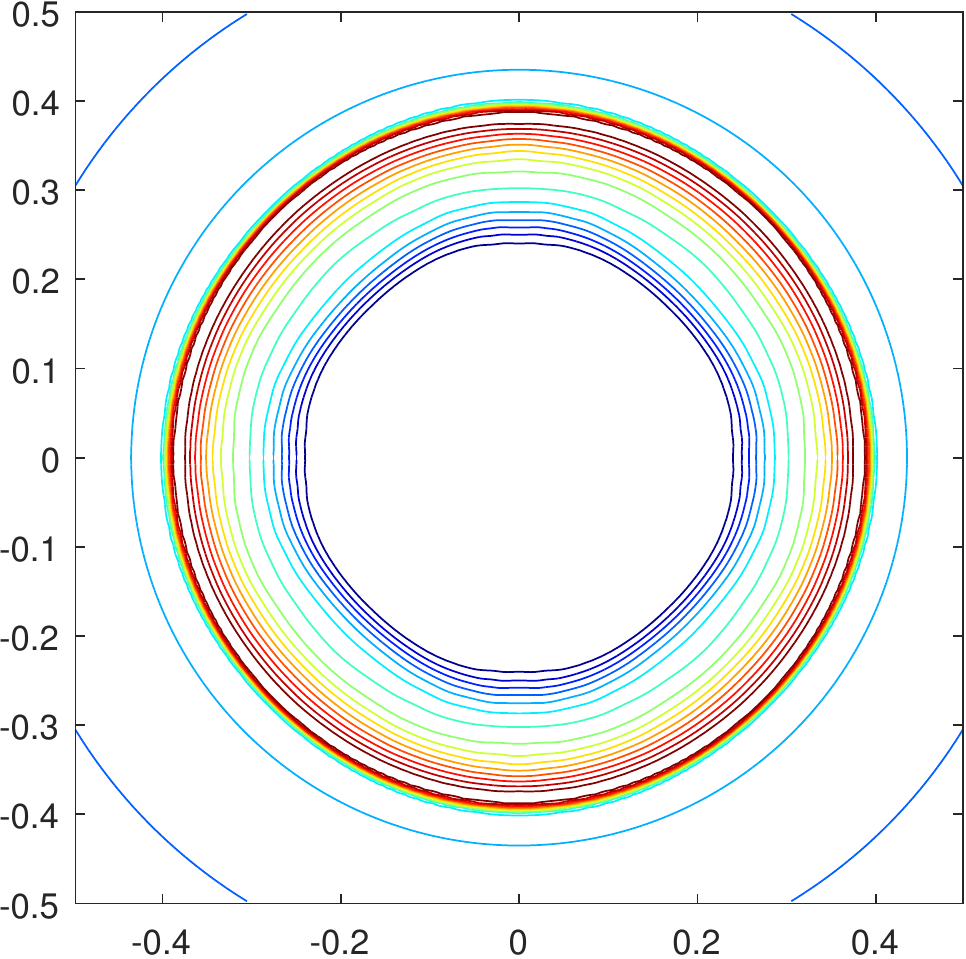}
	}
	\hfill
	\subfigure{
		\includegraphics[width=0.47\textwidth]
		{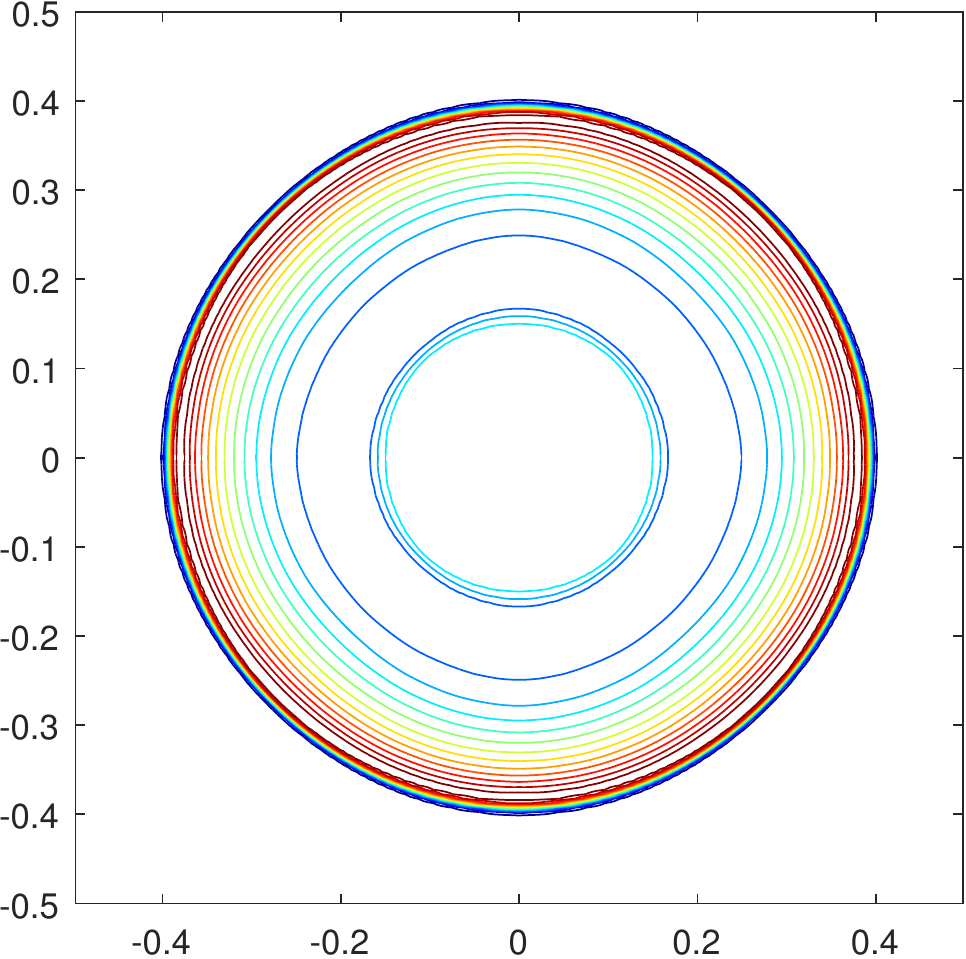}
	}
	\hfill
	\subfigure{
		\includegraphics[width=0.47\textwidth,height=0.43\textwidth]
		{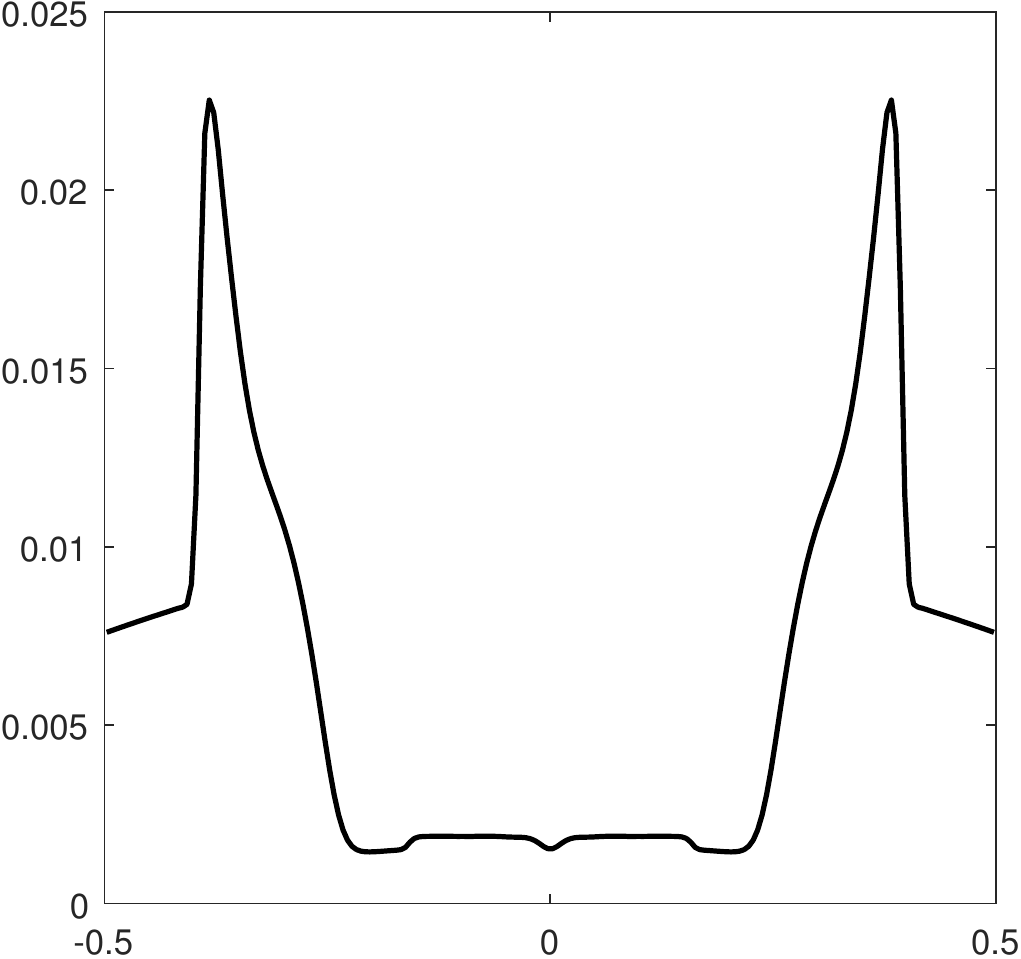}
	}
	\hfill
	\subfigure{
		\includegraphics[width=0.47\textwidth,height=0.43\textwidth]
		{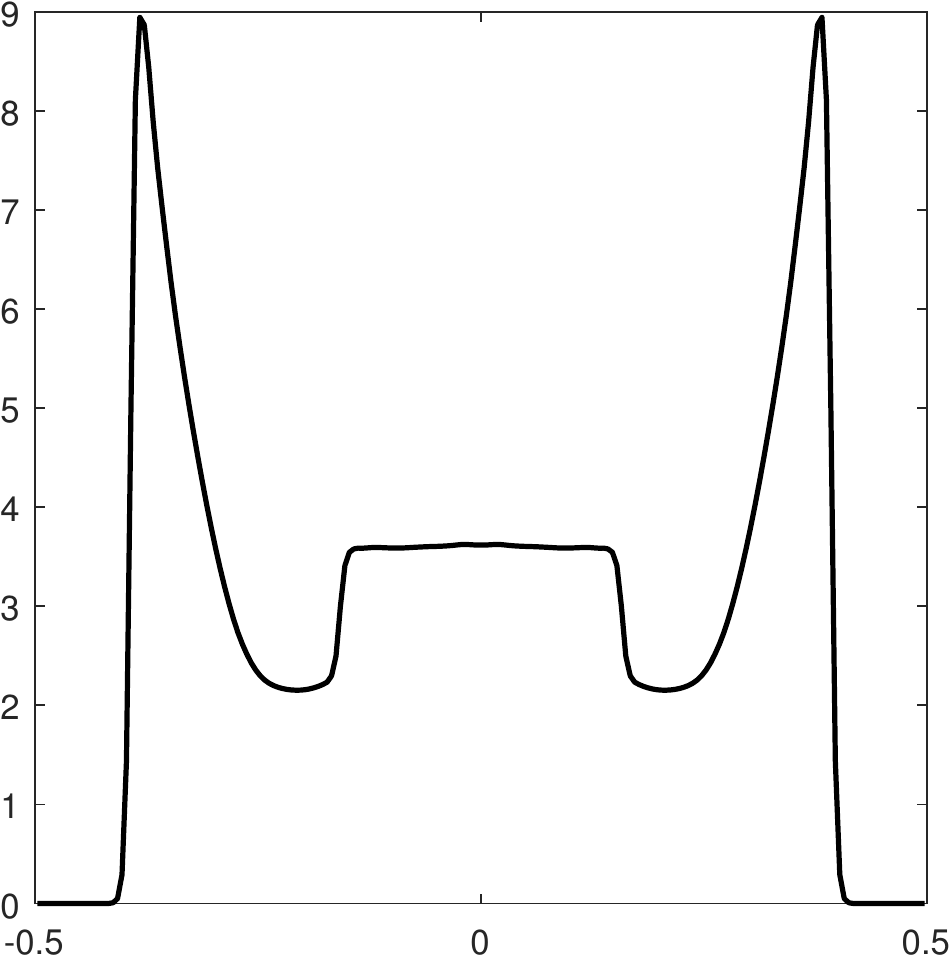}
	}
	\caption{ Example 9:  {Contour plots} of the density (top-left) and the pressure
		(top-right) {and corresponding plots (bottom) along the line $y = 0$ 
}
for the two-dimensional blast problem at $t = 0.005$,
 obtained by the positivity-preserving WB CDG scheme
		on a mesh with $200\times200$ uniform cells.
}
	\label{fig:polytropic-pp}
\end{figure}

\subsubsection{Example 10: Rising thermal bubble}\label{sec:RTB}

This is a benchmark test problem arising from {the} atmospheric flows \cite{ghosh2016well,giraldo2008study,wu2021uniformly}.
{It shows the evolution of a warm bubble in a constant potential temperature environment.
Because the bubble is warmer than the ambient air, it rises while deforming as a consequence of the shearing motion caused by the velocity field gradients until it forms a mushroom cloud.}
 The computational domain
is set as $\Omega = [0, 1000]\times[0, 1000]{~{\rm m^2}}$.  
The boundary conditions on all sides are set as the solid walls and the reflective boundary conditions are specified. 
{The initial solution is a stratified atmosphere in hydrostatic balance; see, e.g., the second example  in  the Appendix  of \cite{ghosh2016well}. The constant potential temperature (and thus the reference temperature at $y=0$ m) is 300 K, and the reference pressure is $10^5{\rm N/m}^2$. The ambient flow is at rest (i.e. ${\bf u} = {\bf 0} {~ {\rm m}/{\rm s}}$) and experiences a constant gravitational force per unit mass of $g=9.8~{\rm m/s}^2$, which implies
a linear gravitational field  with $\phi_{x} = 0 {~ {\rm m}/{\rm s}^{2}}$ and $\phi_{y} = g$}.
 {The potential temperature and the Exner pressure of  the ambient air are $\Theta = T_{0} = 300 ~ {\rm K}$ and $\Pi  = 1 - \frac{ (\gamma - 1)gy}{\gamma RT_{0}}$, respectively}, where $R = 287.058 ~ {\rm J} / {({\rm kg} \cdot {\rm K})}$ is the gas constant {for dry air}. Initially, the warm bubble is added as a potential temperature perturbation to the hydrostatic balance:
\begin{equation}\notag
	\Delta \Theta (x,y,t=0) =
	\left\{
	\begin{aligned}
		& 0 \,,      \quad\quad\quad\quad ~                \quad  r > r_{c}     \,,  \\
		& \frac{\theta_{c}}{2} (1 + \cos(\pi r / r_{c})) \,, \quad  r \leq  r_{c}  \,,
	\end{aligned}
	\right.
	\quad r = \sqrt{ (x-x_{c})^2 + (y-y_{c})^2 } \,,
\end{equation}
where $\theta_{c} = 0.5 ~ {\rm K}$, $(x_{c}, y_{c}) = {(500~{\rm m}, 350~{\rm m})}$, and $r_{c} = 250 ~ {\rm m}$.
The pressure and density are computed by $\Theta$  and $\Pi$ via the following formulas:
\begin{equation}
	p = p_{0} \Pi^{\frac{\gamma}{\gamma - 1}} \,, \quad  \rho = \frac{p_{0}}{R \Theta} \Pi^{\frac{1}{\gamma - 1}} \,,
\end{equation}
with the reference pressure $p_{0} = 10^{5} ~ {\rm N}/{\rm m}^2$. Figure \ref{fig:thermal-bubble} shows the evolution of {the} potential temperature perturbation $\Delta \Theta$ obtained by the proposed fourth-order accurate
WB CDG method on the meshes of $100 \times 100$ cells {($10 ~ {\rm m}$} resolution). We observe {clearly} that the initial circular bubble is deformed to a mushroom-like cloud {and  the} flow structures are well resolved.

\begin{figure}[htbp]
	\centering
	\subfigure[$t = 400~{\rm s}$]{
		\includegraphics[width=0.47\textwidth]
		{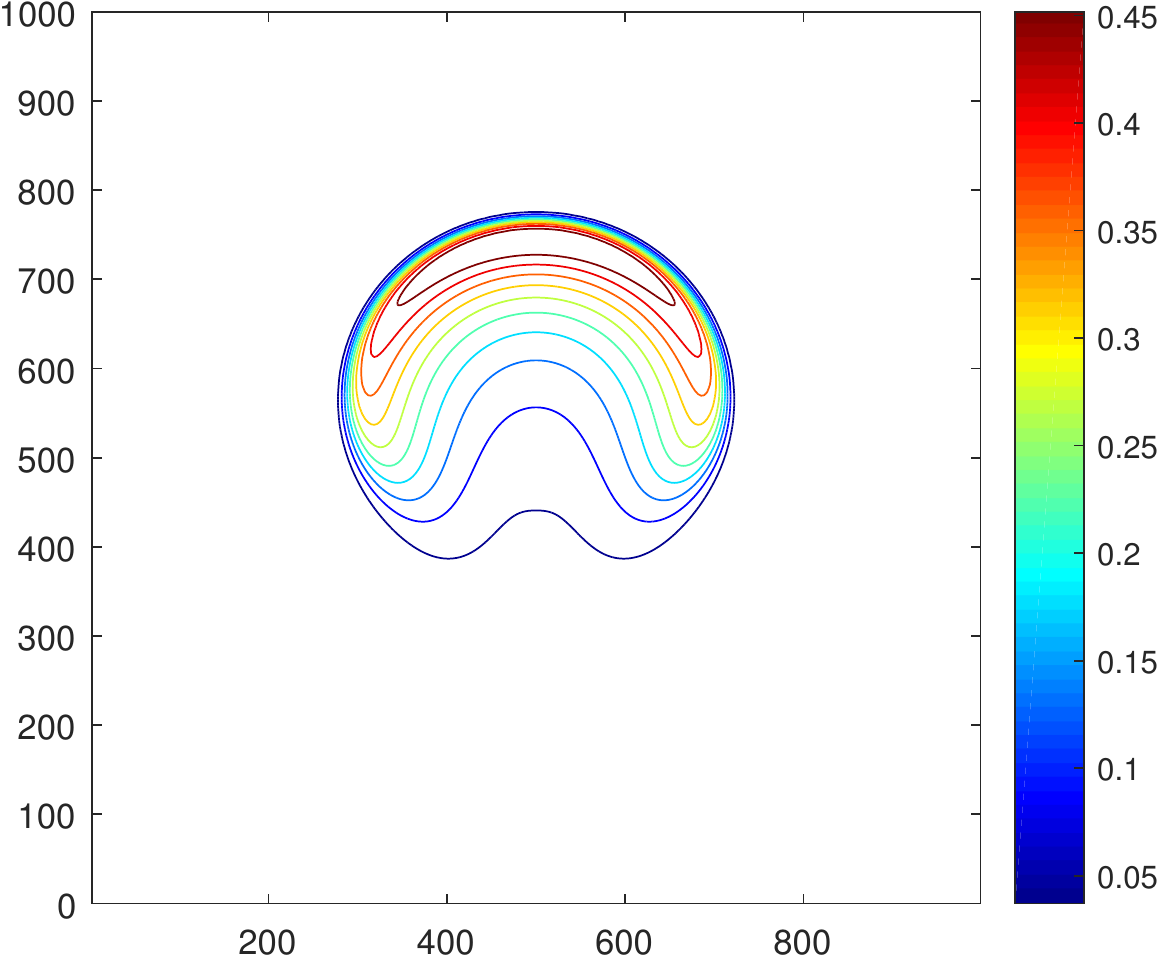}
	}
	\hfill
	\subfigure[$500~{\rm s}$]{
		\includegraphics[width=0.47\textwidth]
		{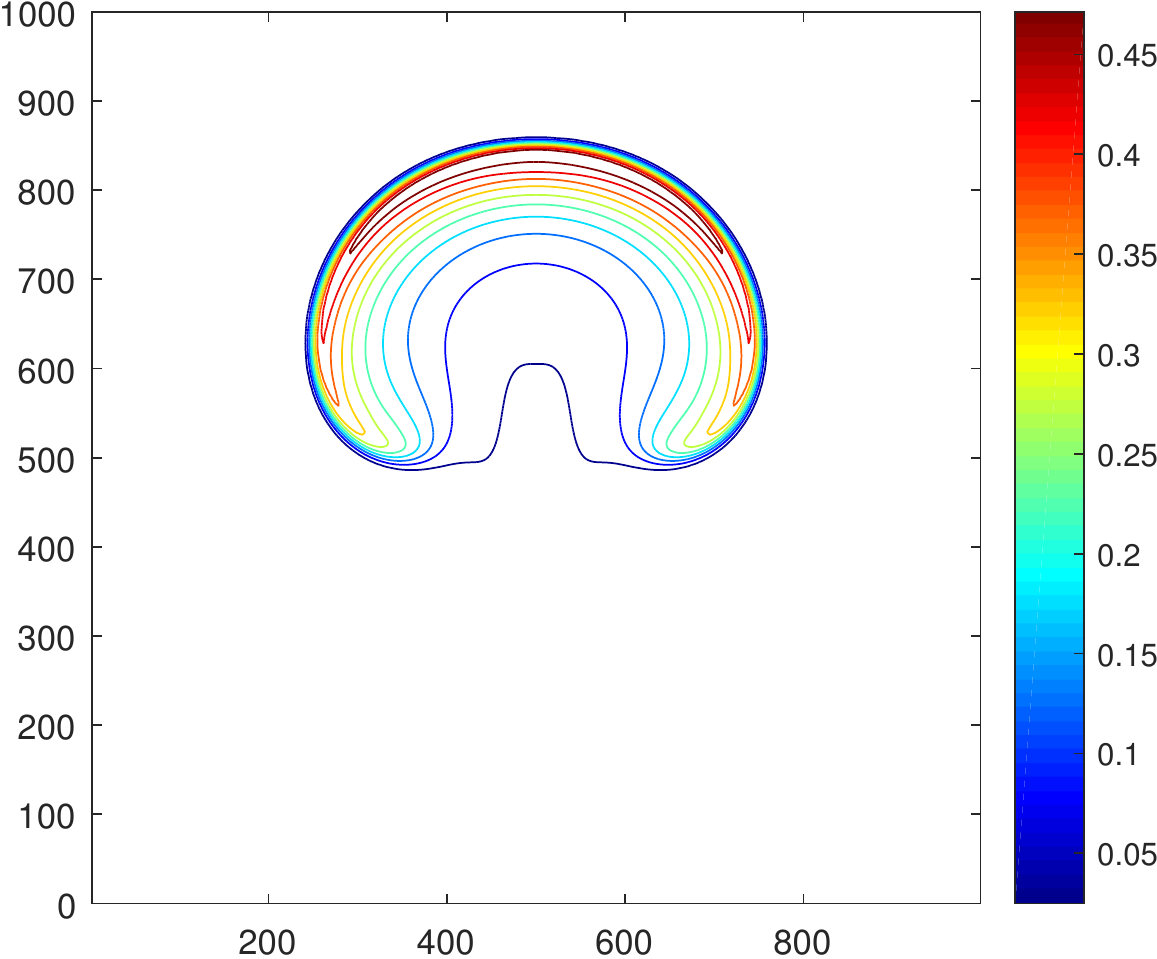}
	}
	\hfill
	\subfigure[$600~{\rm s}$]{
		\includegraphics[width=0.47\textwidth]
		{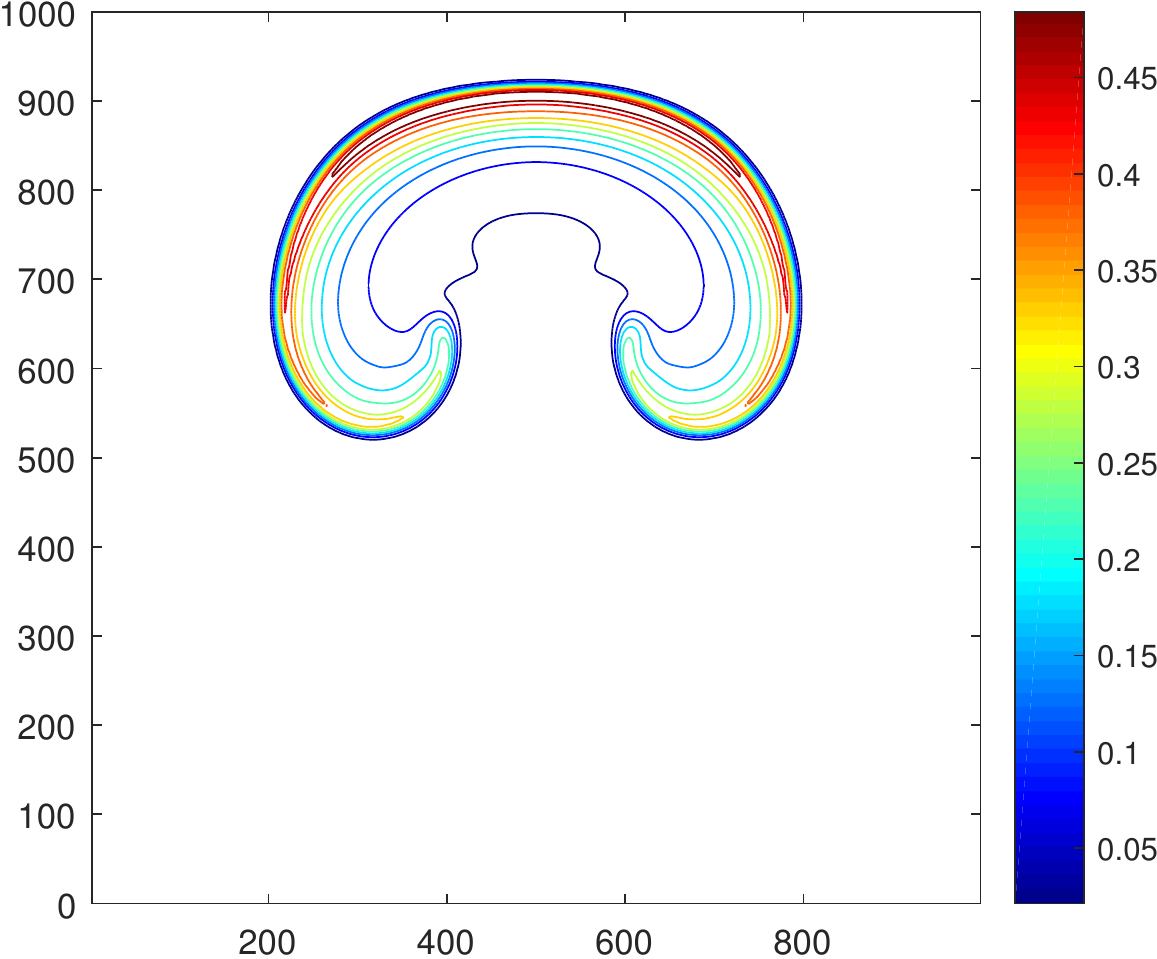}
	}
	\hfill	
	\subfigure[$700~{\rm s}$]{
		\includegraphics[width=0.47\textwidth]
		{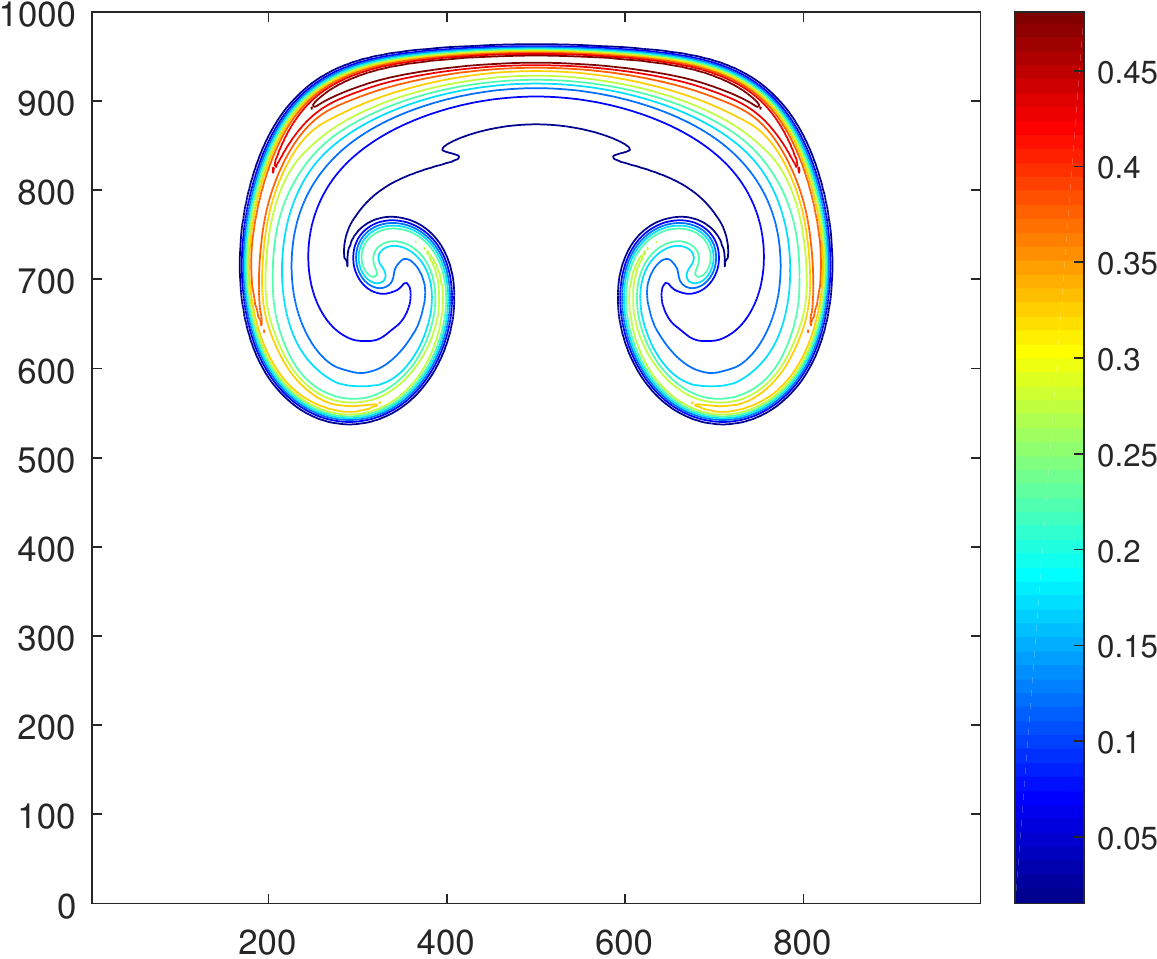}
	}
	\caption{Example 10: Contour plots of the potential temperature perturbation $\Delta \Theta$  at $t = 400~{\rm s}$,
			$500~{\rm s}$,  $600~{\rm s}$, and  $700~{\rm s}$, respectively, obtained by our fourth-order
			WB CDG method. $10$ uniformly spaced contour lines are displayed. }
	\label{fig:thermal-bubble}
\end{figure}

\subsubsection{Example 11: Rayleigh--Taylor (RT) instability tests}
This example simulates three tests, which involve discontinuous stationary hydrostatic solutions. 

For the first two tests, we use the same setups as in \cite{Zenk2017WBNDG}, with 
the gravitational potential function $\phi(x,y) = y$ and the computational domain $ \Omega = [-0.25, 0.25]\times[-1, 1]$. The 
 initial solution is stationary hydrostatic with the pressure and density given by 
\begin{equation}\label{eq:0012}
	p(x,y,0)=
	\begin{cases}
		p_{0}\exp{(-y/T_{l})},    &  y < 0  \,,  \\
		p_{0}\exp{(-y/T_{u})},    &  y > 0  \,,
	\end{cases} \qquad
	\rho(x,y,0)=
	\begin{cases}
		p/T_{l},    &  y < 0  \,,  \\
		p/T_{u},    &  y > 0  \,,
	\end{cases}
\end{equation}
where $p_{0} = 1$, $\{T_{l}, T_{u}\}$ are two different constant temperatures. 
Note that the density $\rho(x,y,0)$ is discontinuous because of 
the jump in temperature at $y=0$, while the pressure is continuous at $y=0$.
We consider two configurations \cite{Zenk2017WBNDG}:
\begin{itemize}
	\item  {\bf RT test 1}: $T_{l} = 1$ and $T_{u} = 2$. This is a stable case, because the light fluid is above the heavy fluid. 
	\item  {\bf RT test 2}: $T_{l} = 2$ and $T_{u} = 1$. This is a physically unstable case, because heavy fluid is above the light fluid.
\end{itemize}
The numerical solutions for both tests are computed until $t=0.1$ by using the third-order WB CDG scheme with respectively 
$25\times100$ and $50\times200$ uniform cells. 
In order to demonstrate our WB implementation of the WENO limiter (see Remark \ref{rem:WB-WENO}), 
we perform the tests with and without the WENO limiter, respectively. 
Tables \ref{tab:Rayleigh-case1} and \ref{tab:Rayleigh-case2} list 
the $L^1$ errors between the numerical solutions and the projection of the initial solution \eqref{eq:0012}. 
We clearly see that all the numerical errors are at the level of rounding error, confirming that the proposed CDG method and our 
implementation of the WENO limiter exactly preserve the WB property. As the solution remains 
at the stationary hydrostatic state, we observe that no cell is flagged as ``troubled cells'' up to $t=0.1$. 
Following \cite{Zenk2017WBNDG}, we also continue the simulation for a very long time. 
As shown in Figure \ref{fig:Rayleigh-WENO-case12}, the steady state solution in the stable case (RT test 1)  
is still exactly preserved in a long-time simulation until $t=10$, thanks to the WB property. 
However, 
because the configuration of RT test 2 is the physically unstable, 
in the long-time simulation the small rounding errors 
will accumulate, as time evolves, and eventually cause the RT 
instability near the
interface $y = 0$ (the WB property is locally preserved away from
the interface); see Figure \ref{fig:Rayleigh-WENO-case12}.

\begin{table}[htbp]
	\caption{$L^{1}$ errors at $t=0.1$ for RT test 1 in Example 11.}  
	\centering  
	\begin{tabular}{cccccc}  
		\toprule  
		 Limiter &  mesh &  errors in $\rho$ &  errors in $\rho u_{1}$ &  errors in $\rho u_{2}$  &  errors in $E$ \\
		\midrule  
		 No limiter  &  $25\times100$  &  1.43e-15 & 	3.41e-16 & 	5.22e-16 & 	5.54e-15 \\
		
		&  $50\times200$  &  2.93e-15 & 	6.39e-16 & 	9.07e-16 & 	1.17e-14 \\ 
		
		 WENO limiter  &  $25\times100$  &  1.43e-15 & 	3.41e-16 & 	5.22e-16 &	 5.54e-15 \\
		
		&  $50\times200$  &  2.93e-15 & 	6.39e-16 & 	9.07e-16 & 	1.17e-14 \\		
		\bottomrule  	
	\end{tabular} \label{tab:Rayleigh-case1}
\end{table}

\begin{table}[htbp]
	\caption{$L^{1}$ errors  $t=0.1$ for RT test 2 in Example 11.} 
	\centering  
	\begin{tabular}{cccccc}  
		\toprule  
		 Limiter &  mesh &  errors in $\rho$ &  errors in $\rho u_{1}$ &  errors in $\rho u_{2}$  &  errors in $E$ \\
		\midrule  
		 No limiter  &  $25\times100$  &  9.48e-16 & 	2.92e-16 & 	3.20e-16 &	 5.01e-15 \\
		&  $50\times200$  &  2.02e-15 &	5.34e-16 & 	6.05e-16 & 	1.01e-14 \\ 
		 WENO limiter  &  $25\times100$  &  9.48e-16 & 	2.92e-16 & 	3.20e-16 &	 5.01e-15 \\
		&   $50\times200$  &  2.02e-15 &	5.34e-16 &	 6.05e-16 &	 1.01e-14 \\ 	
		\bottomrule  	
	\end{tabular} \label{tab:Rayleigh-case2}
\end{table}

\begin{figure}[htbp]
	\centering
	\subfigure[t = 5]{
	\includegraphics[width=0.2\textwidth]
	{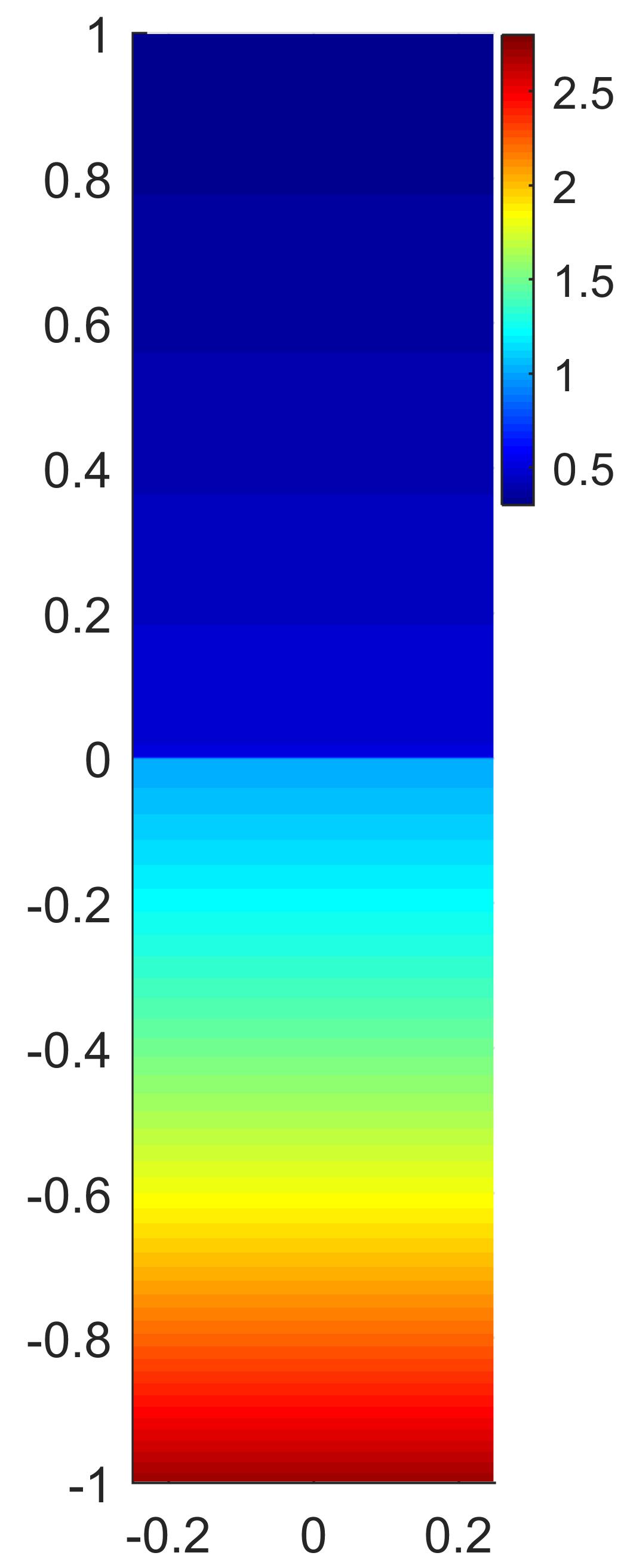}
}
\quad 
\subfigure[t = 7]{
	\includegraphics[width=0.2\textwidth]
	{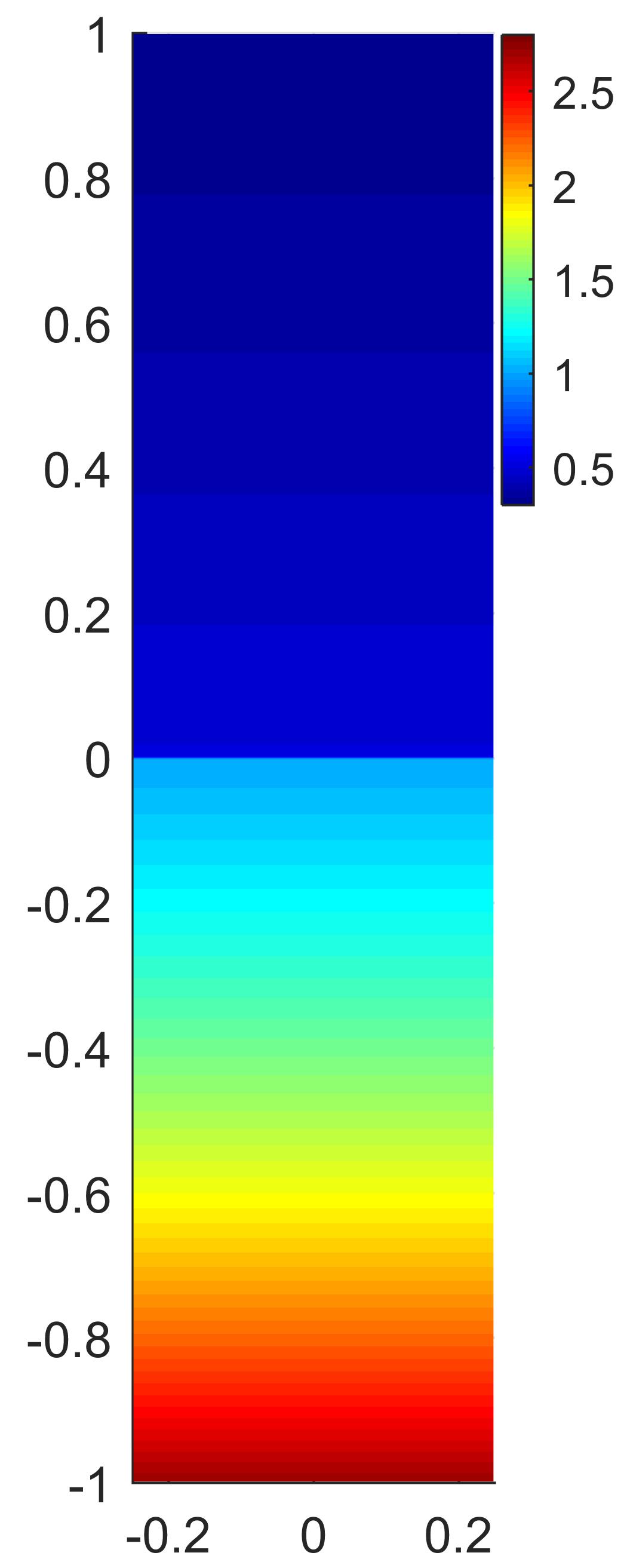}
}
\quad 
\subfigure[t = 9]{
	\includegraphics[width=0.2\textwidth]
	{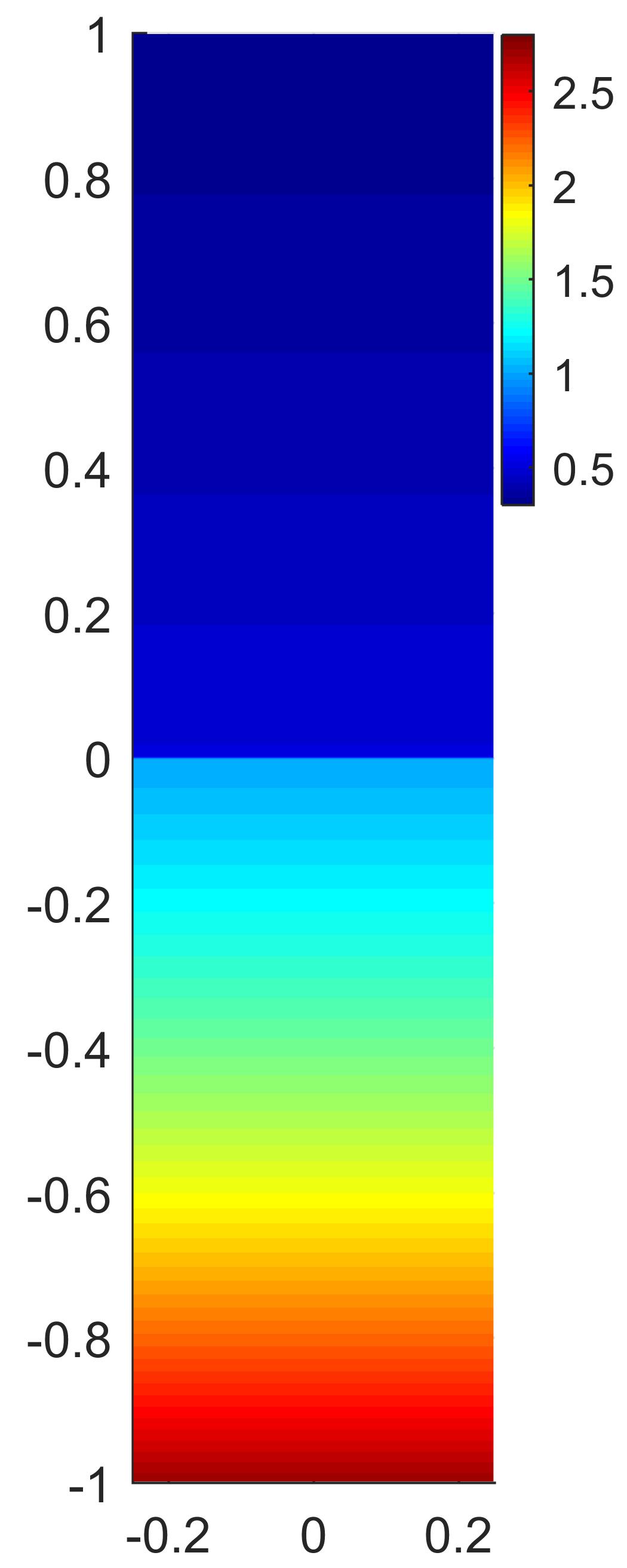}	
}
\quad 
\subfigure[t = 10]{
	\includegraphics[width=0.2\textwidth]
	{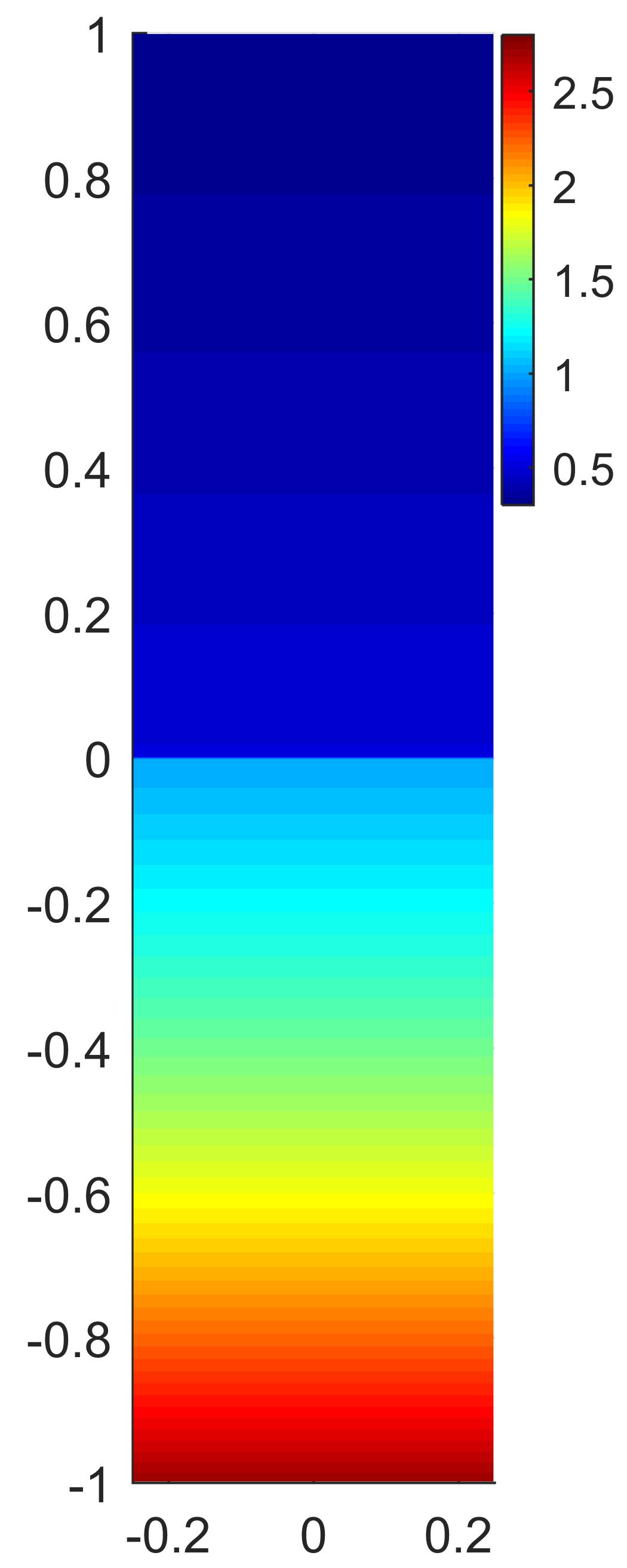}	
}
\\
	\subfigure[$t = 5$]{
	\includegraphics[width=0.2\textwidth]
	{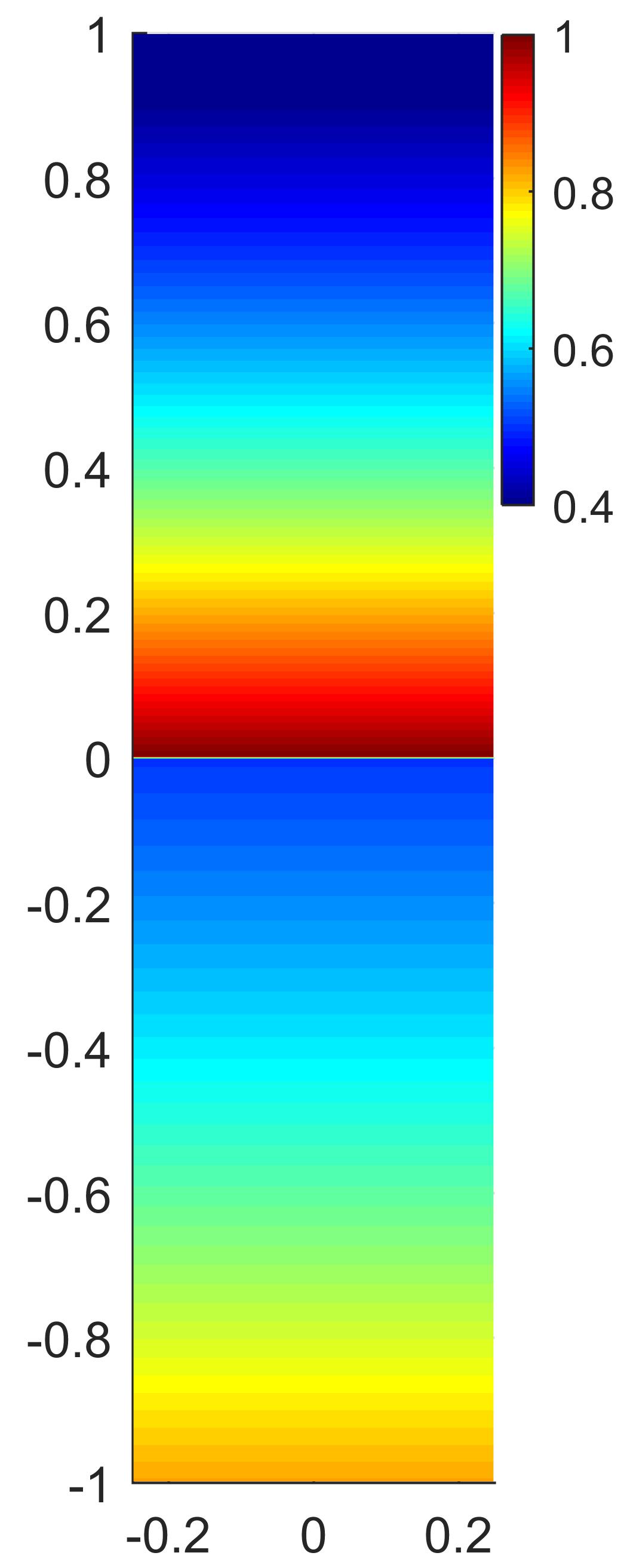}
}
\quad 
\subfigure[$t = 7$]{
	\includegraphics[width=0.2\textwidth]
	{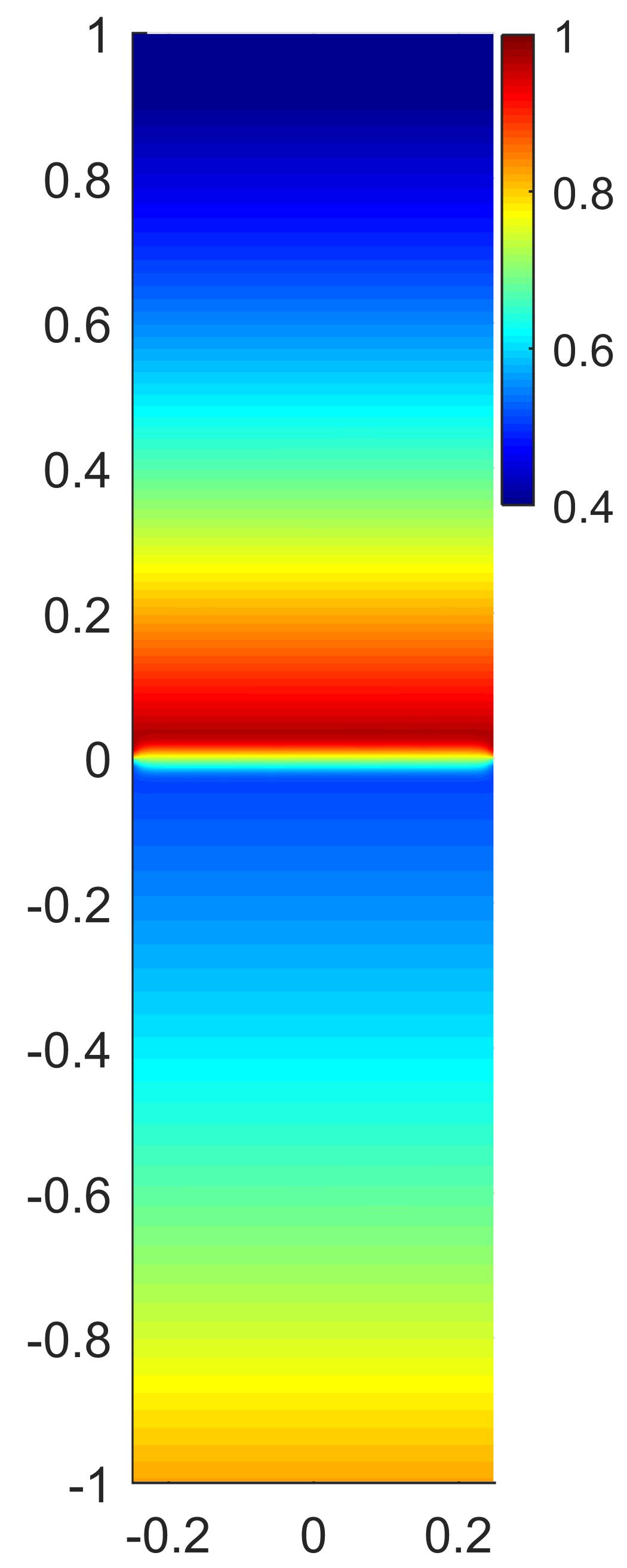}
}
\quad 
\subfigure[$t = 9$]{
	\includegraphics[width=0.2\textwidth]
	{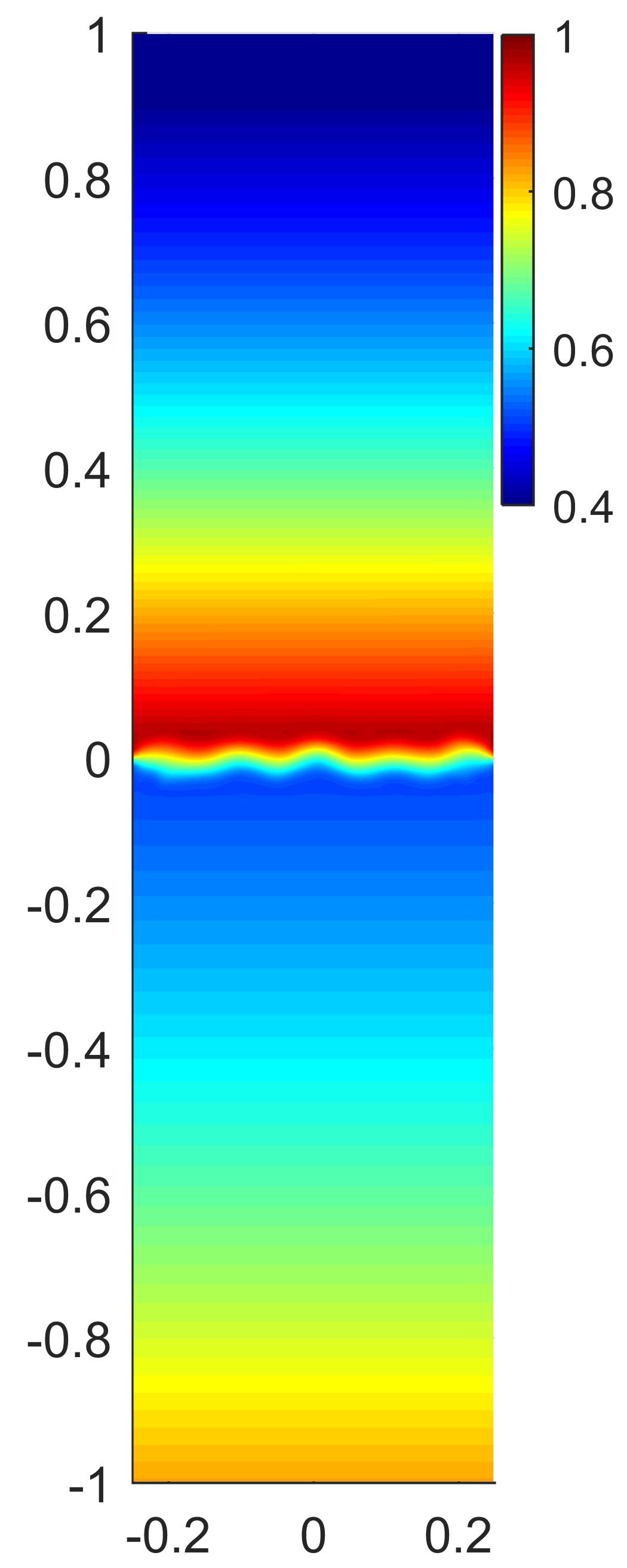}	
}
\quad 
\subfigure[$t = 10$]{
	\includegraphics[width=0.2\textwidth]
	{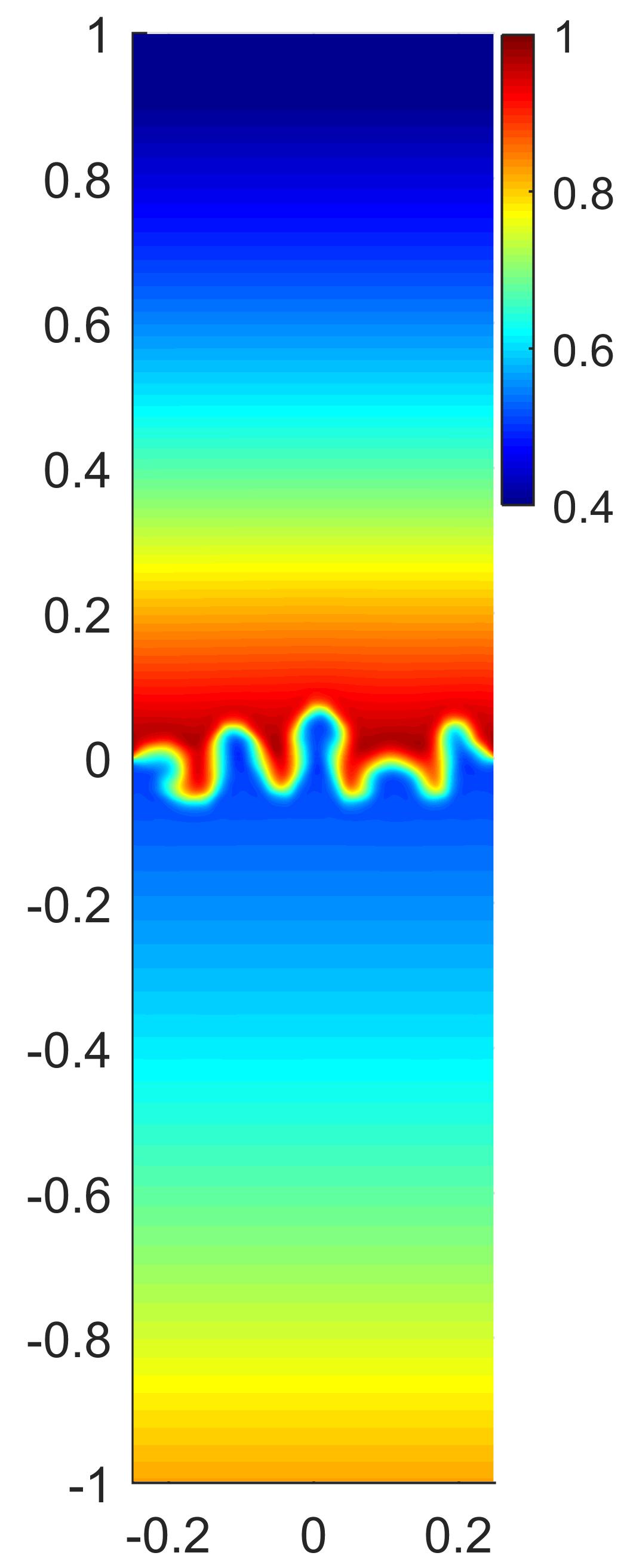}	
}
\caption{Long-time simulations of RT test 1 (top row; the stable configuration) and RT test 2 (bottom row; the unstable configuration) 
	by using the third-order WB CDG scheme 
	with $50\times200$ uniform cells. The WB implementation of the WENO limiter is used with the parameter $M = 200$ in the TVB corrected minmod function \eqref{minmod-corrected}.}
\label{fig:Rayleigh-WENO-case12}
\end{figure}

To check the effectiveness of our WB CDG method in capturing small perturbations near discontinuous 
equilibrium solutions, 	we simulate another classical benchmark RT test \cite{SHI2003690}, which is abbreviated as ``{\bf RT test 3}''
for convenience.
	This test 
 is usually used to validate the ability of a high-order numerical scheme in capturing complicated small wave structures. 
For comparison purpose, we use the same setup as in \cite{SHI2003690}.   
The gravitational potential function is taken as $\phi(x,y) = -y$, so that  
the acceleration is in the positive $y$-direction. 
The 
initial condition is a small perturbation to an unstable stationary hydrostatic solution  involving a discontinuity in density: 
\begin{equation*}\label{}
(\rho,u_1,u_2,p)(x,y,0)=
	\begin{cases}
		(2, ~ 0, ~ \tilde{u}(x), ~ 2y+1)^{\top},    &  0 \leq y < 0.5  \,,  \\
		(1, ~ 0, ~ \tilde{u}(x), ~ y+1.5)^{\top},   &  0.5 \leq y \leq 1  \,,
	\end{cases} 
\end{equation*}
where $\tilde{u}(x) = -0.025\sqrt{\gamma p/ \rho} \cos (8 \pi x)$. 
The variables $(\rho,u_1,u_2,p)$ are set as $ = (1,0,0,2.5)$ on the top boundary and as $(2,0,0,1)$ on the bottom. 
Reflective boundary conditions are imposed on both the left and right boundaries. 
The WB implementation of the WENO limiter is used with the parameter $M = 200$ in the TVB corrected minmod function \eqref{minmod-corrected}. 
As in \cite{SHI2003690}, the mesh refinement study is carried out here by using three different uniform square meshes: 
$h=\frac1{240}, \frac1{480}, \frac1{960}$, where $h$ is the spatial step-size in both the $x$- and $y$-directions. 
 Figure \ref{fig:Rayleigh-Taylor-case2} displays the density contours at time $t=1.95$. 
We
 see that our third-order WB CDG method can clearly resolve the complicated wave structures and that the numerical resolutions are comparable 
 to those computed with WENO9 (a ninth-order WENO scheme) in \cite{SHI2003690} with the same mesh sizes.


\begin{figure}[htb]
	\centering
	\subfigure[$h = 1/240$]{
		\includegraphics[width=0.2\textwidth]
		{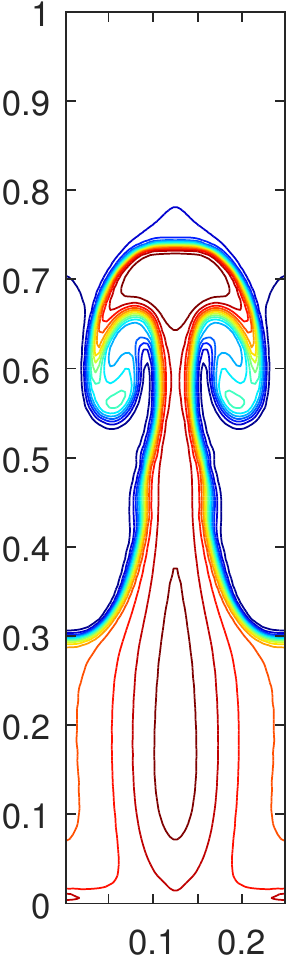}
	}
	\qquad 
	\subfigure[$h = 1/480$]{
		\includegraphics[width=0.2\textwidth]
		{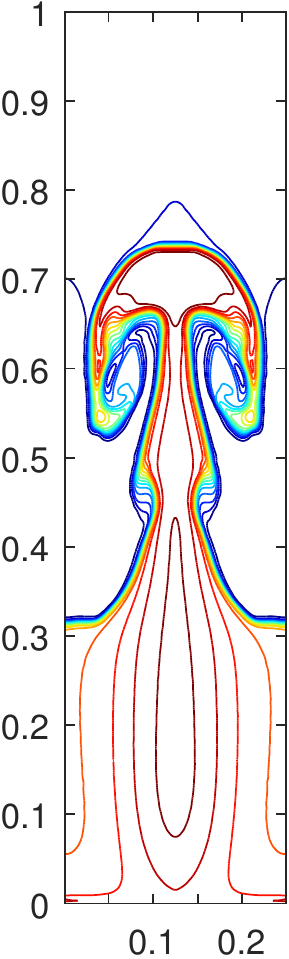}
	}
	\qquad 
	\subfigure[$h = 1/960$]{
		\includegraphics[width=0.2\textwidth]
		{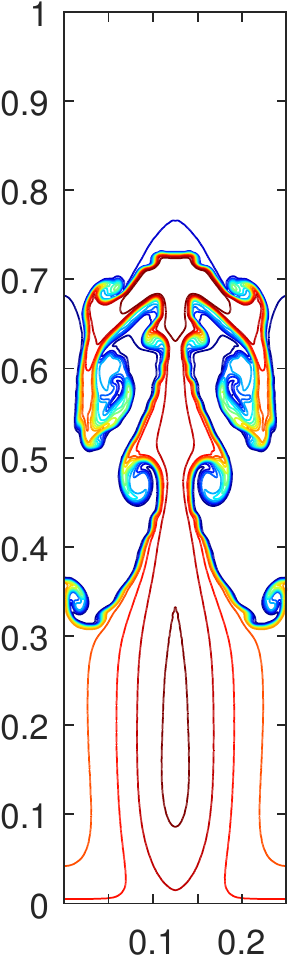}
	}
	\caption{RT test 3 (the perturbation configuration): Contour plots of the density at $t = 1.95$ 
		obtained by the third-order WB CDG scheme with $15$ equally spaced contour lines from $0.952269$ to 
		$2.14589$. }
	\label{fig:Rayleigh-Taylor-case2}
\end{figure}

\section{Conclusions} \label{section:conclusion}

{This paper} designed a high-order positivity-preserving {well-balanced (WB) central discontinuous Galerkin (CDG)} method for the Euler equations under
gravitational fields. A novel WB spatial discretization in the CDG framework was devised with suitable
modifications to the numerical dissipation term and {the} source term approximation, while the desired conservative
and positivity-preserving properties were also simultaneously preserved in the discretization. The modifications were based on a novel projection for the stationary hydrostatic  solution, which {had} the same order of accuracy as the standard $L^2$-projection, {could} be explicitly calculated,
and {was} easy to implement without solving any optimization problems.
Moreover, the novel projection guaranteed the projected stationary solution having
the same cell averages on both the primal and dual meshes.
This feature was a key to obtain the desired properties of our schemes. Based on some convex decomposition techniques and several key properties of the admissible states, we rigorously proved that the resulting WB CDG
method {satisfied} a weak positivity-preserving property, which {implied} that a simple limiter {could} ensure the positivity-preserving property without losing the high-order accuracy and conservativeness. Extensive one- and two-dimensional numerical examples
were provided to demonstrate the robustness, high-order accuracy, WB and positivity-preserving properties of the proposed schemes.

\section*{Acknowledgements}
H.Z.~Tang is partially supported by the National Key R\&D Program of China, Project Number 2020YFA0712000, and the National Natural Science Foundation of China (Nos. 12171227 \& 12126320). K.L.~Wu
is partially supported by the National Natural Science Foundation of China, Project Number 12171227.
H.L.~Jiang wishes to thank Professor Tie Zhou of Peking University very much for his support and  the Department of Mathematics of SUSTech for hospitality during the preparation of this paper.

\bibliographystyle{siamplain}
\bibliography{Reference,references2}

\end{document}